\documentclass[11pt,psamsfonts, oneside, reqno]{amsart}
\newcommand{\hkra}{\hookrightarrow}

\newcommand{\ra}{\rightarrow}

\newcommand{\set}[1]{\left\{ #1 \right\}}

\newcommand{\cc}[1]{\overline{#1}}

\newcommand{\sub}{\subset}

\newcommand{\sm}{\ensuremath{\setminus}}

\newcommand{\s}[2]{\sum\limits_{#1}{#2} }
\newcommand{\p}[2]{\prod\limits_{#1}{#2} }
\newcommand{\g}{\circ}
\newcommand{\sign}{\normalfont\text{sign}}
\newcommand{\inv}{^{-1}}

\newcommand{\norm}[1]{\lVert#1\rVert}
\newcommand{\concat}{\ast}
\newcommand{\cl}{\colon}
\newcommand{\djun}[1]{\bigsqcup\limits_{#1}}

\newcommand{\lbr}[1]{\Bigl(#1\Bigr)}
\newcommand{\emst}{\emptyset}

\newcommand{\xra}{\xrightarrow}

\newcommand{\scA}{\mathscr{A}}

\newcommand{\scC}{\mathscr{C}}
\newcommand{\scD}{\mathscr{D}}

\newcommand{\scF}{\mathscr{F}}
\newcommand{\scG}{\mathscr{G}}
\newcommand{\scH}{\mathscr{H}}
\newcommand{\scI}{\mathscr{I}}

\newcommand{\scP}{\mathscr{P}}
\newcommand{\scQ}{\mathscr{Q}}
\newcommand{\scS}{\mathscr{S}}

\newcommand{\infi}[2]{\inf\limits_{#1}{#2}}

\newcommand{\eva}{ev}

\newcommand{\pf}{\pitchfork}

\newcommand{\lc}{\underline}

\newcommand{\dul}{^\vee}

\newcommand{\ide}{\normalfont\text{id}}
\newcommand{\pt}{\normalfont\text{pt}}
\newcommand{\im}{\normalfont\text{im}}

\newcommand{\PGL}{\normalfont\text{PGL}}
\newcommand{\PU}{\normalfont\text{PU}}

\newcommand{\coker}{\normalfont\text{coker}}
\newcommand{\pr}{\normalfont\text{pr}}

\newcommand{\Mbar}{\overline{\mathcal M}}

\newcommand{\vdim}{\normalfont\text{vdim}}
\newcommand{\delbar}{\bar\partial}
\newcommand{\del}{\partial}
\newcommand{\Hom}{\normalfont\text{Hom}}

\newcommand{\Aut}{\normalfont\text{Aut}}

\newcommand{\bC}{\mathbb{C}}
\newcommand{\bD}{\mathbb{D}}

\newcommand{\bN}{\mathbb{N}}

\newcommand{\bP}{\mathbb{P}}

\newcommand{\bR}{\mathbb{R}}

\newcommand{\bZ}{\mathbb{Z}}
\newcommand{\cA}{\mathcal{A}}
\newcommand{\cB}{\mathcal{B}}
\newcommand{\cC}{\mathcal{C}}
\newcommand{\cD}{\mathcal{D}}
\newcommand{\cE}{\mathcal{E}}
\newcommand{\cF}{\mathcal{F}}

\newcommand{\cI}{\mathcal{I}}

\newcommand{\cK}{\mathcal{K}}

\newcommand{\cM}{\mathcal{M}}
\newcommand{\cN}{\mathcal{N}}
\newcommand{\cO}{\mathcal{O}}
\newcommand{\cP}{\mathcal{P}}

\newcommand{\cR}{\mathcal{R}}

\newcommand{\cT}{\mathcal{T}}

\newcommand{\cV}{\mathcal{V}}
\newcommand{\cW}{\mathcal{W}}
\newcommand{\cX}{\mathcal{X}}

\newcommand{\cZ}{\mathcal{Z}}

\newcommand{\fC}{\mathfrak{C}}

\newcommand{\fb}{\mathfrak{b}}
\newcommand{\fc}{\mathfrak{c}}

\newcommand{\ff}{\mathfrak{f}}

\newcommand{\fo}{\mathfrak{o}}

\newcommand{\fq}{\mathfrak{q}}

\newcommand{\fs}{\mathfrak{s}}

\newcommand{\codim}{\normalfont{\text{codim}}}

\newcommand{\pcd}{\normalfont\text{PD}}
\newcommand{\wt}{\widetilde}

\newcommand{\wh}{\widehat}

\newcommand{\ind}{\normalfont\text{ind}}

\newcommand{\orl}{\hspace{0.8pt}\fo}

\newcommand{\q}{\mathfrak{q}}

\newcommand{\QQ}{\mathbb{Q}}

\newcommand{\ZZ}{\mathbb{Z}}

\newcommand{\mfa}{\mathsf{a}}
\newcommand{\obs}{s}

\newtheorem{intthm}{Theorem}

\newcommand{\dmc}{\widetilde{\mathscr{O}\mathscr{C}}}
\newcommand{\cdmc}{\overline{\mathscr{O}\mathscr{C}}}

\newcommand{\opr}{\normalfont\text{opr}}
\newcommand{\stb}{\normalfont\text{st}}

\newcommand{\ov}[1]{\overline{#1}}
\newcommand{\cKc}{\cK^{\scale{\square}{0.6}}}
\newcommand{\cKd}[1]{\cK^{#1,\scale{\square}{0.6}}}

\newcommand{\cMc}{\Mbar^{\scale{\square}{0.6}}}

\newcommand{\cNc}{\cN^{\scale{\square}{0.6}}}
\newcommand{\scale}[2]{\scaleobj{#2}{#1}}
\newcommand{\cBc}{\cB^{\scale{\square}{0.6}}}
\newcommand{\cEc}{\cE^{\scale{\square}{0.6}}}
\newcommand{\cTc}{\cT^{\scale{\square}{0.6}}}
\newcommand{\xla}[1]{\xleftarrow{#1}}
\newcommand{\obj}{\normalfont\text{obj}}
\newcommand{\scfb}{\scF^{\,\bc}}
\newcommand{\bc}{\fb}
\newcommand{\idsimp}{\sigma_{\ide}}
\newcommand{\gt}{\,\wh{\g}\,}

\usepackage[utf8]{inputenc}
\usepackage[english]{babel}
\usepackage{amsmath} 
\usepackage{amssymb}
\usepackage{amsfonts}
\usepackage{mathtools}
\usepackage{float}
\usepackage{amsthm}
\usepackage[shortlabels]{enumitem}
\usepackage[dvipsnames]{xcolor}
\usepackage{tikz-cd}
\usepackage{subfiles}
\usepackage[thinlines]{easytable}
\usepackage{scalerel}[2016/12/29]
\usetikzlibrary{tqft}
\usepackage[colorinlistoftodos]{todonotes}
\usepackage[mathscr]{euscript}
\usepackage{extarrows}
\usepackage[pagebackref=true]{hyperref}
\usepackage{comment}
\usepackage{abraces}
\usepackage{upgreek}

\usepackage{appendix}
\usepackage{bm}
\hypersetup{
    colorlinks=true,
    linkcolor=Blue,
    filecolor=red,      
    urlcolor=teal,
    citecolor=NavyBlue,}

\DeclareFontFamily{U}{rcjhbltx}{}
\DeclareFontShape{U}{rcjhbltx}{m}{n}{<->rcjhbltx}{}
\DeclareSymbolFont{hebrewletters}{U}{rcjhbltx}{m}{n}
\DeclareMathSymbol{\mem}{\mathord}{hebrewletters}{109}
\DeclareMathSymbol{\nun}{\mathord}{hebrewletters}{110}

\newsavebox{\pullback}
\sbox\pullback{%
	\begin{tikzpicture}%
		\draw (0,0) -- (1ex,0ex);%
		\draw (1ex,0ex) -- (1ex,1ex);%
\end{tikzpicture}}

\tikzset{
	immersion/.tip={Glyph[glyph math command=looparrowleft, swap]}
}

\newtheorem{theorem}{Theorem}[section]
\newtheorem{lemma}[theorem]{Lemma}
\newtheorem{cor}[theorem]{Corollary}
\newtheorem{proposition}[theorem]{Proposition}
\newtheorem{conjecture}{Conjecture}

\newtheorem{construction}[theorem]{Construction}

\theoremstyle{definition}
\newtheorem{definition}[theorem]{Definition}
\newtheorem{property}[theorem]{Property}
\theoremstyle{remark}
\newtheorem{remark}[theorem]{Remark}

\newtheorem{example}[theorem]{Example}
\newtheorem{nota}[theorem]{Notation}
\newtheorem*{notation*}{Notation}

\numberwithin{equation}{section}

\makeatletter
\@addtoreset{equation}{section}
\@addtoreset{theorem}{section}
\makeatother

\newcommand{\Addresses}{{
  \bigskip
  \footnotesize
\textsc{A. Hirschi, Université Paris Cité, Sorbonne Université, CNRS, IMJ-PRG, F-75005 Paris, France}\par\nopagebreak
\text{ORCID}: \texttt{0000-0002-2392-7875}\\
  
  \textsc{K. Hugtenburg, School of Mathematics and Statistics, University of St Andrews}\par\nopagebreak
  \text{ORCID}: \texttt{0000-0002-7823-7126}
}}

\makeatletter 
\def\l@subsection{\@tocline{2}{0pt}{2pc}{6pc}{}} \makeatother

\usepackage{microtype} 
\begin{document}

\title[Open-closed Deligne--Mumford field theories II]{Open-closed Deligne--Mumford field theories: construction}
\author{Amanda Hirschi}\author{Kai Hugtenburg}

\begin{abstract} 
 Open-closed Deligne--Mumford field theories are chain-level field theories based on moduli spaces of stable curves with boundary. We associate to a relatively spin embedded Lagrangian $L \sub (X,\omega)$ such an open-closed DMFT. It extends the Fukaya $A_\infty$ algebra to curves of arbitrarily high genus and with arbitrarily many boundary components and is unique up to homotopy. This is the first step in proving Kontsevich's conjecture that the Fukaya category determines the Gromov--Witten invariants of $X$, following a strategy delineated by Costello.
\end{abstract}

\maketitle
\tableofcontents

\section{Introduction}

\subsection{Context}
Cohomological field theories were defined by Kontsevich--Manin, \cite{KM94}, in order to describe the algebraic structure of Gromov--Witten invariants and are inspired by the seminal work of Witten, \cite{Wi91}. In this paper we define a generalisation of this notion, assigning chain-level operations to chains on moduli spaces of stable curves with boundary. We call this algebraic structure an open-closed Deligne--Mumford field theory to indicate that the underlying properad differs from the properad previously used to define open-closed field theories.
Open-closed field theories were defined by Moore and Segal \cite{Moo01,Seg01} and have been widely applied in as diverse fields as string topology, mathematical physics, algebraic geometry and symplectic topology.

As with previous constructions of algebraic structures in symplectic topology, we use moduli spaces of pseudo-holomorphic curves. They were introduced to symplectic topology by Gromov, \cite{Gro85}, and have become the main tool to prove rigidity results. The first invariants based on counts of pseudo-holomorphic curves are the aforementioned Gromov--Witten invariants as well as Lagrangian Floer theory, the latter based on genus zero curves satisfying Lagrangian boundary conditions, \cite{Flo88}. 
In this case, the geometry of moduli spaces of holomorphic discs gives rise to an $A_\infty$ category, the Fukaya category defined in \cite{FOOO,Sei08}, respectively an $A_\infty$ algebra when considering a single Lagrangian. Higher-genus Lagrangian Floer theory has been only studied in special cases such as in \cite{Liu20,ES24}, partially because a good algebraic framework was lacking. 

In \cite{Cos07}, Costello defines the notion of an open-closed topological conformal field theory (TCFT), and shows that any smooth proper Calabi--Yau $A_\infty$ category $\scA$ gives rise to such a structure, yielding an action of the modular properad on the Hochschild homology of $\scA$. This is also discussed in \cite{KS09} and \cite{WW16,Wah16}. Several papers, such as \cite{Cos09} or \cite{CalTuCEI,CLT2018,AT22} show how to extract invariants from a TCFT, called \emph{categorical enumerative invariants}, given additionally a splitting of the associated Hodge filtration. There is also recent work on defining \emph{open(-closed) categorical enumerative invariants} from an open-closed TCFT, see \cite{Ulm25,Ulm26}. 
Crucially, Costello connects open-closed TCFTs to one of the most important conjectures in symplectic geometry, the Homological Mirror Symmetry conjecture of Kontsevich, \cite{KonICM}, by positing them as a key tool for obtaining the Gromov--Witten theory of a symplectic manifold from its Fukaya category.
In order to obtain a cohomological field theory from a closed TCFT, one needs to trivialise the $S^1$ action coming from rotating the interior boundary punctures, as discussed by \cite{Des22} for properads and \cite{DC14,OV20} in the operadic case.

\subsection{Main results}
This paper modifies Costello's definition by replacing the modular properad of smooth framed surfaces by modular properad of moduli spaces of stable curves with boundary, carrying marked points. As in \cite{Cos07}, we do not phrase our algebraic structures in terms of an algebra over this modular properad but as a functor from a symmetric monoidal dg category $ C_*(\Mbar_{oc})$ whose objects are pairs of natural numbers and morphisms are chains on said moduli spaces of curves with appropriate numbers of incoming/outgoing interior and boundary marked points. We allow for disconnected curves, whence disjoint union gives rise to a symmetric monoidal structure on $\dmc$. See \S\ref{subsec:base-category} for the precise definition and \S\ref{sec:field-theories} for a non-technical discussion. We can now give the main definition of this paper.

\begin{definition}\label{} An \emph{open-closed Deligne--Mumford field theory} (DMFT) is a symmetric monoidal $A_\infty$ functor $\scF\cl C_*(\Mbar_{oc}) \to \text{Ch}_\Lambda$ to the category of chain complexes.
\end{definition}

\noindent
A symmetric monoidal structure for an $A_\infty$ functor consists of certain natural transformations, defined in \S\ref{subsec:symmetric-monoidal}, which are inspired by \cite{amorim2016}. It is more involved than the definition of symmetric monoidal dg functors, and our construction relies on the fact that $C_*(\Mbar_{oc})$ is a dg category.

A first important property of open-closed DMFTs is that they define an $A_\infty$ structure on the chain complex $\scF(0,1)$, cf. Lemma~\ref{lem:curved-algebra-from-dmft}. This mirrors a similar statement in \cite{Cos07} for open-closed TCFTS. In contrast to open-closed TCFTs, an open-closed DMFT also directly yields a chain-level lift of a cohomological field theory restricting to moduli spaces of closed Riemann surfaces, cf. \S\ref{sec:field-theories}.

To be more precise, just as the Fukaya algebra of a Lagrangian is usually a curved $A_\infty$ algebra, the algebraic structure we naturally obtain from moduli spaces of pseudo-holomorphic curves forces us to consider the more general notion of a \emph{curved} open-closed DMFT, which is defined on the category $\dmc$ but does not satisfy the $A_\infty$ functor relations. We show that that curved open-closed DMFTs can still be understood as $A_\infty$ functors, albeit from an extension $\cdmc$ of the dg category $C_*(\Mbar_{oc})$; see Definition~\ref{def:OCbar} in \S\ref{subsec:base-category}.
 
The main result of this paper is the construction of a curved open-closed DMFT based on moduli spaces of pseudo-holomorphic stable maps with boundary on the Lagrangian. Since we only require the Lagrangian to be relatively spin, embedded and weakly unobstructed, we use virtual perturbations throughout. These geometric foundations, in particular coherent systems of global Kuranishi charts, are built in \cite{HH25}. 

\begin{intthm}[Theorem~\ref{thm:dmft-exists}]
	\label{thm:OCDMFT associated to L}
	Suppose $(X,\omega)$ is a closed symplectic manifold and $L \sub X$ an embedded Lagrangian equipped with a relative spin structure.
	\begin{enumerate}[label=\alph*),leftmargin=20pt,ref=\alph*]
		\item\label{curved case} $(X,L)$ admits a semi-strict operadic curved open-closed Deligne--Mumford field theory $\scF_L$ whose associated $A_\infty$ algebra $\scA_L$ is the Fukaya $A_\infty$ algebra of $L$.
		\item If $L$ is equipped with a weak bounding cochain, then it admits a semi-strict operadic \emph{uncurved} open-closed Deligne--Mumford field theory.
	\end{enumerate}
In either case, the associated closed DMFT yields the Gromov--Witten theory of $X$.
\end{intthm}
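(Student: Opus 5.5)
The plan is to build $\scF_L$ from the coherent systems of global Kuranishi charts of \cite{HH25} for the moduli spaces $\Mbar_{g,h;k,\ell}(X,L;\beta)$ of stable maps with boundary on $L$. Each of these spaces comes with a forgetful map $\fs$ to $\Mbar_{oc}$, the moduli space of domains. For a chain $\sigma\in C_*(\Mbar_{oc})$ with $k$ incoming and $k'$ outgoing boundary points and $\ell$, $\ell'$ interior points, the operation $\scF_L(\sigma)$ is defined by a pull--push correspondence:
\[ \scF_L(\sigma)(\alpha) = \sum_\beta T^{\omega(\beta)}\,(ev_{out})_*\bigl(ev_{in}^*\alpha \cdot \fs^*\sigma\bigr), \]
where $\alpha$ lies in the chain models $\scF(0,1)$ of $L$ and $\scF(1,0)$ of $X$. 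Concretely, the fibre product of $\sigma$ with $\fs$ is taken inside the global chart. It is made transverse using the semi-strict, operadic nature of the charts together with the perturbation data. The result is then integrated against the pulled-back inputs and pushed forward along the outgoing evaluations. These evaluations are made submersive by the coherent charts (or, alternatively, by working with currents or de Rham models).

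Relative spin orientations make everything signed. The homological degree shift matches $\dim\Mbar_{oc}$ plus the Maslov index.

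The steps, in order, are as follows.

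\textbf{(1) The chain map and $A_\infty$ structure.} Check that $\scF_L(\sigma)$ is a chain map up to boundary terms, via Stokes' theorem on the fibre product. The boundary of the fibre product splits into the pullback of $\partial\sigma$ plus boundary strata of the maps over the interior of $\sigma$. The latter are exactly disc bubbles whose domain does not change stably. These bubbles produce the curvature terms $\fs^*$ of $\mfa$-type, i.e.\ insertions of $\mathfrak m_0$-type operations. They are what force the extension from $\dmc$ to $\cdmc$ in Definition~\ref{def:OCbar}, so the $A_\infty$ functor relations over $\cdmc$ are obtained from this stratification. Higher $A_\infty$ components arise from composites of chains, where the gluing in $\Mbar_{oc}$ is compared with gluing of maps.

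\textbf{(2) Monoidality.} Compatibility with composition and disjoint union comes from the operadic property of the charts: restrictions to boundary strata are products or fibre products of lower charts. The symmetric monoidal natural transformations of \S\ref{subsec:symmetric-monoidal} are built from the homotopies that appear when the perturbations on products are compared with those on their factors.

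\textbf{(3) Identifications.} Restricting $\sigma$ to fundamental chains of the disc moduli $\Mbar_{0,1;k+1,0}$ recovers the standard Fukaya $\mathfrak m_k$; this uses Lemma~\ref{lem:curved-algebra-from-dmft}. Restricting to closed curves ($h=0$) gives pushforwards of virtual classes, i.e.\ Gromov--Witten theory. For part (b), let $b$ be a weak bounding cochain. Deform every operation by inserting $b$ at arbitrarily many extra boundary points, using the forgetful maps. The weak Maurer--Cartan equation then makes the curvature a multiple of the unit, and a unit absorbs it by the forgetful-map compatibility. This cancels the extra $\cdmc$ terms and yields an uncurved functor on $C_*(\Mbar_{oc})$.

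The main obstacle is step (1) together with the coherence in (2). The fibre products over $\Mbar_{oc}$ must be transverse simultaneously for all chains and compatible with every gluing map, including the non-submersive interplay between nodal domains and the forgetful map, and this must be done in a way that is strictly associative up to the semi-strict data. I would first choose a countable generating set of chains transverse to all strata and perturb inductively on energy and Euler characteristic. Where strict coherence fails, I would absorb the discrepancies into the $A_\infty$ components rather than insist on strictness.
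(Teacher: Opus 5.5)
You correctly identify steps (1)--(2) as the crux, but the proposal gives no mechanism that works there, and the fallback you offer contradicts the statement.

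\textbf{The fibre product over $\Mbar_{oc}$.} The fibre product of a chain with the stabilisation map $\cT\to\Mbar_{oc}$ cannot be made transverse by the perturbation data. Your phrase ``the semi-strict, operadic nature of the charts'' is also circular: these are properties of the functor still to be built. The stabilisation map on the thickening is intrinsically non-submersive. Where a bridge of unstable components between two stable ones is contracted, it multiplies gluing parameters, $(t_1,t_2)\mapsto t_1t_2$. Composites $\sigma_2\g\sigma_1$ lie in boundary strata and in general hit such points, where the fibre product is $\{t_1t_2=0\}$ rather than a manifold with corners. Choosing a countable family of generic chains does not help either: such a family is not closed under clutching, forgetful maps and $\pi^*$, all of which $\scF$ must respect.

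The paper instead uses analytic chains. It defines the pull--push on the singular fibre product $\cTc_\Gamma\times_{\cMc_{\ov\Gamma}}\Delta^{\norm{\sigma}}$ by real analytic resolution of singularities (Theorem~\ref{thm:pushforward along chains}). Lemma~\ref{lem:multiplication-up-to-codim} shows the result is independent of the resolution.

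\textbf{Strict compatibility with composition.} Your step (2) assumes that charts and Thom forms restricted to a boundary stratum \emph{are} the fibre-product charts. On the chain level they are only cobordant, so Stokes' theorem does not give $\scF_{\sigma_2\g\sigma_1}=\pm\scF_{\sigma_2}\g\scF_{\sigma_1}$. The paper's fix has three parts:
\begin{itemize}
\item the cubical cobordisms $\cKc_\Gamma$ over $I^{E(\Gamma)}\times\Mbar_\Gamma(X,L)$ with compatible Thom systems (Theorem~\ref{thm:cubical-cobordisms-enhanced}, Proposition~\ref{prop:thom systems exist});
\item the special chain model of Construction~\ref{con:chains-for-dmft}, built from $\cMc_\Gamma=I^{E(\Gamma)}\times\Mbar_\Gamma$, so that composites sit on the faces $\{t_e=1\}$ where the chart is a fibre product;
\item the weighted sum $\sum_\Gamma c_\Gamma\fq_{\Gamma,\sigma}$ over prestable graphs.
\end{itemize}

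\textbf{Higher components and semi-strictness.} Your plan to ``absorb the discrepancies into the $A_\infty$ components'' is incompatible with semi-strictness, which demands $\scF^d=0$ for $d\ge 2$ on stable chains. In the paper the higher components are forced only by the unstable morphisms: unit, pairing, the Thom-form model of the co-pairing, and so on. Strictness for these is impossible on the infinite-dimensional complex $\Omega^*$. The higher components come from charts $\cKc_{\lc\Gamma}$ over $I^{d-1}$ that interpolate between stabilisation patterns (Lemma~\ref{lem:gkc-for-higher-homotopies}). Semi-strictness and operadicity then follow because these charts are products with an interval, hence degenerate, whenever the pattern does not change.

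\textbf{Part (b).} The weak bounding cochain only removes the $D_0$-terms. Deforming to $\scfb$ and using Property~\ref{assum:weakly-curved-to-uncurved} gives a functor on $\cdmc^{unc}_r$ (Proposition~\ref{prop:deformed-dmft}, Lemma~\ref{lem:weakly-curved-dmft}); that property must itself be verified for $\scF_L$ (Lemma~\ref{lem:additional-properties}). The $D_1$-terms in $\del_{oc}\sigma=\del\sigma-D_1\gt\sigma+(-1)^{|\sigma|}\sigma\gt D_1$ are untouched. So your claim that the extra $\cdmc$ terms cancel and you obtain a functor on $C_*(\Mbar_{oc})$ is false as stated.

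The missing step is to twist by the symmetric monoidal Maurer--Cartan element $\wh D_1$ (Corollary~\ref{cor:zero-curvature-to-uncurved}). This replaces the differential on $\scF(0,1)$ by the Floer differential. You must then check that unitality, h-splitness, the symmetric monoidal structure and semi-strictness survive the twist (Lemma~\ref{lem:deformation-of-SM-functor}, Corollary~\ref{cor:bounding-cochain-induces-uncurved}).

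\textbf{Monoidality.} Your account misplaces the difficulty. Charts for disjoint unions are honest products, so $\scF^1$ is strictly multiplicative. The transformations $R^d,L^d$ for $d\ge2$ are needed only because of the higher components. The paper builds them from the loci $\Delta_i,D_i,C_i\subset I^2$ inside products of the higher-homotopy charts.
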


\noindent
Open-closed DMFTs allow for operations coming from curves with any number of outputs and inputs in return for relaxing the assumption that $\scF$ is a dg functor. By relying on a specific chain model, we can achieve strictness of $\scF_L$ for certain subcategories, as indicated by the properties semi-strict and operadic, which are defined in \S\ref{subsec:def-of-dmft}.
A key feature of the property of being operadic is that the open-closed DMFT is compatible with forgetting incoming marked points. In particular, $\scA_L$ is a cyclic $A_\infty$ algebra.
Restricting the open-closed DMFT to stable spheres, we have the following other immediate consequence.

\begin{cor}\label{}
	Any compact symplectic manifold $(X,\omega)$ admits a strict action of the operad of stable genus-$0$ curves on $\Omega^*(X;\Lambda)$ lifting the quantum cup product.
\end{cor}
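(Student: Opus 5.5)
The plan is to obtain the corollary by restricting the field theory of Theorem~\ref{thm:OCDMFT associated to L} to the closed sector. That theory is semi-strict and operadic, and its closed part yields the Gromov--Witten theory of $X$. First I need an admissible Lagrangian. Any closed $(X,\omega)$ contains one: for instance, a small Lagrangian torus in a Darboux chart. Being a torus, it is spin, so it carries a (relative) spin structure. Part (\ref{curved case}) of the theorem then gives a curved open-closed DMFT $\scF_L$. For the genus-zero closed statement, however, the Lagrangian plays no role. The sensible way to proceed is to restrict $\scF_L$ to the full subcategory of $\cdmc$ on objects $(n,0)$, meaning no boundary marked points, and to morphisms given by chains on $\Mbar_{0,n+1}$ with $n$ inputs and one output.

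The key steps, in order, are as follows.

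(i) Check that this restriction never meets the curvature. The curving terms in $\cdmc$ come from discs or boundary components with no boundary inputs. The subcategory of closed genus-zero, connected, one-output curves is closed under composition, since gluing spheres at interior points gives spheres. It therefore avoids the extension $\cdmc\setminus C_*(\Mbar_{oc})$, so the restriction is a genuine $A_\infty$ functor from the dg category generated by $C_*(\Mbar_{0,\bullet})$.

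(ii) Invoke semi-strictness. I expect the definition in \S\ref{subsec:def-of-dmft} to say that the higher $A_\infty$ components vanish, and that the symmetric monoidal structure maps are identities, on the subcategory of trees of connected curves with a single output. Granting this, the restriction is a strict symmetric monoidal dg functor. Equivalently, it is a strict action of the dg operad $C_*(\Mbar_{0,\bullet+1})$ on $\scF(1,0)$.

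(iii) Identify $\scF(1,0)$ with $\Omega^*(X;\Lambda)$. This holds because the chain model in the construction uses differential forms and pushforward along evaluation maps of global Kuranishi charts.

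(iv) Identify the operation on homology. The fundamental class of $\Mbar_{0,3}=\pt$ acts on cohomology by the quantum product. This is exactly the statement that the closed DMFT recovers genus-zero Gromov--Witten invariants: the operation equals $\alpha\otimes\beta\mapsto \sum_A q^{\omega(A)}\,\mathrm{ev}_{0,*}(\mathrm{ev}_1^*\alpha\wedge\mathrm{ev}_2^*\beta)$ on $\Mbar_{0,3}(X,A)$.

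The main obstacle is step (ii). Strictness has to hold on the whole genus-zero closed suboperad, not just on some subcategory. I would first check whether ``operadic'' is defined as strictness on compositions in the suboperad of connected single-output curves, and whether genus-zero closed curves fall inside it. If strictness is only known for compositions along a single node, I would argue that every morphism in the operad is an iterated single-node gluing. Associativity of gluing in $\Mbar_{0,\bullet}$ and of the chain-level pushforwards would then upgrade this to a strict operad action.

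A secondary point is step (i): one must make sure that $L$ does not contribute disc bubbling to closed operations. This is automatic, since closed curves in $\Mbar_{0,n}$ have no boundary, and the moduli spaces used are those of stable maps from closed domains.
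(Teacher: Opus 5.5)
This is the paper's argument: restrict $\scF_L$ to stable spheres, where no curvature enters because for closed $\sigma$ one has $\pi^*\sigma=0$ and the $D_1$-terms of $\partial_{oc}\sigma$ are empty sums, so $\partial_{oc}\sigma=\partial\sigma$; semi-strictness already kills every $\scF^d$ with $d\ge 2$ on all of $\cdmc_n^{\stb}$ (so your fallback via iterated single-node gluings is unnecessary), and the closed sector recovers Gromov--Witten theory. Two precisions: the monoidal structure maps are not identities, since $\scF(n,0)=\Omega^*(X^n;\Lambda)$ and $R^0$ is the cross product from $\Omega^*(X;\Lambda)^{\otimes n}$, which is only a quasi-isomorphism, so the action is $\sigma\mapsto\scF^1_\sigma\g R^0$ and its strict compatibility with composition uses $R^1=0$, i.e.\ $\scF^1_{\sigma\times\tau}\g R^0=R^0\g(\scF^1_\sigma\otimes\scF^1_\tau)$; and the operad that acts strictly is the fattened chain model $\dmc$ of \S\ref{subsec:our-chains}, which is only $A_\infty$ quasi-equivalent to singular chains on $\Mbar_{0,\bullet+1}$.
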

One should compare this with \cite{AGV}, where a strict chain-level action of the operad of framed (stable) genus-$0$ curves on Hamiltonian Floer theory is constructed.

While we highlight the restrictions to discs and closed curves, the (curved) open-closed DMFT of Theorem~\ref{thm:OCDMFT associated to L} contains far more information. It can be considered a universal curve-counting theory in the sense that it is agnostic about the complexity of the curve. In particular, important structures such as (chain-level lifts of) the open-closed map
\begin{equation}
	\label{} HH_*(\scA_L)\to \text{QH}^*(X,\omega)
\end{equation}
and the closed-open map 
\begin{equation}
	\label{} \text{QH}^*(X,\omega) \to HH^*(\scA_L)
\end{equation}
are part of the data of an open-closed DMFT.

The theory of deformations of $A_\infty$ structures carries over to open-closed DMFTs. Recall from \cite{FOOO} that a \emph{weak bounding cochain} of a unital curved $A_\infty$ algebra is a Maurer--Cartan element $\bc$ so that the curvature of the deformed $A_\infty$ algebra is a multiple of the unit. The second assertion of Theorem \ref{thm:OCDMFT associated to L} follows from the first one by establishing the following result.

\begin{proposition}[Proposition~\ref{prop:deformed-dmft}]\label{prop:deformation-of-dmft}
	If $\scF$ is a suitable curved open-closed DMFT and $\bc$ a weak bounding cochain of the associated curved $A_\infty$ algebra, then the deformed curved DMFT $\scfb$ induces an (uncurved) open-closed DMFT. 
\end{proposition}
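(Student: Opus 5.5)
The plan is to build $\scfb$ by inserting $\bc$ at arbitrarily many additional incoming boundary marked points, and then to show that weak unobstructedness together with the operadic property makes the curvature terms cancel. I will work with the description of a curved DMFT as an $A_\infty$ functor $\cdmc \to \text{Ch}_\Lambda$. The extension $\cdmc$ differs from $C_*(\Mbar_{oc})$ by formal morphisms that encode the curvature, that is, disc components carrying no incoming boundary marked points.

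\textbf{Construction of $\scfb$.} For a chain $\sigma$ in the morphism space from $(m,n)$ to $(m',n')$, set
\[ \scfb(\sigma)(x_1,\dots,x_k) = \sum_{\ell\geq 0} \scF\bigl(\pi_\ell^{*}\sigma\bigr)(\dots,\bc,\dots,\bc,\dots), \]
where $\pi_\ell^*$ pulls $\sigma$ back along the map forgetting $\ell$ extra incoming boundary marked points. The $\bc$'s are inserted in all cyclic positions on every boundary component. Because $\bc$ has positive valuation, the sum converges over $\Lambda$. Pulling back along forgetful maps is compatible with composition and with disjoint union, and $\scF$ is semi-strict and operadic. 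Together these give that $\scfb$ is again a symmetric monoidal $A_\infty$ functor on $\cdmc$, i.e.\ a curved DMFT. On discs it reproduces the deformed algebra $\scA_L^{\bc}$.

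\textbf{Curvature.} By hypothesis, the curvature of $\scA_L^{\bc}$ equals $\lambda\cdot e$, where $e$ is the unit. In $\scfb$, the curvature enters through the extra morphisms of $\cdmc$. Evaluating $\scfb$ on such a morphism amounts to gluing an unmarked deformed disc at a boundary node, so it inserts $\lambda e$ at a boundary input. The operadic property ensures that $\scF$ is compatible with forgetting incoming marked points. Hence inserting the unit $e$, represented by the fundamental chain, at a boundary input equals $\scF$ applied to the pushforward of the chain under the forgetful map. This pushforward is degenerate: it drops dimension, or vanishes in the normalized chain model the paper uses.

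The deformed functor therefore factors through the quotient $\cdmc \to C_*(\Mbar_{oc})$ that kills the curvature morphisms. Alternatively, one could subtract $\lambda$ times the unit contributions, which also gives the right answer in the cases where cancellation happens in pairs of the form $\mu^2(e,x)\pm\mu^2(x,e)$. Either way, the result is an honest $A_\infty$ functor from $C_*(\Mbar_{oc})$, and the symmetric monoidal natural transformations descend along the quotient.

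\textbf{Main obstacle.} The hard step is verifying that unit insertions vanish at the chain level for all morphisms of $\cdmc$, not only for the algebra operations. This requires the strict unit of $\scA_L$ to come from the forgetful map in a way that holds uniformly for higher-genus chains with several boundary components. The hypotheses packaged in ``suitable'' (semi-strict, operadic, strictly unital) are exactly what this step needs. I would first check the relation on generators of $\cdmc$ and then extend it using the $A_\infty$ functor equations.
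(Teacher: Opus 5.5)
There is a genuine gap in your last step. Your construction (inserting $\bc$ along forgetful pullbacks) and your use of unit insertion plus degeneracy of forgetful pushforwards to kill curvature do match the paper's mechanism. But $\cdmc$ is obtained by adjoining \emph{two} formal generators, not only curvature discs. Besides $D_0$ there is $D_1$, the disc with one incoming and one outgoing boundary point, which records the positive-energy part of the Floer differential. It enters $\partial_{oc}$ through $-D_1\gt\sigma+(-1)^{|\sigma|}\sigma\gt D_1$. Weak unobstructedness together with the unit property only removes the $D_0$ terms. What you actually obtain (this is Lemma~\ref{lem:weakly-curved-dmft}) is a functor on $\cdmc^{unc}$, where $\partial_{oc}\sigma=\partial\sigma-D_1\gt\sigma+(-1)^{|\sigma|}\sigma\gt D_1$ for $\sigma\in\dmc$. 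The functor $\scfb$ cannot factor through a quotient that also kills $D_1$: $\scfb_{D_1}$ is the part of the deformed Floer differential $\mu^1_{\bc}$ not coming from $d$, and it is nonzero in general. If you keep the de Rham differential on objects, the $d=1$ functor equation on $\dmc$ fails by exactly the terms $\scfb_{D_1\gt\sigma}$ and $\scfb_{\sigma\gt D_1}$.

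The missing idea is the second algebraic deformation of \S\ref{subsec:compatible-with-d1}. The element $\wh D_1=\sum\idsimp^{y_1}\times D_1\times\idsimp^{y_2}$ is a symmetric monoidal Maurer--Cartan element of $\cdmc^{unc}_r$. The twisted category $(\cdmc^{unc}_r)^{\wh D_1}$ contains $\dmc_r$, with its \emph{ordinary} boundary, as a symmetric monoidal dg subcategory. Twisting the functor by $\wh D_1$ replaces the object differentials by $d+\sum_{i\ge1}(\scfb)^{i}_{\wh D_1,\dots,\wh D_1}$ and deforms $R$ and $L$ (Lemmas~\ref{lem:pushforward-bounding-cochain} and~\ref{lem:deformation-of-SM-functor}, Corollary~\ref{cor:zero-curvature-to-uncurved}). h-splitness survives by a spectral sequence argument using positive valuation. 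In particular, the resulting uncurved theory does not agree with $\scF$ on objects.

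There are also smaller problems, starting with the hypotheses you pack into ``suitable'', which are not the ones the argument needs. Operadicity gives $\scF_\sigma(\dots,e_{\scF},\dots)=\scF_{\pi_*\sigma}(\dots)$ only for sequences of operadic chains. Your plan to check unit vanishing on generators and propagate it with the $A_\infty$ functor equations cannot work. Those equations relate $\scF^d$ on a composite to $\scF^{d+1}$ and to products of lower terms; they do not force anything to vanish.

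The paper instead assumes Property~\ref{assum:weakly-curved-to-uncurved}, which has two parts:
\begin{enumerate}
\item inserting $e_{\scF}$ at the point added by $\pi^*\sigma_i$ kills $\scF^d$ for \emph{arbitrary} composable sequences;
\item $\scF$ on $\pi^*\sigma_i\g_*D_k$ factors through $\ide\otimes\scF_{D_k}$.
\end{enumerate}
For $\scF_L$ both are verified geometrically (Lemma~\ref{lem:additional-properties}). The second part is also what makes $\scfb$ a curved DMFT at all (Proposition~\ref{prop:deformed-dmft}). It converts the terms $\scF_{\pi^{[k]}\sigma\g_*D_j}$, evaluated on copies of $\bc$, into insertions of $\mu^j(\bc,\dots,\bc)$, so that the Maurer--Cartan equation can be applied. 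Semi-strictness and operadicity alone do not give this.

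Finally, your formula for $\scfb$ omits the correction term involving $d(\bc)$ on sequences containing some $\sigma'\g_\kappa D_0$; see~\eqref{eq:deformed-dmft-morphisms-2}. The de Rham part of $\mu^1(\bc)$ comes from the object differential rather than from a chain. Without this term, $\scfb_{D_0}$ is not the deformed curvature, and weak unobstructedness does not yield $\scfb_{D_0}=c\,e_{\scF}$.
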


\noindent
While it may seem surprising that weak bounding cochains are sufficient to eliminate curvature in this more general context, the underlying reason is that discs are the only curves with boundary that are unstable if they carry only one or two boundary marked points.

The construction of the curved open-closed DMFT of Theorem~\ref{thm:OCDMFT associated to L}\eqref{curved case} requires numerous choices, in particular a choice of $\omega$-tame almost complex structure $J$. We establish independence from these choices up to homotopy by geometrically constructing the required homotopies.

\begin{intthm}[Theorem~\ref{thm:independence-of-dmft}]\label{thm:independence}
	The (curved) open-closed DMFTs of Theorem~\ref{thm:OCDMFT associated to L} are independendent of all auxiliary choices (and gauge equivalence of bounding cochains) up to homotopy of DMFTs.
\end{intthm}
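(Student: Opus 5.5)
We will prove Theorem~\ref{thm:independence} with the standard "parametrised moduli space" strategy, adapted to the $A_\infty$-functor setting. First we fix what "homotopy of DMFTs" means. Two (curved) open-closed DMFTs $\scF_0,\scF_1\cl \cdmc\to \text{Ch}_\Lambda$ are homotopic if there is a symmetric monoidal $A_\infty$ functor
\[
\scF_{[0,1]}\cl \cdmc\to \text{Ch}_\Lambda
\]
whose values are cylinder objects for the chain complexes $\scF_i(m,n)$, and which restricts to $\scF_0$ and $\scF_1$ under the two evaluation maps. Concretely, one can take the values to be chains (or forms) on $[0,1]$ tensored with the target complex. Equivalently, it is a symmetric monoidal $A_\infty$ natural transformation that is a quasi-isomorphism at every object.

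With this notion, the geometric input is the following. Given two sets of auxiliary data $\mathfrak{d}_0,\mathfrak{d}_1$ (almost complex structures $J_0,J_1$, global Kuranishi chart data, perturbations, chain-model choices), we choose a path $\mathfrak{d}_t$ joining them. Such a path exists because the space of $\omega$-tame $J$ is contractible and the remaining choices in \cite{HH25} can be interpolated. We then form the parametrised moduli spaces
\[
\Mbar_{g,h,k,\ell}(X,L,\beta;\{J_t\})\to [0,1].
\]
Then we need to use \cite{HH25} to produce coherent global Kuranishi charts for this family that restrict to the given ones over $t=0,1$ and are compatible with gluing, disjoint union and forgetful maps.

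We then repeat the construction of Theorem~\ref{thm:dmft-exists} over this family. Pushing forward along the evaluation maps and the map to $\Mbar_{oc}\times[0,1]$ gives operations valued in $C_*([0,1])\otimes \Hom(\ldots)$, which define $\scF_{[0,1]}$. The $A_\infty$ functor relations and the symmetric monoidal natural transformations follow from the boundary stratification exactly as in the unparametrised case. The only new boundary faces are the fibres over $t=0,1$, and these are precisely the required restriction conditions. Semi-strictness and operadicity are preserved because the chain model is the same.

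For bounding cochains, suppose $\bc_0$ and $\bc_1$ are gauge equivalent via a path $\bc_t$, meaning a Maurer--Cartan element of $\scA\otimes\Omega^*([0,1])$. We deform the family DMFT $\scF_{[0,1]}$ by $\bc_t$ using Proposition~\ref{prop:deformation-of-dmft}, applied over the base $[0,1]$. We then check that the potential is still a multiple of the unit, which kills the curvature in families. Composing with the homotopy coming from changing the geometric data, and using transitivity of homotopy, yields the statement.

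The main obstacle is the construction of coherent parametrised global Kuranishi charts. These must be compatible with all gluing maps at once, across every genus and every number of boundary components, while having prescribed restrictions at the ends. A second difficulty is that homotopy must be transitive and compatible with composition even though $\scF$ is only $A_\infty$. Our first attempt on the charts would be to run the inductive construction of \cite{HH25} relative to the boundary $\{0,1\}$, using the fact that the ends are already fixed. For transitivity, we would rely on $\cdmc$ being a dg category, so that homotopies can be concatenated via Whitney chains on $[0,1]$.
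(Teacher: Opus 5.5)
There is a genuine gap, and it starts with your definition of homotopy. In the theorem, ``homotopy of DMFTs'' means Definition~\ref{de:homotopy-dmfts}. For $\scF^0,\scF^1$ agreeing on objects, this is a pre-natural transformation $T$ from $\scF^0$ to $\scF^1$ with $T^0=0$ and $\mu^1_\scQ(T)^d=\scF^{0,d}-\scF^{1,d}$ for $d\ge 1$, together with a transformation $P$ whose boundary measures the failure of the $R$- (and $L$-) squares to commute.

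Your family functor $\scF_{[0,1]}$, with values in forms on $[0,1]$, is instead a \emph{pseudo-isotopy} of DMFTs; the paper only records that notion in its closing remark. Your ``equivalently, a symmetric monoidal natural transformation that is a quasi-isomorphism at every object'' is also not right. A pseudo-isotopy only gives the zigzag $\scF^0\leftarrow\scF_{[0,1]}\to\scF^1$ of evaluations, and a natural quasi-isomorphism is not a homotopy in the above sense anyway. So even granting all your steps, you prove independence up to pseudo-isotopy, not the stated theorem.

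Passing from the family to $T$ is exactly where the new work lies. For $\scF_{[0,1]}$ itself, which pushes forward to $Y\times[0,1]$, the ends $t=0,1$ are mere restrictions, not boundary terms; your sentence about ``new boundary faces'' conflates two pictures. Suppose instead you integrate out $t$, so that the ends appear as Stokes terms giving $\scF^1$ and $\scF^0$. Then the $\{s_i=1\}$-faces of the charts $\cK^{01}_{\lc\Gamma}$, obtained by running Lemma~\ref{lem:gkc-for-higher-homotopies} over $I$, are $\cK^{01}_{\lc\Gamma^{>i}}\times_{I\times Y}\cK^{01}_{\lc\Gamma^{\le i}}$, with both factors at the \emph{same} time. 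The homotopy relation instead needs $\cK^{1}_{\lc\Gamma^{>i}}\times_{Y}\cK^{01}_{\lc\Gamma^{\le i}}\sqcup\cK^{01}_{\lc\Gamma^{>i}}\times_{Y}\cK^{0}_{\lc\Gamma^{\le i}}$.

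The missing idea is a time-ordered correction, a geometric analogue of Fukaya's passage from pseudo-isotopies to $A_\infty$ isomorphisms. For $\textbf{i}=(i_r>\dots>i_1)$, take the iterated fibre products $\cK^{01}_{\lc\Gamma^{>i_r}}\times_Y\dots\times_Y\cK^{01}_{\lc\Gamma^{\le i_1}}$ over the locus $\{t_r\ge\dots\ge t_0\}$, and define $T$ by summing the associated operations over all $\textbf{i}$ and $\lc\Gamma$. The diagonal faces $\{t_p=t_{p-1}\}$ cancel the bad same-time faces. The faces $\{t_r=1\}$ and $\{t_0=0\}$ give $\scF^1\circ T$ and $T\circ\scF^0$, and for empty $\textbf{i}$ they give $\scF^1$ and $\scF^0$. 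The datum $P$ is then built by combining these charts with the loci $D_k,\Delta_k,C_k\subset I^2$ used for $R$. None of this is in your proposal.

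Three further corrections follow from this.
\begin{itemize}
\item For bounding cochains, you must deform $T$ and $P$ themselves by $\wt\bc$, not a family functor, and then invoke $\wh D_1$-compatibility (Lemma~\ref{lem:homotopy-to-uncurved}).
\item Gauge equivalence means a weak bounding cochain on the pseudo-isotopy algebra $\wt\scA$ on $\Omega^*(L\times[0,1];\Lambda)$, not on $\scA\otimes\Omega^*([0,1])$, since $\scA_0\neq\scA_1$ in general.
\item Building charts over $[0,1]$ that restrict on the nose to the given ones is harder than needed and unclear, since the end charts come from unrelated choices. The paper only needs restriction up to equivalence. At fixed $J$ it convexly interpolates perturbations in a double-sum thickening (Lemma~\ref{lem:cobordism-for-homotopy}). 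For varying $J$ it uses one auxiliary datum along the whole path and splices with two fixed-$J$ homotopies.
\end{itemize}
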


\noindent
The basis of the operations of the open-closed DMFT of Theorem~\ref{thm:OCDMFT associated to L} as well as the symmetric monoidal structure and the homotopies of Theorem~\ref{thm:independence} are the correspondences of \S\ref{subsec:correspondences}. 
We construct them using moduli spaces of stable maps with Lagrangian boundary. Since we have to incorporate chains coming from moduli space of stable curves, these correspondences are singular in general -- even after using regularisation techniques. Inspired by \cite{She25}, we use analytic chains. This means we can apply the real analytic resolution of singularities algorithm of \cite{BM08}, extended to real analytic orbifolds with corners in \S\ref{subsec:singular-integration}.

 \begin{remark}\label{}
 	We expect the open-closed DMFT of Theorem~\ref{thm:OCDMFT associated to L} to be defined in any category in which pull-push operations based on correspondences make sense. In the same vein, while we define an $A_\infty$ functor, our construction should also yield an $\infty$-functor to suitable $\infty$-categories without further geometric work. 
 \end{remark}

\subsection{Applications}
  Mirror symmetry states that for every Calabi-Yau variety $X$ there should be a mirror Calabi-Yau variety $Y$ such that, roughly speaking, the symplectic invariants of $X$ are `isomorphic' to complex or algebraic invariants of $Y$. The first example of this was given for the quintic threefold $X$ in $\bC\bP^4$ by \cite{Can}, where a correspondence between the Gromov--Witten invariants of $X$ and the period integrals of the mirror quintic $Y$ was proven. This is known as \emph{enumerative mirror symmetry}. Kontsevich conjectured this should be the consequence of a more fundamental equivalence, called \emph{homological mirror symmetry}. 

\begin{conjecture}[\cite{KonICM}]For mirror varieties $X$ and $Y$, there exists an equivalence
	\begin{equation*}
		\cF uk(X) \cong D^bCoh(Y)
	\end{equation*}
	between the Fukaya category of $X$ and the derived category of coherent sheaves on $Y$.
	Moreover, this homological mirror symmetry implies enumerative mirror symmetry.
\end{conjecture}

\noindent
In particular, on the symplectic side, this would imply 

\begin{conjecture}[\cite{KonICM}]
	\label{con:Fuk-to-GW}
	The Fukaya category determines the Gromov-Witten invariants of the symplectic manifold.
\end{conjecture}

\noindent
In \cite{GPS15,GS25} this was proven in genus $0$ for Calabi--Yau varieties, under suitable assumptions, using \emph{Hodge-theoretic mirror symmetry}. One of the major technical ingredients is their proof that an $S^1$-equivariant enhancement of the open-closed map intertwines the Getzler--Gauss--Manin connection with the quantum connection. This result was also obtained in \cite{Hug24} under very restrictive regularity assumptions. With similar methods it was shown in \cite{Hug2} that one can also extract enumerative invariants of Lagrangians, so called \emph{genus $0$ open Gromov--Witten invariants}, from the Fukaya category under suitable regularity assumptions. The present paper removes these regularity requirements from these papers.

In order to prove Conjecture \ref{con:Fuk-to-GW} in arbitrary genus, Costello \cite{Cos07} associates categorical enumerative invariants to a (weakly Calabi--Yau) $A_\infty$ category, which should recover the usual Gromov--Witten invariants in the case of the Fukaya category. These have been shown to be quite computable, \cite{CLT2018}, but it remains to be shown that they recover `geometric' enumerative invariants. As a first step in defining categorical enumerative invariants, Costello proved that any Calabi--Yau $A_\infty$ category determines an open-closed TCFT.

When the Fukaya category is generated by a single Lagrangian, we conjecture that the open-closed TCFT associated to the open-closed DMFT of Theorem~\ref{thm:OCDMFT associated to L} is equivalent to the universal TCFT associated to the Fukaya category by Costello. This conjecture, including comparing open-closed DMFTs and TCFTs, and the construction of a geometric open-closed DMFT for the whole Fukaya category will be addressed in future work. 

\begin{remark}
	It is unknown whether the closed-open map of an arbitrary closed symplectic manifold $X$ is necessarily an isomorphism; in fact, it is unknown whether the Fukaya category is non-trivial. In situations where homological mirror symmetry has been established, one can use the generation criterion of \cite{AbouzaidGeneration,Gan16} to show that the closed-open map is an isomorphism.
\end{remark}

\begin{remark}
	In order to obtain Gromov--Witten invariants from the Fukaya category, one needs to equip the category with the correct trivialisation of the $S^1$-action/splitting of the non-commutative Hodge-de-Rham spectral sequence. When it is an isomorphism, the closed-open map provides this trivialisation. In the Calabi--Yau case, \cite{GPS15} use intrinsic properties of the Fukaya category to determine the correct splitting. For a general symplectic manifold, one might need additional data.
\end{remark}

\subsection{Field theories and a construction in symplectic topology}\label{sec:field-theories}
In this section we describe open-closed DMFTs in more detail and how they relate to more classical algebraic structures, and give an overview of the construction. 

\subsubsection{Cohomological and Deligne--Mumford field theories}
Before the first construction of Gromov--Witten invariants was given, Kontsevich and Manin described the algebraic structure they should yield, \cite{KM94}, inspired by the seminal work of Witten, \cite{Wi91}. Let $\Mbar_{g,n}$ denote the Deligne--Mumford compactification of the moduli space of (closed) Riemann surfaces of genus $g$ with $n$ marked points. A \emph{cohomological field theory} consists of a graded vector space $V$ equipped with a graded symmetric pairing $\eta$ on $V$, a distinguished element $e \in V^0$, called the \emph{unit}, and a family of linear maps 
\[
\Omega_{g,n}\cl  V^{\otimes n} \rightarrow H^*(\Mbar_{g,n},\QQ),
\]
which satisfies a list of recursion relations, the CohFT axioms, cf. \cite{KM94}. Crucially, this can be equivalently phrased as the data of a strong symmetric monoidal functor. To see this, let $\Mbar_c$ is a topological category whose set of objects are the natural numbers $\bN_0$ including $0$ and whose morphism spaces are moduli spaces of Riemann surfaces with $k^-$ incoming and $k^+$ outgoing marked points. 
We allow for moduli spaces in the unstable range, in which case we take the moduli space to be a point. These additional `unstable' morphisms give rise to the unit $e$ (the sphere with one outgoing marked point) and the pairing $\eta$ (the sphere with two incoming marked points), and to identity morphisms. Composition is given by clutching Riemann surfaces at incoming and outgoing marked points, with the composition with unstable morphisms described in Definition \ref{de:open-closed-curve-category}. For example, composition with the the unit forgets an incoming marked point. $H_*(\Mbar_c)$ denotes the category obtained from $\Mbar_c$ by taking homology of each morphism space.

\begin{lemma}\label{}
	 CohFTs are in bijection to symmetric monoidal functors $\scF\cl H_*(\Mbar_c) \rightarrow \text{Vect}_{\QQ}.$
\end{lemma}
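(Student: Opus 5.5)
We would prove the bijection by writing down explicit maps in both directions and checking that they are mutually inverse. The organising principle is that $H_*(\Mbar_c)$ is generated, as a symmetric monoidal category, by three kinds of morphisms: the stable morphism spaces $\Mbar_{g,k^-+k^+}$, the unstable morphisms (unit, pairing, copairing, identities), and the symmetric group actions permuting marked points. Its relations are given by the clutching compositions.

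\emph{From functors to CohFTs.} Given a symmetric monoidal functor $\scF$, set $V=\scF(1)$. Strong monoidality gives $\scF(n)\cong V^{\otimes n}$ compatibly with the symmetric group actions. The remaining data are obtained as follows.
\begin{itemize}
\item The unit $e\in V^0$ is the image of the class of the sphere with one outgoing point under $\scF\colon H_0(\pt)\to \Hom(\scF(0),V)=V$.
\item The pairing $\eta\colon V\otimes V\to\QQ$ is the image of the sphere with two incoming points.
\item For stable $(g,n)$, $\scF$ induces a linear map $H_*(\Mbar_{g,n})\to \Hom(V^{\otimes n},\QQ)$, viewing all $n$ points as incoming. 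Dualising via $H^*(\Mbar_{g,n};\QQ)=H_*(\Mbar_{g,n};\QQ)^\vee$ (finite dimensional) gives $\Omega_{g,n}\colon V^{\otimes n}\to H^*(\Mbar_{g,n})$.
\end{itemize}
Grading conventions need tracking, so that a class in $H_d$ yields an operator of degree $-d$.

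We then check the CohFT axioms. $S_n$-equivariance follows from functoriality with respect to the relabelling morphisms. Nondegeneracy of $\eta$ follows from the zig-zag identity: composing the copairing (sphere with two outgoing points) with the pairing gives the identity morphism, by the composition rule for unstable morphisms in Definition~\ref{de:open-closed-curve-category}. The splitting axioms come from the clutching maps $\Mbar_{g_1,n_1+1}\times\Mbar_{g_2,n_2+1}\to\Mbar_{g,n}$ and $\Mbar_{g-1,n+2}\to \Mbar_{g,n}$. These are compositions in $\Mbar_c$ of a morphism with one outgoing point and a morphism with that point incoming, or self-gluing via the copairing. Functoriality plus the Künneth formula, dualised, gives exactly
\[
\rho^*\Omega_{g,n} = (\Omega_{g_1,n_1+1}\otimes\Omega_{g_2,n_2+1})(\cdots\otimes\eta^{-1}),
\]
and similarly for the nonseparating case. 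The unit/forgetful axiom $\Omega_{g,n+1}(\cdot\otimes e)=\pi^*\Omega_{g,n}$ is the statement that composing with the unit forgets an incoming point, which again holds by Definition~\ref{de:open-closed-curve-category}.

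\emph{From CohFTs to functors.} Conversely, given a CohFT we define $\scF(n)=V^{\otimes n}$, strictly monoidal. On a stable morphism space with $k^-$ inputs and $k^+$ outputs we set
\[
\scF(\alpha)(v)=\sum \langle \Omega_{g,k^-+k^+}(v\otimes e_i), \alpha\rangle\, e^i,
\]
using $\eta$ to convert the outputs (dual bases $e_i,e^i$). On unstable morphisms, $\scF$ is given by $e$, $\eta$, $\eta^{-1}$ and the identity, extended to disconnected morphisms by tensor product. Functoriality on stable$\circ$stable compositions is the splitting axiom read backwards. Compositions involving unstable pieces reduce to the unit axiom or to $\eta\circ\eta^{-1}=\ide$. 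The two constructions are clearly inverse: $\Omega_{g,n}$ is recovered from $\scF$ on $\Mbar_{g,n}$, and $\scF$ is determined by its values on connected generators.

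\emph{Main obstacle.} We expect the main difficulty to be bookkeeping for compositions that produce nonseparating nodes or several nodes at once. Composing connected morphisms along $m\ge 2$ marked points yields a curve with several nodes, possibly of higher genus. To handle this, we would factor each such composition in $\Mbar_c$ into single-node gluings using the copairing and symmetric monoidal structure. Each step then reduces to one splitting axiom, and associativity of iterated clutching maps ensures independence of the chosen factorisation. A secondary point is verifying the signs and the direction of the Künneth/duality isomorphisms for odd classes.
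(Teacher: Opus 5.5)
Your strategy is the paper's: read off $V=\scF(1)$, $e$, $\eta$ and $\Omega_{g,n}$ from $\scF$, and conversely set $\scF(k)=V^{\otimes k}$ and use $\eta$ to turn outputs into inputs. However, your claim that compositions involving unstable pieces ``reduce to the unit axiom or to $\eta\circ\eta^{-1}=\mathrm{id}$'' misses a case, and your list of unstable morphisms is incomplete. Besides the unit, pairing, copairing and identities, $\Mbar_c$ contains the counit (the sphere with one incoming point) and, in $\Mbar_c(0,0)$, the formal points $\Mbar_{0,0}$ and $\Mbar_{1,0}$. Their images are forced:
\begin{itemize}
\item counit $=$ pairing$\,\circ\,$(unit$\,\sqcup\,\mathrm{id}$) must go to $\eta(e,\cdot)$;
\item $\Mbar_{0,0}=$ counit$\,\circ\,$unit must go to $\eta(e,e)$;
\item $\Mbar_{1,0}=$ pairing$\,\circ\,$copairing (a self-clutching, (C4)) must go to $\eta(\eta^{-1})$, the (super)dimension of $V$.
\end{itemize}
Now let $\sigma\in H_*(\Mbar_{1,1})$ have its marked point incoming. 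By (C2), $\sigma\circ\text{unit}=\pi_*\sigma\in H_*(\Mbar_{1,0})=H_*(\mathrm{pt})$. So functoriality demands $\langle \Omega_{1,1}(e),\sigma\rangle=\deg_0(\sigma)\,\eta(\eta^{-1})$ for \emph{all} $\sigma$, where $\deg_0$ denotes the degree-zero component. The flat-unit axiom is imposed only for $2g-2+n>0$, so it says nothing about $\Omega_{1,1}(e)$.

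The $H_0$-part of this identity does follow. $H_0(\Mbar_{1,1})$ is spanned by the boundary point, and the nonseparating splitting axiom together with $\Omega_{0,3}(v_1,v_2,e)=\eta(v_1,v_2)$ gives the value $\eta(\eta^{-1})$ there. The $H_2$-part, $\int_{\Mbar_{1,1}}\Omega_{1,1}(e)=0$, is not a consequence of the Kontsevich--Manin axioms. The Hodge CohFT ($V=\QQ$, $\eta(1,1)=1$, $e=1$, $\Omega_{g,n}=c(\mathbb{E})$) satisfies all of them, flat unit included, yet $\int_{\Mbar_{1,1}}\lambda_1=\tfrac{1}{24}\neq 0$. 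So your converse construction yields no functor for it. To close the gap you must do one of two things:
\begin{itemize}
\item restrict explicitly to CohFTs that are homogeneous in the way your grading convention ($H_d\mapsto$ degree $-d$) implicitly requires, i.e.\ $\Omega_{g,n}(v)\in H^{|v|}(\Mbar_{g,n})$, in which case the $H_2$-part vanishes because $e\in V^0$; or
\item add the genus-one condition as an extra axiom.
\end{itemize}
With that in place, the remaining verifications you list go through. The paper's one-line sketch does not address this point either.
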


\begin{proof}
Given such a functor $\scF$, set $V \coloneqq \scF(\{1\})$ and $\Omega_{g,n}(\alpha_1,\dots, \alpha_n)(\sigma) := \scF_{\sigma}(\alpha_1, \dots, \alpha_n)$ for $[\sigma] \in H_*(\Mbar_{g,n})$ and $\alpha_1, \dots, \alpha_n \in V$. The unit $e$ and the pairing are the image of the unique non-zero unstable element in $H_*(\Mbar_c(0,1))$, respectively in $H_*(\Mbar_c(2,0))$. The CohFT axioms follow from the functoriality of $\scF$. The fact a CohFT determines such a functor by setting $\scF(k) =V^{\otimes k}$, and using the pairing $\eta$ to turn all outputs into inputs.
\end{proof}
 
 \noindent
 In symplectic topology, CohFTs arise from moduli spaces of stable maps and haven been constructed using several different frameworks. Instead of working over $\QQ$, one usually has to work over a \emph{Novikov ring} $\Lambda_\QQ$ over $\QQ$ to deal with non-convergence issues in Gromov--Witten theory.
Let us briefly outline the construction of the functor $\scF\cl H_*(\Mbar_c)\to  \text{Vect}_{\Lambda}$ in this setting. Let $(X,\omega)$ be a closed symplectic manifold and $J$ a tame almost complex structure.
 By \cite{HS22}, the moduli space $\Mbar^{\, J,\beta}_{g,n}(X)$ of $J$-holomorphic stable maps representing $\beta\in H_2(X;\bZ)$ admits a global Kuranishi chart. That is, they construct an orbifold $\cT_\beta$, an orbibundle $\cE_\beta \rightarrow \cT_\beta$, and an orbisection section $s_\beta$ of $\cE_\beta$ so that $s\inv_\beta(0)\cong \Mbar^{\, J,\beta}_{g,n}(X)$. The construction directly yields corresponding charts for moduli spaces $\Mbar^{\, J,\beta}_{g;k^-,k^+}(X)$ of stable maps, where the marked points are partitioned into incoming and outgoing ones. Evaluating at these marked points, we obtain a correspondence
 \begin{equation}\label{eq:closed-correspondence}\begin{tikzcd}
 X^{k^-}	& \Mbar^{\, J,\beta}_{g;k^-,k^+}(X)\arrow[r,"{\eva^+}"]\arrow[l,"{\eva^-}"]&X^{k^+}
 		  \end{tikzcd} \end{equation}
 which is, moreover, equipped with a map $\stb$ to the moduli space $\Mbar_{g,k^-,k^+}$ of stable curves. Both the evaluation maps and the stabilisation map extend to the thickening $\cT$. Then, we define for $\sigma \in H_*(\Mbar_c(k^-,k^+))$ the map
 \begin{align}
	\label{eq:CohFT-def-symplectic}
	\notag\scF_\sigma\cl  H^*(X^{k^-};\Lambda) &\rightarrow H^*(X^{k^+};\Lambda),\\
	\alpha &\mapsto \sum_{\beta}\eva^+_*(\stb^*\pcd(\sigma)^* \cup (\eva^-)^*\alpha \cup s_\beta^*\tau_{\cE_\beta})Q^\beta,
	\end{align}
	where $\tau_\beta$ is the Thom class of $\cE_\beta$. By \cite{HS22}, this map is independent of auxiliary choices, in particular of $J$. In \cite{Hir23}, the CohFT axioms are shown.
	
	\smallskip
	 
Following Costello, we define a chain-level version of cohomological field theories as follows.

\begin{definition}
	A \emph{strict} Deligne--Mumford Field Theory (DMFT) is an h-split symmetric monoidal dg functor $\scF\cl C_*(\Mbar_{c}) \rightarrow \text{Ch}_{\Lambda}$ to the category of chain complexes of $\Lambda$-modules.
\end{definition}

\noindent
The property h-split requires that $\scF(k)$ is quasi-isomorphic to $\scF(1)^{\otimes k}$ instead of being isomorphic. This relaxation is required in the setting of Gromov-Witten theory because the K\"unneth isomorphism is usually not an isomorphism on the chain level.
Using the correspondences~\eqref{eq:closed-correspondence}, one could try to emulate~\eqref{eq:CohFT-def-symplectic} in any chain model that allows for pull-push operations. We have chosen to work with the chain complex of differential forms $\Omega^*(X;\Lambda)$ with (completed) Novikov coefficients, so we will describe the issues in this setting. 

\begin{enumerate}[label=(\Roman*),leftmargin=20pt,ref=\Roman*]
	\item\label{well-defined} The first issue concerns the definition of the chain-level operations since~\eqref{eq:CohFT-def-symplectic} requires Poincar\'e duality on $\Mbar_c$. We solve this problem geometrically by modifying the correspondence~\eqref{eq:closed-correspondence} using simplices in $\Mbar_c$: given a simplex $\sigma \cl \Delta^m \to \Mbar_c(k^-,k^+)$ we define the fibre product 
	$$\cK_\sigma := \cK\times_{\Mbar_c}\Delta^m.$$ 
	Since the stabilisation map is not a submersion, this is usually not a well-defined derived orbifold chart. However, assuming $\sigma$ to be \emph{analytic}, we can apply the resolution of singularities algorithm of \cite{BM08} to $\cT_\sigma$ to obtain a smooth orbifold $\wt\cT_\sigma$ with a smooth map $p_\sigma \cl \wt\cT_\sigma \to \cT$. Thus, we can define $\scF_{\sigma}\cl \Omega^*(X^{k^-};\Lambda) \rightarrow \Omega^*(X^{k^+};\Lambda)$ by
	\begin{equation}\label{eq:dmft-operations}
		\scF_\sigma(\alpha) \;=\; (\eva^+\g p_\sigma)_*p_\sigma^*((ev_{\sigma}^-)^*\alpha \wedge s^*\tau).
	\end{equation}
	We show in Lemma~\ref{lem:multiplication-up-to-codim} that the choice of $\wt\cT_\sigma$ does not affect $\scF_\sigma$.
	\item\label{functoriality} The second issue relates to functoriality.  
	The global Kuranishi charts of \cite{HS22} do not depend on the auxiliary choices \emph{up to equivalence}, which is sufficient to achieve strict compatibility/functoriality of the operation~\eqref{eq:CohFT-def-symplectic}. This is no longer true on the chain level: both the choice of global Kuranishi charts as well as the choice of Thom form, i.e., the representative of the Thom class, are relevant for the chain-level pull-push operation. Our strategy is to choose the required auxiliary data for moduli spaces of stable maps arbitrarily and then modify the operations \eqref{eq:dmft-operations} in order to achieve functoriality. This modification is also defined in terms of a correspondence and uses the fact that the global Kuranishi chart restricted to the boundary is cobordant to the fibre product global Kuranishi chart. Doing this rigorously leads to the notion of a system of cubical cobordisms, where we construct for each stable map graph $\Gamma$ a global Kuranishi chart $\cKc_\Gamma$ for $I^{E(\Gamma)}\times\Mbar_\Gamma(X)$, where \[\Mbar_\Gamma(X)\sub \p{v\in V(\Gamma)}{\Mbar_{g_v,k_v}^{\,J,\beta_v}(X)}\] is the fibre product determined by the edges of $\Gamma$. In its simplest form, when $\Gamma$ has a unique edge, $\cKc_\Gamma$ is a cobordism from $\del_\Gamma\cK_\beta$ to the fibre product of global Kuranishi charts. Given these charts, we can replace~\eqref{eq:dmft-operations} by 
	\begin{equation*}\label{} \scF_\sigma \coloneqq \s{\Gamma}{\scF_{\Gamma,\sigma}},\end{equation*}
		where $\scF_{\Gamma,\sigma}$ defined similarly to~\eqref{eq:dmft-operations} with $\cKc_\Gamma$ instead of $\cK_\beta$.
\end{enumerate}

\noindent
While the solution described in \eqref{functoriality} achieves strict functoriality in the stable range, strict functoriality of $\scF$ with respect to the unstable morphisms would imply that the pairing on $\Omega^*(X;\Lambda)$ has to be non-degenerate. Since the chain complex is infinite-dimensional, this is unachievable. The upshot is that one has to relax the condition that $\scF$ is a dg functor.

\begin{definition}
	A \emph{Deligne--Mumford Field Theory} (DMFT) is an h-split symmetric monoidal $A_\infty$ functor $\scF\cl C_*(\Mbar_{c}) \rightarrow \text{Ch}_{\Lambda}$.
\end{definition} 

\noindent
A DMFT is \emph{semi-strict} if the restriction of $\scF$ to the subcategory of stable chains $\Mbar_c^{st}$ is a dg functor. Refer to Definition \ref{def:symmetric-monoidal} for the definition of symmetric monoidal $A_\infty$.

\begin{theorem}
	Associated to any closed symplectic manifold $(X,\omega)$ is a semi-strict DMFT, which yields the Gromov--Witten CohFT of $X$ when passing to homology.
\end{theorem}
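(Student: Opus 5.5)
We will construct the DMFT $\scF$ on the chain model $\Omega^*(X^k;\Lambda)$ for the object $k$. The operations come from correspondences as in~\eqref{eq:dmft-operations}, corrected by the system of cubical cobordisms described in~\eqref{functoriality}. The h-split structure is the wedge product map $\Omega^*(X)^{\otimes k}\to\Omega^*(X^k)$, which is a quasi-isomorphism by Künneth.

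\textbf{Step 1: global Kuranishi charts.} First we fix, for every stable map graph $\Gamma$ and class $\beta$, a global Kuranishi chart $\cK_\beta$ for $\Mbar^{\,J,\beta}_{g;k^-,k^+}(X)$ as in \cite{HS22}, together with a Thom form $\tau_\beta$. By induction on energy and on the number of edges, we then construct cubical cobordisms $\cKc_\Gamma$ over $I^{E(\Gamma)}\times\Mbar_\Gamma(X)$. Each face $\{t_e=0\}$ restricts to $\del_\Gamma\cK$, and each face $\{t_e=1\}$ restricts to the fibre product of the vertex charts. The Thom forms are chosen compatibly on the faces.

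\textbf{Step 2: stable operations.} For an analytic simplex $\sigma$ in the stable range, we define $\scF_\sigma=\sum_\Gamma\scF_{\Gamma,\sigma}$ by pull–push along a resolution $\wt\cT_{\Gamma,\sigma}$. By Lemma~\ref{lem:multiplication-up-to-codim}, this is independent of the chosen resolution.

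We then verify the chain-map identity $d\scF_\sigma\pm\scF_\sigma d=\scF_{\del\sigma}$ using Stokes' theorem on $\wt\cT_{\Gamma,\sigma}$. The boundary has three kinds of contribution:
\begin{enumerate}[label=(\roman*)]
\item the faces $\del\sigma$, which give $\scF_{\del\sigma}$;
\item the nodal strata of $\cT_\Gamma$ lying over boundary strata of $\Mbar$ hit by $\sigma$, which cancel against the cube faces $t_e=0$ for graphs with one more edge;
\item the nodal strata over interior points of the domain moduli, corresponding to bubbling of spheres or discs.
\end{enumerate}
Strict functoriality $\scF_{\sigma\g\sigma'}=\scF_\sigma\g\scF_{\sigma'}$ for stable composable chains follows from the $t_e=1$ faces being fibre products, combined with the projection formula and base change for pushforward along submersions. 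Summing over $\Gamma$ makes the cube contributions telescope. Symmetric monoidality on disjoint unions is immediate from products of charts.

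\textbf{Step 3: unstable morphisms.} Next we define $\scF$ on the unstable morphisms: the unit is $1\in\Omega^0$ and the pairing is $\int_X\alpha\wedge\beta$. For compositions involving these, we build the higher $A_\infty$ components $\scF^{(2)},\scF^{(3)},\dots$. Composition with the unit should agree with forgetting a marked point, which we arrange by choosing the charts compatibly with forgetful maps; this is the "operadic" property. The failure of the copairing to exist is measured by a homotopy kernel for the diagonal $\Delta\sub X\times X$, namely a form $K$ with $dK=\delta_\Delta-\sum e_i\otimes e^i$, following the usual construction. The higher components are then given by inserting $K$ along edges that glue via the unstable pairing.

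\textbf{Step 4: main obstacle and homology.} I expect the main obstacle to be Step 2(iii) together with the coherent inductive construction of the cubical cobordisms in Step 1. One must show that all codimension-one boundary of the resolved fibre products either cancels or is accounted for, even though $\stb$ is not a submersion. This requires the resolutions to be chosen compatibly on boundary faces, using the functoriality of the \cite{BM08} algorithm under restriction to faces. Finally, on homology the operations reduce to~\eqref{eq:CohFT-def-symplectic}, since cobordant charts and cohomologous Thom forms give equal pull–push maps. So the induced CohFT is the Gromov--Witten CohFT of \cite{HS22,Hir23}.
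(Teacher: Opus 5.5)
There is a genuine gap in two places: your Step 2 does not give semi-strictness on the chain model you use, and your Step 3 does not actually construct the unstable sector.

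\textbf{Semi-strictness needs a different chain model.} You define $\scF_\sigma=\sum_\Gamma\scF_{\Gamma,\sigma}$ for analytic simplices in $\Mbar_c$. You then claim $\scF_{\sigma\g\sigma'}=\scF_\sigma\g\scF_{\sigma'}$ because the faces $\{t_e=1\}$ of the cubes are fibre products. But in $C_*(\Mbar_c)$ the composite $\sigma\g\sigma'$ is just a chain in a nodal stratum of $\Mbar_{\mathsf a}$. It carries no cube coordinate, so it never reaches $\{t_e=1\}$. Fibring a chart $\cKc_\Gamma$ with $E(\Gamma)\neq\emptyset$ over such a chain gives a correspondence that is degenerate in the cube direction. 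The only surviving contribution is $\cK_{\mathsf a}$ restricted to the stratum, which is merely cobordant to the fibre product. This is exactly problem~\eqref{functoriality}, and nothing telescopes.

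The paper fixes this by changing the base category. Morphisms become colimits over $\scS_{\mathsf a}$ of analytic chains on the spiky spaces $\cMc_\Gamma=I^{E(\Gamma)}\times\Mbar_\Gamma$. Clutching is defined to land in $\{t_e=1\}$, with Alexander--Whitney products for strict associativity (Construction~\ref{con:chains-for-dmft}, Definition~\ref{de:chain-model}). One then needs Lemma~\ref{lem:right-homology} to know this still computes $H_*(\Mbar_{\mathsf a};\bR)$, which your homology step also relies on. One also needs Lemma~\ref{lem:independence-of-spike} to know the operations descend to the colimit.

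Your ``main obstacle'' in Step 2(ii)--(iii) is misplaced in the closed setting. Thickenings of closed stable maps have no boundary, and nodal or bubbling strata have real codimension $2$. So Stokes only produces $\scF_{\del\sigma}$. The genuine issue at the faces $\{t_e=0\}$ is compatibility with the colimit identifications.

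\textbf{The unstable sector is not constructed.} Step 3 names a propagator $K$, but it is not a construction. You give no formula for $\scF^d$ beyond the snake relation, no verification of the relations~\eqref{eq:warped-relations} for $d\ge 3$, and no unitality argument. Essentially all of the non-strictness lives here. Stable compositions glue through the stabilised, submersive evaluation maps at $\{t_e=1\}$. Your higher components would glue through $K$, which is singular along the diagonal and only makes sense as an operator. You never say how outgoing evaluations are made submersive. Hence it is unclear which composites with the unit, identity, pairing and copairing are strict and which need homotopies, let alone how these homotopies cohere.

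The paper uses no Hodge theory. Every unstable operation is a correspondence (Definition~\ref{de:unstable-operations}); for instance, the copairing is the Thom form on $TX\oplus TX$ pushed forward by $\exp\times\exp$. Non-strictness is diagnosed as a change of stabilisation pattern under composition: forgetting one output of the energy-zero coproduct yields the identity, whose correspondence is unstabilised. All higher components then come from the charts $\cKc_{\lc\Gamma}$ over $I^{d-1}$ of Lemma~\ref{lem:gkc-for-higher-homotopies}, in which the stabilisations are rescaled by the functions $\chi^\pm_{\lc\Gamma}$. The consequences are as follows:
\begin{itemize}
\item the $A_\infty$ relations follow from Stokes on the faces $s_i=0$ (composition of chains) and $s_i=1$ (composition of operations);
\item unitality follows from $\chi^\pm$ factoring through a degenerate map when an identity is inserted (Lemma~\ref{lem:unital-functor});
\item semi-strictness follows from $\chi^\pm$ being constant on stable sequences (Corollary~\ref{cor:our-dmft-properties}).
\end{itemize}

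Finally, ``symmetric monoidality is immediate from products of charts'' only yields $R^0$ and the $d=1$ identity. A symmetric monoidal $A_\infty$ functor also needs higher components $R^d$ and $L^d$. The paper builds these from the loci $\Delta_i,C_i,D_i\sub I^2$ inside the product of the two cubes (Definition~\ref{de:symmetric-monoidal-transformation}, Lemma~\ref{lem:right-sym-mon}).
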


\noindent
While the construction of the higher homotopies is natural in our set-up, it requires considerable work, including the construction of a special chain model, to achieve semi-strictness. 

\subsubsection{Open-closed Deligne--Mumford field theories}
As their name suggests, open-closed DMFTs also incorporate operations from stable curves with boundary. To make this precise, let $\Mbar_{oc}$ be the category with objects pairs $y = (y_c,y_o)$ of nonnegative integers (or finite sets more generally) and morphism from $y$ to $y'$ given by possibly nodal disconnected curves with incoming/outgoing interior marked points labelled by $y_c \sqcup y'_c$ and incoming/outgoing boundary marked points labelled by $y_o\sqcup y_o'$. As before, we include moduli spaces of unstable curves as points. An \emph{open-closed Deligne--Mumford field theory} is an h-split symmetric monoidal $A_\infty$ functor \[
	\scF\cl \dmc \coloneqq C_*(\Mbar_{oc}) \rightarrow \text{Ch}_{\Lambda}.
	\]

\begin{remark}\label{}
	The key difference between an open-closed DMFT and the open-closed TCFT considered by Costello is that our moduli spaces are compactified and that we consider interior marked points as opposed to interior punctures. Moreover, Costello defines an open-closed TCFT to be a dg functor, whereas we allow for higher homotopies and consider $A_\infty$ functors.
 \end{remark}
 
 \noindent
One can restrict an open-closed DMFT to $C_*(\Mbar_c)$ to obtain a DMFT, and to the subcategory generated by discs with one output to obtain an $A_\infty$ algebra. They thus serve as a bridge between the closed and the open theory. 

\noindent
In practice (at least in symplectic topology), pseudo-holomorphic stable maps with Lagrangian boundary condition do not yield an open-closed DMFT.
Indeed, while the same issues~\eqref{well-defined} and~\eqref{functoriality} arise, the same solutions work in the case with boundary.
However, in the open setting, moduli spaces have codimension-$1$ boundary strata, which deform the de Rham differential on $\Omega^*(L;\Lambda)$. 
This deformation, called the Floer differential, need not square to zero because the Lagrangian might be \emph{curved}. Concretely, the bubbling off of unstable discs creates additional boundary strata that are not seen by moduli spaces of stable curves and which spoil the $A_\infty$ functor equations. 
To remedy this, we change the category $\dmc$ by allowing for \emph{unstable} curves in the definition of $\Mbar_{oc}$ that `have higher energy' and thus contribute differently than the other unstable curves. 

\begin{definition}
	Let $\cdmc$ be the category obtained by formally adjoining a morphism $D_0$ from $(0,0)$ to $(0,1)$ and a morphism $D_1$ from $(0,1)$ to $(0,1)$ to $C_*(\Mbar_{oc})$ and defining the differential on a morphism $\sigma$
	\[
	\partial_{oc} \sigma \coloneqq \del\sigma + (-1)^{|\sigma|}\pi^*\sigma \g_{*} D_0 - \sum_{i \in y'_o} D_1 \g_{i} \sigma + (-1)^{|\sigma|}\sum_{i \in y_o} \sigma \g_{i} D_1,
	\]
	where the chain $\pi^*\sigma$ is obtained by adding an additional incoming boundary marked point to the curves in the image of $\sigma$; see \ref{con:lifted-simplex} for a precise construction.
\end{definition}

\noindent
We have thus arrived at the definition that is closest to the geometry we see in symplectic topology and which is what we construct in \S\ref{subsec:lagrangian-dmft}.

\begin{definition}
	A \emph{curved open-closed Deligne--Mumford field theory} is an h-split symmetric monoidal $A_\infty$ functor $\scF\cl \cdmc \rightarrow \text{Ch}_{\Lambda}$.
\end{definition}

\noindent
The uncurved open-closed DMFT in Theorem~\ref{thm:OCDMFT associated to L} is obtained from a curved one by two separate purely algebraic deformations, discussed in \S\ref{subsec:bounding-cochains-and-deformations} and \S\ref{subsec:compatible-with-d1} respectively.

\addtocontents{toc}{\protect\setcounter{tocdepth}{1}}

\subsection*{Acknowledgements}  We are very grateful to Nick Sheridan for invaluable suggestions and to Mohammed Abouzaid, Lino Amorim, Yash Deshmuk and Junwu Tu for useful discussions. A.H. thanks Hiro Lee Tanaka, and Nathalie Wahl for enlightening conversations.

Part of this work was completed while both authors were in residence at the Simons Center for Geometry and Physics as well as the Mittag-Leffler Institute. We thank both institutes for their hospitality. A.H. is supported by ERC Grant No.864919 and K.H. was partially supported by EPSRC Grant EP/W015749/1.

\section{Open-closed Deligne--Mumford field theories}\label{sec:dmfts}
\addtocontents{toc}{\protect\setcounter{tocdepth}{2}}
\noindent In \S\ref{subsec:base-category} we define the notion of a curved open-closed Deligne--Mumford field theory and discuss its properties, including how it defines a curved $A_\infty$ algebra.
 In \S\ref{subsec:bounding-cochains-and-deformations}, we define deformations of curved open-closed DMFTs and when a deformation yields an uncurved open-closed DMFT. 

\subsection{Chains on moduli spaces of curves}\label{subsec:base-category}
Given a collection $\cD$ of $D$-branes, Costello defines in \cite{Cos07} a category $\cM_\cD$, whose objects are tuples $y = (y_c,s,t\cl y_o\to \cD)$ of finite ordered sets $y_c$ and $y_o$, the latter equipped with the source and target maps $s,t$ to $\cD$. The morphism spaces are, roughly, smooth Riemannian surfaces with boundary, whose boundaries are divided into three types. We will only consider the case where $\cD$ consists of a single Lagrangian $L$ and define a variation of his category, which we denote by $\Mbar_{oc}$.

\begin{definition}\label{de:open-closed-curve-category} The category $\Mbar_{oc}$ enriched over oriented orbifolds with corners has objects given by pairs $y = (y_c,y_o)$ of finite ordered sets, a `closed' set and an `open' one
	\begin{itemize}[leftmargin=20pt]
		\item To define the space of morphisms from $y$ to $y'$, let
		\[\Mbar_{oc}^\bullet(y,y') \coloneqq\djun{\substack{g \geq 0,\\h\geq \min\{|y_o|+|y_o'|,1\}}}\Mbar_{g,h;y,y'}\]
		be the disjoint union of moduli spaces of connected stable curves. The marked points are partitioned into incoming and outgoing marked points, labelled by $y_c$ and $y_o$, respectively $y'_c$ and $y'_o$. If $(g,h,|y_c|+|y_c'|,|y_o|+|y'_o|)$ is in the unstable range, we formally define the moduli space $\Mbar_{g,h;y,y'}$ to be a point.
		Then, set 
		\begin{equation*}\label{eq:morphisms}
			\qquad\Mbar_{oc}(y,y') \coloneqq \djun{\substack{y = y_1 \sqcup \dots y_m,\\y' = y'_1 \sqcup \dots\sqcup y'_m}}\prod_{i =1}^{m}\,\Mbar_{oc}^\bullet(y_i,y_i'), 
		\end{equation*}
		that is, we allow for curves that are not necessarily connected.
		\item The composition maps
		$$\psi\cl \Mbar_{oc}(y',y'')\times\Mbar_{oc}(y,y')\to \Mbar_{oc}(y,y'')$$
		in the stable range are defined by clutching at the marked points associated to $y'$.
		
		We define the composition for the allowed unstable cases for each case separately; observe that we write the unstable moduli space, although in each case it is simply a point.
		\begin{enumerate}[label=(C\arabic*),leftmargin=25pt,ref=C\arabic*]
			\item\label{identity} The morphism associated to  $\Mbar_{0,0;(1,0),(1,0)}$, respectively $\Mbar_{0,0;(1,0),(1,0)}$ is the identity and so the composition with it is simply the identity.
			\item\label{forget-interior-outgoing} The composition `with' $\Mbar_{0,0;(1,0),0}$
			is the map forgetting the respective outgoing interior marked point, while the composition with $\Mbar_{0,0;(0,1),0}$ forgets the respective incoming interior marked point.
			\item\label{forget-boundary-outgoing} The composition with $\Mbar_{0,1;(0,1),0}$ ($\Mbar_{0,1;(0,1),0}$) forgets an incoming (outgoing) boundary marked point.
			\item\label{self-clutching} For $\{i,j\} = \{0,2\}$, the composition with $\Mbar_{0,0;i,j}$ is given by self-clutching at incoming, respectively outgoing interior marked points. Meanwhile, composition with $\Mbar_{0,1;(0,i),(0,j)}$  is given by self-clutching at incoming, respectively outgoing boundary marked points.
			\item\label{collapsed-circle} For $\{j^-,j\} = \{0,1\}$, the composition $\Mbar_{0,1;(j^-,j),0}$	is the inclusion of the boundary stratum where a boundary circle collapses.
		\end{enumerate}
	\end{itemize}
\end{definition}

\noindent
 By construction, the category $\Mbar_{oc}$ admits a symmetric monoidal structure given by the disjoint union on objects and morphism spaces. Moreover, $\Mbar_{oc}$ has several important subcategories.

\begin{definition}\label{de:stable-subcategory} We define $\Mbar_{oc}^{\stb}$ to be the \emph{stable subcategory}, that is, the subcategory with the same set of objects but where each morphism space only comprises moduli spaces in the stable range, together with the `identity' morphisms, that is, the spaces $\Mbar_{0,0;(1,1),0}$ and $\Mbar_{0,1;0,(1,1)}$.
\end{definition}

\noindent
 In particular, $\Mbar_{oc}^{\stb}$ does not have the compositions \eqref{forget-interior-outgoing}-\eqref{collapsed-circle}.
 
\begin{definition}
	\label{de:operadic-subcategory}
	Let $\Mbar_{oc}^{\opr}$ be the \emph{operadic subcategory}, which is the subcategory with the same set of objects, but where the morphism spaces are generated (using the symmetric monoidal structure and compositions) by connected surfaces with at most one output marked point, which is a boundary marked point if the surface is open.
\end{definition}

\noindent
Morphisms in the operadic subcategory are surfaces for which every connected component has at most one output marked point.

\begin{definition}\label{de:closed-subcategory} The \emph{closed subcategory} $\Mbar_c$ of $\Mbar_{oc}$ has objects $y = (y_c,\emst)$ and morphisms given by closed surfaces. The \emph{open subcategory} $\Mbar_o$ is the full subcategory on the objects $y = (\emst,y_o)$.
\end{definition}

\noindent
While one could directly work with the topological category $\Mbar_{oc}$, we use the category of analytic singular chains on it. In genus zero, there is an obvious choice of chains; in higher genus, the morphism spaces are orbifolds. The standard definition of singular chains on orbifolds (taking the geometric realisation of the diagonal of the nerve) is too unwieldy for our purposes. Since we are only interested in chains with rational or real coefficients, we can use the following model. Note that we are not working with orbifold (co)homology as defined in \cite{CR04}.

\begin{definition}\label{de:chains-on-orbifolds}
	Let $X = [X_1 \rightrightarrows X_0]$ be a topological groupoid so that the source and target maps $s,t \cl X_1 \to X_0$ are topological submersions. We define its \emph{singular chain complex} to be 
	\[C_*(X) = C_*(X;\bR) \coloneqq \coker(t_*-s_* \cl C_*(X_1;\bR)\to C_*(X_0;\bR)).\] 
\end{definition}

\noindent
The definitions of the subcomplex of \emph{analytic} or \emph{smooth} singular chains on $X$, in the context where it is well-defined, are immediate.

\begin{lemma}\label{lem:chains-on-orbifolds-well-defined}
	This definition is homotopy equivalent to the definition of singular chains in \cite{Be04} for any \'etale proper topological groupoid. In particular, there exists a canonical isomorphism
	$$H_*(X)\; \cong\; H_*(|X|;\bR).$$
	Moreover, any refinement $X\to Y$ of compact Lie groupoids induces an isomorphism $C_*(X)\to C_*(Y)$ of smooth singular chains.
\end{lemma}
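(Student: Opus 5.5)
To compare $C_*(X)=\coker(t_*-s_*)$ with Behrend's complex, I would pass through the simplicial nerve. Behrend's singular chains on an étale groupoid $X$ are the total complex of the double complex $C_*(X_\bullet;\bR)$, where $X_p = X_1\times_{X_0}\dots\times_{X_0}X_1$ is the space of $p$-composable arrows and the horizontal differential is $\sum(-1)^i(d_i)_*$. The first column is $C_*(X_0)$, and the horizontal differential out of the second column is $t_*-s_*$. So there is a natural chain map from the total complex onto $C_*(X)$: project to the zeroth column and then to the cokernel. I would show this map is a quasi-isomorphism. It suffices to show that for each fixed chain degree $q$, the augmented row
\[
\dots\to C_q(X_2;\bR)\to C_q(X_1;\bR)\xrightarrow{t_*-s_*} C_q(X_0;\bR)\to C_q(X)_q\to 0
\]
is exact; the comparison then follows from the standard spectral sequence argument for the column filtration.

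Exactness of the rows is where properness and étaleness are used, and where working over $\bR$ rather than $\bZ$ matters. A singular simplex $\sigma\colon\Delta^q\to X_0$ together with the arrows out of it gives a local action. Because $s$ is étale and $\Delta^q$ is simply connected, the arrows with source on $\sigma$ decompose into lifts $\tilde g\colon\Delta^q\to X_1$ with $s\tilde g=\sigma$. Properness makes the set of such lifts landing in any given simplex finite. This is the main obstacle: one must show that $C_q(X_\bullet)$ is the bar construction of a free action of a finite groupoid on the set of singular simplices in $X_0$ (taken orbitwise). Once that is established, the row is the standard resolution computing the groupoid coinvariants. Its higher homology is group homology with $\bR$-coefficients of finite groups, namely the isotropy of a simplex, and this vanishes by averaging. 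The resulting contracting homotopy is built from the normalized sum over the finite automorphism set, and this step needs division by the orders of the groups. Hence the projection is a quasi-isomorphism, and since all complexes are free over $\bR$, it is in fact a homotopy equivalence.

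The isomorphism $H_*(X)\cong H_*(|X|;\bR)$ then follows from Behrend's result that his homology agrees with that of the coarse space with rational (hence real) coefficients. Alternatively, it can be seen directly: the map $C_*(X)\to C_*(|X|)$ induced by $X_0\to|X|$ is well defined because $t_*-s_*$ maps to zero, and rationally it is an isomorphism via a transfer argument that again uses finite isotropy.

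For a refinement $X\to Y$ of compact Lie groupoids, I would use that $Y_0$ is covered by $X_0$ up to arrows: every smooth simplex of $Y_0$ is equivalent in the cokernel to sums of images of simplices in $X_0$. This requires subdividing, with barycentric subdivision chain homotopic to the identity, and then lifting through the local sections that come from essential surjectivity. This gives surjectivity of $C_*(X)\to C_*(Y)$. Full faithfulness, $X_1=X_0\times_{Y_0}^2Y_1$, identifies the relations imposed by $t_*-s_*$ on both sides, which gives injectivity. I expect the subdivision bookkeeping at this step to be the delicate point, since strict isomorphism, and not just quasi-isomorphism, is claimed.
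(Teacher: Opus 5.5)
Your argument for the first two claims is essentially the paper's. The paper also identifies $C_*(X)$ with the $p=0$ column of the first page of the spectral sequence of Behrend's bicomplex. It then uses finiteness of isotropy to kill everything off that column, and quotes Behrend's Proposition~36 for the coarse space.

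One correction to what you call the main obstacle. You do not need the arrows out of $\sigma$ to assemble into global lifts through $s$. That fails in general: for \v{C}ech groupoids, $s$ is an open inclusion, not a covering. Since $\mathrm{Sing}_q$ commutes with fibre products, the $q$-th row is literally $\bR[N_\bullet G_q]$ for the discrete groupoid $G_q=(\mathrm{Sing}_q X_1\rightrightarrows\mathrm{Sing}_q X_0)$. Its homology is therefore $\bigoplus_{[\sigma]}H_*(\Aut(\sigma);\bR)$. Étaleness and connectedness of $\Delta^q$ make evaluation at a vertex embed $\Aut(\sigma)$ into a finite isotropy group, so the higher terms vanish over $\bR$.

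The genuine gap is in the refinement claim.
\begin{itemize}
\item Barycentric subdivision is only chain homotopic to the identity. Subdividing a simplex of $Y_0$ and lifting the small pieces through local sections therefore shows at best that $C_*(X)\to C_*(Y)$ is surjective on homology, i.e.\ a quasi-isomorphism.
\item It cannot give the strict isomorphism of chain complexes that is asserted. The paper needs the strict version, since it wants the complex itself, not just its homology, to be invariant.
\end{itemize}

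The missing idea, which is the role of Ehresmann's lemma in the paper's proof, is to lift \emph{whole} simplices. Write $\phi\colon X\to Y$ for the refinement.
\begin{itemize}
\item For compact groupoids, the essential-surjectivity map $t\circ\pr_2\colon X_0\times_{Y_0}Y_1\to Y_0$ (fibre product over $\phi$ and $s$) is a proper surjective submersion. By Ehresmann's lemma it is a locally trivial fibre bundle.
\item Its pullback along a smooth simplex $\tau\colon\Delta^q\to Y_0$ is trivial because $\Delta^q$ is contractible. So it has a smooth global section $(x,g)$, where $g\colon\Delta^q\to Y_1$ is an arrow-simplex from $\phi\circ x$ to $\tau$.
\item Hence $[\tau]=[\phi\circ x]$ in the cokernel, with no subdivision.
\end{itemize}

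Now observe that $C_q(X)=\bR[\pi_0G_q]$ is the free vector space on isomorphism classes of $q$-simplices. Indeed, the relations $t\rho-s\rho$ already form an equivalence relation because $X$ is a groupoid. The lift above gives surjectivity of $\pi_0G_q(X)\to\pi_0G_q(Y)$. Full faithfulness gives injectivity, exactly as you indicate. A bijection on isomorphism classes of simplices in each degree is precisely an isomorphism of the chain complexes.
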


\noindent
For our purposes it is important that the chain complex itself is invariant up to equivalence.  

\begin{proof}
	\cite{Be04} defines the singular chains, denoted here $C^B_*(X)$, on a topological groupoid $X$ by taking $X_p = N_p(X)$ to be the $p$th level of the nerve of $X$ and letting $ C^B_*(X)$ be the totalisation of the bicomplex $C_*(X_*)$. Then, $C_*(X) = E^1_{*,0}$ is the first non-zero column of the first page of the spectral sequence associated to $C_*(X_*)$.  Since $X$ is \'etale proper, all isotropy groups are finite. Thus, $E^2_{p,q} = 0$ for $p \neq 0$, which shows that the spectral sequence collapses at the second page and we have $H^B_*(X)\cong H_*(X)$. The second claim then follows from \cite[Proposition~36]{Be04}. The last claim is a straightforward verification, using Ehresmann's Lemma.
\end{proof}

\noindent
We will also need the following property in \S\ref{subsec:def-of-dmft}.

\begin{lemma}\label{lem:fibre-product-well-defined}
	Suppose $f \cl X\to Y$ is a submersion of \'etale proper Lie groupoids. Then, for any simplex $[\sigma]\in C_k(X)$, the (orbifold) fibre product $\Delta^k{}_{\sigma}\times_f X$ does not depend on the choice of representation $\sigma\cl\Delta^k \to Y$.
\end{lemma}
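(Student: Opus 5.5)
We first make the statement precise. A class $[\sigma]\in C_k(Y)$ is represented by singular simplices $\sigma\cl\Delta^k\to Y_0$, modulo the image of $t_*-s_*$. Two representatives of the same simplex (not just of the same class of chains) therefore differ by a simplex $\gamma\cl \Delta^k\to Y_1$ with $\sigma = s\g\gamma$ and $\sigma' = t\g\gamma$. (The statement should read $\sigma\cl\Delta^k\to Y$ with $[\sigma]\in C_k(Y)$.) The claim is that the fibre products $\Delta^k{}_{\sigma}\times_f X$ and $\Delta^k{}_{\sigma'}\times_f X$ are canonically isomorphic as orbifolds, i.e.\ as \'etale proper groupoids up to isomorphism, or at least Morita equivalence.

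The plan is to write the fibre product as a groupoid explicitly. Since $f$ is a submersion, $f_0\cl X_0\to Y_0$ is a submersion. The objects are $(p,x)\in \Delta^k\times X_0$ with $\sigma(p)=f_0(x)$, which form a manifold with corners by transversality. The morphisms are pairs $(p,\phi)$ with $\phi\in X_1$ and $f_1(\phi)$ equal to the identity at $\sigma(p)$. This is the strict fibre product. One could instead use the weak (homotopy) fibre product, whose objects are triples $(p,x,\eta)$ with $\eta\in Y_1$ from $\sigma(p)$ to $f_0(x)$. Étaleness together with the submersion hypothesis ensures that either version is a Lie groupoid.

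The key step is then to construct an explicit isomorphism using $\gamma$. For the weak fibre product, send $(p,x,\eta)$ to $(p,x,\eta\g\gamma(p)\inv)$, and act as the identity on morphisms. This is an isomorphism of groupoids covering $\ide_{\Delta^k}$ and compatible with the projections to $\Delta^k$ and $X$, with inverse given by composing with $\gamma(p)$. Smoothness follows because composition and inversion in $Y$ are smooth and $\gamma$ is smooth or analytic (the analytic case is the one relevant later). Finally, one checks that the weak fibre product is Morita equivalent to the strict one when $f$ is a submersion, using the standard argument via Ehresmann or local lifting of arrows along the \'etale maps. The choice of $\gamma$ does not matter up to the natural 2-isomorphisms, since the ambiguity lies in the finite isotropy.

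The main obstacle is this Morita comparison, together with making sure that the identification of different representatives is coherent enough to be used later for pull-push operations. Concretely, one needs that different choices of $\gamma$ induce isomorphisms differing by a natural transformation valued in isotropy. Such a transformation acts trivially on integrals of forms, which is all that is needed downstream. If the reader prefers to avoid weak fibre products, the alternative is to lift $\gamma$ locally to bisections of $Y$ via étaleness. These lifts can then be transported to $X$ using the submersion $f$, giving local diffeomorphisms $\Delta^k{}_{\sigma}\times_f X_0\to\Delta^k{}_{\sigma'}\times_f X_0$ that glue to an isomorphism of orbifolds.
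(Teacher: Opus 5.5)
Your key step is the paper's proof. The paper takes the orbifold fibre product to be the \emph{weak} one, with objects $(v,\varphi,x)$ where $\varphi\in Y_1$ joins $\sigma(v)$ to $f_0(x)$. It then uses the arrow-valued simplex $\rho\cl\Delta^k\to Y_1$ relating the two representatives to define $(v,\varphi,x)\mapsto(v,\varphi\g\rho(v),x)$ on objects and the identity on morphisms. That isomorphism, which is your $(p,x,\eta)\mapsto(p,x,\eta\g\gamma(p)\inv)$, is the whole proof. You are also right that the statement should read $[\sigma]\in C_k(Y)$.

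The problem is the step you single out as the main obstacle: it is unnecessary, and it is false. For a submersion of \'etale proper groupoids, the strict fibre product is in general not Morita equivalent to the weak one. In fact the strict fibre product genuinely depends on the representative. Here is a counterexample:
\begin{itemize}
\item Take $Y=[\bR/(\bZ/2)]$ with $\bZ/2$ acting by $y\mapsto -y$.
\item Let $X$ be the unit groupoid of $(0,\infty)$, with $f_0$ the inclusion, so $f_0$ and $f_1$ are open embeddings.
\item Let $\sigma\equiv 1$ and $\sigma'\equiv -1$ on $\Delta^k$. They represent the same class, via the constant arrow $(g,1)\in Y_1=\bZ/2\times\bR$ with $g$ the generator.
\end{itemize}
The strict fibre product is $\Delta^k$ for $\sigma$ and empty for $\sigma'$, while both weak fibre products are a copy of $\Delta^k$.

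The comparison you have in mind needs something like an arrow-lifting (fibration) hypothesis, for instance that $X_1\to Y_1\times_{s,Y_0,f_0}X_0$ be a surjective submersion. This fails in the example above. Your alternative, transporting local bisections of $Y$ to $X$, relies on the same lifting and fails for the same reason.

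Similarly, different choices of $\gamma$ need not give isomorphisms related by a natural transformation. Take $X=\pt$ and $Y=[\pt/G]$ with $G$ a nontrivial finite group: the weak fibre product has only identity morphisms. The two isomorphisms differ by the automorphism $\eta\mapsto\eta\g\gamma'(p)\inv\g\gamma(p)$ covering the identity of $\Delta^k\times X_0$. This is harmless for integrating pulled-back forms, but it is not a $2$-isomorphism.

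Neither point is needed for the lemma. Define the orbifold fibre product as the weak one and stop after the explicit isomorphism.
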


\begin{proof}
	If $\sigma'$ is another representative of $\sigma$, then there exists $\rho \cl \Delta^k \to Y_1$ so that $t_*\rho= \sigma$ and $s_*\rho = \sigma'$. Thus, $\rho$ induces an isomorphism $\Delta^k{}_\sigma\times_f X \to\Delta^k{}_{\sigma'}\times_f X$ given by
	\[ (v,\varphi,x)\mapsto (v,\varphi\g \rho(v),x),\]
	on objects and the identity on morphisms.
\end{proof}

\begin{example}\label{ex:concrete-groupoids}
	The orbifold $\Mbar_{g,h;y,y'}$ can be represented as the almost free $\PU(N+1)$ (respectively $\text{PO}(N+1)$)-manifold $\Mbar^*_{g,y\sqcup y'}(\bC P^{N};d)$ if $h = 0$, respectively $\Mbar^*_{g,y\sqcup y'}(\bC P^{N},\bR P^N;d)$ if $h > 0$, where $N$ and $d$ only depend on $g,h,y,y'$. In the closed case this is explained in \cite{ACG-moduli}; in the open case it follows easily by applying \cite[\S 2]{HH25} to stable curves. We will use the associated \'etale proper Lie groupoids for the remainder of the paper.
\end{example}

\noindent
Clearly, the restriction of an analytic simplex to one of its faces is analytic as well. Thus, we can write $C_*^{an}(Y;\bR)$ for the subcomplex of the singular chains on $Y$ generated by analytic smooth simplices.

\begin{lemma}\label{lem:analytic-homology} $H_*(C_*^{an}(Y;\bR)) = H_*(Y;\bR)$ for any compact analytic orbifold $Y$ with corners.
\end{lemma}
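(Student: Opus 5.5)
The plan is to compare $C_*^{an}(Y;\bR)$ with the full singular chain complex by using a triangulation adapted to the analytic structure, first for manifolds with corners and then for orbifolds via the groupoid model of Definition~\ref{de:chains-on-orbifolds}.

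First take $Y$ to be a compact real analytic manifold with corners. I would invoke the classical theorem that compact real analytic sets, and in particular compact analytic manifolds with corners, admit semi-analytic triangulations. Better still, choose a \emph{smooth} triangulation whose closed simplices are the images of analytic embeddings $\Delta^k\to Y$. One way to get this is to take a smooth (Whitehead) triangulation of $Y$ and approximate the simplex maps, inductively over skeleta and relative to faces already fixed, by real analytic maps that remain embeddings. Grauert's approximation theorem together with a Whitney-type extension makes this possible.

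The triangulation then gives a simplicial chain complex $C_*^\Delta(Y)$ that sits inside $C_*^{an}(Y)\subset C_*(Y)$. The composite inclusion is a quasi-isomorphism by the standard comparison of simplicial and singular homology. So the key remaining point is that $C_*^{an}(Y)\hookrightarrow C_*(Y)$ is itself a quasi-isomorphism. I would argue this directly:
\begin{itemize}
\item Analytic simplices are closed under barycentric subdivision, since affine maps are analytic.
\item Using a subordinate cover of small analytic charts, which are convex polytope-like domains in $\bR^n$ or quadrants in the corner case, one builds analytic straight-line homotopies.
\item The usual proof that $C_*$ is homotopy invariant and satisfies Mayer--Vietoris then goes through verbatim within analytic chains. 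On convex charts, analytic chains are acyclic via the cone construction, because coning an analytic simplex from a point by affine interpolation stays analytic.
\end{itemize}
A Mayer--Vietoris induction over a finite good cover of $Y$ by such charts then shows that the inclusion induces isomorphisms on homology.

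For a general orbifold, present $Y$ by the étale proper groupoid $[Y_1\rightrightarrows Y_0]$ with analytic structure maps, as in Example~\ref{ex:concrete-groupoids}. Both $C_*^{an}$ and $C_*$ are defined as cokernels of $t_*-s_*$. Locally, $Y$ looks like $[V/G]$ with $G$ finite, and there the cokernel is the $G$-coinvariants of $C_*(V)$. Over $\bR$, coinvariants are an exact functor, so the quasi-isomorphism for $V$ descends to $C_*^{an}([V/G])\to C_*([V/G])$. Gluing by the same Mayer--Vietoris argument, now over a cover of $|Y|$ by such uniformized charts, gives $H_*(C_*^{an}(Y))\cong H_*(C_*(Y))$. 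Lemma~\ref{lem:chains-on-orbifolds-well-defined} identifies the latter with $H_*(|Y|;\bR)$.

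The main obstacle is the Mayer--Vietoris (small chains) step in the orbifold setting. We need that the analytic chains subordinate to a cover compute the same homology, and that the subdivision operator commutes with $t_*-s_*$. Subdivision is natural, so it does commute. The delicate points are the corners and the requirement that charts be analytic convex quadrants, both of which must be checked carefully.
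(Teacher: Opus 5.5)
Your route is different from the paper's, and it is workable, but one step is false as written and needs replacing.

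\textbf{Comparison of routes.} The paper makes a single appeal to relative analytic approximation \cite[Theorem~VII.2.25]{GMT86}. A continuous simplex can be approximated arbitrarily well by an analytic one that agrees with it on an analytic subvariety (such as the union of faces) where it is already analytic. Applied inductively over skeleta, this deforms singular chains into analytic ones. Corners are handled by embedding $Y$ in Euclidean space and extending $f$ past $\Delta^m$, and orbifolds by working on $Y_0$. You avoid approximation theory altogether. Instead you show that analytic chains behave like a homology theory on open sets: they are stable under precomposition with affine maps, hence under subdivision and prism operators, they satisfy Mayer--Vietoris, and they are acyclic on convex charts. You then compare by a local-to-global induction. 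This is more elementary and handles corners naturally, since convex quadrant charts need no extension, at the cost of more bookkeeping. Your triangulation paragraph plays no role. The inclusions $C^\Delta_*\subset C^{an}_*\subset C_*$ only show that $H_*(C^{an}_*)\to H_*(C_*)$ is surjective. Moreover, the inductive relative approximation you invoke to build analytic simplices is essentially the paper's entire proof.

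\textbf{The false step.} The cone of an analytic simplex is not analytic. Near the apex, $p*\sigma$ is positively homogeneous of degree one in the barycentric coordinates $(t_1,\dots,t_{m+1})$. It is therefore differentiable there only if it is linear, that is, only if $\sigma$ is affine. For example, $\sigma(u)=(u_1,u_1^2)$ coned from $0$ has second component $t_2^2/(t_1+t_2)$, which is not $C^1$ at $t_1=t_2=0$. The fix is already in your toolkit. In a convex chart such as $[0,\epsilon)^k\times(-\epsilon,\epsilon)^{n-k}$, compose the analytic straight-line homotopy $H(x,s)=(1-s)x+sp$ with the prism operator, which only precomposes with affine simplices of $\Delta^m\times I$. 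This gives a chain homotopy on $C^{an}_*$ from the identity to the projection onto constant simplices at $p$, hence acyclicity.

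\textbf{Two further points need care.}

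\emph{Good covers.} A cover by chart-convex sets is not automatically good, because sets convex in different charts can intersect in a set that is convex in no chart. Use a Bredon-style Mayer--Vietoris argument instead: convex subsets of a single chart are closed under intersection, and arbitrary open subsets of a chart are reached by exhaustion, using that chains have compact support.

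\emph{The orbifold local model.} Over a uniformised open subset of $|Y|$, the restricted groupoid is only Morita equivalent to $G\ltimes V$. Identifying its cokernel complex with $C^{an}_*(V)_G$ therefore needs a lifting argument along the \'etale map $s$. A cleaner option is to reuse the nerve bicomplex from the proof of Lemma~\ref{lem:chains-on-orbifolds-well-defined}:
\begin{itemize}
\item In each fixed singular degree $q$, the simplicial space $p\mapsto C^{an}_q(Y_p)$ is the linearised nerve of the groupoid of analytic $q$-simplices. Its automorphism groups are finite, so over $\bR$ its homology vanishes in positive degrees.
\item Hence $C^{an}_*(Y)$ and $C_*(Y)$ are quasi-isomorphic to the totalisations of $C^{an}_*(Y_\bullet)$ and $C_*(Y_\bullet)$ respectively.
\item The comparison then reduces, for each fixed $p$, to the manifold case for the (possibly non-compact) analytic manifolds $Y_p$.
\end{itemize}
With this approach no Mayer--Vietoris argument for cokernel complexes is needed.
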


\begin{proof} By \cite[Theorem~VII.2.25]{GMT86}, a function $f \cl \Delta^m \to Y$ can be arbitrarily well approximated by an analytic function $g_\epsilon$ so that, if $f$ is analytic on an analytic subvariety $K$ of $\Delta^m$, we can choose $g_\epsilon|_K = f|_K$.
	To be precise, the result of \cite{GMT86} concerns analytic manifolds without corners; to carry it over to the case with corners, we embed $Y$ into some Euclidean space and extend $f$ to a continuous function in a neighbourhood of $\Delta^m$ in $\{x\in \bR^{m+1}\mid \sum_{i = 0}^{m}x_i=1\}$. To extend to the case where $Y$ is an orbifold, we represent $Y$ as a Lie groupoid $[Y_1\rightrightarrows Y_0]$ and argue with $Y_0$.
\end{proof}

\begin{definition}\label{de:chains-for-dmft}
The symmetric monoidal dg category $\dmc$ has the same objects as $\Mbar_{oc}$ with morphisms given by the cochain complexes
	\[\dmc^*(y,y') =  (C_{-*}(\Mbar_{oc}(y,y')),\del),\]
	of analytic singular chains on the Lie groupoids of Example~\ref{ex:concrete-groupoids}. The symmetric monoidal structure is given by disjoint union on objects, and on morphisms by applying the Eilenberg-Zilber map to the cross product of chains. The braiding $S$ of the symmetric monoidal structure is given by taking a disjoint union of spheres and discs with one incoming and one outgoing marked point each and the appropriate labelling of the marked points.
\end{definition}

\noindent
We have to consider a twisted version of $\dmc$. To this end, define the real line bundle
\begin{equation}\label{eq:det} \text{det} = \text{det}_{y,y'}\coloneqq \det(\bC^{\times y_c'})\dul\otimes\det(\bR^{\times y'_o})\dul\otimes\det(\delbar_{(\bC,\bR)})[-\chi+2|y'_c|+|y'_o|].\end{equation}
of degree zero and let $\orl(\det)$ be the associated $\bZ/2$-torsor.

\begin{lemma}\label{lem:composition-and-det} Any composition map $\psi\cl \Mbar_{oc}(y',y'')\times\Mbar_{oc}(y,y')\to \Mbar_{oc}(y,y'')$
	admits a canonical isomorphism 
	\begin{equation*}\label{eq:iso-of-det} 
		\text{det}^{\otimes r}_{y',y''}\otimes\text{det}^{\otimes r}_{y,y'}\to \psi^*\text{det}^{\otimes r}_{y,y''}
	\end{equation*} 
of degree $0$.
\end{lemma}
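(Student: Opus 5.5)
We prove the statement by treating the three factors of $\det_{y,y'}$ in \eqref{eq:det} separately, together with the degree shift, and checking that each factor has its own canonical gluing isomorphism along the clutched marked points. Since $\det$ is a real line bundle of degree zero, it suffices to construct the isomorphism for $r=1$ and take tensor powers. Moreover, $\psi$ is built from stable clutching together with the unstable compositions \eqref{identity}--\eqref{collapsed-circle}. By the symmetric monoidal structure we may also assume the curves are connected, or handle disjoint unions by the evident tensor product. So we treat first the stable clutching of a single pair of points labelled by an element of $y'$, and then iterate. For the iteration we need independence of the order of clutching, which follows because each step is a canonical isomorphism of graded lines and the relevant Koszul signs cancel once everything is placed in degree zero.

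For a single interior clutching at $i\in y'_c$ the argument runs as follows.
\begin{itemize}
\item The factor $\det(\bC^{\times y'_c})\dul$ from the first morphism loses a copy of $\det(\bC)\dul$, because the outgoing point $i$ is no longer outgoing in $\psi$. The factor $\det(\bC^{\times y''_c})\dul$ is unchanged.
\item The index bundle $\det(\delbar_{(\bC,\bR)})$ is where the real content lies. We use the standard gluing (linear pregluing/index additivity) isomorphism for Cauchy--Riemann operators on the trivial bundle $(\bC,\bR)$ over nodal curves:
\[\det(\delbar_{\Sigma_1})\otimes\det(\delbar_{\Sigma_2}) \cong \det(\delbar_{\Sigma_1\cup_{\text{node}}\Sigma_2})\otimes \bC,\]
where the extra $\bC$ is the fibre at the node coming from the matching condition. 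Since $\bC$ is canonically oriented, this is canonical, and it exactly compensates the lost $\det(\bC)\dul$.
\item The shift $-\chi+2|y'_c|+|y'_o|$ is additive, because $\chi$ drops by $2$ under interior clutching while $2|y'_c|$ drops by $2$.
\end{itemize}
For boundary clutching at $i\in y'_o$ the same scheme applies, with $\bR$ in place of $\bC$. The fibre $\bR$ at the boundary node is no longer canonically oriented. Its orientation, however, appears once in the index gluing and once in the dual factor $\det(\bR)\dul$, so the pairing $\bR\otimes\bR\dul\to\bR$ is canonical. The Koszul sign from moving this factor past the others is absorbed by the degree shift, which is exactly why the shift is included. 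Extending over the boundary strata uses that the gluing isomorphisms vary continuously over families, as in \cite{HH25}.

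The unstable compositions are checked one case at a time.
\begin{itemize}
\item Forgetful maps \eqref{forget-interior-outgoing}, \eqref{forget-boundary-outgoing}: $\det(\delbar)$ is pulled back under forgetting points, and the removed $\bC$ or $\bR$ factor is matched by the unstable point's contribution, which we fix by convention.
\item Self-clutching \eqref{self-clutching}: the same index gluing applies, now for a self-node.
\item Collapsing circle \eqref{collapsed-circle}: we use the degeneration of a boundary circle to an interior point, identifying the index on the disc with $\bR$ (constants).
\end{itemize}

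I expect the main obstacle to be the boundary-node case and the collapsed-circle case. There one must verify that the signs are genuinely canonical, that is, independent of choices such as the ordering of nodes and the orientation of the fibre $\bR$, and that the resulting isomorphisms are associative under iterated composition. I would first check degree-zero-ness, so that no Koszul signs appear in associativity. I would then reduce associativity to the associativity of linear gluing for determinant lines, which is standard, as in \cite{FOOO}.
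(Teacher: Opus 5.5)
This is correct and is the paper's argument: the paper's proof consists solely of invoking the \cite{CZ24} isomorphism between the determinant line of $\delbar_{(\bC,\bR)}$ on the clutched surface and the one on its normalisation. That is exactly your gluing step, with the node fibres $\bC$ and $\bR$ cancelling the dual factors and the change in $\chi$ matching the shift. Your case-by-case treatment of the unstable compositions (each amounts to clutching with a ghost sphere or disc and then stabilising) makes explicit what the paper leaves implicit; also, the fibre $\bR$ at a boundary node is canonically oriented, being the fibre of the trivial real subbundle, and $y'$ is ordered, so the canonicity and associativity worries you raise do not arise for the statement as posed.
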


\begin{proof} This can be seen using the isomorphism in \cite{CZ24} between the determinant line on the clutched surface and the one on its normalisation.
\end{proof}

The category $\dmc_r$ has the same objects as $\Mbar_{oc}$ with morphisms given by 
\begin{equation*}\label{} \dmc_r^*(y,y') \coloneqq (C_{-*}(\Mbar_{oc}(y,y');\text{det}^{\otimes r}),\del).
\end{equation*}
Let $\dmc_r^{\stb, *}\coloneqq C_{-*}(\Mbar^{\stb}_{oc};\text{det}^{\otimes r})$ be the category of chains on the stable subcategory. Define $\dmc_r^{\opr}$ similarly using chains on the operadic subcategory $\Mbar^{\opr}_{oc}$.

\begin{remark}\label{rem:trivialising-det} The point of twisting the coefficients by this sheaf is that it cancels the difference between the boundary orientation and the fibre product orientation of a boundary stratum of the moduli space of stable maps. Equivalently, we can use the canonical spin structure and the ordering of the boundary components to trivialise $\text{det}^{\otimes r}$ over each component of $\Mbar_{oc}(y,y')$. This has the effect that the composition maps are equipped with certain signs, depending on $d$ and the distribution of marked points. They can be explicitly determined using \cite[CROrient 7C and 7H3]{CZ24} and \cite[Theorem~3.5]{HH25}.\end{remark}

\noindent
Since we do not restrict ourselves to unobstructed Lagrangians, we need to consider curvature. To this end, we modify the category $\dmc_r$, using a few preliminary definitions. We thank Nick Sheridan for fruitful discussions about this.

\begin{construction}[Lift along marked point]\label{con:lifted-simplex}
	Given $y,y'\in \obj(\Mbar_{oc})$ and an analytic simplex $\sigma \cl \Delta^m \to \Mbar(y,y')$ in the stable range, we define the chain $\pi^*\sigma\in C_{k+1}(\Mbar_{oc}(y^+,y'))$ as follows. We have $y^+ := (y_c,y_o\sqcup \{*\})$ where $*$ is ordered as the last element of $y_o$ in the orientation ordering. Then, we can form the fibre product
	\begin{center}\begin{tikzcd}
			Z \arrow[r,""] \arrow[d,"\wt\sigma"]&\Mbar_{oc}(y^+,y') \arrow[d,"\pi"]\\ \Delta^k \arrow[r,"\sigma"] & \Mbar_{oc}(y,y') \end{tikzcd} \end{center}
	where $\pi$ forgets the boundary marked point labelled by $*$. Let $\wt Z$ be a real analytic resolution of singularities as discussed in \S\ref{subsec:singular-integration}. By \cite{Gra64}, we can find a triangulation of $\wt Z$ by real analytic simplices. We define $\pi^*\sigma = \wt\sigma$ to be the chain associated to this triangulation with trivialisation of $\orl(\pi^*\sigma)$ induced by the isomorphism 
	\begin{equation}\label{eq:orientation-lift}
		\orl(\pi^*\sigma) \cong \orl(\sigma)\orl(\bR).
	\end{equation}
	If $\sigma$ maps to an unstable moduli space, we set $\pi^*\sigma = 0$.
\end{construction}

\noindent
Recall that a chain is \emph{degenerate} if it is in the image of the degeneracy maps of the simplicial set $\text{Sing}(X)$. Clearly, a simplex is degenerate if it factors through a manifold of lower dimension; or equivalently, if it is smooth, its differential is nowhere injective.

\begin{lemma}\label{lem:lift-well-defined}
	Up to degenerate simplices, the chain $\pi^*\sigma$ is a well-defined analytic simplex that does not depend on the choices made in Construction~\ref{con:lifted-simplex}.
\end{lemma}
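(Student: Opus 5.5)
Two kinds of choice enter Construction~\ref{con:lifted-simplex}: the real analytic resolution of singularities $\wt Z\to Z$, and the real analytic triangulation of $\wt Z$ supplied by \cite{Gra64}. The plan is to show that neither affects the resulting chain modulo degenerate simplices. The key observation is that the fibre product $Z = \Delta^k\times_{\Mbar_{oc}(y,y')}\Mbar_{oc}(y^+,y')$ is already a real analytic space whose "generic part" is a smooth manifold of dimension $k+1$. Indeed, the forgetful map $\pi$ is a real analytic map of orbifolds with corners whose fibres are the boundary circles (up to nodes), i.e.\ it is the universal curve restricted to the boundary. Over the open dense set where $\pi$ is a submersion, $Z$ is smooth of dimension $k+1$. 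I also expect to invoke Lemma~\ref{lem:fibre-product-well-defined}, or rather its analogue for the non-submersive $\pi$ using the same explicit isomorphism $(v,\varphi,x)\mapsto (v,\varphi\g\rho(v),x)$, to see that $Z$ is independent of the groupoid representative of $\sigma$.

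Step 1: identify a closed analytic subset $S\subset Z$ of dimension $\le k$ such that $Z\setminus S$ is a smooth $(k+1)$-dimensional analytic manifold. For $S$ I take the preimage of the nodal/non-submersive locus together with $\wt\sigma\inv(\partial\Delta^k)$ as needed. Any resolution $p\colon\wt Z\to Z$ produced by the algorithm of \cite{BM08}, as in \S\ref{subsec:singular-integration}, is an isomorphism over $Z\setminus S$. So the composite $\wt Z\to Z\to\Mbar_{oc}(y^+,y')$ agrees, off the preimage $p\inv(S)$, with the inclusion of the smooth part.

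Step 2: compare two choices $(\wt Z,\cT)$ and $(\wt Z',\cT')$ of resolution and triangulation. A top-dimensional analytic simplex of $\cT$ whose interior meets $p\inv(S)$ in an open set must have image of dimension $\le k$, since $S$ has dimension $\le k$. Such a simplex factors, up to reparametrisation, through a lower-dimensional analytic set, and hence is degenerate modulo the conventions above. The remaining simplices map diffeomorphically onto pieces of $Z\setminus S$. Both chains therefore represent the fundamental chain of the same oriented $(k+1)$-dimensional analytic pseudomanifold, with the orientation \eqref{eq:orientation-lift} fixed canonically.

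To conclude, I will take a common refinement. By \cite{Gra64}, together with the fact that the two triangulations pulled back to $Z\setminus S$ are subanalytic, there is a triangulation subordinate to both. Subdividing changes a chain only by a boundary/degenerate term, and a simplexwise analytic reparametrisation of the same oriented piece does as well; the paper should use a normalized chain convention in which these are identified.

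The main obstacle is Step 2: "equal up to degenerate simplices" must be made precise. Two different analytic parametrisations of the same region are not literally equal singular simplices, and subdivision only yields homologous chains, not equal ones. I would first try to argue that the relevant chain complex identifies chains with the same pushforward current (integration along them), which is what the operations \eqref{eq:dmft-operations} actually see. Failing that, I would show the difference is the boundary of a degenerate $(k+2)$-chain built from the prism over a common refinement.
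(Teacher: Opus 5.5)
There is a genuine gap, and it sits in Step 2, which is also where you place the main obstacle. The dichotomy you rely on does not hold. A top-dimensional simplex of a triangulation of the $(k+1)$-manifold $\wt Z$ maps its interior homeomorphically onto an open subset of $\wt Z$. It therefore can never meet the set $p^{-1}(S)$, which has dimension $\le k$, in an open subset, so your first class of simplices is empty. The many simplices that merely touch $p^{-1}(S)$ are not degenerate, yet they do not map diffeomorphically onto pieces of $Z\setminus S$ either: their images contain parts of $S$, and $p$ collapses the exceptional locus on them. So you never reach a situation where the two chains are subdivisions or reparametrisations of one another. Moreover, a common subanalytic refinement of the images in $Z$ does not lift to triangulations of $\wt Z$ and $\wt Z'$ compatible with the given ones. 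Of your two fallbacks, (a) would change the chain model rather than prove the statement, and (b) is stated only as an intention.

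The missing idea is a short homological argument that makes refinements unnecessary. Let $M$ be an oriented manifold with boundary and $c_1,c_2$ two fundamental chains. Then $c_1-c_2$ is a relative cycle vanishing in $H_{\dim M}(M,\del M)$, hence $c_1-c_2=d\tau+\rho$ with $\tau\in C_{\dim M+1}(M)$ and $\rho\in C_{\dim M}(\del M)$. Both $\tau$ and $\rho$ are degenerate purely for dimensional reasons (their differentials are nowhere injective), and this is exactly what ``up to degenerate simplices'' means here. That settles independence of the triangulation.

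For independence of the resolution, the paper does not compare $\wt Z$ and $\wt Z'$ inside the singular space $Z$. Instead it chooses a third resolution $\wt Z''$ with blow-down maps $f\colon\wt Z''\to\wt Z$ and $f'\colon\wt Z''\to\wt Z'$ over $Z$. It then picks triangulations of $\wt Z''$ whose associated chains recover $\wt\sigma$ and $\wt\sigma'$, and applies the previous step on the single smooth manifold $\wt Z''$, using that the two composites $\wt Z''\to\Mbar_{oc}(y^+,y')$ coincide. Your prism idea (b) is in this spirit. Once you have the $d\tau+\rho$ argument, your Steps 1 and 2 (the choice of $S$ and the simplex-by-simplex analysis) are not needed at all.
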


\begin{proof}
	That $\pi^*\sigma$ is a well-defined analytic chain is immediate from the construction. To see the other assertion, we show first that, up to a degenerate chain, it does not depend on the choice of real analytic triangulation. For this it suffices to show that, for any manifold $M$ with boundary, two fundamental chains differ by a degenerate chain. Indeed, if $\sigma_1,\sigma_2$ are fundamental chains for $M$, then their difference is exact in $H_*(M,\del M)$, so there exists $\tau\in C_{\dim(M)+1}(M)$ and $\rho \in C_{\dim(M)}(\del M)$ such that $\sigma_1 -\sigma_2 = d\tau + \rho$. Then $\tau$ and $\rho$ are degenerate as claimed. If $\wt Z'$ is another resolution of singularities, we can find $\wt Z''$ with blow-down maps $f\cl \wt Z'' \to \wt Z$ and $f'\cl \wt Z''\to \wt Z'$ and using the given triangulations of $\wt Z$ and $\wt Z'$ we can find triangulations of $\wt Z''$ so that the associated chains agree with $\wt\sigma$ respectively $\wt \sigma'$. By the previous step, we thus have that $\wt \sigma-\wt\sigma'$ is degenerate, whence the claim follows.
\end{proof}

\begin{definition}\label{def:OCbar} The category $\cdmc_r$ is the symmetric monoidal dg category generated under composition and the symmetric monoidal structure by the same objects as $\dmc_r$ and the chain complexes
	$\dmc_r(y,y')$ for $y\notin \{(0,0),(0,1)\}$ and $y' \neq (0,1)$ as well as the graded vector spaces
	\begin{align*}
		\cdmc_r((0,0),(0,1)) &\coloneqq \dmc_r^*((0,0),(0,1)) \oplus \langle D_0\rangle\\
		\cdmc_r((0,1),(0,1)) &\coloneqq \dmc_r^*((0,1),(0,1)) \oplus \langle D_1\rangle,
	\end{align*}
	where $D_0$ and $D_1$ are treated as formal generators with $|D_i| = i-2$. We define the differential $\del_{oc}$ by 
	\begin{equation}\label{eq:modified-differential-old-simplex}
		\partial_{oc} \sigma \coloneqq \del\sigma + (-1)^{|\sigma|}\pi^*\sigma \g_{*} D_0 - \sum_{i \in y'_o} D_1 \g_{i} \sigma + (-1)^{|\sigma|}\sum_{i \in y_o} \sigma \g_{i} D_1,
	\end{equation}
	for $\sigma \in \dmc_r(y,y')$, while $\partial_{oc} D_0 := -D_1 \g D_0$ and
	\begin{equation}\label{} 
		\partial_{oc} D_1 \coloneqq -D_1 \g D_1 + D_2 \g_1 D_0 - D_2 \g_2 D_0,
	\end{equation}
	and extending across symmetric monoidal structure and compositions.
\end{definition}

\begin{nota}
	It will be useful to define 
	\[\pi^*D_0 \coloneqq D_1 \qquad\quad \pi^*D_1 = D_2^+ + D_2^-\]
	where $D_2^+$ is the fundamental chain of $\Mbar_{0,1;(0,1),(0,2)}$ with the ordering of the incoming marked points on the boundary agreeing with the order of their labels and $D_2^-$ the fundamental chain of the (isomorphic) moduli space where the order of the incoming marked points opposite to their ordering on the boundary.
\end{nota}

\begin{remark}
	Pictorially, we can view $D_0$ as a disc (labelled by $D_0$) with one output, and $D_1$ as a disc with one input and one output. A composition $D_1 \g \sigma$ can be viewed as (formally) attaching a $D_1$ disc bubble to an outgoing boundary marked point of $\sigma$.
\end{remark} 

\begin{definition}
	\label{de:stable-operadic-subcategories}
	Let $\cdmc_r^{\stb}$ and $\cdmc_r^{\opr}$ be the smallest symmetric monoidal subcategories containing $\dmc_r^{\stb}$, respectively $\dmc_r^{\opr}$ as well as the (formal) morphisms $D_0$ and $D_1$. Equip them with the differential restricted from $\cdmc_r$.
\end{definition}

\begin{lemma}\label{lem:lifted-simplex-analytic} Suppose $\sigma \in C^{an}_*(\Mbar_{oc}(y,y'))$ as well as $\sigma_1$ and $\sigma_2$ are stable. Then the following holds up to degenerate simplices.
	\begin{enumerate}[leftmargin=20pt]
		\setlength\itemsep{2pt}
		\item\label{pushforward-degenerate} $\pi_*\pi^*\sigma$ is degenerate.
		\item\label{lift-product} $\pi^*(\sigma\times \rho) = (-1)^{|\rho|}\pi^*\sigma \times \rho + \sigma \times \pi^*\rho$.
		\item\label{lift-composition} $\pi^*(\sigma_2\g \sigma_1) = (-1)^{|\sigma_1|}\pi^*\sigma_2 \g \sigma_1 + \sigma_2\g \pi^*\sigma_1$.
		\item\label{lift-differential} $\pi^*$ commutes with $\del_{oc}$. 
	\end{enumerate}  
\end{lemma}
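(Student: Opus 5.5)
The four claims are of two kinds. Items (1)--(3) are geometric statements about fibre products. Item (4) is an algebraic consequence of those, together with the geometry of the boundary of the lifted chain.

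\emph{Item (1).} By Construction~\ref{con:lifted-simplex}, $\pi_*\pi^*\sigma$ is the composite $\wt Z\to Z\to\Delta^k\xrightarrow{\sigma}\Mbar_{oc}(y,y')$. This is a chain of dimension $k+1$ that factors through the $k$-dimensional simplex $\Delta^k$. After triangulating $\wt Z$ as in Construction~\ref{con:lifted-simplex}, every top simplex therefore has nowhere injective differential. Such simplices are degenerate by the remark preceding Lemma~\ref{lem:lift-well-defined}, which proves (1).

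\emph{Items (2) and (3).} In both cases we compare two resolutions of the same fibre product.

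For (2), the forgetful map on a disjoint union $\Mbar_{oc}(y_1\sqcup y_2^+,\dots)$ only sees the component carrying the extra point $*$. Hence the fibre product of $\sigma\times\rho$ along $\pi$ splits as a disjoint union of $Z_\sigma\times\Delta^{|\rho|}$ and $\Delta^{|\sigma|}\times Z_\rho$. A product of resolutions is a resolution of the product, so Lemma~\ref{lem:lift-well-defined} identifies $\pi^*(\sigma\times\rho)$ with the sum of the two product chains, up to degenerate simplices. The sign $(-1)^{|\rho|}$ arises from moving the $\bR$-factor in \eqref{eq:orientation-lift} past $\orl(\rho)$, together with the Eilenberg--Zilber convention.

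For (3), note that the universal curve over $\Mbar_{oc}(y,y'')$ pulled back along clutching $\psi$ is the union of the pullbacks of the universal curves of the two factors, glued at the nodes. A new boundary point therefore lies on either the $\sigma_1$-part or the $\sigma_2$-part. Concretely, we have a decomposition
\[\psi^{-1}\text{-fibre product}\;=\;\bigl(Z_{\sigma_2}\times\Delta\bigr)\cup_{\text{nodes}}\bigl(\Delta\times Z_{\sigma_1}\bigr).\]
The overlap, where $*$ is located at a node, has codimension one in a $(k+1)$-dimensional space, and therefore contributes only degenerate simplices. Again by Lemma~\ref{lem:lift-well-defined} we may choose compatible resolutions and triangulations. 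The sign is the same Koszul sign as in (2). Stability of $\sigma_1$ and $\sigma_2$ is needed here so that both lifts are defined by the construction rather than being set to zero.

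\emph{Item (4).} We first show $\del\pi^*\sigma=-\pi^*\del\sigma+(\text{boundary terms})$, where the sign comes from \eqref{eq:orientation-lift}. The boundary of $\wt Z$ has two parts:
\begin{itemize}
\item the preimage of $\del\Delta^k$, which gives $\pi^*\del\sigma$;
\item the locus where $*$ collides with a boundary marked point $i$ and bubbles off a disc with three special points.
\end{itemize}
In the second case the bubble is the unique point of $\Mbar_{0,1;(0,2),(0,1)}$ or $\Mbar_{0,1;(0,1),(0,2)}$. These strata are precisely $\sigma\g_i D_2$-type compositions, i.e.\ the terms $\pi^*(D_1\g_i\sigma)$ and $\pi^*(\sigma\g_i D_1)$ under the convention $\pi^*D_1=D_2^++D_2^-$. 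The two orderings of $*$ relative to $i$ correspond to $D_2^\pm$.

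With this in hand, we expand $\pi^*\del_{oc}\sigma$ using linearity together with (2) and (3), extended to the formal generators by the Notation: $\pi^*D_0=D_1$ and $\pi^*D_1=D_2^++D_2^-$. We then compare with $\del_{oc}\pi^*\sigma$ term by term:
\begin{itemize}
\item the $D_0$-terms match using $\pi^*\pi^*\sigma\g D_0$ and the relation $\pi^*D_0=D_1$, so that the term $\pi^*(\pi^*\sigma\g_*D_0)$ produces $\pi^*\sigma\g D_1$ plus the double lift;
\item the $D_1$-terms match via the boundary computation above.
\end{itemize}

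\emph{Main obstacle.} I expect the hardest part to be the sign bookkeeping in (4), in particular checking that the two orderings $D_2^\pm$ account for all collision strata with the correct orientations relative to \eqref{eq:orientation-lift}. I would fix conventions using Remark~\ref{rem:trivialising-det} and verify the identity first for $\sigma$ the fundamental chain of a disc moduli space.
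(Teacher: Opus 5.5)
Your route is the paper's. Items (1)--(3) come from the fibre product with the forgetful map, which is a product with the identity on the other factor and is compatible with clutching. Item (4) follows by computing $\del\pi^*\sigma$ as the part over $\del\Delta^k$ plus the vertical boundary of $\pi$ (where $*$ collides with a boundary marked point and splits off a ghost disc giving $D_2^\pm$), and then expanding $\pi^*\del_{oc}\sigma$ using (3) together with $\pi^*D_0=D_1$ and $\pi^*D_1=D_2^++D_2^-$.

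There is one concrete error that would break (4): you assert $\del\pi^*\sigma=-\pi^*\del\sigma+(\text{collision terms})$.

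\begin{itemize}
\item The isomorphism $\orl(\pi^*\sigma)\cong\orl(\sigma)\orl(\bR)$ of \eqref{eq:orientation-lift} puts the fibre direction \emph{last}. With the outward-normal-first convention used throughout the paper, the normal to $\del\Delta^k\times_{\Mbar}\Mbar_+$ is placed in front of $\orl(\sigma)$ without passing the $\bR$-factor. So the part of $\del\wt Z$ over $\del\Delta^k$ is $+\pi^*(\del\sigma)$, which is what the paper uses.
\item With your sign, $\del_{oc}\pi^*\sigma-\pi^*\del_{oc}\sigma$ would contain $-2\,\pi^*\del\sigma$. No other term can cancel it, since every other term involves $D_0$, $D_1$ or $D_2^\pm$, so (4) would fail.
\item Structurally, $\pi^*$ is an odd operation acting on the right, which is exactly the source of the $(-1)^{|\sigma_1|}$ in (3). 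The boundary $\del$ acts on the left, and that is why the two commute rather than anticommute.
\end{itemize}

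A smaller point concerns the collision strata. In both collision cases the ghost disc has two inputs ($*$ and the marked point or node it meets) and one output, so only one three-pointed moduli space occurs, not the two you list. Collisions at incoming points give $\sigma\gt(D_2^+-D_2^-)$. Collisions at outgoing points give the terms $D_2^-\gt(\sigma\times\idsimp)$ and $D_2^+\gt(\idsimp\times\sigma)$. These are what match the $\pi^*D_1$-parts that (3) produces from $\pi^*(\sigma\gt D_1)$ and $\pi^*(D_1\gt\sigma)$.
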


\begin{proof} Assertion~\eqref{pushforward-degenerate} is immediate from the construction. The second claim follows from the fact that the forgetful map on the product of moduli spaces is the product of the forgetful map on one factor and the identity on the other; the sign is determined by the isomorphism~\eqref{eq:orientation-lift}. Assertion~\eqref{lift-composition} is a consequence of~\eqref{lift-product} and the fact that the clutching maps commute with the forgetful maps. For the last claim, abbreviate $\Mbar_+ := \Mbar_{oc}(y^+,y')$ and $\Mbar \coloneqq \Mbar_{oc}(y,y')$. We have by \cite{Joy} that 
	\[\del \Delta^k\times_{\Mbar}\Mbar_+ = \del\Delta^k \times_{\Mbar}\Mbar_+ \sqcup\Delta^k \times_{\Mbar}\del^v\Mbar_+,\]
	where $\del^v$ indicates the boundary of the fibres of the forgetful map $\pi_\kappa \cl \Mbar_+\to \Mbar$, given by the restriction of $\pi$ to $\Mbar_+$, where the marked point labelled by $*$ is between the $\kappa$ and $(\kappa+1)$-labelled marked point on the boundary.
	This vertical boundary is given by 
	\[\del^v\Mbar_+ = \im(s_{\kappa,*})^{-1}\sqcup \im(s_{*,\kappa+1}),\]
	where $s_{\kappa,*} \cl \Mbar \to \Mbar_+$ is the canonical section of $\pi_\kappa$ inserting the additional marked point at the marked point labelled by $\kappa$, creating a ghost disc in the process on which $*$ is the marked point situated \emph{after} the marked point labelled by $\kappa$ in the counter-clockwise ordering, while in $s_{*,\kappa}$ the marked point is situated before the $\kappa$-marked point on the boundary. Note that $\kappa+1$ could be an outgoing marked point. Summing over all $\kappa\in y_o$, we obtain
	\begin{equation}\label{} 
		\del\pi^*\sigma= \pi^*(\del\sigma) + (-1)^{|\sigma|} D_2^-\gt\sigma\times\ide- (-1)^{|\sigma|}D_2^+\gt(\ide\times \sigma) + (-1)^{|\sigma|}\sigma\gt (D_2^+-D_2^-).
	\end{equation}
	The sign is due to the fact that we orient $\orl(\pi^*\sigma)$ via the isomorphism $\orl(\pi^*\sigma)\cong \orl(\sigma)\orl(\ker(d\pi))$ over the locus where $d\pi$ is surjective. 
	Thus,
	\begin{align*}\label{}
		 \del_{oc}\pi^*\sigma-\pi^*\del_{oc}\sigma&= (-1)^{|\sigma|} D_2^-\gt\sigma\times\idsimp- (-1)^{|\sigma|}D_2^+\gt(\idsimp\times \sigma) + (-1)^{|\sigma|}\sigma\gt (D_2^+-D_2^-)\\&\quad +(-1)^{|\sigma|+1} (\pi^*\sigma)\gt D_1  - D_1\gt \pi^*\sigma + (-1)^{|\sigma|+1}\pi^*(\pi^*\sigma)\g_{*'} D_0  \\&\quad- \pi^*\lbr{(-1)^{|\sigma|}\pi^*\sigma \g_{*} D_0 -  D_1 \gt \sigma + (-1)^{|\sigma|}\sigma \gt D_1}\\
		 &= \pi^*(D_1\gt\sigma) - D_1\gt\pi^*\sigma +(-1)^{|\sigma|}D_2^-\gt\sigma\times\idsimp - (-1)^{|\sigma|}D_2^+\gt\idsimp\times\sigma  \\
		 &\quad +(-1)^{|\sigma|+1}(\pi^*\sigma) \gt D_1 + (-1)^{|\sigma|+1}\pi^*(\sigma\gt D_1) + (-1)^{|\sigma|}\sigma\gt (D_2^+-D_2^-)\\
		 & \quad +(-1)^{|\sigma|+1}\pi^*(\pi^*\sigma)\g_{*'} D_0 -(-1)^{|\sigma|}\pi^*(\pi^*\sigma \g_{*} D_0).
		 \end{align*}
		 The first line of the right-hand side vanishes by Claim~\eqref{lift-composition}. Adding the terms of the second line of the right hand side, we obtain $(-1)^{|\sigma|}\pi^*\sigma \g_{*} D_1$ because $\pi^*\sigma$ has the additional incoming boundary marked point labelled by $*$, which is not accounted for in $\pi^*(\sigma \gt D_1)$. Applying Claim~\eqref{lift-composition} to the last line, we obtain that the right hand side vanishes. 
\end{proof}	

\noindent
The following is immediate from a short sign verification.
\begin{cor}
	$\cdmc_r$ is a symmetric monoidal dg category.
\end{cor}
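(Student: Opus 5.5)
To show that $\cdmc_r$ is a symmetric monoidal dg category, I will check three things: $\del_{oc}$ has degree $+1$, it is a derivation with respect to composition and the monoidal product, and $\del_{oc}^2=0$. Since $\cdmc_r$ is \emph{generated} under composition and $\sqcup$, and $\del_{oc}$ is by definition extended to composites and products by the graded Leibniz rule, the first two properties hold by construction once we confirm that the extension is well defined. That is, the formula for $\del_{oc}$ on a stable chain must agree with the Leibniz expansion whenever that chain is itself a composite or a product. This compatibility is exactly Lemma~\ref{lem:lifted-simplex-analytic}\eqref{lift-product} and \eqref{lift-composition}, together with the fact that the ordinary boundary $\del$ is a derivation for clutching and for the Eilenberg--Zilber cross product. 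For the term $\pi^*(\sigma_2\g\sigma_1)\g_*D_0$, the lift splits as a sum over which factor receives the point $*$, and this matches $\del_{oc}(\sigma_2)\g\sigma_1\pm\sigma_2\g\del_{oc}\sigma_1$. The $D_1$-terms at internal (clutched) boundary points appear once with each sign and cancel. The degree claim is immediate: $|D_0|=-2$, $|D_1|=-1$, and $\pi^*$ raises chain degree by one, so $\pi^*\sigma\g_*D_0$ has cohomological degree $|\sigma|+1$.

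By the Leibniz rule, $\del_{oc}^2$ is again a derivation, so it suffices to show $\del_{oc}^2=0$ on generators: $D_0$, $D_1$, and stable simplices $\sigma$.
\begin{itemize}
\item For $D_0$: $\del_{oc}^2D_0=-\del_{oc}(D_1)\g D_0+D_1\g\del_{oc}D_0$. The $D_1\g D_1\g D_0$ terms cancel. The remainder is $-(D_2\g_1D_0-D_2\g_2D_0)\g D_0$, which vanishes by graded commutativity of inserting two copies of the even-degree element $D_0$ into the two inputs of $D_2$.
\item For $D_1$: expand similarly. The $D_1$-cubic terms cancel by associativity. The mixed terms $D_1\g D_2\g_iD_0$ and $D_2\g_i(D_1\g D_0)$ must be rewritten using $\del_{oc}D_2$, computed from \eqref{eq:modified-differential-old-simplex} with $\pi^*D_2$ playing the role of $D_3$-type chains. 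I will use the notation $\pi^*D_1=D_2^++D_2^-$ to identify $D_2$ with $D_2^+-D_2^-$, or the appropriate fundamental chain.
\item For a stable $\sigma$: I will apply $\del_{oc}$ to each of the four terms of \eqref{eq:modified-differential-old-simplex}, using $\del^2=0$ and $\del_{oc}\pi^*=\pi^*\del_{oc}$ from Lemma~\ref{lem:lifted-simplex-analytic}\eqref{lift-differential}. The term $\del_{oc}(\pi^*\sigma\g_*D_0)$ then produces $\pi^*\del_{oc}\sigma\g_*D_0\pm\pi^*\sigma\g_*\del_{oc}D_0$. The first piece cancels the $\pi^*$ of the other three terms. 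The second piece, $-\pi^*\sigma\g_*(D_1\g D_0)$, cancels against the $\sigma\g_iD_1$ and $D_1\g\sigma$ contributions after the lift is expanded by Lemma~\ref{lem:lifted-simplex-analytic}\eqref{lift-composition}. The terms $\pi^*\pi^*\sigma\g(D_0,D_0)$ vanish because the two orders of adding the two points give opposite orientations under \eqref{eq:orientation-lift} while the two $D_0$'s commute.
\end{itemize}

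The main obstacle is the sign bookkeeping. This includes the orientation convention \eqref{eq:orientation-lift}, the $\det^{\otimes r}$ twists from Remark~\ref{rem:trivialising-det}, and the $D_2$-terms, which rely on the relation $\pi^*D_0=D_1$ being consistent with $\pi^*D_1=D_2^++D_2^-$. I would fix conventions by first checking the one-point case $\sigma=\idsimp$ (the boundary identity), where $\del_{oc}\idsimp$ should vanish, and then propagate signs via the Koszul rule. Everything is understood modulo degenerate chains, as in Lemma~\ref{lem:lift-well-defined}.
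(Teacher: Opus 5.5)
Your strategy is the "short sign verification" the paper has in mind: check that the Leibniz extension of $\del_{oc}$ respects the relations of $\dmc_r$, using Lemma~\ref{lem:lifted-simplex-analytic}\eqref{lift-product},\eqref{lift-composition}, then check $\del_{oc}^2=0$ on generators. Your $D_0$ computation and your stable-chain argument (using $\del_{oc}\pi^*=\pi^*\del_{oc}$ and the self-cancellation of $\pi^*\pi^*\sigma\g(D_0,D_0)$) are along the right lines. Two steps, however, do not close as written.

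\textbf{Missing generators.} The spaces $\dmc_r(y,y')$ also contain unstable $0$-chains. These are the open unit $e\in\dmc_r((0,0),(0,1))$, the co-unit $\epsilon\in\dmc_r((0,1),(0,0))$, the open pairing and the open co-pairing. They are not composites of stable chains, and $\del_{oc}^2$ does not vanish on them formally. Since $\del u=0$ and $\pi^*u=0$, we have $\del_{oc}u=-\wh D_1\g u+(-1)^{|u|}u\g\wh D_1$. Set $K:=D_2\g_1D_0-D_2\g_2D_0$, the curvature part of $\del_{oc}D_1$, and let $\wh K$ be the sum of its copies at all open points, so that $\del_{oc}\wh D_1=-\wh D_1\g\wh D_1+\wh K$. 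Then $\del_{oc}^2u=u\g\wh K-\wh K\g u$. For example,
\[
\del_{oc}^2e=\bigl(D_2\g_1e-D_2\g_2e\bigr)\g D_0,\qquad \del_{oc}^2\epsilon=\epsilon\g K .
\]
These vanish only because of relations in $\dmc_r$ coming from the forgetful compositions~\eqref{forget-boundary-outgoing}. Namely, $D_2\g_1e$ and $D_2\g_2e$ are both the identity with the same sign, and the unstable pairing $\epsilon\g D_2$ is invariant under the braiding. The pairing and co-pairing need analogous relations. None of this is contained in Lemma~\ref{lem:lifted-simplex-analytic}.

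Relatedly, Lemma~\ref{lem:lifted-simplex-analytic}\eqref{lift-composition} only covers clutching of stable chains. Compatibility of $\del_{oc}$ with compositions by unstable morphisms, such as $\sigma\g_ie=(\pi_i)_*\sigma$, additionally requires $\pi^*(\pi_i)_*\sigma=(\pi_i)_*\pi^*\sigma$ up to degenerate chains. You should state and justify this.

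\textbf{The $D_1$ case stops before its only nontrivial cancellation.} Substitute $\del_{oc}D_2=\pi^*D_2\g_*D_0-D_1\g D_2+D_2\g_1D_1+D_2\g_2D_1$, using $|D_2|=0$ and $\del D_2=0$. Every term containing $D_1$ then cancels, and one is left with
\[
\del_{oc}^2D_1=(\pi^*D_2\g_*D_0)\g_1D_0-(\pi^*D_2\g_*D_0)\g_2D_0 .
\]
This is not of the form $\pi^*\pi^*\sigma\g(D_0,D_0)$ that you treat in the stable case, because one $D_0$ sits at an original input rather than at a lifted point.

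To show it vanishes, split $\pi^*D_2$ into three arcs according to where $*$ lies relative to the inputs $1,2$.
\begin{enumerate}
\item In each term, the two arcs on which $*$ is adjacent to the other $D_0$ cancel. Relabelling two adjacent boundary points reverses the fibre orientation of~\eqref{eq:orientation-lift}, while the two even $D_0$'s commute.
\item The surviving arc of the first term has $*$ after input $2$; the surviving arc of the second term has $*$ before input $1$. After relabelling they coincide with the same orientation, since the moving point's positions differ by two adjacent transpositions. They therefore cancel in the difference.
\end{enumerate}
Your sketch lacks this orientation argument. The proposed identification of $D_2$ with $D_2^+-D_2^-$ plays no role here, since $D_2$ in $\del_{oc}D_1$ is a single fundamental chain.
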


\noindent
Recall from \cite[(1a)]{Sei08} that any dg category $\scC$ defines an $A_\infty$ category on the same objects. We use cohomological grading throughout the paper. We also introduce the following notation.

\begin{nota}\label{nota:sequences}
	Given a composable sequence $\lc{\sigma} = (\sigma_d,\dots,\sigma_1)$ of morphisms in an $A_\infty$ category $\scC$ and $i \leq d-1$, we define 
	\[\lc\sigma^{> i} \coloneqq (\sigma_d,\dots,\sigma_{i+1}) \qquad\quad \qquad\lc\sigma^{\le i} \coloneqq (\sigma_i,\dots,\sigma_1)\]
	and
	\begin{equation*}\label{}
		|\lc\sigma|' \coloneqq \sum_{i =1}^{d}(|\sigma_i|-1)
	\end{equation*}
\end{nota}

\begin{definition}\label{de:dg-to-a-infinity}
	Given a dg category $\scC$, we define the associated $A_\infty$ category, denoted $\scC_\infty$ to have the same objects and with the same underlying vector spaces as morphism spaces, equipped with the operations
	\begin{itemize}
		\item $\mu^1_{\scC}\cl \scC(x,y)\to \scC(x,y) : a\mapsto (-1)^{|\alpha|}d(a)$,
		\item $\mu^2_{\scC} \cl \scC(y,z)\otimes\scC(x,y) \to \scC(x,z) : (a_2,a_1)\mapsto (-1)^{|a_1|}a_2\g a_1$, 
		\item  $\mu^d_{\scC} = 0$ for $d \ge 3$.
	\end{itemize}
	An \emph{$A_\infty$ functor} between dg categories $\scC$ and $\scD$ is an $A_\infty$ functor between their associated $A_\infty$ categories, that is, a map $\scF\cl \obj(\scC)\to \obj(\scD)$ and for each $d\ge 1$ and any objects $x_0,\dots,x_d$ of $\scC$ a map 
	\begin{align*}\label{} 
		\scF^d\cl \scC_\infty^*(x_0,\dots,x_d) \coloneqq \bigotimes\limits_{i=0}^{d-1}\scC_\infty^*(x_{d-i-1},x_{d-i})\to \Hom_{\scD_\infty}^{*+1-d}(\scF(x_0),\scF(x_d)).
	\end{align*}
	which we will denote by $\scF^d_{\lc \sigma}$ or $\scF^d(\lc \sigma)$ depending on notational convenience,
	satisfying 
	\begin{multline}\label{eq:a-infinity-functor}
		\mu^1_{\scD}(\scF^d_{\lc\sigma}) \;+\; \sum_{i=1}^d\mu^2_{\scD}\lbr{\scF^{d-i}_{\lc\sigma^{> i}},\scF^i_{\lc\sigma^{\le i}}} 
		\;=\;\sum_{i=1}^{d}(-1)^{|\lc\sigma^{< i}|'}\scF^d_{\sigma_d,\dots,\mu^1_\scC(\sigma_i),\dots,\sigma_1}\;
		+\;\sum_{i=1}^{d-1}(-1)^{|\lc\sigma^{< i}|'}{\scF^{d-1}_{\sigma_d,\dots,\mu^2_{\scC}(\sigma_{i+1}, \sigma_{i}),\dots,\sigma_1}}.
	\end{multline}
	The $A_\infty$ functor $\scF$ is said to be \emph{unital} if $\scF^1_{\ide_{y}} = \ide_{\scF(y)}$ for any $y \in \obj \scC$ and $\scF^d_{\dots,\ide, \dots} = 0$ whenever $d >1$.
\end{definition}

\begin{example}\label{ex:chain-complexes-dg-category}
	Let $R$ be a commutative ring and $\text{Ch}_R$ be the category of chain complexes of $R$-modules. This is a dg category with 
	$\text{Ch}_R(C^*,D^*)_n = \{\text{linear maps } f \cl C^*\to D^{*+n}\}$ and $\delta(f) = d_{D}\g f - (-1)^{|f|}f\g d_C$. It admits the symmetric monoidal structure given by the tensor product over $R$ of chain complexes and on morphisms by 
	\begin{align}\label{} 
		f_1\otimes f_2\cl C^*_1\otimes_R C^*_2&\to D^*_1\otimes_R D^*_2 \notag\\
		c_1\otimes c_2 &\mapsto (-1)^{|f_2||c_1|}f_1(c_1)\otimes f_2(c_2).
	\end{align}
\end{example}

\vspace*{8pt}

\noindent
Let $\scF,\scG\cl \scC\to \scD$ be two $A_\infty$ functors between $A_\infty$ categories $\scC$ and $\scD$. Following \cite[Section~1h]{Sei08}, a \emph{pre-natural transformation} between $\scF$ and $\scG$ is a sequence $(T^d)_{d\ge 0}$ of linear maps
\begin{equation}
	T^d\cl {\scC}(X_{d-1}, X_d) \otimes \dots \otimes {\scC}(X_0, X_1) \rightarrow {\scD}(\scF(X_0),\scG(X_d))[g - d].
\end{equation}
where $|T| = g$ is the degree of $T$ and we will usually omit the superscript. 
The vector space of pre-natural transformations $\scQ := \text{Fun}(\scF,\scG)$ can be equipped with an $A_\infty$ structure. For the rest of this section, we take $\scC$ to be any dg category and $\scD = \text{Ch}_\Lambda$. In this case, using the conventions of Definition~\ref{de:dg-to-a-infinity}, the differential on $\scQ$ is given by
\begin{multline}
	\label{eq:fun-differential}
	\hspace{-8pt}\mu_\scQ^1(T)(\lc \sigma) \,\coloneqq\, (-1)^{|T(\lc \sigma)|}[\del, T(\lc \sigma)]\,+\,\sum_{i = 0}^d \lbr{(-1)^{|T| + |\lc \sigma^{\leq i}|'}\scG(\lc \sigma^{>i}) \g T(\lc \sigma^{\leq i}) -(-1)^{|T||\lc \sigma^{\leq i}|'}T(\lc \sigma^{> i}) \g \scF(\lc \sigma^{\leq i})}\\ 
	- \sum_{i = 1}^d (-1)^{|T| + |\lc \sigma^{\leq i}|'} T(\partial_i\lc\sigma) - \sum_{i = 1}^{d-1} (-1)^{|T| + |\lc \sigma^{\leq i}|'} T(\concat_i\lc\sigma).
\end{multline}

\begin{definition}\label{de:natural-transformation}
	A pre-natural transformation $T$ is a \emph{natural transformation} if $\mu_\scQ^1(T) = 0$.
\end{definition}

\noindent The composition of two natural transformations $T$ from $\scF$ to $\scG$ and $S$ from $\scG$ to $\scH$ is given by 
\begin{equation}
	\label{eq:fun-composition}
	\hspace{-12pt}\mu_\scQ^2(S,T)(\lc \sigma) \,=\,\sum_{j = 0}^{d}(-1)^{|T|+|S||\lc\sigma^{\le j}|'}S(\lc\sigma^{> j})\g T(\lc\sigma^{\le j}).
\end{equation}

\subsection{Symmetric monoidal $A_\infty$ functors}\label{subsec:symmetric-monoidal}
In general, defining what it means for an $A_\infty$ category $\scC$ to be (symmetric) monoidal is difficult. The $A_\infty$ categories $\cdmc$ and $\text{Ch}_\Lambda$ that are important in this paper are dg categories, in which case the being symmetric monoidal is a well-known property. However, an open-closed DMFT is an $A_\infty$ functor $\scF\cl \cdmc \ra \text{Ch}_\Lambda$, and so it is not immediate what it means for $\scF$ to be symmetric monoidal. Based on \cite{amorim2016}, we will give a definition here in the case of $A_\infty$ functors defined on dg categories. As before, we assume $\Lambda$ is filtered and that all $A_\infty$ functors discussed below are filtered as well.

\begin{nota}\label{nota-for-symm-monoidal}
	For a sequence $\lc p = (p_0 = 0 < p_1 < \dots < p_r< d = p_{r+1})$, let 
	\begin{equation}\label{} 
		\concat_{\lc p}\lc\sigma := (\sigma_d\g \dots \g \sigma_{p_r +1},\dots,\sigma_{p_1}\g \dots \g\sigma_1)
	\end{equation}
	and \begin{equation}
		\scF^{\g \lc p}(\lc \sigma) := \scF^{d-p_r}(\sigma_d, \dots, \sigma_{p_r+1}) \g \dots \g \scF^{p_1}(\sigma_{p_1},\dots, \sigma_1),
	\end{equation}
	setting $\scF^{-1} = 0$ to have uniform notation.
\end{nota}

\noindent Let $\scC$ and $\scD$ be symmetric monoidal dg category and $\scF\cl \scC \ra \scD$ be an $A_\infty$ functor. The following definitions are inspired by \cite[Definition~3.9]{amorim2016}. 
We define the dg category $\scC\boxtimes\scC$ to have 
\begin{itemize}[leftmargin=20pt]
	\item objects: pairs $(x,y)$ with $x,y\in\text{obj}(\scC)$,
	\item morphism spaces 
	\[\scC\boxtimes\scC((x,y),(x',y')) \,:=\, \scC(x,x')\otimes\scC(y,y')\] with differential $d(\sigma \otimes \tau) = (d\sigma)\otimes\tau  + (-1)^{|\sigma|}\sigma\otimes (d\tau)$ and
	\item composition given by $(\sigma'\otimes\tau')\g (\sigma\g \tau) = (-1)^{|\tau'||\sigma|}(\sigma'\g \sigma) \otimes (\tau' \g \tau)$.
\end{itemize}
Given a composable sequence $\lc\sigma$ of morphisms in $\scC$, set 
\[|\lc \sigma + \lc p|\coloneqq(|\sigma_d|, \dots ,|\sigma_{p_r +1}|, |\sigma_{p_r}|', |\sigma_{p_r -1}|, \dots, |\sigma_{p_1}|', \dots, |\sigma_1|),\]
that is, adding $-1$ to the entries labelled by $p_r, \dots, p_1$.
Similarly let $|\lc \sigma + \lc p^c|$ be the sequence obtained by adding $-1$ to all entries not labelled by $p_r, \dots, p_1$. 
Then, define
\begin{align*}\label{} 
	\star_1 &:= \dagger(|\lc \sigma + \lc p|, |\lc \tau + \lc p^c|)\\
	\star_2 &:= \dagger(|\lc \sigma + \lc p^c|, |\lc \tau + \lc p|),
\end{align*}
Also define
\[\dagger(\lc a, \lc b) \;\coloneqq\; \sum_{i<j} a_ib_j\]
for sequences $\lc a$ and $\lc b$ of integers of the same length.


\begin{lemma}\label{}
	The following formulas define $A_\infty$ functors $\scC\boxtimes \scC\to \scD$
	\begin{itemize}[leftmargin=15pt]
		\setlength\itemsep{6pt}
		\item $\scF \g \otimes_{\scC}$ by $\scF \g \otimes_{\scC}(X, Y) = \scF(X\otimes_\scC Y)$ on objects and
		\begin{equation}
			\label{eq:F-tensor}
			\quad(\scF \g \otimes_{\scC})^d (\lc \sigma\otimes \lc \tau) := \scF^d(\sigma_d \otimes_{\scC} \tau_d, \dots, \sigma_1 \otimes_{\scC} \tau_1)
		\end{equation}
		\item  $ \text{} \otimes_{\scD} \g (\scF \otimes \scF)$ by $ \text{} \otimes_{\scD} \g (\scF \otimes \scF)(X,Y) = \scF(X)\otimes_\scD\scF(Y)$ on objects and
		\begin{equation}
			\label{eq:tensor-F}
			\quad(\otimes_{\scD} \g (\scF \otimes \scF))^d(\lc \sigma\otimes \lc \tau) \coloneqq \sum_{1 \leq p_1 < \dots < p_{r} < d} (-1)^{\dagger(|\lc \sigma + \lc p|, |\lc \tau + \lc p^c|)} \scF(\concat_{\lc p}\lc\sigma) \otimes_{\scD} \scF^{\g\lc p}(\lc\tau),
		\end{equation}
		\item $\otimes_{\scD} \g (\scF\, \ov\otimes\, \scF)$ by $\otimes_{\scD} \g (\scF \,\ov\otimes\, \scF)(X, Y) = \scF(X)\otimes_\scD \scF(Y)$ on objects and
		\begin{equation}
			\quad(\otimes_{\scD} \g (\scF\, \ov\otimes\, \scF))^d(\lc \sigma\otimes \lc \tau)\coloneqq \sum_{1 \leq p_1 < \dots < p_{r} < d} (-1)^{\dagger(|\lc \sigma + \lc p^c|, |\lc \tau + \lc p|)} \scF^{\g\lc p}(\lc\sigma) \otimes_{\scD} \scF(\concat_{\lc p}\lc\tau),
		\end{equation}
	\end{itemize}
	where we have used $\g$ to denote the composition on both $\scC$ and $\scD$. 
\end{lemma}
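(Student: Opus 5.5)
The plan is to recognise all three formulas as composites of simpler $A_\infty$ functors, so that no $A_\infty$ functor equation has to be checked from scratch. Only two standard facts about the $A_\infty$ categories of Definition~\ref{de:dg-to-a-infinity} will be used:
\begin{enumerate}[label=(\roman*)]
	\item pre- or post-composing an $A_\infty$ functor with a dg functor, viewed as an $A_\infty$ functor with vanishing higher components, gives an $A_\infty$ functor;
	\item the composite $\scG\g\scH$ of two $A_\infty$ functors, with components $(\scG\g\scH)^d=\sum \pm\,\scG^r(\scH^{i_r},\dots,\scH^{i_1})$, is an $A_\infty$ functor \cite[(1b)]{Sei08}.
\end{enumerate}

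\emph{The functor $\scF\g\otimes_\scC$.} The tensor product $\otimes_\scC\cl\scC\boxtimes\scC\to\scC$ is a dg functor, because $\scC$ is a symmetric monoidal dg category. The Koszul sign $(-1)^{|\tau'||\sigma|}$ built into the composition of $\scC\boxtimes\scC$ is exactly what makes $\otimes_\scC$ compatible with composition and with the differential. The sign twists $\mu^1=(-1)^{|a|}d$ and $\mu^2=(-1)^{|a_1|}a_2\g a_1$ of Definition~\ref{de:dg-to-a-infinity} are compatible with this, since $|\sigma\otimes\tau|=|\sigma|+|\tau|$. Formula~\eqref{eq:F-tensor} is then the precomposition of $\scF$ with $\otimes_\scC$, so fact (i) applies.

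\emph{The functor $\otimes_\scD\g(\scF\otimes\scF)$.} I would factor it as $\otimes_\scD\g(\scF\boxtimes 1)\g(1\boxtimes\scF)$, where $1\boxtimes\scF\cl\scC\boxtimes\scC\to\scC\boxtimes\scD$ and $\scF\boxtimes 1\cl\scC\boxtimes\scD\to\scD\boxtimes\scD$. The first of these should be defined by
\[
(1\boxtimes\scF)^k(\sigma_k\otimes\tau_k,\dots,\sigma_1\otimes\tau_1)=\pm\,(\sigma_k\g\dots\g\sigma_1)\otimes\scF^k(\tau_k,\dots,\tau_1).
\]
To check its $A_\infty$ equation, I would expand $\mu^1$ and $\mu^2$ in $\scC\boxtimes\scD$. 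The terms in which the differential hits a $\sigma_i$ cancel by the Leibniz rule in $\scC$, using that $\scC$ is strictly associative. The terms involving only the $\tau$'s reproduce~\eqref{eq:a-infinity-functor} for $\scF$. The functor $\scF\boxtimes 1$ is defined symmetrically and checked in the same way.

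Composing with fact (ii), the composite $(\scF\boxtimes1)\g(1\boxtimes\scF)$ is a sum over block decompositions $\lc p$. In each summand, $1\boxtimes\scF$ turns every block into a composite of $\sigma$'s tensored with an $\scF$-value on the $\tau$'s, and $\scF\boxtimes1$ then applies $\scF^{r+1}$ to the sequence of $\sigma$-composites $\concat_{\lc p}\lc\sigma$ while composing the $\scF$-values on the $\tau$'s. This gives $\scF(\concat_{\lc p}\lc\sigma)\otimes\scF^{\g\lc p}(\lc\tau)$, which is the shape of~\eqref{eq:tensor-F}. Post-composing with the dg functor $\otimes_\scD$ and using fact (i) finishes this case.

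\emph{The functor $\otimes_\scD\g(\scF\,\ov\otimes\,\scF)$.} The third formula is the same construction with the order reversed, $\otimes_\scD\g(1\boxtimes\scF)\g(\scF\boxtimes1)$. Now the $\tau$'s are the ones composed blockwise, and $\scF$ is applied to $\concat_{\lc p}\lc\tau$.

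\emph{Expected main obstacle: the signs.} One has to show that the signs produced by three sources agree with $\star_1=\dagger(|\lc\sigma+\lc p|,|\lc\tau+\lc p^c|)$, respectively $\star_2$. The three sources are the shifted-degree conventions of Definition~\ref{de:dg-to-a-infinity}, the Koszul sign in the composition of $\scC\boxtimes\scC$ together with that of Example~\ref{ex:chain-complexes-dg-category}, and the signs in Seidel's composition formula for $A_\infty$ functors. My first step would be to fix the sign in $(1\boxtimes\scF)^k$ as $(-1)^{\dagger(|\lc\sigma|',|\lc\tau|)}$, or whatever is forced by $k=1,2$. I would then verify that the total sign is the sum over pairs $i<j$ of (degree of the $\sigma$-entry) times (degree of the $\tau$-entry). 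The entries at block ends $p_1,\dots,p_r$ carry a shifted degree on the $\sigma$-side and an unshifted one on the $\tau$-side, and the remaining entries the reverse; this is precisely what $\dagger(|\lc\sigma+\lc p|,|\lc\tau+\lc p^c|)$ records. If this bookkeeping turns out to be awkward, the fallback is to verify~\eqref{eq:a-infinity-functor} for~\eqref{eq:tensor-F} directly. There, the boundary terms split according to whether $\mu^1$ or $\mu^2$ falls inside a block or across a block boundary, and the cross-boundary $\mu^2$ terms cancel against the change from $\lc p$ to $\lc p\setminus\{p_j\}$.
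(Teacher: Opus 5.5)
Your proposal is correct and follows the same reduction as the paper. The first functor is a composite with the dg functor $\otimes_\scC$, and the other two are $\otimes_\scD$ post-composed with the $A_\infty$ functor $\lc\sigma\otimes\lc\tau\mapsto(-1)^{\star_1}\scF(\concat_{\lc p}\lc\sigma)\otimes\scF^{\g\lc p}(\lc\tau)$ from $\scC\boxtimes\scC$ to $\scD\boxtimes\scD$, respectively its mirror image. The only difference is that the paper does not check this last fact but cites \cite[Definition~3.9, Proposition~3.8]{amorim2016} for it, including the sign bookkeeping you flag as the main obstacle. You instead reprove it via the factorisation $(\scF\boxtimes 1)\g(1\boxtimes\scF)$, respectively the reverse order, which also explains why the two tensor functors in the lemma differ.
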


\begin{proof}
	The first definition is a composition of $A_\infty$ functors, thus trivially an $A_\infty$ functor. For the other two, it remains to show that $\lc\sigma\otimes \lc\tau\mapsto (-1)^{\star_1} \scF(\concat_{\lc p}\lc\sigma) \otimes \scF^{\g\lc p}(\lc\tau)$ is an $A_\infty$ functor $\scC\boxtimes \scC\to \scD\boxtimes\scD$.
	This is exactly \cite[Definition~3.9]{amorim2016}, whence the claim follows from Proposition~3.8 op. cit..
\end{proof}

\begin{definition}\label{def:symmetric-monoidal}
	An $A_\infty$ functor $\scF\cl \scC \ra \scD$ between monoidal dg categories is \emph{monoidal} if it is equipped with the data of natural transformations $R$ from $ \text{} \otimes_{\scD} \g (\scF \otimes \scF)$ to $\scF \g \otimes_{\scC}$, and $L$ from $\text{} \otimes_{\scD} \g (\scF \ov\otimes \scF)$ to $\scF \g \otimes_{\scC}$ with  $R^0 = L^0$.  If $\scC$ and $\scD$ are symmetric monoidal with braiding $S$, then $\scF$ is \emph{symmetric monoidal} if it is monoidal,
	\[\scF(S_{x_d,y_d})\g R(\lc \sigma\otimes \lc \tau) = L(\lc \tau\otimes\lc \sigma) \g S_{\scF(x_0),\scF(y_0)}\] 
	and the diagram
	\begin{equation}
		\begin{tikzcd}
			\scF(x) \otimes \scF(y) \arrow[r, "R^0_{x,y}"] \arrow[d, "S_{\scF(x), \scF(y)}"'] & \scF(x \otimes y) \arrow[d, "\scF(S_{x,y})"] \\
			\scF(y) \otimes \scF(x) \arrow[r, "R^0_{y,x}"]   & \scF(y \otimes x)
		\end{tikzcd}
	\end{equation}
	commutes.
	If $\scF$ is a unital $A_\infty$ functor, $\scF$ is \emph{unital symmetric monoidal} if moreover for all $d \geq 1$ \begin{equation}
		\label{eq:unital-symm-monoidal}
		R^d(\lc \sigma\otimes \lc \tau) = L^d(\lc \tau\otimes\lc \sigma) = 0 
	\end{equation}
	whenever for each $i$ either $\sigma_i$ or $\tau_i$ is the identity morphism.
\end{definition}

The following definition is due to Costello, \cite{Cos07}, in the case of dg functors.

\begin{definition}\label{de:h-split}
	A monoidal $A_\infty$ functor $\scF$ is \emph{h-split} if $R^0$ and $L^0$ are quasi-isomorphisms.
\end{definition}

\begin{remark}
	As noted below \cite[Definition~3.9]{amorim2016}, the definition of $\otimes_{\scD} \g (\scF \otimes \scF)$ is not functorial, but is expected to be so up to homotopy.
\end{remark}

\noindent We have the following direct consequence of unitality.

\begin{lemma}
	If $\scF$ is a unital symmetric monoidal $A_\infty$ functor, then
	\[
	\scF(\lc \sigma\otimes_\scC (\idsimp^y, \dots, \idsimp^y)) \g R^0_{x_0,y} \;=\; R^0_{x_d,y}\g \left(\scF(\lc \sigma) \otimes \ide_{\scF(y)}\right),
	\]
	and	
	\[
	\scF((\idsimp^x, \dots, \idsimp^x)\otimes_\scC \lc \tau) \g L^0_{x,y_0} \;=\; L^0_{x,y_d}\g \left(\ide_{\scF(x)}\otimes \scF(\lc \tau)\right).
	\]
\end{lemma}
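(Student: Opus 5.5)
The plan is to read the identity off the naturality equation $\mu^1_\scQ(R)=0$ (respectively $\mu^1_\scQ(L)=0$) from \eqref{eq:fun-differential}. I evaluate it on the composable sequence $\lc\sigma\otimes(\idsimp^y,\dots,\idsimp^y)$ of length $d$ in $\scC\boxtimes\scC$, and then use the unitality hypotheses to kill almost every term. I treat $R$ in detail; the statement for $L$ follows by the same argument with the roles of the two tensor factors exchanged, using $\otimes_\scD\g(\scF\,\ov\otimes\,\scF)$ in place of $\otimes_\scD\g(\scF\otimes\scF)$.

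First I list the terms of $\mu^1_\scQ(R)(\lc\sigma\otimes\lc\idsimp)$.
\begin{enumerate}
\item The bracket term $[\del,R^d(\dots)]$ and the terms $R(\del_i\lc\sigma\otimes\lc\idsimp)$ are evaluations of $R^d$ with $d\ge1$ on sequences in which every $\tau_i$ is an identity. This uses $d(\sigma\otimes\idsimp)=d\sigma\otimes\idsimp$, since $\idsimp$ is closed. By \eqref{eq:unital-symm-monoidal} all of these vanish.
\item The terms $R(\concat_i\lc\sigma\otimes\lc\idsimp)$ also vanish. Composition gives $(\sigma_{i+1}\otimes\idsimp)\g(\sigma_i\otimes\idsimp)=\sigma_{i+1}\sigma_i\otimes\idsimp$ with no sign, because $|\idsimp|=0$, so again every second entry is an identity.
\item In the terms $\scG(\lc\sigma^{>i})\g R(\lc\sigma^{\le i})$, with $\scG=\scF\g\otimes_\scC$, only $i=0$ survives. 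This term is $\scF^d(\lc\sigma\otimes_\scC\lc\idsimp)\g R^0_{x_0,y}$.
\item In the terms $R(\lc\sigma^{>i})\g(\otimes_\scD\g(\scF\otimes\scF))(\lc\sigma^{\le i})$, only $i=d$ survives. This term is $R^0_{x_d,y}\g(\otimes_\scD\g(\scF\otimes\scF))^d(\lc\sigma\otimes\lc\idsimp)$.
\end{enumerate}

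Next I compute the source functor on identities using \eqref{eq:tensor-F}. Each summand indexed by $\lc p$ contains $\scF^{\g\lc p}(\lc\idsimp)=\scF^{d-p_r}(\idsimp,\dots)\g\dots$. By unitality of $\scF$, every factor $\scF^k$ with $k>1$ evaluated on identities vanishes. So only the partitions in which every block has length one survive, and each such block contributes $\scF^1(\idsimp)=\ide$. The surviving summand is the one with $\concat_{\lc p}\lc\sigma$ still of length $d$, so it gives $\scF^d(\lc\sigma)\otimes\ide_{\scF(y)}$. The sign there is $\dagger(|\lc\sigma+\lc p|,|\lc\idsimp+\lc p^c|)$, and I have to check that it is trivial. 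The $\tau$-degrees are $0$, shifted only at the entries of $\lc p^c$; I expect this index set to be empty or to pair with nothing under $i<j$ in the extremal case.

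The main obstacle is this sign bookkeeping, together with the relative sign between the two surviving terms in $\mu^1_\scQ$. The prefactors are $(-1)^{|R|+|\lc\sigma^{\le0}|'}$ and $(-1)^{|R||\lc\sigma|'}$. I take $R$ of degree $0$, which is implicit in $R^0$ being a chain map, so these reduce to $+1$ and $-1$. Then $\mu^1_\scQ(R)=0$ gives exactly the claimed equality. If a stray sign $(-1)^{\dots}$ appears, I would absorb it by checking the case $d=1$ directly against the chain-map property of $R^0$ and then arguing inductively.
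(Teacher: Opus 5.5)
Your proposal is essentially the paper's proof. You evaluate $\mu^1_\scQ(R)=0$ (resp.\ $\mu^1_\scQ(L)=0$) on $\lc\sigma\otimes(\idsimp^y,\dots,\idsimp^y)$, use unitality of $\scF$ and of $R$, $L$ to kill every term except $\scF^d(\lc\sigma\otimes_\scC\lc\idsimp)\g R^0$ and $R^0\g(\otimes_\scD\g(\scF\otimes\scF))^d(\lc\sigma\otimes\lc\idsimp)=\pm R^0\g(\scF^d(\lc\sigma)\otimes\ide)$, and finish with a sign check, which the paper also leaves as ``a sign computation''.

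For that check, note that $\lc p^c=\{d\}$ is not empty. The $\dagger$-sign for $R$ vanishes only because this entry comes first in the written order, so the $L$ case, where that entry sits in the first argument of $\dagger$, must be checked on its own rather than by symmetry. Also, a stray sign could not be removed by induction from $d=1$ in any case.
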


\begin{proof}
	By the unitality of $\scF$ and the natural transformations $R$ and $L$, all other terms in Equation~\eqref{eq:fun-differential} vanish. Thus, the claim follows from a sign computation.
\end{proof}

\subsection{Curved open-closed Deligne--Mumford field theories}\label{subsec:def-of-dmft}
Fix a valuation ring $(\Lambda,\nu)$ equipped with an $\bR$-algebra structure. Later, we will take $\Lambda$ to be a Novikov ring defined in \eqref{eq:novikov-ring}. Let $\text{Ch}_\Lambda$ be the dg category of cochain complexes over $\Lambda$ equipped with the symmetric monoidal product $\otimes_\Lambda$.

\begin{definition}\label{de:curved-dmft}
	A \emph{curved open-closed $r$-dimensional Deligne--Mumford field theory} over a filtered ring $\Lambda$ is an h-split unital symmetric monoidal $A_\infty$ functor $\scF \cl \cdmc_r \rightarrow \text{Ch}_{\Lambda}$ such that
	\begin{itemize}[leftmargin =20pt]
		\item (nondegeneracy) $\scF_{\sigma_d, \dots, \sigma_1} = 0$ whenever some $\sigma_i$ is a degenerate chain,
		\item (positivity) $\scF_{D_0}$ and $\scF_{D_1}$ have positive filtration.
	\end{itemize}
\end{definition}

\begin{remark}\label{} One could use any other model of chains on $\Mbar_{oc}$ for this definition. In fact, we will construct a specific model in \S\ref{subsec:our-chains} that will allow us to achieve better properties for the (curved) open-closed DMFT we define.
\end{remark}

\begin{remark}\label{rem:curved-dmft} Explicitly, a curved $r$-dimensional open-closed Deligne-Mumford field theory consists of a filtered graded chain complex $\scF^*(y) = (V_y, d_y)$ for each $y \in \text{obj}(\cdmc_r)$, and a sequence $(\scF^d)_{d\ge 1}$ of linear\label{key} maps
	\begin{equation*}\label{}
		\scF^d\cl \cdmc^*_r(y^{d-1},y^d) \otimes \dots\otimes \cdmc^*_r(y^0,y^1) \to \Hom^{*+1-d}_{\text{Ch}_\Lambda}(\scF(y^0),\scF(y^d))
	\end{equation*}
	such that $\scF_{D_0}$ and $\scF_{D_1}$ have positive filtration, and 
	\begin{multline}\label{eq:warped-relations} 
		(-1)^{|\scF^d_{\lc \sigma}|}(d_{y_d}\g\scF^d_{\lc \sigma}-(-1)^{|\scF^d_{\lc\sigma}|}\scF^d_{\lc\sigma}\g d_{y_0}) + 
		\sum_{i=1 }^{d-1}{(-1)^{|\lc\sigma^{\leq i}|'+1}\scF^{d-i}_{\lc\sigma^{> i}}\g\scF^i_{\lc\sigma^{\le i}}}\\ 
		= \sum_{i=1}^{d}(-1)^{|\lc\sigma^{\leq i}|' + 1}\scF^d_{(\sigma_1,\dots,\partial_{oc}\sigma_{i},\dots,\sigma_d)} +\sum_{i=1}^{d-1}{(-1)^{|\lc\sigma^{\leq i}|' + 1}\scF^{d-1}_{(\sigma_d,\dots,\sigma_{i+1}\g \sigma_{i},\dots,\sigma_1)}},
	\end{multline}
	such that $\scF$ is unital symmetric monoidal.
\end{remark}

\begin{definition}
	\label{de:uncurved-dmft}
	An \emph{(uncurved) open-closed Deligne--Mumford field theory} is an h-split unital symmetric monoidal $A_\infty$ functor $\wt \scF \cl \dmc_r \rightarrow \text{Ch}_{\Lambda}$ satisfying analogous conditions.
\end{definition}

\begin{definition}\label{}
	The \emph{open DMFT} obtained from $\scF$ is the restriction to the full subcategory on objects $y = (\emst,y_o)$, while the \emph{(closed) DMFT} underlying $\scF$ is the restriction to the subcategory with objects $y = (y_c,\emst)$ and morphism given by chains on moduli space of closed curves.
\end{definition}

\begin{lemma}\label{lem:cohft-from-dmft} If $\scF$ is a curved open-closed DMFT with closed sector $V^*$, then $\scF$ defines the structure of a cohomological field theory on $H^*(V^*)$, i.e. a symmetric monoidal functor $H_*(\Mbar_c) \rightarrow Vect_\Lambda$.
\end{lemma}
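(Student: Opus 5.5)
We restrict $\scF$ to the closed subcategory and pass to cohomology. Concretely, set $V^*\coloneqq \scF(\{1\},\emst)$ and, for each $k$, $W_k\coloneqq \scF(\{1,\dots,k\},\emst)$. The object of $H_*(\Mbar_c)$ labelled by $k$ is sent to $H^*(V)^{\otimes k}$, identified with $H^*(W_k)$ via the h-split structure. A homology class $[\sigma]\in H_*(\Mbar_c(k^-,k^+))$ is sent to the map $H^*(\scF^1_\sigma)$, transported along these identifications.

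First, well-definedness. The morphisms of $\cdmc_r$ between closed objects never involve $D_0$ or $D_1$. Indeed, $\del_{oc}$ on a chain of closed curves has no open marked points, so the correction terms in \eqref{eq:modified-differential-old-simplex} vanish: $\pi^*\sigma$ adds a boundary point only to surfaces with boundary, and closed curves have $h=0$. Hence $\del_{oc}=\del$ on these chains. The $d=1$ case of \eqref{eq:warped-relations} then says that $\scF^1$ is a chain map from $\dmc_r(y,y')$ to $\Hom(\scF(y),\scF(y'))$. So a cycle $\sigma$ gives a chain map $\scF^1_\sigma$, and homologous cycles give chain-homotopic maps. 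Moreover $\text{det}^{\otimes r}$ is canonically trivialised on closed components (Remark~\ref{rem:trivialising-det}), so the twisted coefficients reduce to $\bR$-chains. By Lemma~\ref{lem:analytic-homology}, analytic chains compute $H_*(\Mbar_c;\bR)$, and we extend scalars to $\Lambda$.

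Second, functoriality. The $d=2$ case of \eqref{eq:warped-relations} expresses $\scF^1_{\sigma_2\g\sigma_1}-\pm\scF^1_{\sigma_2}\g\scF^1_{\sigma_1}$ as $[d,\scF^2_{\sigma_2,\sigma_1}]$ plus terms involving $\scF^2$ of boundaries. For cycles these last terms vanish, so composition holds on cohomology. Identities go to identities by unitality. The unstable morphisms (unit, pairing, forgetful and self-clutching compositions \eqref{forget-interior-outgoing}, \eqref{self-clutching}) are genuine morphisms of $\Mbar_c$, so the same argument covers them.

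Third, the symmetric monoidal structure. $R^0$ is a quasi-isomorphism, so $H^*(R^0)$ gives isomorphisms $H^*(V)^{\otimes k}\cong H^*(W_k)$; here Künneth holds because $\Lambda$ is a field, or flatness is assumed. The natural transformation equation $\mu^1_\scQ(R)=0$ at $d=1$ reads
\[
\scF^1_{\sigma\otimes\tau}\g R^0 - R^0\g(\scF^1_\sigma\otimes\scF^1_\tau) = \pm[d,R^1] + R^1(\text{boundary terms}),
\]
so for cycles $\sigma,\tau$ this gives compatibility with $\otimes$ on cohomology. The braiding diagram in Definition~\ref{def:symmetric-monoidal} gives symmetry. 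By the Lemma relating CohFTs to symmetric monoidal functors $H_*(\Mbar_c)\to\text{Vect}$, this is a CohFT.

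The main obstacle is the sign bookkeeping that makes the cohomology-level compositions and the monoidal compatibilities hold on the nose, including the Koszul signs from $\mu^2$ and from $\dagger$. We expect this to be routine once degrees are tracked. A second point needing care is whether $R^0$ being a quasi-isomorphism yields the strong monoidal isomorphism $H^*(V)^{\otimes k}\cong H^*(W_k)$, which we would assert under the standing coefficient assumptions.
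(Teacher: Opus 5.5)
Your proposal is correct and follows the paper's own argument: the $d=1$ relation (with $\del_{oc}=\del$ on chains of closed surfaces) makes $\scF^1_\sigma$ a chain map whose homotopy class depends only on $[\sigma]$, the $d=2$ relation gives functoriality on cohomology, and your treatment of the monoidal part via $R^0$, the $d=1$ naturality equation for $R$ and the braiding square spells out what the paper leaves implicit. One phrasing fix: morphisms of $\cdmc_r$ between closed objects \emph{can} involve $D_0$ and $D_1$, for instance through composites via open objects or through chains on surfaces with boundary but no boundary marked points, which $\Mbar_{oc}(y,y')$ allows for closed $y,y'$; so state this step as restricting to the sub-dg-category of chains on closed surfaces, which is all your argument actually uses.
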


\begin{proof}
	The $A_\infty $ equation for $\scF^1$ and $\sigma \in C_*(\Mbar_c)$ yields that $d\scF^1_\sigma(\alpha) =(-1)^{|\sigma|}\scF^1_\sigma(d\alpha)+ \scF_{d\sigma}^1(\alpha)$, whence $\scF_\sigma^1$ is a chain map for such $\sigma$ if $\partial\sigma = 0$. Meanwhile, the $A_\infty $ relation for $\scF^2$, applied to two cycles $\sigma_1,\sigma_2$ yields that the resulting map on cohomology is a functor, using that $|\scF_{\sigma_i}^1| = |\sigma_i|$.
\end{proof}

\begin{definition}\label{de:curved-dmft-properties} 
	An open-closed DMFT $\scF\cl \cdmc_r\to \text{Ch}_\Lambda$ is
	\begin{itemize}[leftmargin=17pt]
		\setlength\itemsep{1.5pt}
		\item \emph{strict} if $\scF^d = 0$ for $d \ge 2$.
		\item \emph{semi-strict} if $\scF^d$ vanishes on $({\cdmc_r}^{\stb,*})^{\otimes d}$ whenever $d \geq 2$.
		\item \emph{operadic} if $\scF^d$ vanishes on $({\cdmc_r}^{\opr,*})^{\otimes d}$ whenever $d \geq 2$.
		\item \emph{weakly curved} if it is operadic and $\scF^1_{D_0} = c\,e_\scF$ for some $c \in \Lambda$, where $e_\scF$ denotes the (open) unit.
	\end{itemize}
\end{definition}

\begin{definition}\label{de:category-without-curvature}
	Let $\cdmc^{unc}_r$ be the symmetric monoidal dg category obtained from $\cdmc_r$ by setting $D_0$ equal to $0$.
\end{definition}

\begin{definition}
	If $\scF^d_{\lc \sigma} = 0$ whenever $D_0$ appears in $\lc \sigma$ (either as an element of the tuple or composed with another chain), we say that $\scF$ is a curved open-closed DMFT \emph{with vanishing curvature}. Equivalently, one can view this as a functor $\scF\cl \cdmc^{unc}_r \rightarrow \text{Ch}_{\Lambda}$.
\end{definition}

\noindent A semi-strict curved open-closed DMFT gives rise to a curved $A_\infty$ algebra.

\begin{lemma}\label{lem:algebra-from-dmft}
	\label{lem:curved-algebra-from-dmft} Suppose $\scF$ is a semi-strict open-closed DMFT with $(\scA,d)\coloneqq \scF(0,1)$. Then $\scF$ defines the structure of a curved $A_\infty$ algebra on $\scA$. If $\scF$ is operadic, then $\scA$ is a curved cyclic unital $A_\infty$ algebra.
\end{lemma}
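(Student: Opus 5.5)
To prove the lemma, I would define the curved $A_\infty$ operations on $\scA=\scF(0,1)$ directly from the functor, using the distinguished disc chains. For $k\ge 0$, let $\Delta_k\in\cdmc_r((0,k),(0,1))$ denote the fundamental chain of the moduli space $\Mbar_{0,1;(0,k),(0,1)}$ of discs with $k$ incoming and one outgoing boundary marked point, cyclically ordered compatibly with the labels. These are stable for $k\ge 2$; for $k=0,1$ I would instead use the formal generators $D_0,D_1$. I would then set
\[
\mathfrak m_0 := \scF^1_{D_0}(1),\qquad \mathfrak m_1 := d_\scA + \scF^1_{D_1},\qquad \mathfrak m_k := \scF^1_{\Delta_k}\g (R^0)^{-1}\ \ (k\ge 2),
\]
with appropriate sign twists. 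Here $R^0\cl \scA^{\otimes k}\to\scF(0,k)$ is the h-split map. Since $R^0$ is only a quasi-isomorphism, I would instead define $\mathfrak m_k$ as the composite $\scF^1_{\Delta_k}\g R^0$ on $\scA^{\otimes k}$, which needs no inverse. Positivity of $\scF_{D_0},\scF_{D_1}$ gives the filtration condition required of a curved $A_\infty$ algebra.

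The key input is the boundary formula for $\Delta_k$ in $\cdmc_r$. The codimension-one boundary of the disc moduli space consists of the clutching strata $\Delta_{k-j+1}\g_i\Delta_j$ with $2\le j\le k-1$. The modified differential $\del_{oc}$ from Definition~\ref{def:OCbar} then adds the terms $\pi^*\Delta_k\g_* D_0$, $D_1\g\Delta_k$ and $\Delta_k\g_i D_1$. I would check that, up to degenerate chains, $\pi^*\Delta_k=\sum_i\Delta_{k+1}$ with the new point inserted in position $i$, so that $\pi^*\Delta_k\g_*D_0$ produces exactly the $\mathfrak m_{k+1}(\dots,\mathfrak m_0,\dots)$ terms. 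Combining this with $\del_{oc}D_0$ and $\del_{oc}D_1$ gives
\[
\del_{oc}\Delta_k=\sum \pm\,\Delta_{k-j+1}\g_i\Delta_j,
\]
where the sum now runs over all $j$ from $0$ to $k$, with $\Delta_0=D_0$ and $\Delta_1=D_1$. Applying the $A_\infty$ functor relation~\eqref{eq:warped-relations} with $d=1$ to $\Delta_k$ expresses $[d,\scF^1_{\Delta_k}]$ through $\scF^1_{\del_{oc}\Delta_k}$. Semi-strictness kills $\scF^2$ on pairs of stable chains, so $\scF^1_{\Delta_a\g_i\Delta_b}=\pm\scF^1_{\Delta_a}\g\scF^1_{\Delta_b\otimes\ide}$ whenever both factors are stable. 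The symmetric monoidal structure, via the lemma following Definition~\ref{de:h-split}, converts $\scF^1_{\Delta_b\otimes\ide}\g R^0$ into $R^0\g(\scF^1_{\Delta_b}\otimes\ide)$. Summing these pieces yields the curved $A_\infty$ relations.

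The main obstacle is the terms involving $D_0$ and $D_1$, which lie outside the stable subcategory. Semi-strictness does not cover them, so $\scF^2(\Delta_a,D_i\otimes\ide)$ may be nonzero, and these terms must either cancel or be absorbed. My first attempt would be to observe that the relation~\eqref{eq:warped-relations} for $d=2$ shows these $\scF^2$ terms assemble into a correction that can be added to $\mathfrak m_k$. Alternatively, I would argue that $D_i$ composed with identities lies in a subcategory on which semi-strictness (as defined through $\cdmc_r^{\stb}$, which contains $D_0$ and $D_1$ by Definition~\ref{de:stable-operadic-subcategories}) still forces $\scF^2=0$. The latter reading seems to be the intended one, and it makes the argument go through directly; the rest is sign bookkeeping with the orientation conventions of Remark~\ref{rem:trivialising-det}.

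For the operadic statement, I would proceed in three steps:
\begin{itemize}
\item \emph{Unit.} The unit is $e=\scF^1(\Mbar_{0,1;0,(0,1)})$. Compatibility with forgetting incoming boundary points, composition~\eqref{forget-boundary-outgoing}, together with the vanishing of $\scF^2$ on operadic chains, gives $\mathfrak m_2(e,a)=\pm a$ and $\mathfrak m_k(\dots,e,\dots)=0$ for $k\ge3$. For the latter I would use that the forgetful image of $\Delta_k$ is degenerate, and then invoke nondegeneracy.
\item \emph{Pairing.} The pairing is $\langle a,b\rangle=\scF^1_{\Mbar_{0,1;(0,2),0}}(R^0(a\otimes b))$.
\item \emph{Cyclicity.} Cyclic symmetry follows by writing $\langle\mathfrak m_k(\dots),\cdot\rangle$ as $\scF^1$ of the composite disc with $k+1$ incoming points. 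That composite is invariant under cyclic relabelling, and composing with the unstable two-input disc is strict on the operadic subcategory.
\end{itemize}
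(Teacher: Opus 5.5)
Your proposal is correct and follows essentially the same route as the paper. The operations $\mu^0,\mu^1$ come from $D_0,D_1$, and the higher $\mu^k$ come from disc fundamental chains precomposed with iterated $R^0$. The $A_\infty$ relations follow from the $d=1$ functor equation together with semi-strictness, read exactly as you settle on, with $D_0,D_1\in\cdmc_r^{\stb}$, so no $\scF^2$ correction is needed. Unitality and cyclicity follow from operadicity and nondegeneracy.

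The one point to make explicit is that the $\Delta_k$ must be chosen inductively so that $\partial\Delta_k=\sum\pm\Delta_{k_0}\g(\idsimp^{i-1}\times\Delta_{k_1}\times\idsimp^{k_0-i})$ holds on the nose at chain level; this is how the paper sets up its chains $D_k$.
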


\begin{proof}
	 To make the sign computations more manageable, we introduce the following notation. For $\alpha \in \scF(x)$, $\beta \in \scF(y)$, let 
	 \[
	\alpha \times \beta := (-1)^{|\alpha||\beta|}R^0(\alpha \otimes \beta).
	\]
	where $R = (R^d)_{d\ge 0}$ is one natural transformation of the right symmetric monoidal structure on $\scF$. 
	We then find that 
	\begin{equation}
		\label{eq:symm-monoid-times-sign}
		\scF^1_{\sigma \times \idsimp}(\alpha \times \beta) = (-1)^{{|\sigma||\beta|}}\scF^1_{\sigma}(\alpha) \times \beta.
	\end{equation}
	Let $\mu_\scA^1(\alpha) \coloneqq  (-1)^{|\alpha|}(d_{0,1}(\alpha) + \scF_{D_1}(\alpha))$ be the $A_\infty$ differential of $\scA$. Additionally define 
	\begin{equation*}
		\mu_{\scA}^0 := - \scF_{D_0}.
	\end{equation*}
	To define the higher operations, let $D_k \in \cdmc_r^{k-2}((\emst,\{1,\dots,k\}),(\emst,1))$ be an inductive choice of analytic fundamental chain so that its boundary is given by
	\begin{equation}\label{eq:compatible-chains}
		\partial D_k \;= \sum_{\substack{k_0 + k_1 = k + 1\\ k_0,k_1 \geq 2}}\sum_{1\le i \le k_0}^{} (-1)^{k_1(k_0-i) + i} D_{k_0} \g (\idsimp^{i-1} \times D_{k_1} \times \idsimp^{ k_0 - i}),
	\end{equation}
	where the map $\psi$ clutches the output of $D_{k_1}$ to the $i$'th marked point of $D_{k_0}$. 
	Then, for $k \geq 2$, let 
	\begin{equation*}
		\mu^k(\alpha_1, \dots, \alpha_k) = (-1)^{\epsilon(\alpha)}\scF^1_{D_k}(\alpha_1 \times \dots \times \alpha_k).
	\end{equation*} 
	where 
	\begin{equation}\label{eq:st-sign}
		\epsilon(\alpha) = 1+ \sum_{j = 1}^{k} j(|\alpha_j| + 1)
	\end{equation}
	Note that this agrees with the signs in \cite{ST16}.
	As before, we use the notation $D_1 \gt D_k$ instead of $\sum_{i \in y'_o} D_1 \g_i D_k$, and similarly for the reverse composition.
	Note that $|\scF_{D_k}| = |D_k|\equiv k\mod{2}$. Thus, Equation~\eqref{eq:warped-relations} specialises to 
	\begin{equation}
		\label{eq:A-infinity-chain-map}
		d \g \scF_{D_k} - (-1)^{k}\scF_{D_k} \g d \;=\;  \scF_{\partial D_k} - \scF_{D_1 \gt D_k} + (-1)^k\scF_{D_k \gt D_1} + (-1)^{k}\scF_{\pi^*D_k \g_* D_0}
	\end{equation}
	To see that this yields the $A_\infty$ equations, we write out the terms one by one.
	Given $1\leq i \leq k_0$ and $\alpha_1,\dots,\alpha_k\in \scA$ we use the short hand $$\alpha^1 = (\alpha_1, \dots, \alpha_{i-1}),\qquad \alpha^2 = (\alpha_i, \dots, \alpha_{i+k_1 - 1}),\qquad \alpha^3 = (\alpha_{i+k_1}, \dots, \alpha_k)$$\noindent
	whenever $i$ is clear from context. 
	Moreover, we write $\alpha$ for $\alpha_1\times \dots\times \alpha_k$ as well. 
	First observe that Equation \eqref{eq:symm-monoid-times-sign} yields
	\begin{equation*}
		\scF_{\ide^{ i-1} \times D_{k_1} \times \ide^{ k_0 - i}}(\alpha) = (-1)^{k_1|\alpha^3|}\alpha^1 \times \scF_{D_{k_1}}(\alpha^2) \times \alpha^3,
	\end{equation*}
	whence
	\begin{equation*}
		\scF_{\partial D_k} = (-1)^{|\alpha| + k + \epsilon(\alpha) + 1}\sum_{\substack{k_0 + k_1 = k\\i = 1, \dots, k_0\\ k_0,k_1 \geq 2}} (-1)^{|\alpha^1|'} \mu^{k_0}_\scA(\alpha^1, \mu^{k_1}_\scA(\alpha^2), \alpha^3).
	\end{equation*}
	The sign here comes from the fact that 
	\[
	k_1(k_0 - i)+ k_1|\alpha^3| + \epsilon(\alpha^2) + \epsilon(\alpha^1,\mu^{k_1}_\scA(\alpha^2), \alpha^3) \equiv |\alpha|+ k + \epsilon(\alpha) + |\alpha^1| + i, 
	\]
	where we have used \cite[Lemma~2.9]{ST16} to compute $\epsilon(\alpha^2) + \epsilon(\alpha^1,\mu^{k_1}_\scA(\alpha^2), \alpha^3)$.
	Since $\scF$ is semi-strict,  
	\begin{equation*}
		\scF_{D_1 \gt D_k} = \scF_{D_1} \gt \scF_{D_k} \text{ and } \scF_{D_k \gt D_1} = \scF_{D_k} \gt \scF_{D_1}.
	\end{equation*}
	We thus find that 
	\[
	(d \g \scF_{D_k} + \scF_{D_1 \gt D_k})(\alpha) = (-1)^{|\alpha| + k}\mu^1_\scA(\scF_{D_k}(\alpha)) = (-1)^{|\alpha| + k + \epsilon(\alpha)}\mu^1_{\scA}(\mu^k_\scA(\alpha)).
	\]
	Similarly,
	\[
	(\scF_{D_k \gt D_1} + \scF_{D_k} \circ d)(\alpha) = (-1)^{|\alpha| + 1 + \epsilon(\alpha)}\sum_{i = 1}^{k} (-1)^{|\alpha^1| + i - 1} \mu^k_\scA(\alpha^1, \mu^1_\scA(\alpha_i), \alpha^3),
	\]
	For the curvature terms, first observe that because $\pi^*D_k$ has the extra marked point ordered as the last incoming marked point, we have \[
	\scF_{\pi^* D_k \g D_0}(\alpha) = -\scF_{\pi^*D_k}(\alpha \times \mu^0_\scA) = \sum_{i=0}^{k}(-1)^{k-i+1}\scF_{D_{k+1}}(\alpha_1 \times \dots \times \alpha_i \times \mu^0_\scA \times  \alpha_{i+1} \times \dots \times \alpha_k),
	\]
	where the sign $(-1)^{k-i}$ comes from reordering the marked points.
	Then,
	\begin{align*}
		\sum_{i}&(-1)^{|\alpha_1|+ \dots+ |\alpha_i| - i}\mu^{k+1}_\scA(\alpha_1, \dots, \alpha_i, \mu^0_\scA, \alpha_{i+1}, \dots, \alpha_k) \\
		&= \sum_i (-1)^{\epsilon(\alpha) + |\alpha| + k-i}\scF_{D_{k+1}}(\alpha_1 \times \dots \times \alpha_i \times \mu^0_\scA \times \alpha_{i+1}\times  \dots\times \alpha_k) \\
		&= (-1)^{|\alpha| + \epsilon(\alpha) + 1}\scF_{\pi^* D_k \g_* D_0}(\alpha),
	\end{align*}
	where the second equality follows from the fact that $\epsilon(\alpha) + \epsilon(\alpha^1,\mu^0_\scA, \alpha_2) = |\alpha^2|' + i = |\alpha^2| + k$.
	
	When $\scF$ is operadic, we have that $\scF_{D_k}(e_\scF, \dots) = \scF_{(\pi_1)_*D_k}(\dots) = 0$, where $\pi_1$ denotes the map forgetting the first incoming boundary marked point. Being operadic also implies that $\scF_{D_2}(e_\scF,\alpha) = \scF_{D_2}(\alpha,e_\scF) = \alpha$, whence unitality of $\scA$ follows.
	
	Define the pairing $\langle\cdot,\cdot \rangle_{\scA}: \scA^{\otimes 2} \rightarrow \Lambda$ by $\langle \alpha_1, \alpha_2 \rangle_\scA = (-1)^{|\alpha_2|}\scF_{D_{2,0}}(\alpha_1 \times \alpha_2)$, where $D_{2,0}$ denotes the surface giving rise to the pairing, that is, the disc with $2$ incoming marked points, and no outgoing marked points. Let $P_{k+1}$ denote the chain obtained by the composition $D_{2,0} \g (D_k \times \idsimp)$. This represents the moduli space of discs with $k+1$ incoming marked points. It admits a $\ZZ/(k+1)$ action by cyclically relabelling the marked points. The action of the generator $\gamma$ induces an isomorphism with sign $(-1)^{k}$ (obtained by moving the $k+1$st marked point all the way past the others). 
	Thus,
	\[
	\scF_{P_{k+1}}(\alpha \times \alpha_0) = (-1)^{k+|\alpha_0||\alpha|}\scF_{P_{k+1}}(\alpha_0 \times \alpha),
	\]
	for $\alpha = \alpha_1\times \dots\times \alpha_k$. If $\scF$ is operadic, Equation \eqref{eq:symm-monoid-times-sign} implies
	\[
	\scF_{P_{k+1}}(\alpha \times \alpha_0) = \scF_{D_{2,0}} \g \scF_{(D_k \times id)}(\alpha \times \alpha_0) =  (-1)^{(k-1)|\alpha_0| + \epsilon(\alpha)}\langle \mu^k_\scA(\alpha),\alpha_0\rangle_\scA,
	\]
	whereas
	\[
	\scF_{P_{k+1}}(\alpha_0 \times \alpha) = (-1)^{(k-1)|\alpha_k| + \epsilon(\alpha_0, \dots, \alpha_{k-1})}\langle \mu^k_\scA(\alpha_0, \dots, \alpha_{k-1}),\alpha_k\rangle_\scA.
	\]
	A short sign computation then shows the result.
\end{proof}

\begin{remark}
	Similar to our construction of the modified chain complex $C_*(\Mbar_{oc})$ in \ref{subsec:our-chains}, one can use the higher homotopies to associate an $A_\infty$ algebra to an open-closed DMFT which is not necessarily semi-strict.
\end{remark}

\noindent
One can view a weakly curved $A_\infty$ algebra as an uncurved $A_\infty$ algebra. The same is true for open-closed DMFTs, provided it has the following properties.

\begin{property}\label{assum:weakly-curved-to-uncurved}
	Let $\scF$ be an operadic open-closed DMFT with open unit $e_\scF$ and let $\lc \sigma \in \cdmc(x, \dots, x')$ be any composable sequence. Then, we have the properties
	\begin{enumerate}
		\item \label{assum:weakly-curved-unit}$\scF^d_{(\sigma_d, \dots, \pi^*\sigma_i, \dots, \sigma_1 \times \idsimp)}(R^0(\alpha \otimes e_\scF)) = 0$.
		\item \label{assum:weakly-curved-Dk} for any $k \geq 0$, 
		\[\scF_{(\sigma_d, \dots, \pi^*\sigma_i\g_* D_k, \sigma_{i-1},\dots, \sigma_{1})}\g R^0\;=\;(-1)^{k|\lc\sigma^{< i}|'}\scF_{(\sigma_d, \dots, \pi^*\sigma_i, \sigma_{i-1},\dots, \sigma_{1})}\g (\ide\otimes\scF_{D_k}) \g R^0\]
		on $\scF^*(y^0)\otimes \scF(0,k)$.
	\end{enumerate}
\end{property}

\noindent
Property \eqref{assum:weakly-curved-unit} can be seen as a stronger version of operadic, as it says that `the unit is compatible with any chain of the form $\pi^*\sigma$'.
For our open-closed DMFT, these properties are shown in Lemma \ref{lem:additional-properties}.

\begin{lemma}\label{lem:weakly-curved-dmft} If $\scF$ is a weakly curved $r$-dimensional open-closed DMFT, and Property~\ref{assum:weakly-curved-to-uncurved} holds, then $\scF$ restricts to a functor $\scF\cl \cdmc^{unc}_r \rightarrow \text{Ch}_{\Lambda}$.
\end{lemma}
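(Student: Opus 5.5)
The plan is to show that every term of $\scF$ in which $D_0$ occurs vanishes. Then $\scF$ descends to the quotient $\cdmc^{unc}_r$, where $D_0=0$. Since the differential and composition of $\cdmc^{unc}_r$ are induced from those of $\cdmc_r$, the $A_\infty$ functor equations~\eqref{eq:warped-relations} and the symmetric monoidal data $R,L$ then restrict automatically. So the whole content is the vanishing statement
\[
\scF^d_{(\sigma_d,\dots,\tau,\dots,\sigma_1)}=0 \quad\text{whenever } \tau \text{ is a composite in which } D_0 \text{ appears.}
\]

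First I will reduce to a normal form. By the definition of $\cdmc_r$ as generated under composition and $\times$, any morphism containing $D_0$ is a sum of composites in which $D_0$ is clutched to some boundary input, possibly after other $D_k$'s and $D_1$'s. By construction of $\del_{oc}$ in~\eqref{eq:modified-differential-old-simplex}, these are the terms $\pi^*\sigma\g_* D_0$ (and $D_1\g D_0$, $D_2\g_i D_0$). Using the symmetric monoidal structure, written as $\alpha\times\beta = \pm R^0(\alpha\otimes\beta)$, I will move $D_0$ to the last incoming boundary slot $*$. Weak curvature gives $\scF_{D_0}=c\,e_\scF$. Property~\ref{assum:weakly-curved-to-uncurved}\eqref{assum:weakly-curved-Dk} with $k=0$ then converts
\[
\scF_{(\dots,\pi^*\sigma_i\g_*D_0,\dots)}\g R^0 \quad\text{into}\quad \pm\,\scF_{(\dots,\pi^*\sigma_i,\dots)}\g(\ide\otimes c\,e_\scF)\g R^0 .
\]
Property~\ref{assum:weakly-curved-to-uncurved}\eqref{assum:weakly-curved-unit} kills the latter. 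Since the precomposition by $R^0$ and the trivial identity factor $\times\idsimp$ on the last slot are harmless, the terms of the form $\pi^*\sigma\g_*D_0$ vanish. The remaining cases $D_1\g D_0$ and $D_2\g_i D_0$ are handled by $\pi^*D_0=D_1$ and $\pi^*D_1=D_2^++D_2^-$, which put them in the same form.

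Next I will handle $D_0$ appearing as a separate entry of the tuple, or composed deeper inside a longer composite. If $D_0$ is an entry $\sigma_1=D_0$, I will use the operadic property: $\scF^d$ vanishes on operadic tuples for $d\ge2$, and $D_0$ together with disc-type compositions is operadic. For general $\lc\sigma$ I expect to argue by induction on $d$, using the $A_\infty$ relation~\eqref{eq:warped-relations} to trade a term with $D_0$ in position $1$ for terms with $D_0$ absorbed into a composite $\sigma_2\g D_0$. The latter are then handled as in the previous step, once we note that $\sigma_2\g_j D_0 = (\pi^*\pi_{*}\text{-type})$. More precisely, composing a stable chain with $D_0$ at input $j$ equals $\pi_j$-lift notation after relabelling, with the reordering sign $(-1)^{k-i}$ as in the proof of Lemma~\ref{lem:curved-algebra-from-dmft}.

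The main obstacle is this last identification. An arbitrary composite $\sigma\g_j D_0$ must be shown to be expressible as $\pi^*\sigma'\g_*D_0$ for suitable $\sigma'$, so that both properties apply. Alternatively, one must restrict to showing vanishing only for the combinations that actually arise in $\del_{oc}$ and in compositions of the generated category. I would first try to show that the ideal generated by $D_0$ in $\cdmc_r$ is spanned by morphisms of the form $\rho\g(\pi^*\sigma\g_*D_0)\times\tau$. This uses Lemma~\ref{lem:lifted-simplex-analytic}\eqref{lift-composition}, rewriting $\pi^*(\sigma_2\g\sigma_1)$, together with the interchange of composition with $\times$. The vanishing on that span then follows from the first step and the semi-strict/operadic factorization of $\scF$ on such composites. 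The remaining work is sign bookkeeping.
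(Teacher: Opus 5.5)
Your first step is exactly the paper's argument and is correct: apply Property~\ref{assum:weakly-curved-to-uncurved}\eqref{assum:weakly-curved-Dk} with $k=0$, then use $\scF_{D_0}=c\,e_\scF$, then Property~\ref{assum:weakly-curved-to-uncurved}\eqref{assum:weakly-curved-unit}.

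The gap is in what you take the lemma to require. You declare the whole content to be that $\scF^d$ vanishes on every tuple in which $D_0$ appears. This is false for a weakly curved DMFT with $c\neq 0$. Already $\scF^1_{D_0}=c\,e_\scF\neq 0$. Since $\scF$ is operadic and unital, also $\scF^1_{D_2\g_1 D_0}(\alpha)=\pm c\,\scF_{D_2}(e_\scF\times\alpha)=\pm c\,\alpha$.

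Property~\ref{assum:weakly-curved-to-uncurved}\eqref{assum:weakly-curved-unit} only kills insertions of the unit into the \emph{full} lift $\pi^*\sigma$, summed over all boundary positions. This reflects that $\pi_*\pi^*\sigma$ is degenerate. A single composite $\sigma\g_j D_0$ has no reason to vanish. This is why your ``main obstacle'' cannot be overcome:
\begin{itemize}
\item an arbitrary $\sigma\g_jD_0$ is not of the form $\pi^*\sigma'\g_*D_0$;
\item the ideal generated by $D_0$ is not spanned by morphisms $\rho\g(\pi^*\sigma\g_*D_0)\times\tau$, since it contains $D_0$ and $D_2\g_1D_0$ themselves.
\end{itemize}
Likewise, treating $D_0$ as a separate entry via operadicity and induction on $d$ aims at an untrue statement. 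Operadicity says nothing for $d=1$ or for non-operadic tuples in any case.

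The missing observation is what ``restricts'' means. You keep only the components of $\scF$ on $D_0$-free tuples and check that they satisfy the $A_\infty$ functor equations of $\cdmc^{unc}_r$. In~\eqref{eq:warped-relations} for a $D_0$-free tuple $\lc\sigma$, the compositions $\sigma_{i+1}\g\sigma_i$ are again $D_0$-free. So $D_0$ enters only through $\partial_{oc}\sigma_i$, namely through the terms $(-1)^{|\sigma_i|}\pi^*\sigma_i\g_*D_0$. For $\sigma_i=D_1$ there are also the terms $D_2\g_iD_0$, which have the same form via $\pi^*D_1=D_2^++D_2^-$.

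Passing to $\cdmc^{unc}_r$ deletes exactly these terms. So the equations continue to hold precisely because these terms vanish, which is your first step. You came close to this in your ``alternatively'' sentence. The proof is your first step plus this remark; the rest of the plan should be discarded rather than completed.
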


\begin{proof}
	The only way that $D_0$ shows up in the structure equations is through $\scF_{(\sigma_d, \dots, \pi^*\sigma_i \g_* D_0, \dots, \sigma_1)}$. We thus need to show that this vanishes whenever $\scF$ is weakly curved. 
	Applying both Properties~\ref{assum:weakly-curved-to-uncurved}, we obtain
	\begin{align*}
		\scF_{(\sigma_d, \dots, \pi^*\sigma_i \g_* D_0, \dots, \sigma_1)}(\alpha) &\; =\; \pm\, \scF_{(\sigma_d, \dots, \pi^*\sigma_i, \dots, \sigma_1 \times \idsimp)}(R^0( \alpha \otimes \scF_{D_0})) \\
		&\; =\; \pm\, c\,\scF_{(\sigma_d, \dots, \pi^*\sigma_i, \dots, \sigma_1\times \idsimp)}(R^0( \alpha \otimes e_\scF)) \\
		&\;=\; 0
	\end{align*}
	as required.
\end{proof}
\subsection{Deformations and bounding cochains} \label{subsec:bounding-cochains-and-deformations}
Recall from \cite[Definition~3.6.29]{FOOO2} that a {weak bounding cochain} for a filtered, unital curved $A_\infty$ algebra $\scA$ is an element $\bc \in \scA$ of degree $1$ and positive valuation such that \begin{equation}
	\sum_{k \geq 0} \mu^k_\scA(\bc^{\otimes k}) = c \cdot 1,
\end{equation}
for some $c \in \Lambda_{> 0}$. When $c = 0$, the element  $\bc$ is called a {(strong) bounding cochain}. \cite{FOOO2} show that given any element $\bc$ of degree $1$ with positive valuation, one obtains the deformed $A_\infty$ algebra $(\scA,\mu_\bc)$ with operations \begin{equation}
	\label{eq:deformed-Ainfty-operations}
\mu^k_\bc(\alpha_1 \otimes \dots \otimes \alpha_k) := \sum_{i_0, \dots, i_k \geq 0} \mu^{i_0 + \dots + i_k + k}(\bc^{\otimes i_0} \otimes \alpha_1 \otimes \dots \otimes \alpha_k \otimes \bc^{\otimes i_k}). 
\end{equation}
If $\bc$ is a weak bounding cochain, then the operations $\mu_\bc^k$ define an (uncurved) $A_\infty$ structure.

The main result of this section, Corollary~\ref{cor:actual-dmft}, asserts that we can similarly deform a curved open-closed DMFT $\scF$ by an element $\bc$ of degree $1$ with positive valuation. If $\bc$ is a weak bounding cochain and $\scF$ satisfies certain properties, this defines an (uncurved) open-closed DMFT.

\begin{definition}\label{de:bounding-cochain}
	A \emph{(weak) bounding cochain} for a semi-strict (or operadic) curved open-closed DMFT $\scF$ is a (weak) bounding cochain for the associated $A_\infty$ algebra of Lemma \ref{lem:curved-algebra-from-dmft}.
\end{definition}

\noindent We will need the following preliminary definitions.

\begin{definition}\label{de:lift-sequence}
	For a tuple $\lc{\sigma} = (\sigma_d, \dots, \sigma_1)$, define the pullback 
	\begin{equation}
		\label{eq:lift-of-tuple}
		\pi^*\lc{\sigma} = \sum_{i = 1}^d (-1)^{|\lc\sigma^{<i}|'}(\sigma_d, \dots,\sigma_{i+1}, \pi^*\sigma_i, \sigma_{i-1} \times \idsimp, \dots, \sigma_1\times\idsimp).
	\end{equation}
\end{definition}

\begin{remark}\label{}
	It follows that the iterated lift of the sequence $\lc\sigma$ satisfies
	\begin{equation}
		\label{eq:iterated-lift}
		\pi^{[k]}\lc \sigma = \sum_{k_d + \dots k_1 = k}(-1)^{\dagger(\lc k, \lc \sigma)}(\pi^{[k_d]}\sigma_d, \dots, \pi^{[k_1]}\sigma_1 \times \idsimp^{k-k_1}),
	\end{equation}
	for $k \ge 1$, where $\dagger(\lc k, \lc \sigma) = \sum_{i = 1}^d k_i(|\lc\sigma^{<i}|' + |\lc k^{<i}|)$. 
\end{remark}

\begin{lemma}\label{lem:lifted-sequence-properties} Suppose $\lc\sigma$ is a composable sequence of chains and $\lc k = (k_1,\dots,k_d)$. Set 
	\[\pi^{[\lc k]}\lc\sigma \coloneqq (-1)^{\dagger(\lc k, \lc \sigma)}(\pi^{[k_d]}\sigma_d, \dots, \pi^{[k_1]}\sigma_1 \times \idsimp^{k-k_1}).\] 
	Then, we have for any $i \le d$ that
	\begin{itemize}
		\item $\pi^{[\lc k]}((\partial_{oc})_i\lc\sigma) = (-1)^{|\lc k^{> i}|}(\partial_{oc})_i(\pi^{[\lc k]}\lc\sigma)$, where $(\partial_{oc})_i\lc\sigma = (\sigma_d,\dots,\partial_{oc}\sigma_i,\dots,\sigma_1)$.
		\item $\concat_i\pi^{[\lc k]}\lc\sigma = (-1)^{|\lc k^{> i}|}\pi^{[\lc k]}(\concat_i\lc\sigma)$;
		\item $((\pi^{\lc k}\lc\sigma)^{ > i},(\pi^{\lc k}\lc\sigma)^{ \le i}) = (-1)^{(|\lc\sigma^{\le i}|'+|\lc k^{\le i}|)|\lc k^{> i}|}(\pi^{\lc k^{> i}}(\lc\sigma^{> i}),\pi^{\lc k^{\le i}}(\lc\sigma^{\le i}))$.
	\end{itemize} 
\end{lemma}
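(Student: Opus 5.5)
The three identities are sign bookkeeping: the underlying tuples of chains agree on both sides of each equation, so the plan is to verify that, and then compare the exponents of $(-1)$.

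\emph{First bullet.} Only the $i$th entry changes. For $j\neq i$ the entry is $\pi^{[k_j]}\sigma_j\times\idsimp^{k-k_j}$ (or $\pi^{[k_d]}\sigma_d$). For the $i$th entry:
\begin{itemize}
\item By Lemma~\ref{lem:lifted-simplex-analytic}\eqref{lift-differential}, $\pi^*$ commutes with $\del_{oc}$, hence so does $\pi^{[k_i]}$.
\item Since $\idsimp$ is a cycle, $\del_{oc}(\tau\times\idsimp^{m})=(\del_{oc}\tau)\times\idsimp^m$ up to the Koszul sign of the product, which is trivial here.
\end{itemize}
The underlying tuples therefore agree, and the work is in the signs. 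Replacing $\sigma_i$ by $\del_{oc}\sigma_i$ lowers $|\sigma_i|$ by one in cohomological grading (it raises it under the paper's convention), so $|\lc\sigma^{<j}|'$ shifts by one for every $j>i$. Hence $\dagger(\lc k,\lc\sigma)$ changes by $\sum_{j>i}k_j=|\lc k^{>i}|$. Also $|\pi^{[k_i]}\sigma_i|=|\sigma_i|-k_i$, but no other entry's sign depends on the $i$th one in a way that changes, since $\del_{oc}$ shifts both sides identically. This gives the factor $(-1)^{|\lc k^{>i}|}$.

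\emph{Second bullet.} Compose adjacent entries $i+1$ and $i$. Use Lemma~\ref{lem:lifted-simplex-analytic}\eqref{lift-composition}, iterated: $\pi^{[a+b]}(\sigma_{i+1}\g\sigma_i)$ decomposes as a sum over splittings. Its $(a,b)$-summand is $\pi^{[a]}\sigma_{i+1}\g(\pi^{[b]}\sigma_i\times\idsimp^a)$ with sign $(-1)^{a(|\sigma_i|-b)}$, or its analogue.

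Here $\pi^{[\lc k]}(\concat_i\lc\sigma)$ is understood with $k_i+k_{i+1}$ in the merged slot, summed over the splittings. This matches the sum of $\concat_i\pi^{[\lc k]}\lc\sigma$ over splittings. The plan is to check that, splitting by splitting, the signs
\[
\dagger(\lc k,\lc\sigma)+a(|\sigma_i|-b)
\]
on one side agree with $\dagger$ of the merged sequence plus $|\lc k^{>i}|$ on the other. The mechanism is that merging two entries removes one $-1$ shift in $|\cdot|'$ for all later indices, which accounts for $|\lc k^{>i}|$.

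This is the step I expect to be the main obstacle. One must fix the convention for how the summation over splittings enters $\pi^{[\lc k]}$. I would first verify the case $k=1$, which is Lemma~\ref{lem:lifted-simplex-analytic}\eqref{lift-composition} combined with \eqref{eq:lift-of-tuple}, and then induct using \eqref{eq:iterated-lift}.

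\emph{Third bullet.} Splitting a tuple at position $i$ is a purely formal operation. It remains to compare
\[
\dagger(\lc k,\lc\sigma)\quad\text{with}\quad\dagger(\lc k^{>i},\lc\sigma^{>i})+\dagger(\lc k^{\le i},\lc\sigma^{\le i}).
\]
The difference consists of the cross terms $k_j\bigl(|\lc\sigma^{\le i}|'+|\lc k^{\le i}|\bigr)$ for $j>i$. These sum to $(|\lc\sigma^{\le i}|'+|\lc k^{\le i}|)|\lc k^{>i}|$, which is exactly the stated exponent. The trailing $\idsimp$ factors on the lower block change only the objects involved, not the signs.
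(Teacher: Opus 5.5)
Your proposal is correct and is essentially the paper's argument: the paper likewise derives the first two bullets from Lemma~\ref{lem:lifted-simplex-analytic} together with the observation that $\dagger$ shifts by $|\lc k^{>i}|$, and obtains the third bullet from exactly the cross-term comparison you describe. Your splitting-by-splitting check for the second bullet usefully sharpens the paper's one-line sign claim. Put $k_i+k_{i+1}$ in the merged slot; then $\dagger(\lc k,\lc\sigma)$ and $\dagger$ of the merged sequence differ by $|\lc k^{>i}|+k_{i+1}(|\sigma_i|+k_i)$ modulo $2$, and the extra term is cancelled precisely by your composition-lift sign $(-1)^{a(|\sigma_i|-b)}$ with $a=k_{i+1}$, $b=k_i$.
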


\begin{proof}
	The first two claims follow from Lemma~\ref{lem:lifted-simplex-analytic} and the fact 
	\[\dagger(\lc k, \partial_i\lc \sigma) -\dagger(\lc k, \lc \sigma) = |\lc k^{>i}|= \dagger(\lc k, \lc \sigma) -\dagger(\lc k, \concat_i\lc \sigma).
	\]
	The third claim follows from a direct computation of the sign difference.
\end{proof}	

More generally, for $\lc\sigma\otimes\lc\tau = (\sigma_d\otimes\tau_d,\dots,\sigma_1\otimes\tau_1)$ and $k = k_0 + k_1$, we define
\begin{equation*}\label{}
	\pi^{[k_0,k_1]}(\lc\sigma\otimes\lc\tau) = \s{k^i_1+\dots +k^i_d = k_i}{(-1)^{\dagger(\lc k^0,\lc k^1,\lc\sigma,\lc\tau) } (\pi^{[k^0_d]}\sigma_d\otimes \pi^{[k^1_d]}\tau_d,\dots,\pi^{[k^0_1]}\sigma_1\otimes \pi^{[k^1_1]}\tau_1)},
\end{equation*}
with 
\[\dagger(\lc k^0,\lc k^1,\lc\sigma,\lc\tau) := \sum_{i = 1}^{d}k^1_i(|\lc\sigma\otimes\lc\tau^{< i}|'+ |\lc k^{< i}|)+k^0_i(|\lc\sigma\otimes\lc\tau^{< i}|'+ |\lc k^{< i}|+|\sigma_i|+ |k^0_i|).\]

\begin{definition}\label{de:deformed-dmft} Given an element $\bc \in \scA$, with $|\bc| = 1$ and positive valuation, the associated \emph{deformed curved open-closed DMFT} $\scfb$ is defined by $ \scfb = \scF$ on objects and by
	\begin{equation}\label{eq:deformed-dmft-morphisms}
		\scfb_{\lc\sigma}(\alpha)\, \coloneqq\,\sum_{k \geq 0} \frac{(-1)^{k|\alpha|}}{k!}\scF_{\pi^{[k]} \lc \sigma}(R^0(\alpha\otimes \bc^{\otimes k})),
	\end{equation}
	on composable sequences $\lc\sigma$ of simplices in $\cdmc_r(y,y')$ such that $\sigma_i \neq \sigma'_i \g_\kappa D_0$ for some $\sigma'_i$ for any $i$. For sequences of simplices containing such compositions, we define $\scfb_{\lc\sigma}$ inductively by
	\begin{equation}\label{eq:deformed-dmft-morphisms-2}
		\scfb_{\lc\sigma}(\alpha)\, \coloneqq\,\sum_{k \geq 0} \frac{(-1)^{k|\alpha|}}{k!}\scF_{\pi^{[ k]} \lc \sigma}(R^0(\alpha\otimes \bc^{\otimes k}))
		+ \sum_{k \geq 1} \frac{(-1)^{k|\alpha|}}{k!} \scF_{\pi^{[k-1]}\lc \sigma'}(R^0(\alpha\otimes d(\bc)\otimes \bc^{\otimes k-1})),
	\end{equation}
	where $\sigma'_j = \sigma_j$ for $i \neq j$ and $\sigma'_j = \sigma_i'$ for $j = i$.
	Finally, extend linearly to composable sequences of chains. 
	The associated symmetric monoidal transformations $R$ and $L$ are defined by $(R^{\bc})^0 = R^0$ and
	\begin{equation}\label{} 
		R^{\bc}_{\lc\sigma\otimes\lc\tau}(\alpha\otimes \beta) \;=\; \s{k\ge 0}{\sum_{k_0+k_1 = k}\frac{(-1)^{k_0|\alpha|+k_1|\beta|}}{k!}R_{\pi^{[k_0,k_1]}(\lc\sigma\otimes\lc\tau)}\lbr{(\alpha\otimes \bc^{\otimes k_0})\otimes (\beta\otimes \bc^{\otimes k_1})}}
	\end{equation}
	for sequences in $\dmc_r$
	 and a sum as in~\eqref{eq:deformed-dmft-morphisms-2} involving compositions with $D_0$ or $D_1$. We define $L^{\bc}$ analogously.
\end{definition}

\noindent In particular, 
	\begin{equation}\label{eq:deformed-differential}
			\scfb_{D_1}(\alpha) = \sum_{k \geq 0} \frac{(-1)^{k|\alpha|} }{k!}\scF_{\pi^{[k]}D_1}(R^0(\alpha\otimes \bc^{\otimes k}))
		\end{equation}
	and
	\begin{equation}
			\scfb_{D_0} = d(\bc) + \sum_{k \geq 0} \frac{1}{k!}\scF_{\pi^{[k]}D_0}(R^0(\bc^{\otimes k})).
		\end{equation}
		
\begin{remark}
	Note that the operations $\scfb_\sigma$ are only deformed for stable chains and $D_0$ and $D_1$ because the lift constructed in~\eqref{eq:iterated-lift} vanishes on unstable chains.
\end{remark}

\begin{remark}\label{rem:compare-differentials}
	Recalling the definition $\alpha_1 \times \alpha_2 := (-1)^{|\alpha_1||\alpha_2|}R^0(\alpha_1 \otimes \alpha_2)$, which agrees in our geometric setting with cross-product of differential forms, the deformed DMFT is given by \[
		\scfb_{\lc\sigma}(\alpha)\, \coloneqq\,\sum_{k \geq 0} \frac{1}{k!}\scF_{\pi^{[k]} \lc \sigma}(\alpha\times \bc^{\times k}).
	\]
	In particular, if $\scF$ is semi-strict or operadic, the deformation of the underlying $A_\infty$ algebra (as defined in \cite[Definition~13.39]{FOOO}) associated to $\scF$ agrees with the $A_\infty$ algebra associated to $\scfb$. This follows from a sign computation and the observation that we require a factor of $\frac{1}{k!}$ because we sum over $k!$ many fundamental chains since each insertion of $\bc$ can be along any of the boundary segments, as opposed to the definition of the deformation in the $A_\infty$ algebra setting, where each insertion of $\bc$ is fixed on a specific boundary segment.
\end{remark}

\noindent The following result shows that (weak) bounding cochains suffice to turn a curved open-closed DMFT into an open-closed DMFT with vanishing curvature. In the next subsection we will show how that gives rise to an open-closed DMFT. 

\begin{proposition}\label{prop:deformed-dmft}
	If $\scF$ has Property~\ref{assum:weakly-curved-to-uncurved}\eqref{assum:weakly-curved-Dk}, then $\scfb$ is a curved open-closed DMFT. If $\bc$ is a weak bounding cochain, $\scfb$ is weakly curved.
\end{proposition}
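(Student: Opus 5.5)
We have to check that $\scfb$, with the transformations $R^{\bc},L^{\bc}$, satisfies the curved $A_\infty$ functor relations~\eqref{eq:warped-relations}, is unital symmetric monoidal, h-split, nondegenerate and positive. Afterwards we identify the curvature when $\bc$ is a weak bounding cochain. The idea is that $\scfb$ is obtained from $\scF$ by the operation $\lc\sigma\mapsto \pi^{[k]}\lc\sigma$, inserting $\bc^{\otimes k}$ at the added boundary inputs. The $A_\infty$ relations for $\scfb$ should then follow from those of $\scF$ applied to the lifted sequences $\pi^{[k]}\lc\sigma$, with the discrepancy absorbed by the terms involving $D_0$.

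\textbf{Step 1: structure equation.} Fix $\lc\sigma$ with no $D_0$-compositions and apply~\eqref{eq:warped-relations} for $\scF$ to $\pi^{[k]}\lc\sigma$, evaluated on $R^0(\alpha\otimes\bc^{\otimes k})$. Then sum over $k$ with weights $(-1)^{k|\alpha|}/k!$.
\begin{itemize}
\item The term $d_{y_d}\g\scF$ gives $d\g\scfb_{\lc\sigma}$ directly.
\item In the term $\scF\g d_{y_0}$, the differential also acts on the $\bc$ factors via the Leibniz rule for $R^0$, which is the naturality of $R$ at $d=0$. This produces terms containing $d\bc$.
\item The composition terms split as $\scF^{d-i}_{\pi^{[k']}\lc\sigma^{>i}}\g\scF^i_{\pi^{[k'']}\lc\sigma^{\le i}}$ by the third item of Lemma~\ref{lem:lifted-sequence-properties}. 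To recombine them into $\scfb\g\scfb$, we need to move the $\bc^{\otimes k'}$ that feed the outer factor past the inner operation. This uses the unital symmetric monoidal identities, namely the lemma following Definition~\ref{def:symmetric-monoidal} together with the $\sigma\times\idsimp$ factors in $\pi^{[k]}$. The binomial count $\binom{k}{k'}$ matches $1/k!=1/(k'!k''!)\cdot\binom{k}{k'}^{-1}$.
\item On the right-hand side, the first two items of Lemma~\ref{lem:lifted-sequence-properties} commute $\pi^{[k]}$ past $\partial_{oc}$ and $\concat_i$, up to the stated signs.
\end{itemize}
What remains are the terms where $\partial_{oc}\pi^*\sigma_i$ contains $\pi^{[\cdot]}\sigma_i\g_*D_0$, and the $d\bc$ terms. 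Using Property~\ref{assum:weakly-curved-to-uncurved}\eqref{assum:weakly-curved-Dk} with $k=0$, a composition with $D_0$ at a lifted point becomes $\scF$ of the lifted chain evaluated on $\scF_{D_0}$ inserted alongside the $\bc$'s. Together with the $\pi^{[k]}D_0$ insertions and $d\bc$, these combine exactly into $\scfb_{D_0}$ inserted at the $\ast$-point. This is precisely the $D_0$-term of $\partial_{oc}$ for $\scfb$, handled via the definition~\eqref{eq:deformed-dmft-morphisms-2}.

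\textbf{Step 2: $D_1$ terms and the remaining axioms.} The $D_1\g_i\sigma$ and $\sigma\g_i D_1$ terms deform into $\scfb_{D_1}$ as in~\eqref{eq:deformed-differential}, again by Property~\ref{assum:weakly-curved-to-uncurved}\eqref{assum:weakly-curved-Dk}. In that property, the $D_k$ with $k\ge 2$ appear exactly when a $\bc$-insertion collides with an input. For sequences already involving $D_0$, the inductive definition~\eqref{eq:deformed-dmft-morphisms-2} is designed so that the same computation closes; I check this by induction on the number of $D_0$-compositions.

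The remaining axioms are checked as follows.
\begin{itemize}
\item Naturality of $R^{\bc}$ and $L^{\bc}$ is the same computation applied to $R$, using $\pi^{[k_0,k_1]}$.
\item Symmetry and h-splitness are unchanged, since $(R^{\bc})^0=R^0$.
\item Unitality holds because $\pi^*$ of an identity is zero, as identities are unstable.
\item Nondegeneracy holds since lifts of degenerate chains are degenerate.
\item Positivity holds since $\bc$ has positive valuation and $\scF_{D_0},\scF_{D_1}$ have positive filtration.
\item Convergence of the sums in $k$ follows from the positive valuation of $\bc$ in the completed ring $\Lambda$.
\end{itemize}

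\textbf{Step 3: weak curvature.} Operadicity of $\scfb$ follows from that of $\scF$, since $\pi^{[k]}$ preserves $\cdmc^{\opr}$. By Remark~\ref{rem:compare-differentials}, the $A_\infty$ algebra of $\scfb$ is the $\bc$-deformation of $\scA$. Hence
\[
\scfb_{D_0}=-\mu^0_{\bc}=-\sum_k\mu^k(\bc^{\otimes k})=-c\,e_\scF,
\]
up to the sign conventions of $\mu^0$, using the identification of $\scF_{\pi^{[k]}D_0}$ with $\scF_{D_k}$ through $\pi^{[k]}D_0=k!\,D_k$ modulo orderings. I expect the main obstacle to be Step 1, specifically the sign bookkeeping showing that the $D_0$-curvature terms and $d\bc$ terms assemble exactly into $\scfb_{D_0}$ with matching factorials. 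I would first verify this for $d=1$ and a single $\sigma$, then propagate it using Lemma~\ref{lem:lifted-sequence-properties}.
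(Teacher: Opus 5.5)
Your proposal is correct as a sketch and follows the same route as the paper. The paper's proof consists only of the remark that the $A_\infty$ functor equations for $\scfb_{\lc\sigma}$ reduce, via Lemma~\ref{lem:lifted-sequence-properties}, to the sum over $k$ of the equations for $\scF_{\pi^{[k]}\lc\sigma}$ evaluated on $R^0(\alpha\otimes\bc^{\otimes k})$. Your Steps 1--3 spell this out in more detail: recombining compositions via the unital symmetric monoidal identities, absorbing the $D_0$ and $d\bc$ terms via Property~\ref{assum:weakly-curved-to-uncurved}\eqref{assum:weakly-curved-Dk} and \eqref{eq:deformed-dmft-morphisms-2}, and reading off weak curvature from Remark~\ref{rem:compare-differentials}. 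The factorial and sign bookkeeping you single out as the main obstacle is also left unchecked in the paper.
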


\begin{proof}
	This is a lengthy but, given Lemma~\ref{lem:lifted-sequence-properties} straightforward, computation, which reduces the $A_\infty$ functor equations for $\scfb_{\lc\sigma}$ to (the sum of) the $A_\infty$ functor equations for $\scF_{\pi^{[k]}\lc\sigma}$. 
\end{proof}

\noindent 
From the definition, one verifies directly that if $\scF$ has Property~\ref{assum:weakly-curved-to-uncurved}, then so does $\scfb$. Then, Lemma \ref{lem:weakly-curved-dmft} implies

\begin{cor}
	 If $\bc$ is a weak bounding cochain, and $\scF$ has Property~\ref{assum:weakly-curved-to-uncurved}, then the deformed open-closed DMFT $\scfb$ induces an curved open-closed DMFT with vanishing curvature.
\end{cor}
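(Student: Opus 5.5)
The plan is to obtain the corollary as a direct combination of Proposition~\ref{prop:deformed-dmft} and Lemma~\ref{lem:weakly-curved-dmft}, applied to $\scfb$ in place of $\scF$. First, since $\scF$ has Property~\ref{assum:weakly-curved-to-uncurved}, in particular part~\eqref{assum:weakly-curved-Dk}, Proposition~\ref{prop:deformed-dmft} shows that $\scfb$ is a curved open-closed DMFT. Because $\bc$ is a weak bounding cochain, the same proposition shows that $\scfb$ is weakly curved: it is operadic and $\scfb_{D_0} = c\,e_{\scfb}$. Here $c$ comes from the identity $\sum_k \mu^k(\bc^{\otimes k}) = c\cdot 1$, transported through Remark~\ref{rem:compare-differentials}, which identifies the $A_\infty$ algebra of $\scfb$ with the FOOO deformation. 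The open unit of $\scfb$ agrees with $e_\scF$, since $\scfb = \scF$ on objects and the deformed operations on the unit reduce to the undeformed ones.

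The second step is to verify that $\scfb$ again satisfies Property~\ref{assum:weakly-curved-to-uncurved}. I would unwind \eqref{eq:deformed-dmft-morphisms} for a sequence containing $\pi^*\sigma_i$:
\[
\scfb_{(\dots,\pi^*\sigma_i,\dots,\sigma_1\times\idsimp)}(R^0(\alpha\otimes e)) = \sum_k \tfrac{(-1)^{k|\alpha\otimes e|}}{k!}\scF_{\pi^{[k]}(\dots,\pi^*\sigma_i,\dots)}\bigl(R^0(R^0(\alpha\otimes e)\otimes\bc^{\otimes k})\bigr).
\]
By \eqref{eq:iterated-lift}, every term of $\pi^{[k]}$ applied to the sequence still has the form $(\dots,\pi^*\sigma'_j,\dots,\sigma'_1\times\idsimp)$, with the unit inserted at the marked point forgotten by some $\pi^*$. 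I would use Lemma~\ref{lem:lifted-simplex-analytic}\eqref{lift-composition} to commute lifts past each other, and symmetric monoidality of $R^0$ to reorder the inputs so that $e$ sits at a lifted point. Property~\eqref{assum:weakly-curved-unit} for $\scF$ then kills each term.

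Property~\eqref{assum:weakly-curved-Dk} for $\scfb$ is handled the same way. I would expand both sides in powers of $\bc$, note that $\scfb_{D_k}$ involves $\scF_{\pi^{[m]}D_k} = \scF_{D_{k+m}}$-type terms, and match them with the corresponding terms on the left using Property~\eqref{assum:weakly-curved-Dk} for $\scF$ applied to $D_{k+m}$. The bookkeeping is in the factor $1/k!$ and the binomial splitting of the $\bc$'s between $\sigma_i$ and $D_k$.

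Finally, Lemma~\ref{lem:weakly-curved-dmft}, applied to the weakly curved $\scfb$ with Property~\ref{assum:weakly-curved-to-uncurved}, shows that $\scfb$ vanishes whenever $D_0$ appears, i.e.\ it descends to $\cdmc^{unc}_r$. I expect the main obstacle to be the verification of Property~\eqref{assum:weakly-curved-Dk} for $\scfb$. There the sign $(-1)^{k|\lc\sigma^{<i}|'}$ and the combinatorial factors have to match after redistributing the $\bc$-insertions, and the inductive terms involving $d(\bc)$ in \eqref{eq:deformed-dmft-morphisms-2} must also be tracked.
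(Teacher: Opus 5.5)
Your proposal follows the paper's argument exactly. Proposition~\ref{prop:deformed-dmft} makes $\scfb$ a weakly curved DMFT, Property~\ref{assum:weakly-curved-to-uncurved} is checked to pass from $\scF$ to $\scfb$ directly from the definition (the paper leaves this as a direct verification, and your sketch fills in the expected bookkeeping), and Lemma~\ref{lem:weakly-curved-dmft} finishes.

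One small precision about that last step. Lemma~\ref{lem:weakly-curved-dmft} only kills the terms $\scfb_{(\dots,\pi^*\sigma_i\g_* D_0,\dots)}$ that enter the structure equations, so $\scfb$ restricts to $\cdmc^{unc}_r$, which is what ``induces'' means here. It does not make $\scfb$ vanish on every sequence containing $D_0$, since $\scfb_{D_0}=c\,e_\scF$ with possibly $c\neq 0$.
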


\subsection{Deforming the differential}\label{subsec:compatible-with-d1}
In this section we show that a curved open-closed DMFT $\scF\cl  \cdmc^{unc}_r \rightarrow \text{Ch}_\Lambda$ with vanishing curvature induces an open-closed DMFT.

To this end, recall that a \emph{Maurer--Cartan element} $b$ for an $A_\infty$ category $\scC$ is the data of a Maurer--Cartan element $b_y$ for each $A_\infty$ algebra $\scC(y,y)$. Maurer--Cartan elements are the same as bounding cochains, but we will use different terminology here to avoid confusing the Maurer--Cartan elements employed here with the bounding cochains for a Lagrangian employed in the previous section.

Given a Maurer--Cartan element $b$, the deformed category $\scC^b$ has the same objects and morphisms as $\scC$, but a deformed $A_\infty$ structure defined by the analogous formula to Equation \eqref{eq:deformed-Ainfty-operations}, where one replaces $b$ by $b_y$ for appropriate $y$. We will only require the definition in the case where $\scC = \scC_\infty$ is the $A_\infty$ category associated to a dg category.

\begin{definition}
	\label{de:deformation-of-dg-cat}
	Let $\scC$ be a dg category, and $b$ a Maurer--Cartan element for $\scC_\infty$. Then $\scC^b$ is the $dg$ category associated to the $A_\infty$ category $\scC_{\infty}^b$. It has the same objects and morphisms and compositions as $\scC$ but is equipped with the deformed differential 
	\[
	d^b(\alpha) = d(\alpha) + b_{y'}\g \alpha - (-1)^{|\alpha|}\alpha \g b_y.
	\]
	for $\alpha \in \scC(y,y')$.
\end{definition}

\noindent 
We will from now on drop the subscript when passing from dg categories to $A_\infty$ categories and simply assume that all $A_\infty$ categories in this subsection have $\mu^k = 0$ for $k \ge 3$.

\begin{lemma}\label{lem:pushforward-bounding-cochain}
	If $\scF\cl \scC\to \text{Ch}_\Lambda$ is an $A_\infty$ functor and $b$ a Maurer--Cartan element for $\scC$, then for each $y$, the element
	\begin{equation}\label{} 
		b^\scF_y \; \coloneqq\; \sum_{d\ge 1} \,\scF^d_{(b_y, \dots, b_y)}
	\end{equation} 
	is a Maurer--Cartan element for $\normalfont\text{Ch}_\Lambda(\scF(y),\scF(y))$ and the maps
	\begin{equation}
		\label{eq:deformed-functor}
		\scF^b_{\sigma_d, \dots, \sigma_1} \;\coloneqq\; \sum \scF_{(b, \dots, b, \sigma_d, b, \dots, b, \sigma_1, b, \dots, b)},
	\end{equation}
	where we have just written $b$ instead of $b_y$ for appropriate $y$, define an $A_\infty$ functor $\scC^b \to \text{Ch}_\Lambda$, where $\scF^b(y)$ carries the differential deformed by $b^\scF_y$. If $\scF$ is unital, so is $\scF^b$.
\end{lemma}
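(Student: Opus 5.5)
The cleanest route is to work at the level of bar constructions. An $A_\infty$ functor $\scF\cl\scC\to\scD$ is the same as a dg coalgebra morphism $\wh\scF\cl B\scC\to B\scD$ between the (reduced, completed) bar constructions. A Maurer--Cartan element $b$ corresponds to the group-like element $e^b=\sum_{d\ge0}b^{\otimes d}$ (interpreted objectwise). Deforming by $b$ amounts to conjugating the bar differential by $e^b$, i.e.\ twisting the codifferential by insertion of $b$'s.

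Convergence of all infinite sums is assumed. It should hold because $b$ lies in positive filtration and $\scF$ is filtered, which is the standing assumption on $\Lambda$ and on $A_\infty$ functors in \S\ref{subsec:symmetric-monoidal}. I would state this explicitly first so that each sum below is well defined.

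The first step is to show that $b^\scF_y$ is Maurer--Cartan. Since $\scD=\text{Ch}_\Lambda$ is dg, the Maurer--Cartan equation for $b^\scF_y$ reads $\mu^1_\scD(b^\scF)+\mu^2_\scD(b^\scF,b^\scF)=0$, up to the curvature term, which vanishes because $\scD$ is uncurved. Expanding, the left side is
\[\sum_d\Bigl(\mu^1_\scD\scF^d(b,\dots,b)+\sum_i\mu^2_\scD\bigl(\scF^{d-i}(b,\dots,b),\scF^i(b,\dots,b)\bigr)\Bigr).\]
Applying \eqref{eq:a-infinity-functor} with all $\sigma_j=b$ and summing over $d$, this equals
\[\sum_d\sum_i\pm\,\scF^d(b,\dots,\mu^1_\scC b,\dots,b)+\sum_d\sum_i\pm\,\scF^{d-1}(b,\dots,\mu^2_\scC(b,b),\dots,b).\]
Since each entry $b$ has $|b|'=0$, all these signs are $+$. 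Regrouping by position, the right side becomes $\sum\scF(b,\dots,b,\mu^1 b+\mu^2(b,b),b,\dots,b)$, which vanishes by the Maurer--Cartan equation in $\scC$.

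The second step is the functor equation for $\scF^b$. I would write out \eqref{eq:a-infinity-functor} for $\scF^b$ with target differential $\mu^1_{\scD}+\mu^2_\scD(b^\scF,\cdot)\pm\mu^2_\scD(\cdot,b^\scF)$ and source operations $\mu^1_{\scC^b}=\mu^1+\mu^2(b,\cdot)+\mu^2(\cdot,b)$ and $\mu^2_{\scC^b}=\mu^2$. Then I would compare with the sum over all $b$-insertions of the undeformed equation \eqref{eq:a-infinity-functor}, applied to sequences $(b,\dots,b,\sigma_d,b,\dots,\sigma_1,b,\dots,b)$. The undeformed equations split into four kinds of terms:
\begin{enumerate}
\item Terms $\mu^2_\scD(\scF^{\bullet},\scF^{\bullet})$ where one factor contains only $b$'s. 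These give exactly the deformation of the target differential by $b^\scF$.
\item Terms $\mu^1_\scC$ or $\mu^2_\scC$ applied to a block of consecutive $b$'s. These cancel by the Maurer--Cartan equation, as in the first step.
\item Terms $\mu^2_\scC(b,\sigma_i)$ or $\mu^2_\scC(\sigma_i,b)$. These assemble into $\mu^1_{\scC^b}$.
\item The remaining terms. These reproduce the undeformed shape of the equation for $\scF^b$.
\end{enumerate}
Inserting $b$ does not change the shifted degrees $|\cdot|'$, so the signs $(-1)^{|\lc\sigma^{<i}|'}$ agree on both sides. This is the point where I expect the only real care is needed. The hardest part is bookkeeping: checking that every splitting of an enlarged sequence in the undeformed equation corresponds to exactly one term of the deformed equation. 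I would do this through the coalgebra picture, where $\wh{\scF^b}=\tau_{e^{b^\scF}}\g\wh\scF\g\tau_{e^b}^{-1}$, with $\tau$ denoting the twist, which makes the correspondence automatic.

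Finally, unitality. On a single identity we have $\scF^b(\ide)=\scF^1(\ide)+\sum\scF^{\ge2}(\dots,\ide,\dots)=\ide$, because $\scF^d$ vanishes when $d>1$ and an identity is an entry. For the same reason $\scF^{b,d}$ vanishes for $d>1$ whenever some $\sigma_i=\ide$. Hence $\scF^b$ is unital whenever $\scF$ is.
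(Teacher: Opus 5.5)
Your proposal is correct and follows the paper's route. The Maurer--Cartan property of $b^\scF_y$ comes, exactly as in the paper's one-line proof, from summing the $A_\infty$ functor equations over all-$b$ sequences and using $\mu^1_\scC(b)+\mu^2_\scC(b,b)=0$, with all signs trivial since $|b|'=0$. Your term-by-term check of the functor equations for $\scF^b$ (matching splittings of the enlarged sequences) and of unitality supplies details the paper leaves implicit; it is the same bookkeeping the paper sketches for deformed natural transformations in Lemma~\ref{lem:deformation-of-natural-transformation}.
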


\begin{proof} Since $\scF$ is an $A_\infty$ functor, we have $\mu^1_{\text{Ch}_\Lambda}(b^{\scF}_y) +\mu^2_{\text{Ch}_\Lambda}(b^{\scF}_y,b^{\scF}_y)\;=\;  \scF_{\mu^1_\scC(b_y)+ \mu^2_{\scC}(b_y,b_y)} \;=\; 0$.
\end{proof}

\begin{remark}
	A functor $\scF\cl \scC \rightarrow \text{Ch}_\Lambda$ is also known as a left $A_\infty$ module. We have chosen to state everything in terms of functors since it simplifies the characterisation of certain properties and is closer to Costello's set-up. The above example shows that given a left $A_\infty$ module over $\scC$, a Maurer--Cartan element $b$ for $\scC$ induces a left $A_\infty$ module over $\scC^b$.
\end{remark}

\noindent 
In general, if $\scC$ is symmetric monoidal, $\scC^b$ might no longer be, since the differential is changed. To ensure this, we can only deform by certain Maurer--Cartan elements of $\scC$.

\begin{lemma}\label{lem:symmetric-monoidal-bounding-cochain}
	We call a Maurer--Cartan element $b$ for $\scC$ \emph{symmetric monoidal} if 
	\begin{equation}\label{} b_{x\otimes y}\;=\; b_x\otimes_\scC \ide_y +\ide_x \otimes_{\scC} b_y\end{equation}
	for any $x,y \in \obj(\scC)$. In this case the symmetric monoidal structure on $\scC$ induces one on $\scC^b$.
\end{lemma}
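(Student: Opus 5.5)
The plan is to keep the tensor product of objects, the tensor product of morphisms, and the braiding exactly as in $\scC$, and to check that the monoidal product is compatible with the new differential $d^b$. Composition in $\scC^b$ is unchanged, so the only new requirement is that
\[\otimes_\scC\cl \scC^b(x,x')\otimes\scC^b(y,y')\to\scC^b(x\otimes y,x'\otimes y')\]
be a chain map for $d^b$. Concretely, for $\alpha\in\scC(x,x')$ and $\beta\in\scC(y,y')$ we must show
\[d^b(\alpha\otimes\beta)=d^b\alpha\otimes\beta+(-1)^{|\alpha|}\alpha\otimes d^b\beta.\]

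The computation runs as follows.

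\begin{enumerate}
\item Expand $d^b(\alpha\otimes\beta)$ as
\[d(\alpha\otimes\beta)+b_{x'\otimes y'}\g(\alpha\otimes\beta)-(-1)^{|\alpha|+|\beta|}(\alpha\otimes\beta)\g b_{x\otimes y}.\]
The term $d(\alpha\otimes\beta)$ is handled by the Leibniz rule in $\scC$, since $\scC$ is a symmetric monoidal dg category.

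\item Substitute the symmetric monoidal condition
\[b_{x'\otimes y'}=b_{x'}\otimes\ide_{y'}+\ide_{x'}\otimes b_{y'},\]
and likewise for $b_{x\otimes y}$.

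\item Apply the interchange law $(\sigma'\otimes\tau')\g(\sigma\otimes\tau)=(-1)^{|\tau'||\sigma|}(\sigma'\g\sigma)\otimes(\tau'\g\tau)$. Because $|b|=1$ and $|\ide|=0$, this gives
\[(b_{x'}\otimes\ide)\g(\alpha\otimes\beta)=(b_{x'}\g\alpha)\otimes\beta,\qquad (\ide\otimes b_{y'})\g(\alpha\otimes\beta)=(-1)^{|\alpha|}\alpha\otimes(b_{y'}\g\beta).\]
Similarly,
\[(\alpha\otimes\beta)\g(b_x\otimes\ide)=(-1)^{|\beta|}(\alpha\g b_x)\otimes\beta,\qquad (\alpha\otimes\beta)\g(\ide\otimes b_y)=\alpha\otimes(\beta\g b_y).\]

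\item Collect terms. The four contributions regroup into $(b_{x'}\g\alpha-(-1)^{|\alpha|}\alpha\g b_x)\otimes\beta$ and $(-1)^{|\alpha|}\alpha\otimes(b_{y'}\g\beta-(-1)^{|\beta|}\beta\g b_y)$, which is exactly the required identity. This sign bookkeeping is the main, though routine, obstacle. It uses the paper's conventions for the dg category $\scC\boxtimes\scC$ and the cohomological grading.
\end{enumerate}

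The remaining structure needs no change.

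\begin{itemize}
\item The associator, unitor and braiding are morphisms of degree $0$. We must check that they are $d^b$-closed. For the braiding $S_{x,y}$ one has $d^bS=dS+b_{y\otimes x}\g S-S\g b_{x\otimes y}$, and $dS=0$.
\item Naturality of the braiding in $\scC$ gives $S\g(b_x\otimes\ide+\ide\otimes b_y)=(\ide\otimes b_x+b_y\otimes\ide)\g S$, so the last two terms cancel and $d^bS=0$. The same argument applies to the associators and unitors. For the unit object $\mathbf 1$, the condition with $x=y=\mathbf 1$ gives $b_{\mathbf 1}=2b_{\mathbf 1}$, hence $b_{\mathbf 1}=0$, which is consistent.
\item The coherence diagrams involve only composition and the tensor product. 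These are unchanged, so the diagrams still commute.
\end{itemize}

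Hence $\scC^b$ is a symmetric monoidal dg category with the same structure maps.
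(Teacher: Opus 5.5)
Your proposal is correct and follows the same route as the paper, which invokes the interchange law $(\sigma'\otimes\tau')\g(\sigma\otimes\tau)=(-1)^{|\tau'||\sigma|}(\sigma'\g\sigma)\otimes(\tau'\g\tau)$ and leaves the check that $\otimes$ intertwines $d^b$ as a straightforward computation. You carry out that computation explicitly, and your signs check out; you also verify that the braiding stays $d^b$-closed, which the paper does not spell out.
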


\begin{proof}
	Since 
	\[(\alpha'\otimes \beta')\g (\alpha\otimes \beta) = (-1)^{|\alpha'||\beta|}(\alpha\g \alpha')\otimes (\beta'\g \beta)\]
	the claim follows from a straightforward computation.
\end{proof}

\noindent 
Suppose now $T$ is a pre-natural transformation between two $A_\infty$ functors $\scF$, $\scG\cl \scC \rightarrow \text{Ch}_{\Lambda}$. If $b$ is a Maurer--Cartan element for $\scC$, then the maps 
\begin{equation}\label{eq:deformed-pre-transformation-mc-element}
	T^b(\lc \sigma) := \sum T(b, \dots, b, \sigma_d, b, \dots, b,\sigma_{1}, b, \dots, b)
\end{equation}
define a pre-natural transformation between $\scF^b$ and $\scG^b$, where we have omitted subscripts on the Maurer--Cartan elements for simplicity.

\begin{lemma}
	\label{lem:deformation-of-natural-transformation}
	If $T$ is a natural transformation from $\scF$ to $\scG$, then $T^b$ is one from $\scF^b$ to $\scG^b$.
\end{lemma}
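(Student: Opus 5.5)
To prove Lemma~\ref{lem:deformation-of-natural-transformation}, I would check directly that $\mu^1_{\scQ^b}(T^b)$ equals the sum, over all ways of inserting copies of $b$ into $\lc\sigma$, of $\mu^1_\scQ(T)$ evaluated on the resulting long sequence; naturality of $T$ then makes every summand vanish. Throughout I treat $\scC$ as the $A_\infty$ category $\scC_\infty$ with $\mu^k=0$ for $k\ge 3$. In this setting $\scC^b$ has $\mu^1_b(\sigma)=\mu^1(\sigma)+\mu^2(b,\sigma)+\mu^2(\sigma,b)$, $\mu^2_b=\mu^2$, and no higher operations. By Lemma~\ref{lem:pushforward-bounding-cochain}, $\scF^b$ and $\scG^b$ are $A_\infty$ functors $\scC^b\to\text{Ch}_\Lambda$, where $\scF^b(y)$ carries the differential $d+b^\scF_y$.

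The first step is to expand each term of \eqref{eq:fun-differential} for $T^b$, $\scF^b$, $\scG^b$ on a sequence $\lc\sigma=(\sigma_d,\dots,\sigma_1)$.

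The bracket with the deformed differentials splits into two pieces. One is $[\del,T^b(\lc\sigma)]$, which is a sum of $[\del,T(\dots)]$ terms. The other is the composite $b^\scG\g T^b - (\pm) T^b\g b^\scF$. Since $b^\scG=\sum_d\scG^d(b,\dots,b)$, this second piece is exactly the $i=d$ and $i=0$ boundary contributions in the composition sums of \eqref{eq:fun-differential}. These are the contributions in which the extreme block $\lc\sigma^{>i}$ or $\lc\sigma^{\le i}$ consists only of inserted $b$'s.

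The terms $\scG^b(\lc\sigma^{>i})\g T^b(\lc\sigma^{\le i})$ and $T^b(\lc\sigma^{>i})\g\scF^b(\lc\sigma^{\le i})$ expand into all ways of cutting a $b$-padded sequence at a position lying between two $\sigma$'s. Together with the previous piece, these give all cuts of the padded sequence at an arbitrary position.

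The term $T^b(\del_i\lc\sigma)$ involves $\mu^1_b(\sigma_i)$. Its extra pieces $\mu^2(b,\sigma_i)$ and $\mu^2(\sigma_i,b)$ are compositions of $\sigma_i$ with an adjacent inserted $b$. The remaining $b$–$b$ compositions and $\mu^1(b)$ terms are missing from this expansion. They cancel in the padded expansion of $\mu^1_\scQ(T)$ because
\[
\mu^1(b)+\mu^2(b,b)=0,
\]
summed over each run of consecutive $b$'s in the sequence.

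With this bookkeeping, $\mu^1_{\scQ^b}(T^b)(\lc\sigma)$ equals
\[
\sum\mu^1_\scQ(T)(b,\dots,b,\sigma_d,b,\dots,b,\sigma_1,b,\dots,b),
\]
which vanishes term by term.

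The main obstacle is signs. The element $b$ has degree $1$, so $|b|'=0$. Hence inserting $b$'s changes none of the reduced degrees $|\lc\sigma^{\le i}|'$ that appear in \eqref{eq:fun-differential}. That should make the signs of corresponding terms agree, but it must be checked against the convention $\mu^1_\scC(a)=(-1)^{|a|}d(a)$ and the sign in $d^b$ from Definition~\ref{de:deformation-of-dg-cat}. Convergence of the infinite sums is assumed, as in the definition of $T^b$; it is guaranteed by the positive filtration of $b$.
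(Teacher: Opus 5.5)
Your proposal is correct and follows essentially the same route as the paper: apply $\mu^1_\scQ(T)=0$ to all $b$-padded sequences, regroup the terms into $\mu^1_{\scQ^b}(T^b)(\lc\sigma)$, and cancel the leftover $\mu^1(b)$ and $b\g b$ insertions by the Maurer--Cartan equation. Your identification of the $b^\scG\g T^b$ and $T^b\g b^\scF$ terms with the extreme cuts, of the adjacent $b\g\sigma_i$ and $\sigma_i\g b$ compositions with $\mu^1_b(\sigma_i)$, and your remark that $|b|'=0$ keeps the signs unchanged match the paper's sketch, in which it likewise omits the signs.
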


\begin{proof}
	Applying the equation $\mu_{\scQ}^1(T) = 0$ to tuples of the form $\lc \sigma^b = (b, \dots, b, \sigma_d, b, \dots, b,\sigma_{1}, b, \dots, b)$. The proof is then an easy verification of signs. We will sketch it here, omitting the signs. First observe that \[
	\sum_i \scG((\lc \sigma^b)^{>i}) \g T((\lc \sigma^b)^{\leq i}) = \scG(b, \dots, b) \g T^b(\lc \sigma) + \sum_{i = 0}^{d-1} \scG^b(\lc \sigma^{>i}) \g T^b(\lc \sigma^{\leq i}).
	\]
	Similarly\[
	\sum_i T((\lc \sigma^b)^{>i}) \g \scF((\lc \sigma^b)^{\leq i}) = T^b(\lc \sigma) \g \scF(b, \dots, b) + \sum_{i = 1}^d T^b(\lc \sigma^{>i}) \g \scF^b(\lc \sigma^{\leq i}). 
	\]
	Next observe that \[[\partial, T(\lc \sigma^b)] = [\partial^b, T^b(\lc \sigma)] - \scG(b, \dots, b) \g T^b(\lc \sigma) + (-1)^{|T|} T^b(\lc \sigma) \g \scF(b, \dots, b).\]
	The terms involving $\scF(b, \dots, b)$ and $\scG(b,\dots, b)$ thus cancel.
	Next, we find
	\[
	\sum_i T(\partial_i \lc \sigma^b) = \sum_i T^b(\partial_i \lc \sigma) + \sum T(b, \dots, b, \sigma_d, \dots, \partial b, \dots, \sigma_1, b, \dots, b),
	\]
	where the $\partial b$ could be anywhere (including to the left and right of $\sigma_d$, $\sigma_1$). Similarly
	\[\sum_i T(\star_i \lc \sigma^b) = \sum_i T^b(\star_i \lc \sigma) + \sum T(\dots \sigma_d, \dots, b \g b, \dots, \sigma_1,\dots) + \sum T(\dots, \sigma_d, \dots, [b, \sigma_i], \dots, \sigma_1,\dots).
	\] 
	The Maurer--Cartan equation for $b$ shows that \[
	\sum T(b, \dots, b, \sigma_d, \dots, \partial b, \dots, \sigma_1, b, \dots, b) + \sum T(\dots \sigma_d, \dots, b \g b, \dots, \sigma_1,\dots) = 0.
	\]
	Finally,
	\[
	T^b(\partial^b_i \lc \sigma) = T^b(\partial_i \lc \sigma) + \sum T(b,\dots, \sigma_d, \dots, [b, \sigma_i], \dots, \sigma_1,\dots,b)
	\]
	by definition of the differential on $\scC^b$.
\end{proof}

\begin{definition}\label{de:compatible-transformation}
	We say a pre-natural transformation $T$ is \emph{compatible} with the Maurer--Cartan element $b$ if $(T^b)^0 = T^0$.
\end{definition}

\begin{lemma}
	\label{lem:deformation-of-SM-functor}
	Let $\scF\cl \scC \rightarrow \text{Ch}_{\Lambda}$, be a monoidal $A_\infty$ functor and  $b$ be a symmetric monoidal Maurer--Cartan element for $\scC$. Then, the induced functor $\scF^b\cl \scC^b \rightarrow \text{Ch}_{\Lambda}$ is a monoidal $A_\infty$ functor. If $\scF$ is unital symmetric monoidal, then $\scF^b$ is unital symmetric monoidal. If $\scF$ is h-split, so is $\scF^b$.
\end{lemma}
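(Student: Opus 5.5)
The plan is to deform the monoidal data $R$ and $L$ exactly as the functor is deformed. That is, set $R_b \coloneqq R^b$ and $L_b \coloneqq L^b$ as in \eqref{eq:deformed-pre-transformation-mc-element}, where the entries of $\scC\boxtimes\scC$ are padded by the Maurer--Cartan element of $\scC\boxtimes\scC$ determined by $b$. Since $b$ is symmetric monoidal, Lemma~\ref{lem:symmetric-monoidal-bounding-cochain} shows that $\scC^b$ is symmetric monoidal. The relevant Maurer--Cartan element on $\scC\boxtimes\scC$ at $(x,y)$ is $b_x\otimes\ide_y+\ide_x\otimes b_y$, and $\otimes_\scC$ carries it to $b_{x\otimes y}$. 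Hence
\[
(\scF\g\otimes_\scC)^{b}=\scF^b\g\otimes_{\scC^b}.
\]
This first identification is immediate from \eqref{eq:F-tensor} and \eqref{eq:deformed-functor}.

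The main step, and the one I expect to be the obstacle, is to identify the deformation of $\otimes_\scD\g(\scF\otimes\scF)$ by this Maurer--Cartan element with $\otimes_\scD\g(\scF^b\otimes\scF^b)$, and similarly for $\ov\otimes$. The argument expands \eqref{eq:tensor-F} on a tuple padded with $b_x\otimes\ide$ and $\ide\otimes b_y$ entries. I would use unitality of $\scF$ to discard every term in which a factor $\scF^{\geq 2}$ receives an identity entry. The remaining terms regroup: the $b_x\otimes\ide$ insertions go into the $\scF(\concat_{\lc p}\lc\sigma)$ factor and the $\ide\otimes b_y$ insertions into the $\scF^{\g\lc p}(\lc\tau)$ factor, which reproduces the padded expressions defining $\scF^b$. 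The target differential deformed by $b^\scF_{x}\otimes\ide+\ide\otimes b^\scF_y$ must also be matched with the one coming from $b^{\scF}_{x\otimes y}$. On the nose this only holds up to $R^0$, so I would apply Lemma~\ref{lem:deformation-of-natural-transformation} to the \emph{target} functors viewed on $\scC\boxtimes\scC$. The sign bookkeeping uses $|b|=1$, so $b$ contributes $|b|'=0$ to all $\dagger$-signs. Consequently $\star_1,\star_2$ are unchanged by inserting $b$'s, which is what makes the regrouping sign-free.

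With these identifications, Lemma~\ref{lem:deformation-of-natural-transformation} shows that $R^b$ and $L^b$ are natural transformations between the required functors. We also need $(R^b)^0=(L^b)^0$. Their zeroth components are $\sum_k R^k(b,\dots,b)$ and $\sum_k L^k(b,\dots,b)$. Because each padded entry is of the form $b\otimes\ide$ or $\ide\otimes b$, condition \eqref{eq:unital-symm-monoidal} kills all $k\geq1$ terms in the unital case, so $(R^b)^0=R^0=L^0=(L^b)^0$, i.e.\ $R$ and $L$ are compatible with $b$ in the sense of Definition~\ref{de:compatible-transformation}. In the non-unital monoidal case I would restrict to this compatible situation.

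The remaining assertions are short. Unitality of $\scF^b$ is part of Lemma~\ref{lem:pushforward-bounding-cochain}. Vanishing \eqref{eq:unital-symm-monoidal} for $R^b,L^b$ follows because padding identity entries with $b$'s still leaves, in each slot, either an identity or a $b$-term paired with an identity, so $R,L$ vanish on them. The braiding relation and the commuting square are unchanged at level zero and follow termwise for higher levels from those of $R,L$ applied to padded tuples. Finally, $(R^b)^0=R^0$ is a quasi-isomorphism for the undeformed differentials. With the deformed differentials, filter by valuation (assuming, as throughout, that $b$ has positive filtration, or more generally via a complete filtration). The associated graded map is $R^0$, so a spectral sequence comparison shows that $R^0$ remains a quasi-isomorphism, giving h-splitness.
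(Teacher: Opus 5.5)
Your proposal follows the paper's proof essentially step for step. You deform $R$ and $L$ by $\wt b_{(x,y)}=b_x\otimes\ide_y+\ide_x\otimes b_y$ on $\scC\boxtimes\scC$, identify $\scH^{\wt b}=\scF^b\g\otimes_\scC$ and (using unitality) $\scG^{\wt b}=\otimes\g(\scF^b\otimes\scF^b)$, invoke Lemma~\ref{lem:deformation-of-natural-transformation}, and get unitality and h-splitness from the identity factors in $\wt b$ and a valuation spectral sequence. Your explicit check $(R^{\wt b})^0=R^0=L^0=(L^{\wt b})^0$ via \eqref{eq:unital-symm-monoidal} makes precise a point the paper leaves implicit.

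One caution: your claim that the regrouping is sign-free because $|b|'=0$ is not right as stated. In $|\lc\sigma+\lc p|$, a $b$ in a slot outside $\lc p$ contributes $|b|=1$, and in $|\lc\tau+\lc p^c|$ an identity in such a slot contributes $|\ide|'=-1$. For example, this happens for the term $\scF^2(b,\sigma)\otimes\scF^1(\tau)$ coming from $(b\otimes\ide,\sigma\otimes\tau)$ with $\lc p=(1)$. So the signs in $\scG^{\wt b}=\otimes\g(\scF^b\otimes\scF^b)$ genuinely need checking, a verification the paper's proof also omits.
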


\begin{proof}
	Let $R$ and $L$ be the two natural transformations giving the symmetric monoidal structure of $\scF$. We only discuss the case of $R$ as the other case is analogous. 
	Abbreviate $\scG\coloneqq \bigotimes_{\text{Ch}_\Lambda}\g (\scF\otimes\scF)$, defined in~\eqref{eq:tensor-F}, and $\scH:= \scF\g \otimes_\scC$.
	Recalling the definition of $\scC\boxtimes\scC$ from \S\ref{subsec:symmetric-monoidal}, we define the Maurer--Cartan element $\wt b$ by $\wt b_{(x,y)} \coloneqq b_x\otimes\ide_y + \ide_x \otimes b_y$. Then,
	$$\otimes_{\scC}(\wt b_{(x,y)})\; =\; b_{x\otimes y},$$\noindent
	which implies $\scH^{\wt b} = \scF^b\g\otimes_{\scC}$. Due to the unitality of $\scF$, we have $\scG^{\wt b} = \otimes_{\text{Ch}_\Lambda}\g (\scF^b\otimes\scF^b)$ and $R^{\wt b}$ defines the desired first part of the symmetric monoidal structure on $\scF^b$ by Lemma~\ref{lem:deformation-of-natural-transformation}. The same argument shows that $L^{\wt b}$ yields the other part of the symmetric monoidal structures. Since $\wt b$ is symmetric, $\scF^b$ is symmetric monoidal.
	Unitality follows from the fact that $\wt b_{(x,y)}$ is a sum of tensor products containing the identity. Finally, since $b$ and thus also $\wt b$ have positive valuation, one can use spectral sequences to prove that $(R^b)^0$ and $(L^b)^0$ are still quasi-isomorphisms if $R^0$ and $L^0$ are.
\end{proof}

\noindent 
We now specialise the discussion to our geometric setup of the previous subsections. Thus, $\scC = \cdmc^{unc}$ is the symmetric monoidal dg category of Definition~\ref{de:chains-for-dmft}.
 For any $y \in \cdmc$ we write $\wh D_1 \in \cdmc(y,y)$ for the morphism of the form \[\wh D_1 = \sum_{y_1,y_2} \idsimp^{y_1} \times D_1 \times \idsimp^{y_2},\]
 summing over all possible partitions of $y \sm \{\kappa\}$ for arbitrary $\kappa \in y_o$ with open points in $y_1$ ordered before those in $y_2$.

\begin{lemma}\label{}
	 $\wh D_1$ is a symmetric monoidal Maurer--Cartan element for the category $\cdmc^{unc}$ of Definition~\ref{de:category-without-curvature} and there exists a canonical inclusion
	 \[
	 \dmc_r \;\hkra\; (\cdmc^{unc}_r)^{\wh D_1}
	 \]
	 of symmetric monoidal dg categories. 
\end{lemma}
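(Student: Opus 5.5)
The proof splits into three checks: the Maurer--Cartan equation for $\wh D_1$, its compatibility with the monoidal structure, and the identification of the deformed differential on genuine chains with $\del$.

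\textbf{Step 1: the Maurer--Cartan equation.} In the dg setting of Definition~\ref{de:dg-to-a-infinity}, the equation for $b_y = \wh D_1$ reduces (up to the sign conventions for $\mu^1,\mu^2$) to
\[
\del_{oc}\wh D_1 + \wh D_1\g \wh D_1 = 0 \quad\text{in } \cdmc^{unc}_r(y,y).
\]
In $\cdmc^{unc}_r$ we have $D_0 = 0$, so the defining formula gives $\del_{oc} D_1 = -D_1\g D_1$. The terms $D_2\g_i D_0$ vanish. Extending across the monoidal structure, $\del_{oc}(\idsimp^{y_1}\times D_1\times \idsimp^{y_2})$ is computed by the Leibniz rule.

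\begin{itemize}
\item The identity factors contribute no $\del$-terms.
\item Their $D_1$-correction terms from \eqref{eq:modified-differential-old-simplex} are of the form $D_1\g_i\idsimp - \idsimp\g_i D_1$, and these cancel.
\item The $\pi^*$-term is absent because $D_0 = 0$.
\end{itemize}

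Hence $\del_{oc}\wh D_1 = -\sum_\kappa \idsimp\times (D_1\g D_1)\times \idsimp$. On the other hand, $\wh D_1\g \wh D_1$ expands as a diagonal part (both $D_1$'s at the same $\kappa$) plus a cross part (two distinct points $\kappa\neq\kappa'$). The diagonal part cancels $\del_{oc}\wh D_1$. The cross terms cancel pairwise: the $(\kappa,\kappa')$ and $(\kappa',\kappa)$ terms differ by the Koszul sign of passing one odd-degree morphism ($|D_1| = -1$) past another in the interchange law for $\g$ and $\times$.

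\textbf{Step 2: symmetric monoidality.} I check $\wh D_1{}_{x\otimes y} = \wh D_1{}_x\otimes\idsimp_y + \idsimp_x\otimes \wh D_1{}_y$. This is immediate from the definition, because the sum over $\kappa\in (x\sqcup y)_o$ splits as a sum over $\kappa\in x_o$ and $\kappa\in y_o$. The one point to verify is that the ordering conventions on open points in $y_1,y_2$ give exactly the Koszul signs of the symmetric monoidal product. Lemma~\ref{lem:symmetric-monoidal-bounding-cochain} then shows that $(\cdmc^{unc}_r)^{\wh D_1}$ is a symmetric monoidal dg category.

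\textbf{Step 3: the inclusion.} The inclusion $\dmc_r\hkra(\cdmc^{unc}_r)^{\wh D_1}$ is the identity on objects and on chains. Compositions and the monoidal product agree on the nose, since they are unchanged in the deformation. The only thing to prove is that the differentials match: for $\sigma\in\dmc_r(y,y')$,
\[
d^{\wh D_1}\sigma = \del_{oc}\sigma + \wh D_1\g\sigma - (-1)^{|\sigma|}\sigma\g\wh D_1.
\]
With $D_0 = 0$, $\del_{oc}\sigma = \del\sigma - \sum_{i\in y'_o} D_1\g_i\sigma + (-1)^{|\sigma|}\sum_{i\in y_o}\sigma\g_i D_1$. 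Since $\wh D_1\g\sigma = \sum_{i\in y'_o} D_1\g_i\sigma$ and $\sigma\g \wh D_1 = \sum_{i\in y_o}\sigma\g_i D_1$, the correction terms cancel and $d^{\wh D_1}\sigma = \del\sigma$. Thus the inclusion is a dg functor, and it is strictly symmetric monoidal.

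The main obstacle is the sign bookkeeping in Step 1. This includes matching the convention $\mu^1 = (-1)^{|a|}d$ and the sign in $\mu^2$ to the form of the Maurer--Cartan equation, and the interchange signs for the cross terms. I would first fix the convention that $b_y\g\alpha$ means $\sum_\kappa D_1\g_\kappa\alpha$, as in the definition of $\del_{oc}$, and then verify the cancellation on the case $y = (\emst,2)$, which contains every type of term.
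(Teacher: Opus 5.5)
Your proposal is correct and follows the paper's proof. The paper obtains the Maurer--Cartan equation from $\mu^1(\wh D_1) = \wh D_1\g\wh D_1 = -\mu^2(\wh D_1,\wh D_1)$ in the category where $D_0=0$, and it shows that the deformed differential on genuine chains collapses to $\del$ by the same cancellation as your Step 3. Your explicit cancellation of the cross terms via the interchange sign, and your check that $b_{x\otimes y} = b_x\otimes\ide_y + \ide_x\otimes b_y$ for $b=\wh D_1$, fill in details the paper leaves implicit.
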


\begin{proof}
	The first claim from the fact that $\mu^1_{oc}(\wh D_1) = \wh D_1 \g \wh D_1 = - \mu^2_{oc}(\wh D_1, \wh D_1)$. For the second we observe that by Definition~\ref{de:deformation-of-dg-cat}, the differential of $(\cdmc^{unc}_r)^{\wh D_1}$ is given by  
	\begin{align*}
		\partial_{oc}^{\wh{D_1}}(\sigma) \;&=\; \partial_{oc}(\sigma) + \wh D_1 \g \sigma - (-1)^{|\sigma|}\sigma \g \wh D_1 \\&=\; \partial \sigma,
	\end{align*}
	where $\partial \sigma$ denotes the ordinary boundary operator on the singular chain $\sigma$.
\end{proof}

\noindent 
We can thus consider the deformed category $(\cdmc^{unc}_r)^{\wh{D_1}}$, and give a curved open-closed DMFT $\scF\cl \cdmc^{unc}_r \to \text{Ch}_\Lambda$ with vanishing curvature, the deformed $A_\infty$ functor $\scF^{\wh{D_1}}\cl (\cdmc^{unc}_r)^{\wh{D_1}} \rightarrow \text{Ch}_\Lambda$. Since $\wh D_1$ is symmetric monidal, Lemma~\ref{lem:deformation-of-SM-functor} implies

\begin{cor}\label{cor:zero-curvature-to-uncurved}
	If $\scF$ is an $r$-dimensional curved  open-closed DMFT with vanishing curvature, the $A_\infty$ functor $\wt \scF\cl \dmc_r \rightarrow \text{Ch}_{\Lambda}$ defined by the composite
	 \[
	 \dmc_r \;\hkra\;(\cdmc^{unc}_r)^{\wh{D_1}} \;\xrightarrow{\scF^{\wh D_1}} \;\text{Ch}_{\Lambda}
	\]
	and equipped with the deformed symmetric monoidal structure is an open-closed DMFT.\qed
\end{cor}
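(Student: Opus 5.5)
The statement is Corollary~\ref{cor:zero-curvature-to-uncurved}. I will assemble it from three previously established pieces: the Maurer--Cartan element $\wh D_1$, the deformation Lemma~\ref{lem:pushforward-bounding-cochain}, and Lemma~\ref{lem:deformation-of-SM-functor}.

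\emph{Step 1: the deformed functor.} The curvature of $\scF$ vanishes, so $\scF$ factors through $\cdmc^{unc}_r$. By the preceding lemma, $\wh D_1$ is a symmetric monoidal Maurer--Cartan element there. Lemma~\ref{lem:pushforward-bounding-cochain} then gives an $A_\infty$ functor $\scF^{\wh D_1}\cl (\cdmc^{unc}_r)^{\wh D_1}\to \text{Ch}_\Lambda$, with $\scF(y)$ carrying the differential deformed by $\wh D_1^{\scF}$. The sums $\sum_d \scF^d_{(\wh D_1,\dots,\wh D_1)}$ and the insertions in \eqref{eq:deformed-functor} converge because $\scF_{D_1}$ has positive filtration, by the positivity condition of Definition~\ref{de:curved-dmft}. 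More precisely, every term containing $D_1$ gains positive valuation, and I will check that the higher $\scF^d$ on tuples containing $D_1$ also have positive filtration. If this does not follow from the definition, I will note that it holds for the geometric construction and impose it as part of the hypotheses. Precomposing with the canonical inclusion $\dmc_r\hookrightarrow(\cdmc^{unc}_r)^{\wh D_1}$ of symmetric monoidal dg categories gives an $A_\infty$ functor $\wt\scF$ on $\dmc_r$, because restricting an $A_\infty$ functor along a dg functor preserves the relations \eqref{eq:a-infinity-functor}.

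\emph{Step 2: the symmetric monoidal structure.} Lemma~\ref{lem:deformation-of-SM-functor} upgrades $\scF^{\wh D_1}$ to a unital symmetric monoidal, h-split $A_\infty$ functor with structure maps $R^{\widetilde{\wh D_1}}$ and $L^{\widetilde{\wh D_1}}$. Restricting these natural transformations along the inclusion, which is strictly symmetric monoidal, keeps them natural. They also still satisfy the braiding identity, since the braiding $S$ of $\dmc_r$ maps to that of the deformed category. The h-split property only concerns the zeroth components, and these are unchanged under restriction. Unitality is preserved because the identities of $\dmc_r$ map to identities.

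\emph{Step 3: the remaining conditions.} Nondegeneracy of $\wt\scF$ holds because every term of $\wt\scF_{\lc\sigma}$ is a value of $\scF$ on a tuple containing all the $\sigma_i$. Hence it vanishes when some $\sigma_i$ is degenerate.

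\emph{Main obstacle.} The subtle point is h-splitness. It requires $(R^{\wh D_1})^0$ to be a quasi-isomorphism for the \emph{deformed} differentials, even though the differential on $\scF(y)$ has changed. I would argue with the filtration spectral sequence, as in the proof of Lemma~\ref{lem:deformation-of-SM-functor}. Filtering by valuation, the $E_1$ page recovers the undeformed map $R^0$, which is a quasi-isomorphism. The filtration is complete and exhaustive over $\Lambda$, so the spectral sequence converges and the comparison theorem applies. The only care needed is to confirm that $(R^{\wh D_1})^0$ differs from $R^0$ by terms of positive valuation, which follows from positivity of $\scF_{D_1}$.
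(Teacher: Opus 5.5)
Your proposal is correct and is the paper's argument: the paper states the corollary as an immediate consequence of three ingredients, and then restricts along the inclusion $\dmc_r\hkra(\cdmc^{unc}_r)^{\wh D_1}$ exactly as you do. The ingredients are the (unlabelled) lemma just before it, which shows $\wh D_1$ is a symmetric monoidal Maurer--Cartan element of $\cdmc^{unc}_r$ with that inclusion symmetric monoidal dg, together with Lemmas~\ref{lem:pushforward-bounding-cochain} and~\ref{lem:deformation-of-SM-functor}.

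Your extra remarks on convergence and h-splitness just make explicit what the paper leaves to its standing filtered assumptions. One correction: the corrections to $(R^{\wh D_1})^0$ are the terms $R^d(\wt b,\dots,\wt b)$ with $\wt b=\wh D_1\otimes\ide+\ide\otimes\wh D_1$. So you need positivity of these components of $R$ (true in the geometric model), not merely positivity of $\scF_{D_1}$.
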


\noindent 
Note that $\wt \scF$ and $\scF$ do not agree on objects. For example, if $\scF(0,1) =  (V, d)$ is the open sector, then $\wt \scF(0,1) = (V, \wt d)$ where the deformed differential is given by $\wt d = d + \sum_{i \geq 1} \scF^i_{\wh{D_1}, \dots, \wh{D_1}}.$

\begin{cor}
	\label{cor:bounding-cochain-induces-uncurved}
	 Let $\scF$ be a curved open-closed DMFT with Property~\ref{assum:weakly-curved-to-uncurved}. If $\scF$ admits a weak bounding cochain $\bc$, then the deformed open-closed DMFT $\scfb$ induces an open-closed DMFT $\wt \scF^b\cl\dmc_r \rightarrow \text{Ch}_{\Lambda}$. If $\scF$ is strict, semi-strict or operadic, then so is $\wt \scF^b$.
\end{cor}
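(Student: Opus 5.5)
The plan is to chain together the results of the previous two subsections and then check separately that strictness, semi-strictness and operadicity survive each step.

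\emph{Step 1: from $\bc$ to vanishing curvature.} Since $\scF$ has Property~\ref{assum:weakly-curved-to-uncurved}\eqref{assum:weakly-curved-Dk}, Proposition~\ref{prop:deformed-dmft} shows that $\scfb$ is a curved open-closed DMFT. Since $\bc$ is a weak bounding cochain, it shows moreover that $\scfb$ is weakly curved. I would then verify directly from Definition~\ref{de:deformed-dmft} that $\scfb$ again has both parts of Property~\ref{assum:weakly-curved-to-uncurved}; this was asserted just before the preceding corollary.
\begin{itemize}
\item For part \eqref{assum:weakly-curved-unit}, use that $\pi^{[k]}$ applied to a sequence of the form $(\dots,\pi^*\sigma_i,\dots)$ is again a sum of sequences in which some entry is a lift.
\item For part \eqref{assum:weakly-curved-Dk}, use that insertion of $\bc$ commutes with the clutching $\pi^*\sigma_i\g_* D_k$.
\end{itemize}
Lemma~\ref{lem:weakly-curved-dmft} then gives that $\scfb$ factors through $\cdmc^{unc}_r$, i.e. it has vanishing curvature.

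\emph{Step 2: removing $D_1$.} Apply Corollary~\ref{cor:zero-curvature-to-uncurved} to $\scfb$. Concretely, deform by the symmetric monoidal Maurer--Cartan element $\wh D_1$ to get $(\scfb)^{\wh D_1}$, precompose with the inclusion $\dmc_r\hkra(\cdmc^{unc}_r)^{\wh D_1}$, and equip the result with $R^{\wt b},L^{\wt b}$ as in Lemma~\ref{lem:deformation-of-SM-functor}. This yields $\wt\scF^{\bc}\cl\dmc_r\to\text{Ch}_\Lambda$, which is an open-closed DMFT. The remaining conditions are inherited as follows.
\begin{itemize}
\item Unitality and the h-split property come from Lemma~\ref{lem:deformation-of-SM-functor}; the h-split part is the spectral sequence argument, which uses that $\scF_{D_1}$ has positive filtration.
\item Nondegeneracy is inherited because lifts of degenerate simplices are degenerate.
\end{itemize}

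\emph{Step 3: strictness properties.} Write $\scS$ for the relevant subcategory: all of $\cdmc_r$, the stable subcategory $\cdmc_r^{\stb}$, or the operadic subcategory $\cdmc_r^{\opr}$. The property to preserve is that $\scF^d$ vanishes on $\scS^{\otimes d}$ for $d\ge2$. There are two deformations to control.
\begin{itemize}
\item For $\bc$: by \eqref{eq:iterated-lift}, $\scfb^d$ on $\lc\sigma$ is a sum of $\scF^d$ evaluated on $\pi^{[\lc k]}\lc\sigma$. A lift $\pi^*$ of a stable (resp. operadic) chain is again stable (resp. operadic), since adding an incoming boundary point changes neither stability nor the number of outputs. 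The same holds for $\sigma\times\idsimp$. Hence $\scfb^d$ vanishes on $\scS^{\otimes d}$ for $d\ge2$.
\item For $\wh D_1$: $\wt\scF^d_{\lc\sigma}=\sum\scfb_{(\wh D_1,\dots,\sigma_d,\wh D_1,\dots,\sigma_1,\dots)}$. Every term has length at least $d\ge2$, and $\wh D_1$ lies in $\cdmc^{\stb}_r\cap\cdmc^{\opr}_r$ by Definition~\ref{de:stable-operadic-subcategories}. So all terms vanish by the property for $\scfb$.
\item The only term of length one is $\scfb^1_{\sigma}$ with $\sigma\in\scS$. It appears only in $\wt\scF^1$, which is unconstrained; and in the strict case the $\wh D_1$-only sums contribute only to the deformed differential on objects, not to $\wt\scF^d$.
\end{itemize}

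The main obstacle is the bookkeeping in Step 1. One must make sure that $\scfb$ genuinely satisfies Property~\ref{assum:weakly-curved-to-uncurved}\eqref{assum:weakly-curved-unit}, in particular that the correction terms in \eqref{eq:deformed-dmft-morphisms-2} involving $d(\bc)$ are compatible with the unit. I would first reduce this to the corresponding statement for $\scF$ on the lifted sequences $\pi^{[k]}\lc\sigma$, using the third item of Lemma~\ref{lem:lifted-sequence-properties} to commute the lifts past the splitting of sequences. Signs there would be checked only modulo the standard conventions.
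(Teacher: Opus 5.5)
Your proposal is correct and follows the paper's proof. That proof combines Proposition~\ref{prop:deformed-dmft}, the remark that $\scfb$ inherits Property~\ref{assum:weakly-curved-to-uncurved}, Lemma~\ref{lem:weakly-curved-dmft} and Corollary~\ref{cor:zero-curvature-to-uncurved}, and gets the strictness claims from the fact that the $\wh D_1$-deformation only produces sequences of length at least $d$; your extra check that the $\bc$-deformation preserves length and keeps lifts inside the stable/operadic subcategories fills in a step the paper leaves implicit.
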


\begin{proof}
 The first claim follows from Corollary~\ref{cor:zero-curvature-to-uncurved} and Lemma~\ref{lem:weakly-curved-dmft}, while the last claim follows directly from the definition of strict, semi-strict or operadic because $\scF^b_{\lc\sigma}$ is defined in terms of sequences $\lc\sigma'$ that have length at least the length of $\lc\sigma$ and we are only adding $\wh{D_1}$.
\end{proof}

\subsection{Homotopies}\label{subsec:homotopies}
A natural transformation $T$ between $A_\infty$ functors $\scF$ and $\scG$ is called a \emph{homotopy} if $\scF$ and $\scG$ agree on objects, $T^0 = 0$ and $\mu_\scQ^1(T)^d = \scF^d - \scG^d$ for $d \ge 1$.
In particular, a homotopy necessarily has degree $0$.

\begin{definition}\label{de:homotopy-dmfts}
	A \emph{homotopy} $T$ between curved open-closed DMFTs $\scF^0$ and $\scF^1$ is a homotopy $T$ of $A_\infty$ functors so that two compositions of the square
	\begin{center}\begin{tikzcd}
			\otimes_{\text{Ch}_\Lambda}\g (\scF^0\otimes\scF^0) \arrow[r,"R^0"] \arrow[d,"\otimes_{\text{Ch}_\Lambda}\g (T\otimes T)"]&\scF^0\g\otimes_{\cdmc_r} \arrow[d,"T\g\otimes_{\cdmc_r}"]\\ 
			\otimes_{\text{Ch}_\Lambda}\g (\scF^1\otimes\scF^1) \arrow[r,"R^1"] & \scF^1\g \otimes_{\cdmc_r} \end{tikzcd} \end{center}
	differ by the boundary $\mu^1(P)$ of natural transformation $P$ from $\otimes_{\text{Ch}_\Lambda}\g (\scF^0\otimes\scF^0) $ to $\scF^1\g \otimes_{\cdmc_r}$, and similarly with $R^{(i)}$ replaced by $L^{(i)}$ and $P$ replaced by $P\g S$, where $S$ is the braiding of $\cdmc_r$.
\end{definition}

\noindent 
However, if $T$ is a homotopy, the deformed natural transformation $T^b$ need not satisfy $(T^b)^0 = 0$. Thus, $T^b$ might not be a homotopy, which is the motivation behind Definition~\ref{de:compatible-transformation}. In particular, we have the following observation.

\begin{lemma}\label{lem:homotopy-to-uncurved}
	Suppose $T$ is a homotopy between curved open-closed DMFTs $\scF^0$ and $\scF^1$ with vanishing curvature. If $T$ is compatbile with (the Maurer--Cartan element) $\wh{D_1}$, then $T^{\wh{D_1}}$ is a homotopy between the open-closed DMFTs $\scF^{0,\wh{D_1}}$ and $\scF^{1,\wh{D_1}}$.\qed
\end{lemma}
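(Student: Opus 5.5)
We need three things for $T^{\wh D_1}$: it is a natural transformation from $\scF^{0,\wh D_1}$ to $\scF^{1,\wh D_1}$ with $\mu^1_\scQ(T^{\wh D_1})^d = \scF^{0,\wh D_1,d}-\scF^{1,\wh D_1,d}$ for $d\ge 1$; it has vanishing zeroth component; and it is compatible with the symmetric monoidal structures up to a boundary. Throughout, $b=\wh D_1$ is a symmetric monoidal Maurer--Cartan element of $\cdmc^{unc}_r$ by the preceding lemma.

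The plan is to rerun the proof of Lemma~\ref{lem:deformation-of-natural-transformation} with an inhomogeneous right-hand side. The homotopy equation for $T$ reads $\mu^1_\scQ(T)(\lc\tau)=\scF^0(\lc\tau)-\scF^1(\lc\tau)$ for every composable $\lc\tau$ of length $\ge1$, together with the length-$0$ component $\mu^1_\scQ(T)^0=0$. Apply it to the padded sequences $\lc\sigma^b=(b,\dots,b,\sigma_d,b,\dots,b,\sigma_1,b,\dots,b)$ and sum over all paddings. On the left, the bookkeeping of that earlier proof applies verbatim. The terms $\scG(b,\dots,b)\g T^b$ and $T^b\g\scF(b,\dots,b)$ are absorbed into the commutator with the deformed differentials $\partial+b^{\scF^i}$. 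The terms containing $\partial b$ and $b\g b$ cancel by the Maurer--Cartan equation. The terms containing $[b,\sigma_i]$ assemble into $T^b(\partial^b_i\lc\sigma)$. This yields $\mu^1_{\scQ^b}(T^b)(\lc\sigma)$.

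On the right, summing $\scF^0(\lc\sigma^b)-\scF^1(\lc\sigma^b)$ over paddings gives $\scF^{0,b}(\lc\sigma)-\scF^{1,b}(\lc\sigma)$ by \eqref{eq:deformed-functor}, so the homotopy equation holds for $d\ge1$. The one subtlety is $d=0$. There the padded sequences are $(b,\dots,b)$ of length $\ge1$, so the homotopy equation contributes $\scF^0(b,\dots,b)-\scF^1(b,\dots,b)=b^{\scF^0}_y-b^{\scF^1}_y$. The two objects $\scF^{i,b}(y)$ carry these two different deformed differentials, so the length-$0$ component of the computation exactly accounts for the mismatch between $[\partial^{b,0},\cdot]$ and $[\partial^{b,1},\cdot]$ on $(T^b)^0$. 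Compatibility $(T^b)^0=T^0=0$ is what makes this consistent. It gives $(T^b)^0=0$, so the length-$0$ identity $\sum_{k\ge1}T(b^{\otimes k})=0$ forces $\sum_{k\ge1}\bigl(\scF^0(b^{\otimes k})-\scF^1(b^{\otimes k})\bigr)=0$, i.e. $b^{\scF^0}_y=b^{\scF^1}_y$. Hence $\scF^{0,b}$ and $\scF^{1,b}$ agree on objects, including their differentials, as a homotopy requires. Degree $0$ is preserved since $|b|=1$ and each insertion of $b$ lowers the arity shift by one.

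For the monoidal part, let $P$ be the natural transformation of Definition~\ref{de:homotopy-dmfts}. Deform everything in sight by the Maurer--Cartan element $\wt b_{(x,y)}=b_x\otimes\ide_y+\ide_x\otimes b_y$ on $\scC\boxtimes\scC$, exactly as in the proof of Lemma~\ref{lem:deformation-of-SM-functor}. Deformation is compatible with $\mu^2_\scQ$ (the same padding argument), so the square of Definition~\ref{de:homotopy-dmfts} deforms to the corresponding square for $\scF^{i,b}$, $R^{i,\wt b}$, $T^b$. Its failure to commute is $\mu^1(P^{\wt b})$ by Lemma~\ref{lem:deformation-of-natural-transformation} applied with an inhomogeneous term; the $L$ and braiding versions are identical. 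The main obstacle is the sign bookkeeping of the padded sums, together with checking that $\otimes\g(T\otimes T)$ deforms to $\otimes\g(T^b\otimes T^b)$. That last check uses unitality of $T$, i.e. that $T$ vanishes on sequences that are identities in one tensor factor, just as unitality of $\scF$ was used for $\scG^{\wt b}$. I would verify it first in the case $d=1$ and then argue by the same summation for general $d$.
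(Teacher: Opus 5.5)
Your proposal is correct and is essentially the argument the paper intends: the lemma is stated with \qed as a direct consequence of the padding argument of Lemma~\ref{lem:deformation-of-natural-transformation}, run with the inhomogeneous right-hand side $\scF^0-\scF^1$ and with compatibility supplying $(T^{\wh D_1})^0=T^0=0$, together with the $\wt b$-deformation of Lemma~\ref{lem:deformation-of-SM-functor} for the monoidal data. Your length-zero computation, showing that compatibility forces $b^{\scF^0}_y=b^{\scF^1}_y$ so that the deformed functors actually agree on objects, makes explicit a point the paper leaves unstated; one caveat is that the unitality of $T$ you invoke for the monoidal square is not among the stated hypotheses, although the paper never makes $\otimes_{\text{Ch}_\Lambda}\g(T\otimes T)$ precise, so this part is at the same level of rigour as the source.
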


\noindent 
Recall that any semi-strict curved open-closed DMFT defines a curved $A_\infty$ algebra by Lemma~\ref{lem:curved-algebra-from-dmft}.

\begin{lemma}
	\label{lem:homotopies-induce-homs}
	A homotopy between two semi-strict curved open-closed DMFTs induces an isomorphism of the induced curved $A_\infty$ algebras.
\end{lemma}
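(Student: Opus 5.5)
The plan is to build an $A_\infty$ homomorphism $f=(f^k)_{k\ge0}$ from $\scA^0=\scF^0(0,1)$ to $\scA^1=\scF^1(0,1)$ out of the homotopy $T$, by evaluating $T$ on the same fundamental chains $D_k$ that define the $A_\infty$ operations in Lemma~\ref{lem:curved-algebra-from-dmft}. Since both DMFTs agree on objects, $\scA^0$ and $\scA^1$ have the same underlying module $V$, only the operations $\mu^k_{\scA^i}$ differ. I will set
\[
f^1 \coloneqq \ide + \scF\text{-independent correction } T^1_{D_1},\qquad f^0 \coloneqq \pm T^1_{D_0},\qquad f^k(\alpha_1,\dots,\alpha_k)\coloneqq (-1)^{\epsilon(\alpha)}T^1_{D_k}(\alpha_1\times\dots\times\alpha_k)\ (k\ge 2),
\]
with the same sign $\epsilon$ as in \eqref{eq:st-sign} and the $R^0$-cross product of the proof of Lemma~\ref{lem:curved-algebra-from-dmft}. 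Because $T$ has positive filtration on $D_0,D_1$ (inherited from positivity), $f^1$ is $\ide$ plus a positive-valuation term, so once $f$ is shown to be a curved $A_\infty$ homomorphism, it is automatically invertible (invert $f^1$ by a geometric series and then the higher components inductively), giving an isomorphism.

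The key step is to unpack the equation $\mu^1_\scQ(T)^1 = \scF^{0,1}-\scF^{1,1}$ of \eqref{eq:fun-differential} on the single chain $\sigma=D_k$ (and on $D_0,D_1$), together with the vanishing $T^0=0$. Using \eqref{eq:modified-differential-old-simplex} and the boundary formula \eqref{eq:compatible-chains} for $\partial D_k$, the term $T(\partial_{oc} D_k)$ splits into $T^1$ on compositions $D_{k_0}\g(\idsimp\times D_{k_1}\times\idsimp)$, $D_1\gt D_k$, $D_k\gt D_1$, and $\pi^*D_k\g_*D_0$. The terms $T^1$ on compositions are not automatically of composite form; here I use that $T$ is a natural transformation between \emph{semi-strict} functors only through the length-two components: the equation $\mu^1_\scQ(T)^2=\scF^{0,2}-\scF^{1,2}=0$ on stable pairs expresses $T^1(\sigma_2\g\sigma_1)$ as $\scF^1(\sigma_2)\g T^1(\sigma_1)\pm T^1(\sigma_2)\g\scF^1(\sigma_1)$ plus $\mu^1$-exact terms involving $T^2$. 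Substituting, and using the symmetric monoidal compatibility of $T$ (Definition~\ref{de:homotopy-dmfts}) to pull $T^1_{\idsimp\times D_{k_1}\times \idsimp}$ through $R^0$ as $\ide\times T^1_{D_{k_1}}\times\ide$ up to $\mu^1(P)$, the relation should reorganise into the curved $A_\infty$ homomorphism equation
\[
\sum \mu^{\ell}_{\scA^1}(f(\dots),\dots,f(\dots)) = \sum f(\alpha^1,\mu_{\scA^0}(\alpha^2),\alpha^3),
\]
with $f^0$ accounting for the $D_0$ and curvature insertions; signs are checked exactly as in Lemma~\ref{lem:curved-algebra-from-dmft} via \cite[Lemma~2.9]{ST16}.

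The main obstacle is this middle step: the homotopy relation only gives a linear (first-order) identity, whereas the homomorphism equation has terms with several $f$'s. I expect to handle this by noting that $T^2$ and the correction $P$ are not a priori zero, so I would first try to replace $T$ by a homotopic one vanishing in arity $\ge2$ on the operadic/stable subcategory (or simply define the higher products of $f$ to absorb the $T^2$ terms, i.e.\ let $f^k$ include $T^{d}$ evaluated on iterated decompositions $(D_{k_0},\idsimp\times D_{k_1}\times\idsimp,\dots)$). If that bookkeeping closes, the quadratic terms match the tree expansion of compositions of $f$'s, and invertibility follows from the filtration argument above.
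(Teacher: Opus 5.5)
Your construction as stated does not produce an $A_\infty$ homomorphism, and the step you defer (``if that bookkeeping closes'') is the entire content of the proof.

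With $f^k=\pm T^1_{D_k}$, $f^1=\ide+T^1_{D_1}$, $f^0=\pm T^1_{D_0}$, the homomorphism equation contains terms $\mu^{\ell}_{\scA^1}(f^{k_1}(\cdot),\dots,f^{k_\ell}(\cdot))$ with two or more nontrivial factors. These are of degree $\ge 2$ in $T$, whereas every component of $\mu^1_\scQ(T)=\scF^0-\scF^1$ is linear in $T$.

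Rewriting $T^1(D_{k_0}\g(\idsimp\times D_{k_1}\times\idsimp))$ via the length-two relation does not change this. It yields $\scF^1_{D_{k_0}}\g T^1_{\idsimp\times D_{k_1}\times\idsimp}$ and $T^1_{D_{k_0}}\g\scF^0_{\idsimp\times D_{k_1}\times\idsimp}$ (it is $\scF^0$, not $\scF^1$, on the right, since $T$ goes from $\scF^0$ to $\scF^1$). Each of these is still linear in $T$, and you are left with $T^2$-terms that you never absorb. Using the monoidal compatibility only up to $\mu^1(P)$ leaves further unabsorbed terms.

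Your first remedy, normalising $T$ to vanish in arity $\ge 2$, is unjustified. It would also remove exactly the components that generate the nonlinear terms. Finally, positivity in Definition~\ref{de:curved-dmft} is a condition on $\scF_{D_0},\scF_{D_1}$, not on $T$, so it does not by itself give $f^1\equiv\ide$ modulo positive valuation.

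The missing idea is your second remedy made precise, which is what the paper does, following Fukaya's argument for pseudo-isotopies:
\[
f^k(\alpha_1,\dots,\alpha_k)\;=\;\sum_{\Gamma\in\text{Trees}(k)}(-1)^{\epsilon(\alpha)}\,T_{\lc\sigma(\Gamma)}(\alpha_1\times\dots\times\alpha_k).
\]
Here $\text{Trees}(k)$ consists of rooted trees with $k$ open inputs, energy-labelled vertices (energy zero only at vertices with at least three edges), and a total order on the vertices compatible with the tree. The sequence $\lc\sigma(\Gamma)=(\sigma_d,\dots,\sigma_1)$ has $\sigma_d=D_{|v_0|-1}$, and each other $\sigma_i$ is a disjoint union of $D_{|v_i|-1}$ with identity simplices. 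Two facts then close the argument.

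(i) After summing over all trees, the terms $T_{\partial_i\lc\sigma(\Gamma)}$ (with $\partial_{oc}D_j$ expanded via \eqref{eq:compatible-chains}) cancel against the terms $T_{\concat_i\lc\sigma(\Gamma')}$ of trees $\Gamma'$ with one more vertex. For example, the boundary term $T^1(D_{k_0}\g(\idsimp\times D_{k_1}\times\idsimp))$ of $T^1_{D_k}$ cancels the composition term of $T^2(D_{k_0},\idsimp\times D_{k_1}\times\idsimp)$.

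(ii) For $d>1$, semi-strictness makes $\scF^0$ and $\scF^1$ vanish on every subsequence of length $\ge 2$. So in $\mu^1_\scQ(T)(\lc\sigma(\Gamma))$ only the splittings with a single chain on the $\scF$-side survive, and the relation reduces, up to sign, to
\[
[d,T_{\lc\sigma(\Gamma)}]\;=\;\scF^1_{\sigma_d}\g T_{\lc\sigma(\Gamma)^{<d}}\;+\;T_{\lc\sigma(\Gamma)^{>1}}\g\scF^0_{\sigma_1}.
\]
After summing over vertex orders, the two right-hand terms are $\mu^{|v_0|-1}_{\scA^1}(f(\dots),\dots,f(\dots))$ and $f(\dots,\mu^r_{\scA^0}(\dots),\dots)$. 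The single-vertex sequences $\lc\sigma(\Gamma)=D_k$ supply the linear terms.

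So semi-strictness is used to kill $\scF^i$ on the tree sequences, not to rewrite $T^1$ on composites. Since every tree in $\text{Trees}(1)$ contains a vertex of positive energy, $f^1$ is the identity (the paper's $T_{\idsimp}$ term) plus positive-energy terms. That is what your geometric-series inversion argument actually needs.
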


\noindent 
The proof is similar to Fukaya's proof that pseudo-isotopies of $A_\infty$ algebras induce $A_\infty$ isomorphisms, \cite[Theorem~8.2]{Fu10}. Homotopies of DMFTs can thus be seen as a weakening of the notion of pseudo-isotopy.

\begin{proof}
	Following Fukaya \cite[Section~9]{Fu10}, define for any $k \in \bN$ the set $\text{Trees}(k)$ of (isomorphic classes of) all rooted trees with the following properties: the root vertex $v_0$ has exactly one output, and the tree has $k$ open inputs. Every vertex $v$ is labeled with energy $\beta(v)\in [0,\infty)$ with $\beta(v) = 0$ only if $v$ has at least three edges. We additionally equip our trees with a choice of total order on the vertices which respects the partial order defined by the tree, where $v' < v$ if $v'$ lies on the shortest path from $v$ to $v_0$. To any $\Gamma \in \text{Trees}(k)$ with $d$ vertices associate the tuple $\lc \sigma(\Gamma) = (\sigma_d, \dots, \sigma_1)$, where $\sigma_d$ is the chain $D_{|v_0|-1}$, and the other $\sigma_i$ consists of a disjoint union of $D_{|v_i|-1}$ and $\idsimp$ as required, so that the sequence $\lc \sigma(\Gamma)$ is a composable tuple from $k$ open inputs to one open output. 
	
	Given a homotopy $T$ between $\scF^0$ and $\scF^1$, denote the associated $A_\infty$ algebras by $\scA_i$, noting that their underlying $\Lambda$-modules are the same. Then define the maps $f^k\cl \scA_0^{\otimes k} \rightarrow \scA_1$
	by 
	\begin{equation}
		f^k(\alpha_1, \dots, \alpha_k) \;=\; \sum_{\substack{\Gamma \in Trees(k)}} (-1)^{\epsilon(\alpha)}T_{\lc \sigma(\Gamma)}(\alpha_1\times \dots\times \alpha_k),
	\end{equation}
	where $\epsilon(\alpha)$ was defined in~\eqref{eq:st-sign} and $\alpha\times \beta = (-1)^{|\alpha||\beta|}R^0(\alpha\otimes \beta)$.
	
	We claim that $f =(f^k)_{k\ge 1}$ defines an $A_\infty$ algebra homomorphism. First observe that the terms from the homotopy equation~\eqref{eq:fun-differential} with $T_{\partial_i \lc \sigma( \Gamma)}$ and $T_{\concat_i \lc \sigma(\Gamma)}$ cancel with each other after summing over all trees. We then separate out the cases where the length of $\lc \sigma(\Gamma)$ is $1$ and where it is $>1$.
	
	If its length is $1$, we have $\sigma(\Gamma) = D_k$. First, observe that $f^1$ in energy zero is given by $T_{\idsimp} = \ide$. Ignoring the terms that cancel, Equation~\eqref{eq:fun-differential} implies (omitting signs)
	\begin{equation*}
		[d, T_{D_k}] \,+\, T_{\del_{oc}D_k}\;=\; \scF^0_{D_k} \,-\, \scF^1_{D_k}.
	\end{equation*}
	This gives rise to the terms $\mu_{\scA_1}^{1,0} \g f^k$, $f^k \g \mu_{\scA_0}^{1,0}$, $\mu_{\scA_1}^{k} \g (f^{1,0} \otimes \dots \otimes f^{1,0})$ and $f^{1,0} \g \mu_{\scA_0}^k$ in the $A_\infty$ homomorphism equation.
	For sequences $\lc\sigma(\Gamma)$ of length $> 1$, note that as $\scF^i$ is semi-strict, $\scF^i_{\lc \sigma(\Gamma)} = 0$. Thus, ignoring signs and cancelling terms, \begin{equation*}
		[d, T_{\lc \sigma(\Gamma)}] \;=\; \scF^1_{\sigma_d} \g T_{\lc \sigma(\Gamma)^{<d}} \,+\, T_{\lc \sigma(\Gamma)^{>1}} \g \scF^0_{\sigma_1}.
	\end{equation*}
	The term on the left-hand side contributes to $\mu_{\scA_1}^{1,0} \g f^k$ and $f^k \g \mu_{\scA_0}^{1,0}$. For the term on the right-hand side, note that $\sigma_d = D_{|v_0|-1}$, so that $\lc \sigma(\Gamma)^{<d}$ consists of $|v_0| - 1$ trees, and \begin{equation*}
			\scF^1_{\sigma_d} \g T_{\lc \sigma(\Gamma)^{<d}}\; =\; \mu_{\scA_1}^{|v_0|-1}(f^{k_1}(\dots),\dots, f^{k_{|v_0|-1}}(\dots)).
		\end{equation*} 
		For the other term, note that $\sigma_1$ is a product of $D_{r}$ and identity simplices $\idsimp$, so that 
		\begin{equation*}
			T_{\lc \sigma(\Gamma)^{>1}} \g \scF^0_{\sigma_1}\; =\; f^{k-r}(\dots, \mu_{\scA_0}^r(\dots), \dots),
		\end{equation*}
		as required.
\end{proof}

%
%

\section{A Deligne--Mumford field theory from moduli spaces of stable maps}\label{subsec:lagrangian-dmft}
\noindent 
Suppose $(X,\omega)$ is a closed symplectic manifold and $L \sub X$ an embedded Lagrangian submanifold equipped with a relative spin structure $(V,\fs)$. Define the Novikov ring
\begin{align}\label{eq:novikov-ring}
	\Lambda &\coloneqq \set{  \sum_{i = 0}^{\infty}a_iQ^{\beta_i}\,\big|\, a_i \in \mathbb{R},\; \beta_i \in H_2(X, L), \; \lim_{i\to \infty } \omega(\beta_i) = \infty }
\end{align}
This subsection is devoted to proving Theorem~\ref{thm:OCDMFT associated to L}, using the coefficient ring $\Lambda$ endowed with the valuation $\nu \cl \Lambda\to \bR$ given by 
\begin{equation*}\label{eq:valuation}
	\nu\lbr{\s{j}{a_j\,Q^{\beta_j}}} = \infi{\substack{j\\a_j \neq 0}}{\omega(\beta_j)}.
\end{equation*}
We first construct a curved open-closed Deligne--Mumford field theory associated to a relatively spin Lagrangian submanifold.

\begin{theorem}\label{thm:dmft-exists} For any choice of $\omega$-tame almost complex structure $J$, of cubical cobordisms $\{\cKc_{\Gamma}\}_\Gamma$ and compatible system of Thom forms $\eta_{\Gamma}$, there exists a semi-strict, operadic curved open-closed Deligne--Mumford field theory
	\[\scF_L\cl \cdmc_{n} \to \normalfont\text{Ch}_{\Lambda}\]
	defined using the open-closed Deligne-Mumford moduli spaces of $(X,L,J)$.
\end{theorem}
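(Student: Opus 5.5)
The construction defines $\scF_L$ on objects, then on single chains via pull–push through cubical correspondences, then the higher components $\scF^d$, and finally checks the $A_\infty$ relations~\eqref{eq:warped-relations} by Stokes' theorem.

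On objects, set $\scF_L(y_c,y_o) \coloneqq \Omega^*(X^{y_c}\times L^{y_o};\Lambda)$, with the de Rham differential. Then $R^0=L^0$ is the exterior cross product, which is a quasi-isomorphism by the Künneth theorem; this gives h-splitness. For a stable analytic simplex $\sigma\cl\Delta^m\to\Mbar_{oc}(y,y')$ and a stable map graph $\Gamma$, form the fibre product of the cubical chart $\cKc_\Gamma$ with $\Delta^m$ over the stabilisation map. Lemma~\ref{lem:fibre-product-well-defined} shows this is independent of the representative of $\sigma$. Take a real analytic resolution of singularities $\wt\cT_{\Gamma,\sigma}$ of the singular thickening, as in \S\ref{subsec:singular-integration}, and define
\[
\scF_{\Gamma,\sigma}(\alpha)=(\eva^+\g p)_*\,p^*\bigl((\eva^-)^*\alpha\wedge\eta_\Gamma\bigr)\,Q^{\beta(\Gamma)},\qquad \scF^1_\sigma=\s{\Gamma}{\scF_{\Gamma,\sigma}}.
\]
Here $\eta_\Gamma$ is the chosen compatible Thom form. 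The twist by $\det^{\otimes n}$ is what makes the orientations of the fibre products canonical (Remark~\ref{rem:trivialising-det}). Lemma~\ref{lem:multiplication-up-to-codim} shows the result does not depend on the choice of resolution. Degenerate simplices produce pushforwards along maps with positive-dimensional fibres, so they give zero; this is nondegeneracy.

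The formal generators are sent as follows. $\scF_{D_0}$ and $\scF_{D_1}$ are the pull–push through moduli spaces of discs with zero or one input and one output, counted only in positive energy $\beta\neq0$. This gives positivity, since Gromov compactness makes the sums converge in $\Lambda$. Unstable morphisms go to their evident operations: the identity, integration along a forgotten point (pairing with the unit $1$), the wedge-then-integrate pairing, and restriction to the diagonal for self-clutching.

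Next I would set $\scF^d=0$ for $d\ge2$ on stable and operadic sequences. Nonzero $\scF^2$ is defined only on compositions involving the pairing or coevaluation-type unstable morphisms. There the strict relation $\scF_{\sigma_2\g\sigma_1}=\scF_{\sigma_2}\g\scF_{\sigma_1}$ fails because $\Omega^*$ has no chain-level inverse to the pairing. I would take a homotopy $K$ between the diagonal current and a smooth kernel (de Rham regularisation) and let $\scF^2$ and the higher $\scF^d$ be built from iterated insertions of $K$, as in the standard homotopy transfer. The same device defines the components $R^d,L^d$ of the monoidal structure for $d\ge1$, and these vanish whenever an identity is inserted, which gives unitality.

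Verifying the relations is the main step. Stokes' theorem for the pushforward applied to $\wt\cT_{\Gamma,\sigma}$ yields $d\g\scF_\sigma\mp\scF_\sigma\g d$ as a sum of boundary contributions, and these sort into four types.
\begin{itemize}
\item The boundary over $\del\Delta^m$ gives $\scF_{\del\sigma}$.
\item Vertical boundary from nodes that are stable in the domain is matched by the cubical cobordism faces of $\cKc_\Gamma$. After summing over $\Gamma$ these telescope, by the defining property of a system of cubical cobordisms, into $\scF_{\sigma_2}\g\scF_{\sigma_1}$ for the clutching decomposition, giving strict functoriality on the stable subcategory, i.e.\ semi-strictness.
\item Disc bubbles with one or two special points, invisible to $\Mbar_{oc}$, give exactly the $D_0$ and $D_1$ terms of $\del_{oc}$ in~\eqref{eq:modified-differential-old-simplex}. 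Here the term $\pi^*\sigma\g_*D_0$ arises from the forgetful compatibility of the charts with the lifted simplex of Construction~\ref{con:lifted-simplex}.
\item Boundary of the Thom form support contributes nothing because $\eta_\Gamma$ is compactly supported.
\end{itemize}
The operadic property comes from choosing charts and Thom forms compatible with forgetting incoming boundary and interior points (pulled back along $\pi$), so that inserting the unit agrees with the forgetful composition~\eqref{forget-boundary-outgoing}.

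I expect the main obstacle to be this boundary analysis after resolution of singularities. The resolution has to be compatible with the corner structure, so that the boundary of $\wt\cT_{\Gamma,\sigma}$ is, up to codimension-two sets on which the integral vanishes, a union of resolutions of the boundary strata with the correct orientations. I would first establish a relative version of the Bierstone–Milman algorithm for orbifolds with corners that functorially resolves faces. I would then handle signs with the orientation isomorphisms of \cite{CZ24} through the $\det^{\otimes n}$ twist.
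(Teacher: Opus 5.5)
There is a genuine gap. It sits exactly where the content of the theorem lies: semi-strictness and the higher homotopies.

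\textbf{The chain model and $\scF^1$.} You build $\scF^1$ on ordinary analytic chains on $\Mbar_{oc}$ and fibre $\cKc_\Gamma$ over ``the stabilisation map''. But $\cKc_\Gamma$ lives over $I^{E(\Gamma)}\times\Mbar_\Gamma$, and you never say what the collar coordinates $t_e$ of the stable edges do. If they are left free, they shift the degree. If they are restricted to $\{t_e=0\}$, you get the boundary restriction of the chart of the contracted graph, which is only cobordant to the fibre product of charts. Then $\scF^1_{\sigma_2\g\sigma_1}\neq\scF^1_{\sigma_2}\g\scF^1_{\sigma_1}$ on the chain level, and the $d=2$ relation forces $\scF^2\neq 0$ on stable pairs.

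Stokes' theorem cannot repair this. It gives $[d,\scF^1_\sigma]=\scF^1_{\del_{oc}\sigma}$, a statement about a single chain, not an identity between compositions, so no telescoping yields semi-strictness.

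The paper's remedy is the chain model of \S\ref{subsec:our-chains}. Morphisms are colimits over $\scS_{\mathsf a}$ of analytic chains on the spiky spaces $\cMc_\Gamma=I^{E(\Gamma)}\times\Mbar_\Gamma$. Clutching lands in $\{t_e=1\}$ by \eqref{def:clutching-map-on-fattened-moduli}, forgetful maps act on cubes by \eqref{def:forgetful-map-on-fattened-moduli}, and the map $\cTc_\Gamma\to\cMc_{\ov\Gamma}$ records the cube coordinates. A composite chain then literally sits on the face where $\cKc_\Gamma$ equals the fibre product. This $\cdmc_n$ is the category in the statement.

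Relatedly, your four-way sorting of boundary strata omits the faces $\{t_e=0\}$ of unstable edges. These cancel against the strata $\del_{\Gamma}\cKc_{\Gamma'}$ only after weighting by the coefficients $c_\Gamma$ of \eqref{eq:q-coefficient} (Lemma~\ref{lem:total-degree-contraction}), which your unweighted sum $\sum_\Gamma\scF_{\Gamma,\sigma}$ lacks.

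\textbf{The higher components.} Strictness fails not only because the pairing has no chain-level inverse. It also fails because the charts stabilise evaluation maps according to patterns that change under composition with unstable morphisms. For example, the energy-zero coproduct has both outputs stabilised, while forgetting one output yields the unstabilised identity. Likewise, the open co-pairing does not commute with $D_1$.

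The paper handles this with charts $\cKc_{\lc\Gamma}$ over $I^{d-1}$ (Lemma~\ref{lem:gkc-for-higher-homotopies}), built as parametrised fibre products whose stabilising vectors are weighted by the functions $\chi^\pm_{\lc\Gamma}$.
\begin{enumerate}
\item The faces $\{s_i=0\}$ and $\{s_i=1\}$ produce $\scF^{d-1}_{\concat_i\lc\sigma}$ and $\scF^{d-i}\g\scF^i$ respectively.
\item When the patterns are constant, as for stable or operadic sequences, $\cKc_{\lc\Gamma}=I^{d-1}\times(\cdots)$ is degenerate. So $\scF^d=0$ there is a consequence of the construction, not something you may simply impose; it has to be compatible with the relations.
\item The same charts, cut by the loci $\Delta_k,D_k,C_k\subset I^2$, give $R^d$ and $L^d$.
\item Degeneracy of $\chi^\pm$ when an identity is inserted gives unitality.
\end{enumerate}

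Your replacement is iterated insertions of a homotopy $K$ between the diagonal current and a smoothing kernel. You do not show it satisfies the $A_\infty$ relations for all $d$, and these must match geometric boundary strata such as self-clutched charts and disc bubbles. Applying ``the same device'' to $R^d$, $L^d$ and unitality is likewise unsupported.

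Finally, the pairing is an operadic morphism. Nonzero $\scF^2$ on pairing compositions would therefore contradict the operadicity you claim.
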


\noindent 
Cubical cobordisms and Thom systems were defined in \cite{HH25} and are recalled in the next subsection. 
Applying the discussion of weak bounding cochains from \S\ref{subsec:bounding-cochains-and-deformations} in the context of curved open-closed DMFTs yields the second part of Theorem~\ref{thm:OCDMFT associated to L}.

\begin{cor}\label{cor:actual-dmft} Suppose $L$ is equipped with a weak bounding cochain $\bc$ for the unital $A_\infty$ algebra $\scA_\scF$. Then, $L$ admits a semi-strict, operadic $n$-dimensional open-closed DMFT whose associated cohomological field theory recovers the Gromov--Witten theory of $(X,\omega)$.
\end{cor}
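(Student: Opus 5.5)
Starting from Theorem~\ref{thm:dmft-exists}, we have the semi-strict, operadic curved open-closed DMFT $\scF_L\cl\cdmc_n\to\text{Ch}_\Lambda$. The first step is to check that $\scF_L$ has Property~\ref{assum:weakly-curved-to-uncurved}; this is the content of Lemma~\ref{lem:additional-properties}, to be proven later. Geometrically both parts are statements about forgetful maps. For \eqref{assum:weakly-curved-unit}, a chain of the form $\pi^*\sigma_i$ is fibred over $\sigma_i$ with one-dimensional fibres, so inserting the unit $e_{\scF}$ (the constant function $1$ on $L$) at the extra point produces a pushforward along a positive-dimensional fibre, and this vanishes. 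For \eqref{assum:weakly-curved-Dk}, clutching $D_k$ at the $*$-point factors, at the level of correspondences, through the fibre product over $L$. Combined with operadicity, this gives $\scF_{\pi^*\sigma_i\g_* D_k}=\pm\scF_{\pi^*\sigma_i}\g(\ide\otimes\scF_{D_k})$.

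With Property~\ref{assum:weakly-curved-to-uncurved} in hand, apply Proposition~\ref{prop:deformed-dmft} to the weak bounding cochain $\bc$. By Remark~\ref{rem:compare-differentials}, $\bc$ is also a weak bounding cochain for $\scA_\scF$ in the DMFT sense of Definition~\ref{de:bounding-cochain}. The proposition then shows that $\scfb$ is a weakly curved curved open-closed DMFT; as remarked after that proposition, $\scfb$ still has Property~\ref{assum:weakly-curved-to-uncurved}. Corollary~\ref{cor:bounding-cochain-induces-uncurved} now produces an uncurved open-closed DMFT $\wt\scF^{\bc}\cl\dmc_n\to\text{Ch}_\Lambda$ that is semi-strict and operadic, because $\scF_L$ is. The argument there is that deformation by $\bc$ and by $\wh D_1$ only lengthens sequences and inserts stable or $D_1$ chains, so the vanishing on $(\cdmc^{\stb})^{\otimes d}$ and $(\cdmc^{\opr})^{\otimes d}$ for $d\ge2$ is preserved. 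One point needs checking here: deformation by $\bc$ only replaces $\sigma$ by $\pi^{[k]}\sigma$, which stays inside the stable and operadic subcategories, since adding an incoming boundary point does not add outputs.

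For the Gromov--Witten statement, restrict to the closed sector. On chains $\sigma\in C_*(\Mbar_c)$ with $\sigma$ closed, $\pi^*\sigma=0$ because there are no boundary marked points to add. Hence $\scfb_\sigma=\scF_\sigma$, and the $\wh D_1$ deformation acts trivially on objects $(y_c,\emst)$, which contain no open labels. The closed DMFT underlying $\wt\scF^{\bc}$ therefore coincides with that of $\scF_L$. By Lemma~\ref{lem:cohft-from-dmft} it induces a CohFT on $H^*(\Omega^*(X;\Lambda))$. That this CohFT is Gromov--Witten theory follows by comparing the operations~\eqref{eq:dmft-operations} for cycles $\sigma$ with~\eqref{eq:CohFT-def-symplectic} from \cite{HS22,Hir23}. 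On cohomology, the cobordism corrections $\scF_{\Gamma,\sigma}$ for graphs $\Gamma$ with edges should contribute only exact terms, and the pairing with $\pcd(\sigma)$ is realised by the fibre product with $\sigma$.

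The main obstacle I expect is this last identification. One has to show that the resolved fibre product $\wt\cT_\sigma$ computes $\stb^*\pcd(\sigma)$ cohomologically, which requires a degree or transversality argument after resolution of singularities. One also has to show that the cubical-cobordism correction terms vanish in cohomology for cycles; for this I would use the homotopy invariance statement Theorem~\ref{thm:independence} together with the independence of \cite{HS22}.
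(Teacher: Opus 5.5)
This is the paper's argument: Lemma~\ref{lem:additional-properties} supplies Property~\ref{assum:weakly-curved-to-uncurved}, Proposition~\ref{prop:deformed-dmft} together with Lemma~\ref{lem:weakly-curved-dmft} shows that $\scfb_L$ has vanishing curvature, and Corollary~\ref{cor:bounding-cochain-induces-uncurved} then gives the semi-strict, operadic uncurved DMFT. Your check that neither the $\bc$- nor the $\wh D_1$-deformation touches the closed sector is exactly why the Gromov--Witten claim is inherited from $\scF_L$, which the paper leaves implicit and does not reprove here, so the obstacle you flag lies outside this corollary (if you do pursue it, representing classes by cycles in $C^{an}_*(\Mbar_{\mfa})$, as Lemma~\ref{lem:right-homology} allows, means only the single chart $\cK_{\mfa}$ contributes, so no cubical correction terms arise and Theorem~\ref{thm:independence} is not needed).
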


\begin{proof}
	By Corollary~\ref{cor:actual-dmft} and Lemma~\ref{lem:additional-properties}, which shows that $\scF_L$ has Property~\ref{assum:weakly-curved-to-uncurved}, the curved open-closed DMFT $\scfb_L$ has vanishing curvature. Thus, the claim follows from Corollary~\ref{cor:bounding-cochain-induces-uncurved}.
\end{proof}

\begin{remark}
Somewhat surprisingly, all operations of $\scF_L$ are compatible with $D_1$ except for the open co-pairing. Concretely, $\scF_{\lc \sigma} = 0$ whenever $d\ge 2$, $\sigma_i = D_1$ for some $i$ and no other $\sigma_j$ represents the co-pairing. Given a weak bounding cochain $\bc$, it follows from Equation \eqref{eq:deformed-functor} that the functor $\wt \scfb$ only differs from $\scfb$ for sequences $\lc \sigma$ involving the co-pairing. The higher homotopies $\scfb_{D_1, \dots,  D_1}$ also vanish, so the differential on $\wt \scfb(0,1)$ is given by $d + \scF_{D_1}$, which is exactly the Floer differential.
\end{remark}

\subsection{A suitable chain model}\label{subsec:our-chains}
In order to achieve semi-strictness, we construct a special chain model for the real-valued singular homology on the moduli spaces $\Mbar_{\mathsf{a}}$. To this end, we consider `fattened' versions of the moduli spaces $\Mbar_{g,h;y,y'}$, requiring some preliminary definitions.
Recall from \cite[\S3]{HH25} that by a \emph{stable graph} we mean the combinatorial data underlying a possibly disconnected and nodal Riemann surface with boundary and marked points. In this paper, this comes with the additional data of an `orientation' of each half-edge, denoting whether they are incoming or outgoing. 
\emph{We fix for each isomorphism class of stable graph a representative and we will work with that throughout the remainder of the paper.}
Each stable graph $\Gamma$ determines the product moduli space
\begin{equation}
	\Mbar_{\Gamma} =  \prod_{v \in V^s(\Gamma)} \Mbar_{\mfa_v}.
\end{equation}
We have the relation \[
\del_\Gamma\Mbar_{\mathsf{a}} \cong \frac{\Mbar_{\Gamma}}{\Aut(\Gamma)}.
\]
We define the auxiliary moduli spaces \[\cMc_\Gamma := I^{E(\Gamma)}\times \Mbar_\Gamma.\] They will be used to define a suitable chain model for the singular homology of the moduli spaces $\Mbar_{\mathsf{a}}$. Define 
\begin{equation}\label{} 
	\Mbar_{\mathsf{a}}^+\coloneqq \djun{\Gamma\to \mathsf{a}}\cMc_\Gamma.
\end{equation}
While, we do not glue the moduli spaces together, this construction is similar to the outer-collaring of $\Mbar_{\mathsf{a}}$ as used in \cite[Chapter~17]{FOOO20} and \cite[\S3.3.1]{BX22}, considering the moduli spaces $\cMc_\Gamma$ as spikes on $\Mbar_{\mathsf{a}}$. Note that we have additional `spikes' because we consider our half-edges to be oriented. Readers who will find Construction~\ref{con:chains-for-dmft} too algebraic can equally well think of the moduli spaces $\Mbar_{\mathsf{a}}^+$ as actually glued along their common boundary strata and of the chains we take as stratified chains on that `spiky moduli space'. To help that intuition, we include the following picture.

\begin{remark}
	\vspace{-8pt}
	\begin{figure}[H]
		\includegraphics[scale = 0.22]{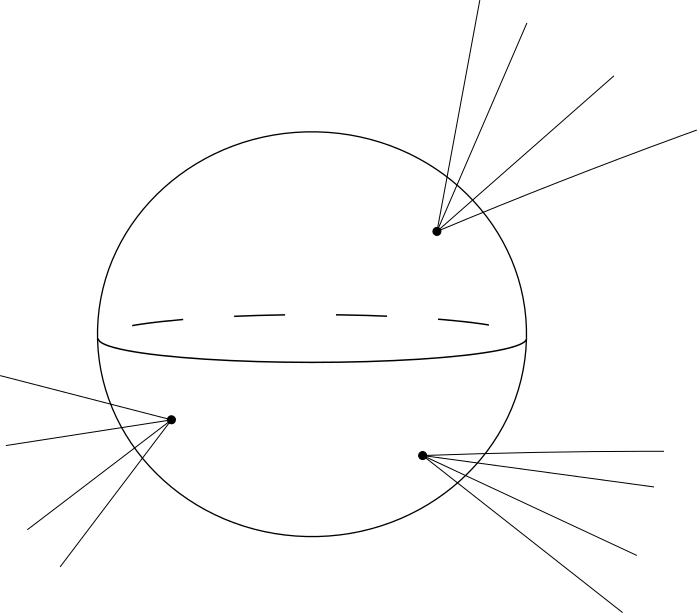}
		\caption{$\Mbar^+_{0,0;(0,0),(4,0)}$}
		\label{fig:spiky-moduli-space}
	\end{figure}
	The usual moduli space of spheres with $4$ marked points $\Mbar_{g= 0, k = 4} \cong \mathbb{CP}^1$ has 3 `special points' where the configuration can be viewed as two spheres clutched at a single node. In the usual outer collaring, one would add one spike (intervals) to each special point. Since we have to keep track of the orientations of the half-edges, we instead have to add $4$ spikes to each special point, making a total of $12$ spikes for the moduli space $\Mbar^+_{0,0;(0,0),(4,0)}$ as drawn above.
\end{remark}

\noindent 
We extend the forgetful and clutching maps on the moduli spaces $\Mbar_{\mathsf{a}}$ to $\Mbar_{\mathsf{a}}^+$ as follows.
\begin{enumerate}[label=(E\hspace{0.2pt}\arabic*),leftmargin=30pt,ref=E\hspace{0.2pt}\arabic*,]
	\item\label{def:clutching-map-on-fattened-moduli} The clutching $\Gamma$ of two graphs $\Gamma_1$ and $\Gamma_2$ induces an embedding
	\begin{equation}
		\label{eq:lift-of-clutching} \cMc_{\Gamma_1} \times \cMc_{\Gamma_2} \rightarrow \cMc_{\Gamma}.
	\end{equation} 
	given by the identity on the moduli spaes $\Mbar_{\Gamma_i}$ and by the canonical inclusion map 
	\begin{equation}
		\label{eq:inclusion-cubes}I^{E(\Gamma_1)} \times I^{E(\Gamma_2)} \hookrightarrow I^{E(\Gamma)}
	\end{equation}
	induced by the embedding $E(\Gamma_1)\sqcup E(\Gamma_2)\hkra E(\Gamma)$	onto the boundary stratum $\{t_e = 1\}_{e \in E(\psi)}$.
	\item\label{def:boundary-contraction-fattened-moduli} The inclusion $\rho \cl \Mbar_{g,h;k,\ell} \rightarrow \Mbar_{g,h+1;k-1,\ell}$ of the stratum with a boundary node of type (E) defines an embedding $\Mbar_\Gamma\hkra \Mbar_{\rho(\Gamma)}$. Moreover $E(\Gamma)$ embeds canonically into $E(\rho(\Gamma))$ so we obtain a smooth real analytic map 
	$$\cMc_\Gamma\hkra\cMc_{\rho(\Gamma)}$$ 
	by including into $\{t_e = 1\}$, where $e$ is the (interior) edge connecting the `collapsed boundary' vertex to the rest of the graph.
	\item\label{def:forgetful-map-on-fattened-moduli} For a dual graph $\Gamma$, let $\pi(\Gamma)$ denote the (possibly stabilised) dual graph obtained by forgetting the respective marked point. Then, the map 
	$$\cMc_\Gamma\,\to\, \cMc_{\pi(\Gamma)}$$
	is given by the natural map $\Mbar_\Gamma\rightarrow \Mbar_{\pi(\Gamma)}$ in the second factor, and by the following map $I^{E(\Gamma)} \rightarrow I^{E(\pi(\Gamma))}$ between cubes: if $T\sub \Gamma$ is a maximal subgraph that is contracted, then $T$ is either attached by a single edge $e$ to the rest of $\Gamma$ or by two edges $e_1,e_2$. In the first case, we forget $I^{E(T)}\times I^{\{e\}}$ and in the second case we map $t \in I^{E(T)}\times I^{\{e_1,e_2\}}$ to 
	\begin{equation*}\label{}
		t_{\bar{e}} := 1- \p{e \in E(T)\cup\{e_1,e_2\}}{(1-t_e)}
	\end{equation*}
	where $\bar{e}$ is the edge onto which $T$ is contracted as described pictorially by
	\begin{figure}[H]
		\includegraphics[scale = 0.5]{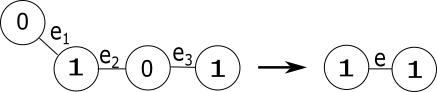}
		\label{fig:contraction}
	\end{figure}
	\noindent Compare this map with the usual local expression of the forgetful map given by $(z_1,z_2)\mapsto z_1z_2$. In the map above, we have to compose with $1-t$ because we use outer-collarings and not inner-collarings.
\end{enumerate}


We can now construct the chain model we use for the definition of $\dmc$ and ultimately in \textsection\ref{subsec:lagrangian-dmft}.

\begin{construction}\label{con:chains-for-dmft} Fix a representative $\Gamma$ for each isomorphism class $[\Gamma]$ of open-closed stable graph as in \cite[Definition~4.13]{HH25}. Define the category $\scS$ to have 
	\begin{itemize}[leftmargin=20pt]
		\item objects $(\Gamma,E,f_\Gamma)$, where $\Gamma$ is a stable graph and $E\sub E(\Gamma)$ is a subset of edges,
		\item morphisms from $(\Gamma,E)\to (\Gamma',E')$ given by (orientation-preserving) morphisms $f \cl \Gamma\to \Gamma'$ in the sense of \cite[Definition~4.14]{HH25} so that any edge contracted by $f$ lies in $E$ and $E'$ consists of the images of those edges in $E$ that are not contracted by $f$.  
	\end{itemize}
	Given $\Gamma$, we let $\mathsf{a}_\Gamma$ be the topological data of the unique corolla (i.e. graph without edges) $\Gamma$ contracts to. We denote said by corolla by $\mathsf{a}$ as well and let $\scS_{\mathsf{a}}$ be the full subcategory on objects $(\Gamma,E)$ with $\mathsf{a}_\Gamma = \mathsf{a}$.
	Define the functor $\scC\cl \scS\to \normalfont\text{Ch}_\bR$ by setting 
	\begin{equation*}\label{}
		\scC(\Gamma,E) := C^{an}_*(\cMc_\Gamma|_{\{t_E = 0\}})
	\end{equation*}
	and letting $\scC(f) = (\ide_{I^{E'}}\times \psi^f)_*$ be the pushforward by the identity and the associated clutching map. 
	
	\begin{lemma}\label{lem:right-homology} For any $\mathsf{a}$ we have $H_*(\normalfont\text{colim}_{\scS_{\mathsf{a}}}\scC) \cong H_*(\Mbar_{\mathsf{a}};\bR)$. 
	\end{lemma}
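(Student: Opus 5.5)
The plan is to compare the colimit with the homotopy colimit of $\scC$ over $\scS_{\mathsf{a}}$, and then identify the homotopy colimit with chains on the outer-collared space $\Mbar^+_{\mathsf{a}}$ glued along its strata. That glued space deformation retracts onto $\Mbar_{\mathsf{a}}$.

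First I would analyse the shape of $\scS_{\mathsf{a}}$. The terminal-type objects are the corolla $(\mathsf{a},\emst)$, whose chain complex is $C^{an}_*(\Mbar_{\mathsf{a}})$ by Lemma~\ref{lem:analytic-homology}. A general object $(\Gamma,E)$ contributes $C^{an}_*(I^{E(\Gamma)\sm E}\times\Mbar_\Gamma)$. Since the cube factor $I^{E(\Gamma)\sm E}$ is contractible, this is quasi-isomorphic to $C_*(\Mbar_\Gamma)$.

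A morphism $f\cl(\Gamma,E)\to(\Gamma',E')$ contracts only edges in $E$, that is, edges whose coordinate is set to $0$. So $\scC(f)$ is the clutching pushforward, which is an embedding of $\{t_E=0\}$ faces.

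Next I would show that the colimit is computed by a quotient of $\bigoplus_{(\Gamma,E)}\scC(\Gamma,E)$. In this quotient, a chain supported on $\{t_E=0\}\times\Mbar_\Gamma$ is identified with its image under clutching into the face $\{t_{E'}=0\}$ of the coarser graph's spike, and in particular into $\Mbar_{\mathsf{a}}$ itself.

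The key claim is that this colimit equals $C^{an}_*$ of the pushout space
\[Y_{\mathsf{a}} \coloneqq \Bigl(\djun{\Gamma\to\mathsf{a}}\cMc_\Gamma\Bigr)\Big/\sim,\]
where the face $\{t_e=0\}$ of the spike of $\Gamma$ is glued to the spike of the graph with $e$ contracted via the clutching map. Strictly, this is the subcomplex of chains that lie in some single piece. One must check two things here. First, the diagram is Reedy-cofibrant in the sense that all structure maps are closed embeddings of faces, so the colimit agrees with the homotopy colimit. Second, automorphisms of $\Gamma$ act compatibly with the quotient $\Mbar_\Gamma/\Aut(\Gamma)\cong\del_\Gamma\Mbar_{\mathsf{a}}$.

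For the orbifold and automorphism issues I would use the groupoid models of Example~\ref{ex:concrete-groupoids} together with the coinvariant description of Definition~\ref{de:chains-on-orbifolds}. Since we work with real coefficients, taking finite-group coinvariants is exact.

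Then I would pass from small chains to all chains on $Y_{\mathsf{a}}$ by a Mayer--Vietoris or subdivision argument. Collar neighbourhoods make $\{\text{interior of each spike}\}$ together with a neighbourhood of $\Mbar_{\mathsf{a}}$ an open cover. Analytic subdivision preserves analyticity, which is needed to apply Lemma~\ref{lem:analytic-homology} on each piece.

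Finally, $Y_{\mathsf{a}}$ deformation retracts onto $\Mbar_{\mathsf{a}}$. The retraction pushes each $t_e$ to $0$, inductively over the number of edges. This is compatible with the gluing because it is the straight-line homotopy in the cube coordinates, and the gluing happens along $t_e=0$. Hence $H_*(\mathrm{colim})\cong H_*(Y_{\mathsf{a}})\cong H_*(\Mbar_{\mathsf{a}};\bR)$.

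I expect the main obstacle to be the colimit versus homotopy colimit step. The category $\scS_{\mathsf{a}}$ contains many parallel morphisms, coming from distinct orientations of half-edges and from different contraction orders. One must verify that the strict colimit does not collapse additional chains, i.e.\ that all identifications are injective embeddings compatible on overlaps. I would first try to show that $\scS_{\mathsf{a}}$ is a direct (Reedy) category filtered by $|E(\Gamma)|-|E|$, with latching maps injective, and deduce the result by induction on the number of edges.
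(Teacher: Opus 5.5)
Your overall picture, that the colimit is chains on a spiky space retracting onto $\Mbar_{\mathsf a}$, is the right intuition. However, the step carrying all the weight is not established, and the justification you give for it is false. That step identifies the strict colimit with (small) chains on the glued space $Y_{\mathsf a}$.

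\begin{itemize}
\item \textbf{The structure maps are not closed embeddings.} A contraction morphism acts by $\ide\times\psi^f$, where $\psi^f$ is a clutching map, and clutching maps are only local embeddings. For instance, $\Mbar_\Gamma\to\del_\Gamma\Mbar_{\mathsf a}$ is generically $|\Aut(\Gamma)|$-to-one. Over deeper strata it also identifies points coming from inequivalent contractions of a finer graph onto $\Gamma$, e.g.\ over curves with two non-separating nodes in $\Mbar_{2}$. So neither your Reedy-cofibrancy claim nor the injectivity of $\operatorname{colim}\scC\to C_*(Y_{\mathsf a})$ follows as stated. This is exactly the obstacle you flag but do not resolve.
\item \textbf{The subdivision step does not close the gap.} The pieces are closed, and their interiors do not cover $Y_{\mathsf a}$: a point where a spike attaches to $\Mbar_{\mathsf a}$ has no neighbourhood inside a single piece. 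Hence ``chains lying in a single piece'' is not the complex of chains subordinate to the open cover you name. No barycentric subdivision moves a simplex straddling $\Mbar_{\mathsf a}$ and a spike into one piece. You would need an additional excision argument, and you would also have to make sense of analytic chains on $Y_{\mathsf a}$, which has no global analytic structure.
\end{itemize}

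The missing idea is to use the contractibility of the cube factors \emph{before} taking the colimit, and naturally rather than objectwise. Objectwise quasi-isomorphisms are not preserved by strict colimits, which is what forces your detour through homotopy colimits. The paper argues as follows.
\begin{itemize}
\item The straight-line contraction $t\mapsto ut$ of the cube coordinates commutes with every $\scC(f)$. Its prism operator therefore gives a chain homotopy equivalence between $\scC$ and the reduced functor $\ov\scC(\Gamma,E)=C^{an}_*(\Mbar_\Gamma)$, with both the maps and the homotopies natural in $(\Gamma,E)$. Equivalences of this kind do pass to strict colimits.
\item $\ov\scC$ factors through the retraction $R(\Gamma,E)=(\Gamma,E(\Gamma))$ onto the full subcategory $\ov\scS_{\mathsf a}$, which has terminal object $\mathsf a$. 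Hence $\operatorname{colim}\ov\scC=C^{an}_*(\Mbar_{\mathsf a})$ exactly.
\item Lemma~\ref{lem:analytic-homology} then finishes.
\end{itemize}
This route needs no glued space, no homotopy colimits, no injectivity of clutching maps and no small-chains argument.
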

	
	\begin{proof}
		Define the `reduced' functor $\ov\scC\cl \scS_{\mathsf{a}} \to \normalfont\text{Ch}_\bR$ by $\ov\scC(\Gamma,E) = \scC(\Gamma,E(\Gamma))$ and $\ov\scC(f) = \psi^f_*$. Then, there exist canonical natural transformations $\eta \cl \ov\scC\to \scC|_{\scS_{\mathsf{a}}}$ and $\xi \cl \scC|_{\scS_{\mathsf{a}}}\to \ov\scC$, induced by the inclusions $\cMc_\Gamma|_{\{t_{E(\Gamma)}= 0\}}\hkra \cMc_\Gamma|_{\{t_{E}= 0\}}$ and the canonical retraction $\cMc_\Gamma|_{\{t_{E}= 0\}}\hkra \cMc_\Gamma|_{\{t_{E(\Gamma)}= 0\}}$ respectively. These are natural homotopy inverses in the sense that the homotopies are compatible with the morphisms of $\scS_{\mathsf{a}}$. In particular, this shows that 
		\begin{equation}\label{} 
			H_*(\normalfont\text{colim}_{\scS_{\mathsf{a}}}\scC) \cong H_*(\normalfont\text{colim}\,\ov\scC).
		\end{equation}
		Let $\ov\scS_{\mathsf{a}} \sub \scS_{\mathsf{a}}$ be the full subcategory on objects $(\Gamma,E(\Gamma))$. Note that $\ov\scS_{\mathsf{a}}$ has a terminal object given by $\mathsf{a}$ itself. Moreover, there exists a right inverse $R\cl\scS_{\mathsf{a}} \to \ov\scS_{\mathsf{a}}$ the inclusion $\iota$, given by $R(\Gamma,E) = (\Gamma,E(\Gamma))$. Since $\ov\cC\g \iota \g R = \ov\cC$, it follows that 
		\begin{equation*}\label{} 
			H_*(\normalfont\text{colim}\,\ov\scC)\cong H_*(\normalfont\text{colim}_{\ov\scS_{\mathsf{a}}}\,\ov\scC)\cong H_*(\Mbar_{\mathsf{a}};\bR),
		\end{equation*}
		where the last isomorphism is due to the fact that $\mathsf{a}$ is terminal in $\ov\scS_{\mathsf{a}}$ and by Lemma~\ref{lem:analytic-homology}.
	\end{proof}
\end{construction}

\begin{definition}\label{de:chain-model}
	We define the symmetric monoidal category $\dmc$ enriched in cochain complexes to have the same objects as $\Mbar_{oc}$ with morphisms given by the cochain complexes
	\[\dmc^*(y,y') =  \bigoplus\limits_{\mathsf{a} \to y,y'}\normalfont\text{colim}_{\scS_{\mathsf{a}}}\scC_{-*},\]
	where $\scC$ is the functor defined in Construction~\ref{con:chains-for-dmft}. The composition maps 
	\begin{gather*}\label{}\dmc(y',y'') \otimes \dmc(y,y')\to \dmc(y,y'')\end{gather*}
	are induced by the composition maps~\eqref{def:clutching-map-on-fattened-moduli}-~\eqref{def:forgetful-map-on-fattened-moduli} and the Alexander--Whitney choice of Eilenberg--Zilber morphism (if in the stable range).\footnote{Given the explicit description of this choice of Eilenberg--Zilber morphism, one sees easily that analyticity is preserved.}
	This is well-defined as the clutching maps are local embeddings. Our choice of Eilenberg--Zilber morphism ensures that the compositions are strictly associative.
\end{definition}

\noindent 
The category $\dmc_r$ has the same objects as $\Mbar_{oc}$ with morphisms given by 
\begin{equation}\label{de:twisted-chains} \dmc_r^*(y,y') \coloneqq \bigoplus\limits_{\mathsf{a} \to y,y'}\normalfont\text{colim}_{\scS_{\mathsf{a}}}\scC_r^*\end{equation}
for $r \in \bZ$, where 
\begin{equation}\label{}
	\scC_r^*(\Gamma,E) \coloneqq C^{an}_{-*}(\cMc_\Gamma;{\det}^{\otimes r}) \end{equation}
with the `signed' differential as above. The argument of Lemma~\ref{lem:right-homology} carries over verbatim, showing that this recovers (cohomologically graded) singular homology with the local coefficients in ${\text{det}}^{\otimes r}$. Thus, a simplex $\wt\sigma\in \dmc_r$ is given by a simplex $\sigma\cl \Delta^m\to \Mbar_{oc}(y,y')$ together with a trivialisation of 
\begin{equation}\label{} 
	\det(\sigma) \coloneqq \sigma^*{\det}^{\otimes r}[r\chi + rd|y'_c|+r|y'_o|].
\end{equation}
We now define the pull-back $\pi^*: \dmc_r^*(y,y') \rightarrow \dmc_r^*(y \sqcup *,y')$ on this modified chain complex. Suppose $\sigma \cl \Delta^k \to \cMc_{\Gamma}$ is a simplex and let $\Gamma^+$ be a stable graph which contracts to $\Gamma$ after forgetting a last incoming boundary marked point. Then, by taking an analytic triangulation of the resolution of the fibre product $\Delta^k\times_{\cMc_{\Gamma}}\cMc_{\Gamma^+}$, one obtains a chain $\pi^*_{\Gamma^+} \sigma$. The proof of  Lemma~\ref{lem:lift-well-defined} carries over verbatim to show this is well-defined.
\begin{definition}
\label{de:lift-of-chain-modified-chains}
Define the chain $\pi^*\sigma$ by $\pi^*\sigma := \sum_{\Gamma^+ \to \Gamma} \pi^*_{\Gamma^+} \sigma,$ where we sum over all stable graphs $\Gamma^+$ that map to $\Gamma$ when forgetting the last incoming boundary marked point.
\end{definition}

\noindent
The proof of Lemma~\ref{lem:lifted-simplex-analytic} shows that $\pi^*\sigma$ has Properties~\eqref{pushforward-degenerate}-\eqref{lift-differential}. Thus, we can define the symmetric monoidal dg category $\cdmc_r$ as in Definition~\ref{de:chains-for-dmft} using the dg category $\dmc_r$ of Definition~\ref{de:chain-model} instead of $C_*^{an}(\Mbar_{oc})$.
\begin{lemma}
	There exist a symmetric monoidal $A_\infty$ quasi-equivalence $C_{-*}^{an}(\Mbar_{oc}) \rightarrow \dmc$.
\end{lemma}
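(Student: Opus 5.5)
We would construct the quasi-equivalence as a dg functor, that is, an $A_\infty$ functor with vanishing higher components. It is the identity on objects. On morphisms it sends an analytic simplex $\sigma\cl\Delta^m\to\Mbar_{\mathsf a}$ to the class in $\text{colim}_{\scS_{\mathsf a}}\scC$ of the same simplex. We view this simplex as a chain in $\cMc_{\mathsf a}=\Mbar_{\mathsf a}$, indexed by the corolla $\mathsf a$ with $E=\emst$, which is the terminal object of $\ov\scS_{\mathsf a}$. Disconnected morphism spaces are treated componentwise, using the product decomposition of $\Mbar_{oc}(y,y')$ and the Alexander--Whitney cross product.

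Compatibility with the differential is immediate, since the structure maps of $\scC$ are pushforwards, which commute with $\del$. Lemma~\ref{lem:right-homology} and its proof show that this inclusion of the $\mathsf a$-summand induces an isomorphism on homology. Indeed, the map factors through $\ov\scC(\mathsf a)\to\text{colim}\,\ov\scC\to\text{colim}_{\scS_{\mathsf a}}\scC$, and both arrows were shown to be quasi-isomorphisms. Twisting by $\det^{\otimes r}$ changes nothing. Hence the functor is a quasi-isomorphism on every morphism complex and a bijection on objects, so it is a quasi-equivalence.

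The main issue is compositions. In $C^{an}_{-*}(\Mbar_{oc})$, clutching lands in the boundary stratum of $\Mbar_{\mathsf a''}$. In $\dmc$, by \eqref{def:clutching-map-on-fattened-moduli}, it lands in the spike $\cMc_\Gamma$ at the face $\{t_e=1\}$. These are different points of the colimit, so the naive inclusion is not strictly multiplicative. We would correct this with a first-order component $\scF^2$. For composable $(\sigma_2,\sigma_1)$, let $\scF^2(\sigma_2,\sigma_1)$ be the chain obtained by sweeping the clutched simplex along the interval $I^{E(\psi)}$ from $t_e=1$ to $t_e=0$. At $t_e=0$ the colimit identifies the chain with its image under the clutching map $\psi^f$ into $\Mbar_{\mathsf a''}$. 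The boundary of this prism chain then gives exactly $\scF^1(\sigma_2\g\sigma_1)-\scF^1(\sigma_2)\g\scF^1(\sigma_1)$, plus the terms $\scF^2(\del\sigma_2,\sigma_1)$ and $\scF^2(\sigma_2,\del\sigma_1)$. This is the $d=2$ relation in \eqref{eq:a-infinity-functor}.

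For longer sequences we would define $\scF^d$ by the cube $I^{d-1}$ of simultaneous sweeps. Its boundary faces reproduce the remaining terms of \eqref{eq:a-infinity-functor}, either by composing adjacent maps or by splitting the sequence. Associativity of Alexander--Whitney and of clutching, noted in Definition~\ref{de:chain-model}, makes these faces match. Unstable compositions \eqref{forget-interior-outgoing}--\eqref{collapsed-circle} are handled via the maps \eqref{def:boundary-contraction-fattened-moduli} and \eqref{def:forgetful-map-on-fattened-moduli}. The cube coordinate formula $t_{\bar e}=1-\prod(1-t_e)$ must be checked to carry sweep cubes to sweep cubes, and where it does not, we add the resulting degenerate corrections.

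For the symmetric monoidal structure, we would take $R^0=L^0$ to be the Eilenberg--Zilber map on disjoint unions. Both categories use the same map, so $R^0$ is strictly compatible and is a quasi-isomorphism. We would take the higher $R^d$ to be the sweep cubes for products, which can be chosen to vanish whenever the identity is involved, giving unitality. The braiding is strictly preserved.

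We expect the main obstacle to be the sign and orientation bookkeeping for the sweep cubes, together with verifying that the cube coordinates interact correctly with the forgetful maps. The topological content is contained in Lemma~\ref{lem:right-homology}.
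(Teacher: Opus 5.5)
This is essentially the paper's proof: there $\scI^1(\sigma)=\sigma$ and, for sequences of stable chains, $\scI^d(\lc\sigma)=\sigma_d\times\dots\times\sigma_1\times I^{d-1}$ with $I^{d-1}$ included diagonally into the coordinates of the edges of $\cMc_\Gamma$ separating the $\sigma_i$, which is exactly your sweep cube (so your opening claim that the functor is a dg functor should simply be dropped). The paper sets $\scI^d=0$ whenever an unstable chain occurs and leaves the quasi-isomorphism and monoidal checks implicit, whereas you push the sweeps through \eqref{def:boundary-contraction-fattened-moduli}--\eqref{def:forgetful-map-on-fattened-moduli}; your plan is the more careful one, since for the collapsed-circle composition \eqref{collapsed-circle} the map \eqref{def:boundary-contraction-fattened-moduli} lands at $t_e=1$, so $\scI^1$ is not strictly multiplicative there either and a sweep is needed.
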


\begin{proof}
	Define the functor $\scI: C_{-*}^{an}(\Mbar_{oc}) \rightarrow \dmc$ by $\scI^1(\sigma) = \sigma$, while for $d \geq 2$, if $\lc \sigma$ contains any unstable chains, set $\scI^d(\lc \sigma) = 0$. Otherwise, if $\lc \sigma$ only contains stable chains, define \[
	\scI^d(\lc \sigma) := \sigma_d \times \dots \times \sigma_1 \times I^{d-1} \in C^{an}_{-*}(\cMc_\Gamma;{\det}^{\otimes r})
	\]
	where $\Gamma$ denotes the stable graph obtained by concatenating all $\sigma$ and $I^{d-1} \hookrightarrow I^{E(\Gamma)}$ is the (diagonal) inclusion of the edges separating the $\sigma_i$.
\end{proof}
\subsection{Correspondences}\label{subsec:correspondences}
We introduce the notion of a pull-push operation obtained from a suitable correspondence and define the correspondences required for our curved open-closed DMFT.

\begin{definition}\label{de:operation-from-correspondence}
	To a correspondence $\fC := X\xleftarrow{f} Y \xra{g} Z$ with $g$ a submersion and $Y$ and $Z$ oriented (respectively $g$ co-oriented) orbifolds, we associate the linear operation 
	\begin{equation}\label{eq:operation-from-correspondence}
		\fq_\fC \cl \Omega^*_c(X)\to \Omega^*(Z) : \alpha \mapsto g_*f^*\alpha,
	\end{equation}
	If we have instead $Y$ an oriented global Kuranishi chart $\cK= (\cT,\cE,\obs)$ with corners equipped with a Thom form $\tau$, we can define $\fq_\fC$ by 
	\begin{equation*}\label{} 
		\fq_\fC(\alpha) = g_*(f^*(\alpha)\wedge \obs^*\tau)
	\end{equation*}
\end{definition}

\noindent
In Theorem~\ref{thm:cubical-cobordisms-enhanced}, Proposition~\ref{prop:thom systems exist} and Lemma~\ref{lem:gkc-for-higher-homotopies}, we construct the correspondences required for $\scF_L$. To show compatibility, we require the following notion of a morphism of correspondences.

\begin{definition}\label{de:morphism-correspondence} A \emph{morphism} $\pi \cl \fC\to \fC'$ \emph{of correspondences} is a morphism of global Kuranishi charts $\pi \cl \cK\to \cK'$ so that $\cE = \pi^*\cE'$ and $\tau = \wt\pi^*\tau'$, while $f'\g \pi = f$ and $g'\g f = g$.
\end{definition}

\begin{definition}\label{de:degenerate-correspondence} We call a correspondence $X\leftarrow (\cK,\tau)\to Y$ \emph{degenerate} if it admits a morphism to a correspondence of strictly lower virtual dimension.
\end{definition}

\begin{definition}\label{} Suppose $\fC = X\xla{f}(\cK,\tau)\xra{g} Z$.
	If $\vartheta\cl Z \to Z'$ is a submersion, we define the \emph{pushforward} $\vartheta_*\fC$ to be the correspondence $X\xla{f}(\cK,\tau)\xra{\vartheta g} Z'$. Similarly, if $\theta\cl X'\to X$ is a submersion,  we define the pullback $\theta^*\fC$ to be $X'\xla{f}(X'\times_X\cK,\tau)\xra{g} Z$.
\end{definition}

\noindent
We observe the following properties of these operations. They make the proofs of the functoriality of $\scF$ cleaner and more geometric.

\begin{proposition}\label{prop:correspondences-operations} The operations associated to correspondences satisfy the following properties.
	\begin{enumerate}[leftmargin=20pt]
		\item\label{pushforward-and-pullback} We have $\fq_{\vartheta_*\fC} = \vartheta_*\g\fq_\fC$ and $\fq_{\theta^*\fC} = \fq_\fC\g \theta^*$ for any correspondence $\fC$ and any maps $\vartheta$ and $\theta$ as above.
		\item\label{composing-correspondences} Suppose $\fC_i := X_i\xleftarrow{f_i} (\cK_i,\tau_i)\xra{g_i} Z_i$ is a correspondence for $i = \{0,1\}$ and $Z_0 = X_1$. Then, the composition $\fq_{\fC_1}\g \fq_{\fC_0}$ is the operation associated to the fibre product correspondence
		\begin{equation*}\label{} 
			\fC_1\times_{Z_0}\fC_0\; \coloneqq\; X_0\xla{f_0 p_0}\, (\cK_1\times_{Z_0} \cK_0,\tau_1\times\tau_0)\, \xra{g_1p_1} Z_1,
		\end{equation*}
		where $p_0$ and $p_1$ are the canonical projections of the fibre product.
		\item\label{reversing-orientation} Suppose $\fC := X\xleftarrow{f} (\cK,\tau)\xra{g} Z$ is a correspondence and $\phi$ an orientation-reversing diffeomorphism of $Y$ so that $g\g \phi= g$, $f\g \phi = f$ and $\wt\phi^*\tau = \tau$. Then $\fq_\fC = 0$. 
		\item\label{products-associative} Given three correspondences $\fC_0,\fC_1$ and $\fC_2$ we have $(\fq_{\fC_0}\times\fq_{\fC_1}) \times \fq_{\fC_2} = \fq_{\fC_0}\times (\fq_{\fC_1}\times\fq_{\fC_2})$.
		\item\label{degenerate-vanishes} If a correspondence $\fC$ is degenerate, then $\fq_\fC$ vanishes.
	\end{enumerate}
\end{proposition}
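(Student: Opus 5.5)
We verify the five properties one at a time. Each follows from standard facts about integration of differential forms along the fibres of submersions of (oriented) orbifolds with corners, together with the definition $\fq_\fC(\alpha)=g_*(f^*\alpha\wedge \obs^*\tau)$. The facts we use are: functoriality of fibre integration, the projection formula $g_*(g^*\beta\wedge\gamma)=\beta\wedge g_*\gamma$, and base change $\theta^*g_*=g'_*\theta'^*$ for a pullback square in which $g$ is a submersion. For (\ref{pushforward-and-pullback}), the pushforward statement is functoriality, $(\vartheta g)_*=\vartheta_*g_*$, which is valid because $\vartheta$ is a submersion. For the pullback statement we write the fibre product chart $X'\times_X\cK$, with the obvious obstruction bundle and section. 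The Thom form is pulled back, and the claim reduces to base change in the square with vertices $X'\times_X\cT$, $\cT$, $X'$ and $X$.

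For (\ref{composing-correspondences}) we form the fibre product chart $\cK_1\times_{Z_0}\cK_0$. This is a global Kuranishi chart because $g_0$ is a submersion. Its bundle is $p_1^*\cE_1\oplus p_0^*\cE_0$, and its Thom form is $\tau_1\times\tau_0$. We then compute
\[
\fq_{\fC_1}\fq_{\fC_0}(\alpha)=g_{1*}\bigl(f_1^*g_{0*}(f_0^*\alpha\wedge\obs_0^*\tau_0)\wedge\obs_1^*\tau_1\bigr).
\]
First we apply base change to $f_1^*g_{0*}$ along the fibre product square. Then the projection formula moves $\obs_1^*\tau_1$ inside the pushforward along $p_1$. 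Finally, functoriality gives $g_{1*}p_{1*}=(g_1p_1)_*$. The signs come from commuting forms of degree $\operatorname{rk}\cE_1$ past each other. They are absorbed by the fibre-product orientation convention (the one fixed in \cite{HH25}), and checking this compatibility is the only bookkeeping step.

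For (\ref{reversing-orientation}) we use change of variables for the diffeomorphism $\phi$ of the thickening. The integrand $f^*\alpha\wedge\obs^*\tau$ is $\phi$-invariant, since $f\phi=f$, $\phi$ is a bundle map covering the identity on $\obs$, and $\wt\phi^*\tau=\tau$. Because $g\phi=g$, we obtain $g_*(\omega)=g_*(\phi^*\omega)=-g_*(\omega)$. The minus sign appears because $\phi$ reverses the fibre orientation. Since $\Lambda$ contains $\bR$, this forces $g_*(\omega)=0$. Property (\ref{products-associative}) follows from associativity of the cross product of forms and of products of charts and Thom forms, with the Koszul signs matching on both sides.

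The main obstacle is (\ref{degenerate-vanishes}). Let $\pi\cl\fC\to\fC'$ be a morphism to a correspondence of lower virtual dimension. Since $\cE=\pi^*\cE'$, $\tau=\wt\pi^*\tau'$, $f=f'\pi$ and $g=g'\pi$, the integrand is $\pi^*(f'^*\alpha\wedge\obs'^*\tau')$. The argument then splits into two cases. If $\pi$ itself is a submersion, we get $g_*\pi^*\beta=g'_*\pi_*\pi^*\beta$. Here $\pi_*\pi^*\beta=0$ because $\pi^*\beta$ has no component of positive degree along the fibres of $\pi$. Had we degree positivity in mind, the point is rather that the fibres of $\pi$ are positive-dimensional, whereas pulled-back forms vanish on vertical vectors. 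In general $\pi$ need not be a submersion. In that case we argue pointwise: $g_*\omega$ at $z$ is the integral over $g^{-1}(z)$ of the component of $\omega$ of top fibre degree. The form $\pi^*\beta$ factors through $d\pi$ restricted to the fibre $T(g^{-1}(z))\to T(g'^{-1}(z))$. Since the virtual dimension drops while the bundle ranks agree ($\cE=\pi^*\cE'$), this restriction has nontrivial kernel wherever its source dimension exceeds its target dimension. Hence the integrand restricted to the fibre vanishes identically in top degree, and therefore $\fq_\fC=0$. The case we need to handle most carefully is a non-equidimensional morphism with corners, where we should check that the notion of morphism in \cite{HH25} (a submersion onto its image) suffices for this factorization.
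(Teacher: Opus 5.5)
Your proposal is correct and follows the paper's route: the paper's entire proof is the remark that all five items follow from the standard properties of fibre integration in \cite[Proposition~2.1]{ST16}. Your added detail in item (5) is sound and needs no hypothesis on $\pi$, since $0\neq\ker d\pi_x\subset\ker dg_x$ and $\iota_w\pi^*\beta=0$ for $w\in\ker d\pi_x$. One adjustment: the base-change square you use in item (1) proves $\fq_{\theta^*\fC}=\fq_\fC\g\theta_*$, which is the correctly typed form of the statement's $\fq_\fC\g\theta^*$; the latter does not compose, because $\theta^*$ lands in $\Omega^*(X')$ while $\fq_\fC$ takes forms on $X$.
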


\begin{proof} These properties follow from a straightforward verification using the properties of the pushforward, \cite[Proposition~2.1]{ST16}.
\end{proof}

\noindent
We recall here the cubical cobordisms constructed in \cite{HH25} that are the main ingredient for achieving strict compatibility of (some of) the chain-level operations. A variation of the construction yields the correspondences that will be used to construct the higher homotopies of the curved open-closed DMFT.

\begin{theorem}[{\cite[Theorem~3.11]{HH25}}]\label{thm:cubical-cobordisms-enhanced} Given a choice of unobstructed auxiliary datum $\alpha_{\mathsf{a}}$ for each $\mathsf{a}\in \cA$, there exists for each stable map graph $\Gamma$ a \emph{smooth} global Kuranishi chart $\wt{\cKc_\Gamma}$ for $I^{E(\Gamma)}\times\Mbar_{\Gamma}(X,L)$ so that the following holds
	\begin{enumerate}[\normalfont 1),leftmargin=18pt,ref=\arabic*]
		\item $\wt{\cKc_{\mathsf{a}}} = \wt\cK_{\mathsf{a}}$ for any $\mathsf{a} \in \cA$.
		\item $\wt{\cKc_\Gamma}$ is orientable.
		\item\label{cubical-1-boundary-enhanced} for any edge $e$ of $\Gamma$ we have a local smooth embedding
		\begin{equation}\label{eq:restricted-e-0-enhanced} 
			\wt{\cKc_{\Gamma}}|_{\{t_e = 0\}} \;\stackrel{\sim\,}{\longrightarrow}\; (\pm 1)\,\del_\Gamma\wt{\cKc_{\Gamma_e}}
		\end{equation}
		of degree \[d_1(\Gamma,\Gamma_e) = |\{\phi\in \Aut(\Gamma)\mid f\g\phi = f\},\] where $f\cl \Gamma\to \Gamma_e$ is the underlying contraction of graphs.
		\item\label{cubical-0-boundary-enhanced} The restriction to the other boundary face induces a smooth embedding
		\begin{equation}\label{eq:restricted-e-1-enhanced} \wt{\cKc_{\Gamma}}|_{\{t_e = 1\}} \;\stackrel{\sim\,}{\longrightarrow}\; (\pm 1)\,\wt{\cKc_{\Gamma_1}}\times_{Y}\wt{\cKc_{\Gamma_2}}
		\end{equation}
		where $\Gamma_1*_e \Gamma_2 = \Gamma$ and $Y = X$ or $L$, depending on whether $e$ is an interior or a boundary edge.
		\item\label{evaluation-cubical} $\wt{\cKc_\Gamma}$ admits a smooth submersive lift $\eva_\Gamma\cl \wt{\cKc_\Gamma}\to X^k\times L^\ell$ of the evaluation map on $\Mbar_\Gamma(X,L)$ and the (local) embeddings of~\eqref{eq:restricted-e-0-enhanced} and~\eqref{eq:restricted-e-1-enhanced} intertwine these lifts.
	\end{enumerate}
\end{theorem}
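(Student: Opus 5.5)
The plan is to construct the charts $\wt{\cKc_\Gamma}$ by induction on the number of edges $|E(\Gamma)|$, treating the cube coordinates $t_e$ as parameters that interpolate between two global Kuranishi charts for the same space. At $t_e=0$ the chart is the boundary restriction of the chart of the contracted graph $\Gamma_e$. At $t_e=1$ it is the fibre product of the charts of the two pieces. For $E(\Gamma)=\emptyset$ we set $\wt{\cKc_{\mathsf a}}=\wt\cK_{\mathsf a}$, the smooth chart constructed from the unobstructed auxiliary datum $\alpha_{\mathsf a}$; this gives property 1).

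For the inductive step, assume charts have been constructed for all graphs with fewer edges, compatibly with 3), 4) and 5). Then the restriction of the desired chart to the boundary $\partial I^{E(\Gamma)}\times\Mbar_\Gamma(X,L)$ is already prescribed face by face. On a face $\{t_e=0\}$ it is $\del_\Gamma\wt{\cKc_{\Gamma_e}}$, restricted along the remaining cube coordinates. On a face $\{t_e=1\}$ it is $\wt{\cKc_{\Gamma_1}}\times_Y\wt{\cKc_{\Gamma_2}}$. The first task is to check that these prescriptions agree on corners where two faces meet. This reduces to the induction hypothesis together with associativity of fibre products and the fact that restricting to boundary strata commutes with forming fibre products.

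The second task is to fill in the cube. All the boundary charts arise from the construction of \cite{HS22,HH25}: a stabilising divisor (or framing) datum, a choice of $\PU(N+1)$-framing via $\bC P^N$, and an obstruction space. For each such chart these data are \emph{auxiliary data} for the same moduli space $\Mbar_\Gamma(X,L)$, or for a product of moduli spaces. The key input is that the space of auxiliary data is connected, up to the stabilisation and germ equivalences used in \cite{HS22}. We would therefore choose a family of auxiliary data parametrised by the cube $I^{E(\Gamma)}$ that restricts to the prescribed data on $\partial I^{E(\Gamma)}$. Concretely, we would first pass to a common refinement by taking products of framings and direct sums of obstruction spaces, so that all boundary charts become stabilisations of charts with the same group $G$. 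We would then use a collar/partition-of-unity interpolation in the $t$-variables, which is possible because $\partial I^{E}$ is a retract of a collar. Running the \cite{HS22} construction in families over $I^{E(\Gamma)}$ gives a topological chart with the correct boundary restrictions.

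The degree $d_1(\Gamma,\Gamma_e)$ in 3) appears because $\del_\Gamma\Mbar_{\Gamma_e}\cong\Mbar_\Gamma/\Aut$, so the map is only a local embedding, of the stated covering degree. The orientation signs come from comparing boundary and fibre-product orientations via \cite{CZ24}. Orientability 2) follows from the relative spin structure, as in \cite{HH25}. For 5), the evaluation maps are submersive after enlarging the obstruction bundle by pulling back $TX^k\oplus TL^\ell$ through the evaluation maps. These enlargements are made compatibly with the boundary restrictions at each step, which is why we add them in the inductive step rather than at the end.

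The main obstacle is \emph{smoothness} with corners compatible on all faces simultaneously. The families construction gives a topological (or $C^1_{loc}$) chart, and the smoothing theory of \cite{HH25} must be applied relative to the already-smooth boundary charts. We would first try to use a relative version of the smoothing theorem: that is, extend a smooth structure given near $\partial I^{E(\Gamma)}$ over the interior, using that the relevant obstruction to extending a stable smooth structure on a collar is trivial. Failing this, we would further stabilise by a representation so that the required extension exists. A secondary difficulty is making the identifications on corners strictly equal rather than equal up to germ equivalence, and this is what dictates working with fixed graph representatives.
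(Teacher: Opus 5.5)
The paper gives no proof of this statement; it is imported from \cite[Theorem~3.11]{HH25}. The paper does reveal some of that construction: the double-sum thickening with convexly combined perturbation terms in the proof of Lemma~\ref{lem:cobordism-for-homotopy}, and the matching conditions $\exp(\chi^{+}(s)\xi)=\exp(\chi^{-}(s)\xi')$ in the proof of Lemma~\ref{lem:gkc-for-higher-homotopies}, which is described as a variation of it. Both fit your architecture of one thickening carrying the framings and obstruction spaces of all relevant corollas, with the cube coordinates as interpolation parameters. But the steps that carry the content of the theorem are deferred rather than proved.

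\textbf{Filling and corner coherence.} The face charts do not assemble into a chart on $\partial I^{E(\Gamma)}\times\Mbar_\Gamma(X,L)$ that you could then extend. The charts $\del_\Gamma\wt{\cKc_{\Gamma_e}}$ and $\wt{\cKc_{\Gamma_1}}\times_Y\wt{\cKc_{\Gamma_2}}$ have different structure groups and in general obstruction bundles of different rank, and the latter does not come from a single auxiliary datum in the sense of \cite{HS22}. So connectedness of the space of auxiliary data is not the relevant input; it would in any case only fill $[0,1]$, not $I^{E(\Gamma)}$ from its boundary sphere. After your common refinement, each face of the new chart is only a group enlargement and stabilisation of the corresponding lower chart. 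Properties 3) and 4) then hold coherently on all corners only if the refined data are canonical: effectively the fixed data $\alpha_{\mathsf a_H}$ of all pieces $H$ obtained from $\Gamma$ by cutting some edges and contracting the rest, with $t$ entering through explicit weights on their perturbation terms. Only then do the face identifications compose correctly. You never specify the refinement, and an arbitrary collar or partition-of-unity choice at each inductive step does not give this. This coherence is the core of the theorem, not a secondary difficulty.

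The same issue arises for 5). In $\del_\Gamma\wt{\cKc_{\Gamma_e}}$ the two branches at $e$ satisfy a genuine node condition with no stabilising vectors. The fibre product of enhanced charts instead imposes $\exp(\xi_1)=\exp(\xi_2)$. So the stabilising vectors at the half-edges of $e$ must be scaled by a function of $t_e$, as the $\chi^\pm$ do in Lemma~\ref{lem:gkc-for-higher-homotopies}. Two smaller points:
\begin{itemize}
\item Enlarging only the obstruction bundle makes nothing submersive. You must stabilise, i.e.\ also thicken by the same bundle, and use $\exp_{\eva}(\xi)$ as the new evaluation map.
\item You never prescribe the faces $\{t_e=1\}$ for non-separating edges and collapsed boundary circles, cf.\ Proposition~\ref{prop:thom systems exist}.
\end{itemize}

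\textbf{Smoothness.} You rightly single this out, but leave it open, and your fallback does not work as stated. Stabilising $\wt{\cKc_\Gamma}$ by a representation stabilises every one of its faces, so it breaks 3) and 4) against the lower charts already built. Pushing the stabilisation down breaks 1), which is an equality. Since every chart is a face of infinitely many others, such stabilisations would have to be organised coherently over the whole poset of graphs, which you do not do. Your assertion that the obstruction to extending a smooth structure from a collar vanishes is likewise unsupported.

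The smooth structure must also keep $\cTc_\Gamma\to\cBc_\Gamma$ a smooth submersion over the base of framed curves, whose map to $\cMc_{\ov\Gamma}$ is analytic. That is how these charts are used later (Construction~\ref{con:orientation}, Lemma~\ref{lem:evaluation-pushforward}, Theorem~\ref{thm:pushforward along chains}), and an abstract relative smoothing of the total space does not guarantee it. The hypothesis that the $\alpha_{\mathsf a}$ are unobstructed is the only smoothness input in the statement, and your filling step never checks that the refined thickenings inherit it. As written, the proposal reduces the theorem to these unproved steps rather than proving it.
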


\begin{proposition}\label{prop:thom systems exist} Given a system of (enhanced) cubical cobordisms $\{\cKc_{\Gamma}\}_{\Gamma}$ and a choice of Thom form $\eta_{\mathsf{a}}$ for $\cK_{\mathsf{a}}$ so that $\obs_{\mathsf{a}}^*\eta_{\mathsf{a}}$ is compactly supported, there exists a system of Thom forms $\eta_{\Gamma}$ for $\cE_{\Gamma}$ with the following properties.
	\begin{enumerate}[label=\normalfont\arabic*),leftmargin=15pt,ref=\roman*]
		\item The pullback $\obs_{\Gamma}^*\eta_{\Gamma}$ is compactly supported in $\cTc_{\Gamma}$.
		\item If $\Gamma = \mathsf{a}$, then $\eta_{\Gamma} = \eta_{\mathsf{a}}$.
		\item It is compatible with boundary restrictions. More precisely, for any edge $e$ of $\Gamma$, we have that
		\begin{equation}
			\label{eq:thom-contracting-edge}
			\eta_\Gamma|_{\{t_e = 0\}} \sim \eta_{\Gamma_e}|_{\del_\Gamma\cKc_{\Gamma_e}}, 
		\end{equation} 
		and
		\begin{equation}
			\label{eq:thom-separating-edge}
			\eta_\Gamma|_{\{t_e = 1\}} \sim \eta_{\Gamma_1}\times \eta_{\Gamma_2}|_{\cTc_{\Gamma_1}\times_Y \cTc_{\Gamma_2}} 
		\end{equation}
		if $e $ separates $\Gamma$ into $\Gamma_1$ and $\Gamma_2$ with $Y = X$ or $L$ depending on $e$, and 
		\begin{equation}
			\label{eq:thom-nonseparating-edge}
			\eta_\Gamma|_{\{t_e = 1\}} \sim \eta_{\Gamma\sm e}|_{Y\times_{Y^2}\cKc_{\Gamma\sm e}} 
		\end{equation}
		if $e$ is a non-separating node and $\Gamma\sm e$ is the graph given by cutting the edge $e$. Finally, if the edge corresponds to a collapsed boundary circle, then
		\begin{equation}
			\label{eq:thom-boundary-edge-type-E}
			\eta_\Gamma\mid_{\{t_e = 1\}} \sim \eta_{\Gamma\sm e}|_{L\times_{X}\cKc_{\Gamma\sm e}}.
		\end{equation} 
	\end{enumerate}
\end{proposition}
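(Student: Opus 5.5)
I would construct the Thom forms $\eta_\Gamma$ by induction on the number of edges $|E(\Gamma)|$, working over the fixed representatives of isomorphism classes of stable map graphs. This is the same pattern used for the charts $\cKc_\Gamma$ in Theorem~\ref{thm:cubical-cobordisms-enhanced}.

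\textbf{Base case.} For corollas $\Gamma = \mathsf{a}$ we set $\eta_\Gamma = \eta_{\mathsf{a}}$. By assumption this form has compactly supported pullback $\obs_{\mathsf{a}}^*\eta_{\mathsf{a}}$.

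\textbf{Inductive step: prescribing boundary data.} Suppose $\eta_{\Gamma'}$ has been constructed for every $\Gamma'$ with fewer edges. The boundary of the cube $I^{E(\Gamma)}$ consists of the faces $\{t_e = 0\}$ and $\{t_e = 1\}$ for $e \in E(\Gamma)$. Over each face, properties \eqref{eq:thom-contracting-edge}--\eqref{eq:thom-boundary-edge-type-E} prescribe a Thom form, transported along the embeddings of Theorem~\ref{thm:cubical-cobordisms-enhanced}:
\begin{itemize}
\item on $\{t_e = 0\}$, the restriction of $\eta_{\Gamma_e}$ to $\del_\Gamma\cKc_{\Gamma_e}$;
\item on $\{t_e = 1\}$, the product $\eta_{\Gamma_1}\times\eta_{\Gamma_2}$, or the pullback of $\eta_{\Gamma\sm e}$ along the diagonal $Y\to Y^2$, or along $L\to X$ in the type (E) case.
\end{itemize}
Each of these involves only graphs with strictly fewer edges, so it is defined by induction.

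\textbf{Compatibility on codimension-two corners.} The prescribed forms must agree where two faces meet, say $\{t_e = \epsilon\}\cap\{t_{e'} = \epsilon'\}$. This follows from the inductive hypothesis applied to $\Gamma_e$, $\Gamma_1$, $\Gamma_2$ and $\Gamma\sm e$. One also needs that the boundary embeddings commute with one another, that is, that clutching commutes with contracting a different edge; this is part of the coherence built into the cubical cobordisms. Since the comparisons in the statement are only up to the equivalence $\sim$ (pullback under the embeddings and the identification $\cE_\Gamma|_{\text{face}} \cong$ the corresponding bundle), it suffices to check agreement after pulling back to the corner. The local-embedding degree $d_1(\Gamma,\Gamma_e)$ plays no role here, since pullback of forms is insensitive to it.

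\textbf{Extension to the interior.} Gluing these compatible forms gives a closed form on $\cE_\Gamma|_{\del^{\text{cube}}\cTc_\Gamma}$. Each piece is a Thom form there, i.e.\ it is vertically compactly supported and has fibre integral one. The next step is to extend it to a Thom form on all of $\cE_\Gamma$. I would do this as follows:
\begin{itemize}
\item Use collar neighbourhoods of the cube boundary in $\cTc_\Gamma = I^{E(\Gamma)}\times(\cdots)$, which exist because the cube factor carries product coordinates, to extend the boundary form to a neighbourhood as a pullback under the collar retraction.
\item Pick any Thom form $\eta'$ on $\cE_\Gamma$.
\item Since the two forms agree up to an exact vertically compactly supported form on the collar (relative Thom isomorphism), interpolate between them with a cutoff $\chi$ in the cube variables: $\eta_\Gamma = \eta' + d(\chi\,\beta)$, where $\beta$ is a primitive of the difference on the collar.
\end{itemize}
Working equivariantly or on the orbifold presentation poses no extra problem, since one can average over the finite isotropy.

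\textbf{Main obstacle: compact support of $\obs_\Gamma^*\eta_\Gamma$.} The delicate point is ensuring that $\obs_\Gamma^*\eta_\Gamma$ stays compactly supported in $\cTc_\Gamma$ while interpolating. On the boundary this holds by induction. In the interior I would first shrink the support by scaling in the fibres, i.e.\ pulling back by $v\mapsto \lambda v$ for large $\lambda$, which is homotopic through Thom forms. Combined with properness of $\obs_\Gamma^{-1}(0)\cong I^{E(\Gamma)}\times\Mbar_\Gamma(X,L)$, this places $\obs^*$ of the support inside a compact neighbourhood of the zero locus. The scaling has to be done compatibly with the boundary data. I would handle this by performing the scaling in the induction with a uniform, $\Gamma$-dependent but boundary-consistent parameter, so that scaling commutes with the restriction maps.
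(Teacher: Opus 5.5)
The paper does not prove this statement; it is imported from \cite{HH25}. Your skeleton is the natural route: induction on $|E(\Gamma)|$, prescribing $\eta_\Gamma$ on the faces $\{t_e=0\}$ and $\{t_e=1\}$ by pullback along the embeddings of Theorem~\ref{thm:cubical-cobordisms-enhanced}, getting agreement on corners from the coherence of the cubical system, and extending inward via the relative Thom isomorphism. However, the step you flag as the main obstacle does not go through as proposed.

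A uniform dilation that is ``boundary-consistent so that scaling commutes with the restriction maps'' is vacuous. Iterating the faces $\{t_e=0\}$ leads from any $\Gamma$ to a corolla, and property 2) fixes $\eta_{\mathsf a}$, so consistency forces $\lambda_\Gamma=1$ throughout. A constant $\lambda$ would not help anyway on a non-compact thickening, where $|\obs_\Gamma|$ need not be bounded below off a compact set. So the boundary data are frozen, and the problem sits in the collar where you interpolate, $\eta_\Gamma=\eta'+d(\chi\beta)$.

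There your inductive hypothesis is genuinely too weak, because compactness of $\obs^*\eta$ on each face does not propagate. Take a rank-one obstruction bundle, and suppose that near infinity the data on $\{t_e=0\}$ are supported in $\{v>\obs_\Gamma\}$ while those on $\{t_e=1\}$ are supported in $\{v<\obs_\Gamma\}$. Apply Stokes' theorem on $\{v\le\obs_\Gamma\}$ over a path joining the two faces. For every closed extension with fibre integral one, this gives $\int\obs_\Gamma^*\eta_\Gamma=\pm1$ along every such path near infinity, so $\obs_\Gamma^*\eta_\Gamma$ cannot be compactly supported.

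What is missing is a support condition carried through the induction. For example, require $\operatorname{supp}\eta_\Gamma\subset S_\Gamma$ for a closed, fibrewise star-shaped neighbourhood $S_\Gamma$ of the zero section with $\obs_\Gamma^{-1}(S_\Gamma)$ compact.
\begin{itemize}
\item This is stable under the face identifications, since closed subsets and fibre products over $Y$ of compact sets are compact.
\item At the base it is a condition on the given $\eta_{\mathsf a}$. It holds, e.g., if they are supported in disc bundles of radius $<|\obs_{\mathsf a}|$ off a compact set.
\item In the inductive step, take the collar width to be a positive function decaying at infinity, so that the transported regions keep compact $\obs_\Gamma$-preimage.
\item Take $\eta'$ supported in $\{|v|<\epsilon(x)\}$ with $\{|\obs_\Gamma|\le\epsilon\}$ compact.
\item Choose $\beta$ inside the union of these star-shaped regions. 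Fibre dilations $v\mapsto\mu v$ with $\mu\ge1$ preserve star-shaped supports, so their homotopy operator moves both forms into a thin disc bundle, where the ordinary relative Thom isomorphism applies. This is where your scaling idea belongs.
\end{itemize}

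Three smaller points.
\begin{itemize}
\item $\cTc_\Gamma$ is not a product $I^{E(\Gamma)}\times(\cdots)$, since it interpolates between genuinely different charts. Collars must therefore come from the projection to the cube and be chosen compatibly at corners.
\item There is no smooth retraction onto the union of the faces. The extension near $\del I^{E(\Gamma)}$ should be made by inclusion--exclusion over such collars, or by carrying collared forms through the induction.
\item The structure groups of global Kuranishi charts are compact Lie groups, not finite. Work directly on the orbifold rather than ``averaging over finite isotropy''.
\end{itemize}
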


\noindent
While these results holds for all stable map graphs, we only consider graphs $\Gamma$ whose unstable vertices (after forgetting the degree) are of type $(0,1)$, i.e. encode discs. In this case, we can construct relative orientations for $\cKc_\Gamma$ over $\Mbar_{\Gamma^{\stb}}$.

\begin{construction}[Orientation]\label{con:orientation}Suppose first $\Gamma$ is a stable map graph whose complete contraction $\mfa_\Gamma$ is stable after forgetting the degree. In this case, the forgetful map $\cTc_\Gamma\to I^{E(\Gamma^{\stb})}\times\Mbar_{\Gamma^{\stb}}$ has a canonical relative orientation defined as follows. This map factors through the base space $\cBc_\Gamma$, and since $\cTc_\Gamma\to \cBc_\Gamma$ has a canonically relative orientation by \cite[Lemma~2.29]{HH25}, it suffices to consider 
	$$\ff_{\,\Gamma}\cl \cBc_\Gamma\to I^{E(\Gamma^{\stb})}\times\Mbar_{\Gamma^{\stb}}$$\noindent 
	instead. Using that a disc bubble creates a boundary of codimension $1$, we obtain an open $\cBc_\Gamma\hkra \cBc_{\Gamma'}$ onto a collar neighbourhood of $\cBc_{\Gamma'}$, through which $\ff_\Gamma$ factors. 
	For chains ending in leaves, there is a unique choice of $\Gamma'$.
	For interior chains of unstable vertices, there are two choices of graphs $\Gamma'$ and $\Gamma''$, which only differ in their degree distribution and which both yield the same graph $\wh\Gamma$ after contracting a single boundary edge. 
	Thus we have a homotopy commutative square of open immersions
	\begin{equation*}\begin{tikzcd}
			\cBc_\Gamma \arrow[r,hook,""] \arrow[d,hook,""]&\cBc_{\Gamma'}\arrow[d,hook,""]\\
			\cBc_{\Gamma''} \arrow[r,hook,""] & \cBc_{\wh\Gamma} \end{tikzcd} \end{equation*}
	which covers
	\begin{equation*}\begin{tikzcd}
			\cMc_{\Gamma^{\stb}} \arrow[r,hook,""] \arrow[d,hook,""]&	 \cMc_{{\Gamma'}^{\stb}}\arrow[d,hook,""]\\
			\cMc_{{\Gamma''}^{\stb}} \arrow[r,hook,""] &	\cMc_{\Gamma^{\stb}} \end{tikzcd} 
	\end{equation*}
	of open embeddings, where the forgetful maps of the lower triangle have the same fibres. Thus, a relative orientation of $\ff_{\,\wh\Gamma}$ induces canonically a relative orientation of $\ff_\Gamma$. Proceeding in this fashion, we end up with a graph $\wt\Gamma$ obtained by contractions from $\Gamma$ and which is stable, whence $\ff_{\,\wt\Gamma}$ has a canonical relative orientation, inducing therefore one on $\ff_{\,\Gamma}$.
	
	If $\mfa_\Gamma$ is not unstable, it is a disc with one outgoing marked point and at most one incoming marked point. In this case, we can use the same argument as in the previous case to reduce the problem to orienting $\cK_{\mfa_\Gamma}$, i.e., choosing a trivialisation of 
	\[\orl(D)\otimes \bigotimes_{i}\orl(T_{z_i}\del \bD)\orl(\Aut(\bD))\dul.\]  
	We use the orientation of $\orl(D)$ induced by the relative spin structure; see \cite[\S B]{HH25} or \cite{CZ24}. The remaining parts are canonically oriented.
\end{construction}

\noindent
These will be the correspondences used for the operations associated to stable simplices. For the operations associated to unstable simplices, note that by the nondegeneracy assumption in Definition~\ref{de:curved-dmft}, we only have to specify a correspondence for the unique $0$-dimensional simplex. These correspondences are described in the following definition.

\begin{definition}\label{de:unstable-operations}
	The \emph{unstable operation} $\scF_{\ov{\mathsf{a}}}$, associated to an unstable tuple $\ov{\mathsf{a}}$, is defined using the following correspondences. 
	\begin{enumerate}[label=U\arabic*),leftmargin=22pt,ref=U\arabic*]
		\item\label{identity-correspondence} (Identity) If $\ov{\mathsf{a}}$ represents spheres/discs with one incoming and one outgoing interior marked point,  the correspondences are 
		\begin{equation*}\label{} 
			X\;\xleftarrow{\ide_X} \;X\;\xra{\ide_X}\; X\qquad\text{and} \qquad 	L\;\xleftarrow{\ide_L} \;L\;\xra{\ide_L}\; L.
		\end{equation*}
		\item\label{forgetful-correspondence} (Co-unit) If $\ov{\mathsf{a}}$ represents spheres with one incoming marked point, respectively discs with only one incoming boundary marked point, the correspondences are
		\begin{equation*}\label{} 
			X\;\xleftarrow{\ide_X} \;X\;\to\; \pt,\qquad \text{respectively} \qquad 	L\;\xleftarrow{\ide_L} \;L\;\to\; \pt.
		\end{equation*}
		\item\label{unit-correspondence} (Unit) If $\ov{\mathsf{a}}$ corresponds to a sphere/disc with one outgoing interior/boundary marked point, the correspondences are 
		\begin{equation*}\label{} 
			\pt\;\xleftarrow{} \;X\;\xra{\ide_X}\; X\qquad \text{and} \qquad 	\pt\;\xleftarrow{} \;L\;\xra{\ide_L}\; L.
		\end{equation*}
		\item\label{pairing-correspondence} (Pairing) If $\ov{\mathsf{a}}$ represents a sphere with two incoming marked points or a disc with two incoming boundary marked points, then the correspondences are
		\begin{equation*}\label{} 
			X\times X\;\xleftarrow{\Delta_X} \;X\;\xra{}\; \pt\qquad \text{and} \qquad 		L\times L\;\xleftarrow{\Delta_L} \;L\;\xra{}\; \pt.
		\end{equation*}
		\item\label{copairing-correspondence} (Co-pairing) If $\ov{\mathsf{a}}$ represents a sphere with two outgoing marked points the correspondence is
		\begin{equation*}\label{} 
			\pt\;\xleftarrow{} \;(TX^{\oplus 2}_\epsilon,\pi_X^*TX^{\oplus 2},\Delta_{TX^{\oplus2} },\tau_X^{\oplus 2})\;\xra{\exp\times\exp}\; X\times X,
		\end{equation*}
		where $ \epsilon > 0$ is smaller than the injectivity radius of the chosen Riemannian metric on $X$, $\pi_X$ is the projection map and $\tau_X$ is a Thom form of $TX$ with support in the $\epsilon$-disc bundle.
		
		If $\ov{\mathsf{a}}$ represents a disc with two outgoing boundary marked points, the correspondence $\fC^0_{0,2}$ is given by
		\begin{equation*}
			\pt\;\xleftarrow{} (TL^{\oplus 2}_\epsilon,\pi_L^*TL^{\oplus 2},\Delta_{TL^{\oplus2}},\tau_L^{\oplus 2})\;\xra{\exp\times\exp}\; L\times L,
		\end{equation*}
		with $\epsilon,\pi_L$ and $\tau_L$ as in the closed case.
		\item\label{restrict-correspondence} (Restriction) if $\ov{\mathsf{a}}$ represents a disc with one incoming interior marked point, the associated correspondence is 
		\begin{equation*}\label{} X\;\xleftarrow{j}\; L \; \to \;\pt,\end{equation*}
		where $j$ is the inclusion of $L$.
		\item\label{corestrict-correspondence} (Inclusion) if $\ov{\mathsf{a}}$ represents a disc with one outgoing interior marked point, the associated correspondence is 
		\begin{equation*}\label{} \pt\;\leftarrow\; (TX_\epsilon|_L,\pi_X^*(TX|_L),\Delta,\tau_X|_{TX|_L}) \; \xra{\exp} \;X.\end{equation*}
		\item If the curve that $\ov{\mathsf{a}}$ represents has no marked point, then we associate the empty correspondence $\pt \leftarrow\emst\to \pt$ to it.
	\end{enumerate}
	We define $\scF_\sigma$ to be zero if $\sigma$ is a degenerate simplex in an unstable moduli space.
\end{definition}

\noindent
We can now turn to the correspondences required for the higher homotopies. The proof is a variation of the proof of Theorem~\ref{thm:cubical-cobordisms-enhanced}.

\begin{lemma}\label{lem:gkc-for-higher-homotopies} Let $\{\cKc_\Gamma\}_{\Gamma}$ be a system of cubical cobordisms as in Theorem~\ref{thm:cubical-cobordisms-enhanced}. Then, given any sequence $\lc{\Gamma} = (\Gamma_d,\dots,\Gamma_1)$ of stable map graphs and truly unstable corollas, there exists smooth global Kuranishi charts $\cKc_{\lc{\Gamma}}$ with corners with a submersion to $I^{d-1}\times \prod_{j=0}^{d-1}I^{E(\Gamma_{d-j})}$ with the following properties.
	\begin{enumerate}[label=\arabic*),leftmargin=20pt,ref=\arabic*]
		\item\label{higher-base} $\cKc_{\lc{\Gamma}}$ admits a smooth submersion to the base space 
		$\cBc_{\lc{\Gamma}} \;\coloneqq\; \cBc_{\Gamma_d}\times \dots \times \cBc_{\Gamma_1}$.
		\item\label{higher-ev} $\cKc_{\lc{\Gamma}}$ admits smooth invariant submersions $\eva^+_{\lc\Gamma}\cl\cTc_{\lc{\Gamma}}\to Y^+_{\Gamma_d}$ and $\eva^-_{\lc\Gamma}\cl\cTc_{\lc{\Gamma}}\to Y^-_{\Gamma_1}$.
		\item\label{higher-side-one} $\cKc_{\lc{\Gamma}}|_{\{s_i = 0\}} \simeq \cKc_{\Gamma_d,\dots,\Gamma_{i+1}\g \Gamma_i,\dots,\Gamma_1}$ as correspondences over $Y^-_{\Gamma_1}$ and $Y^+_{\Gamma_d}$.
		\item\label{higher-side-zero} $\cKc_{\lc{\Gamma}}|_{\{s_i = 1\}} \simeq \cKc_{ \Gamma_d,\dots,\Gamma_{i+1}}\times_{Y^+_{\Gamma_i}}\cKc_{\Gamma_i,\dots,\Gamma_{1}}$ as (unoriented ) correspondences over $Y^-_{\Gamma_1}$ and $Y^+_{\Gamma_d}$.
		\item\label{higher-edge-zero} $\cKc_{\lc{\Gamma}}|_{\{t_e = 0\}} \simeq \cKc_{ \Gamma_d,\dots,(\Gamma_i)_e,\dots,\Gamma_1}$ for any edge $e \in E(\Gamma_i)$.
		\item\label{higher-edge-one} $\cKc_{\lc{\Gamma}}|_{\{t_e = 1\}} \simeq \cKc_{ \Gamma_d,\dots,\Gamma_i'',\Gamma'_i,\dots,\Gamma_1}|_{\{s_{i}=0\}}$ for any edge $e \in E(\Gamma_i)$ separating $\Gamma_i$ into $\Gamma'_i$ and $\Gamma''_i$ (where we add identity corollas as necessary).
		\item\label{higher-case-one} $\cKc_{\lc{\Gamma}} = \cKc_{\Gamma_1}$ if $d = 1$.
		\item\label{trivial-if-stable} If $\Gamma_d,\dots,\Gamma_1$ are all stable, then $\cKc_{\lc{\Gamma}} = I^{d-1}\times\cKc_{\Gamma_{d}}\times_{Y_{d-1}^+} \dots\times_{Y_1^+}\cKc_{\Gamma_1}$.
	\end{enumerate}
	Here we use the coordinates $s_1,\dots,s_{d-1}$ on $I^{d-1}$.
\end{lemma}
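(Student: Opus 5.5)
We construct the charts $\cKc_{\lc\Gamma}$ by induction on the length $d$ of the sequence and, for fixed $d$, on the total number of edges and unstable corollas. The starting point is property~\eqref{higher-case-one}: for $d=1$ we set $\cKc_{\lc\Gamma}=\cKc_{\Gamma_1}$ from Theorem~\ref{thm:cubical-cobordisms-enhanced}. For truly unstable corollas we use the chart underlying the correspondences of Definition~\ref{de:unstable-operations}. When all $\Gamma_i$ are stable, property~\eqref{trivial-if-stable} forces the product definition
\[
I^{d-1}\times\cKc_{\Gamma_d}\times_{Y^+_{d-1}}\dots\times_{Y^+_1}\cKc_{\Gamma_1}.
\]
We first check that this choice is consistent with the other properties. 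On $\{s_i=0\}$ we need agreement with the composite sequence. Since the clutched graph $\Gamma_{i+1}\g\Gamma_i$ carries the new edges at $t_e=1$ by~\eqref{def:clutching-map-on-fattened-moduli}, this follows from~\eqref{eq:restricted-e-1-enhanced}, the boundary restriction of the cubical cobordism of the clutched graph. On $\{s_i=1\}$ the face is literally a fibre product. Properties~\eqref{higher-edge-zero} and~\eqref{higher-edge-one} are inherited from items \eqref{cubical-1-boundary-enhanced} and \eqref{cubical-0-boundary-enhanced} of Theorem~\ref{thm:cubical-cobordisms-enhanced}, applied factorwise.

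The real work concerns sequences containing unstable corollas, where composition triggers forgetting, self-clutching or collapsing a boundary circle, as in \eqref{forget-interior-outgoing}--\eqref{collapsed-circle}. Here the $s_i=0$ face must be the chart of a graph with fewer marked points. That chart does not agree with the fibre product over $Y^+_{\Gamma_i}$ with the unstable correspondence, so a genuine cobordism has to be constructed. The approach follows the proof of Theorem~\ref{thm:cubical-cobordisms-enhanced} in \cite{HH25}: global Kuranishi charts are built from unobstructed auxiliary data (framings, obstruction spaces $E$, perturbation data) over the base spaces $\cBc$.
\begin{enumerate}[label=(\roman*),leftmargin=25pt]
\item Over the base $\cBc_{\lc\Gamma}$ we take the product of the base spaces, giving~\eqref{higher-base}. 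The evaluation maps are extended from those of the outer factors, giving~\eqref{higher-ev}; they are made submersive by adding a small thickening by $TX$ or $TL$ as in the co-pairing correspondence \eqref{copairing-correspondence}.
\item Near each face $\{s_i=0\}$ or $\{s_i=1\}$ the required boundary chart is already prescribed by the induction hypothesis. On the corner strata the prescriptions agree because they agree for shorter sequences; this is the same compatibility used in \cite{HH25}.
\item We interpolate the auxiliary data (obstruction bundles and the maps defining the obstruction section) across the $s$-coordinates. We use a partition of unity on $I^{d-1}$ and the fact that the space of unobstructed auxiliary data is convex, or at least stably contractible after stabilising the obstruction bundle. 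This produces a single chart with collars matching the prescribed faces.
\end{enumerate}
Smoothness of the interpolated chart comes from the smoothing argument in \cite{HH25}, applied relative to the already smooth boundary.

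The forgetful maps \eqref{def:forgetful-map-on-fattened-moduli} involve the cube maps $t_{\bar e}=1-\prod(1-t_e)$, and these must be compatible with the $t_e$-faces, i.e.\ with properties~\eqref{higher-edge-zero}--\eqref{higher-edge-one}. We handle this by defining the chart on the $s_i=0$ face through pullback along this cube map. We also have to check that it remains a submersion onto $I^{d-1}\times\prod I^{E(\Gamma_{d-j})}$, which holds away from the corner $t=1$. To secure submersivity everywhere, we reparametrise the cube coordinates near the corners by a collar.

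I expect the main obstacle to be the collapsed-circle and self-clutching compositions \eqref{self-clutching}--\eqref{collapsed-circle}. There the $s_i=0$ face changes the topology of the domain, so the chart is cobordant to a fibre product over the diagonal or over $L\to X$, rather than equal to it. For these faces I would use the explicit Thom-form compatibilities \eqref{eq:thom-nonseparating-edge} and \eqref{eq:thom-boundary-edge-type-E} as a model. I would build the interpolation by gluing in the cylinders $I\times\cKc$ furnished by item~\eqref{cubical-1-boundary-enhanced}. This is why only an equivalence of unoriented correspondences is claimed in~\eqref{higher-side-zero}.
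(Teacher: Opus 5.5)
You are right that compositions with truly unstable corollas force a genuine cobordism. However, you have misidentified what differs between the faces, and the mechanism you propose does not address it.

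What changes under such compositions is the \emph{stabilisation pattern}, i.e.\ which evaluation maps are replaced by $\exp_{u(z)}(\xi)$ for a stabilising tangent vector $\xi$. The simplest case is the energy-zero coproduct, whose two outputs are both stabilised. Compose it with a co-unit on one output. On the $s_1=1$ face the fibre product still has a stabilised output. On the $s_1=0$ face the composite is the identity corolla with chart $X\leftarrow X\to X$. At chain level these give different operations: roughly a smoothing $\exp_*(\pi^*\alpha\wedge\tau)$ versus $\alpha$.

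This is why your step (iii) cannot connect the faces:
\begin{itemize}
\item It interpolates obstruction bundles and obstruction sections, which never touches the evaluation maps.
\item It presupposes a convex space of ``unobstructed auxiliary data'' for pieces that are not moduli spaces of maps, for instance the co-pairing correspondence \eqref{copairing-correspondence}.
\item It assumes the prescribed face charts share a common thickening and covering group. Without that, ``a single chart with collars matching the prescribed faces'' is asserted rather than constructed. The same goes for strict agreement on all corners of $I^{d-1}$, which the later Stokes arguments need for exact cancellations.
\end{itemize}
The obstacle you single out, self-clutching and collapsing a boundary circle, is not the real issue. Those compositions land in $\{t_e=1\}$ and are inherited from the cubical system exactly as in your stable case. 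The only new phenomenon there is, again, a possible change of stabilisation pattern.

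The paper needs no induction and no gluing. It records the pattern by $\delta_\Gamma(j)\in\{0,1\}$. It then defines explicit multilinear weights $\chi^\pm_{\lc\Gamma,j}\colon I^{d-1}\to I$ that equal $\delta_{\Gamma_k\g\dots\g\Gamma_1}(j)$ (respectively $\delta_{\Gamma_d\g\dots\g\Gamma_{d-k}}(j)$) on the corresponding faces and split at $s_k=1$.

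The thickening $\cTc_{\lc\Gamma}$ is built from maximally stabilised charts, in which every marked point carries a stabilising vector $\xi$. It sits inside $I^{d-1}\times\prod_i\wh{\cKc_{\Gamma_{d-i}}}$ as the parametrised fibre product cut out at each interface edge $e$ by
\[
\exp_{u_i(z_{i,e})}\bigl(\chi^{+,i}_{\lc\Gamma,e}(s)\,\xi_{i,e}\bigr)\;=\;\exp_{u_{i+1}(z_{i+1,e})}\bigl(\chi^{-,i+1}_{\lc\Gamma,e}(s)\,\xi_{i+1,e}\bigr).
\]
It carries the pulled-back direct-sum obstruction bundle, the product section and the product group. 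Smoothness comes from the outgoing evaluations being submersions.

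Every face lives in the same ambient product and differs only through the values of $\chi^\pm$. Hence \eqref{higher-side-one}--\eqref{higher-edge-one} follow from the identities satisfied by $\chi^\pm$. Property \eqref{trivial-if-stable} holds because the weights are constant when the pattern never changes.

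The explicit formula is also what later shows $\cKc_{\lc\Gamma}$ is degenerate when an entry is an identity, or when the sequence is stable or operadic (Lemma~\ref{lem:unital-functor}, Corollary~\ref{cor:our-dmft-properties}). An abstract interpolation would give no such control.
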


\noindent
Note that~\eqref{higher-base} implies that we have a smooth map $\stb \cl \cKc_{\lc{\Gamma}}\to \prod_{i =1}^{d}\Mbar_{\Gamma^{\stb}_i}$, where we take $\Mbar_{\Gamma^{\stb}_i}$ to be a point if $\Gamma_i$ does not have a well-defined stabilisation.

\begin{proof} In this proof a graph will mean a stable map graph or a disjoint union of truly unstable corollas. The upshot of the construction is that different stable graphs can have different \emph{stabilisation patterns}, that is, which evaluation maps are stabilised can change when composing two graphs. The easiest example is the coproduct in energy zero: the two outgoing marked points are stabilised; however, when forgetting one (and recovering the identity) the remaining marked point is no longer stabilised in our definition. This mismatch has to be reconciled by a cobordism. We will construct them here compatibly.
	
	We first have to define some notation.
	To a graph $\Gamma$ and a marked point $j$ of $\Gamma$, we associate the number 
	\begin{equation}\label{stabilisation-pattern}\delta_{\Gamma}(j)\,\coloneqq\,
		\begin{cases}
			1 \quad &\text{if }\eva_{\Gamma,j} \text{ is stabilised}\\
			0 \quad &\text{otherwise}.
	\end{cases}\end{equation}
	Here an evaluation map is stabilised according to Theorem~\ref{thm:cubical-cobordisms-enhanced} or Definition~\ref{de:unstable-operations}.
	Then, given a sequence $\lc{\Gamma}$ as in the statement with outgoing marked points labelled by $y^d_c \sqcup y^d_o$ and incoming marked points labelled by $y_c^0\sqcup y_o^0$, we define the functions $\chi^\pm_j\cl I^{d-1}\to I$ by 
	\begin{equation}\label{} 
		\chi_{\lc\Gamma,j}^-(s) \;=\; \sum_{k=1}^{d}\delta_{\Gamma_k\g \dots\g \Gamma_1}(j)\,s_k\prod_{i=1}^{k-1}(1-s_i)
	\end{equation}
	for $j \in y_c^0\sqcup y_o^0$, where we set $s_d \equiv 0$, and
	\begin{equation}\label{} 
		\chi_{\lc\Gamma,j}^+(s) \;=\; \sum_{k=0}^{d-1}\delta_{\Gamma_d\g \dots\g \Gamma_{d-k}}(j)\,s_{d-k-1}\prod_{i=1}^{k}(1-s_{d-i})
	\end{equation}
	for $j \in y^d_c \sqcup y^d_o$ with $s_0 \equiv 1$. The key property that the maps $\chi^\pm_{\lc\Gamma,j}$ satisfy is that 
	\begin{equation}\label{} 
		\chi^-_{\lc\Gamma,j}(s) \; = \; \delta_{\Gamma_k\g \dots\g \Gamma_1}(j)\qquad \qquad \text{if }s_1,\dots,s_{k-1} = 0,\, s_k = 1,
	\end{equation}
	and
	\begin{equation}\label{} 
		\chi^+_{\lc\Gamma,j}(s) \; = \; \delta_{\Gamma_d\g \dots\g \Gamma_{d-k}}(j)\qquad \qquad \text{if }s_{d-1},\dots,s_{d-k} = 0,\, s_{d-k-1} = 1.
	\end{equation}
	More generally, letting $\chi^\pm_{\lc\Gamma}$ be the product over all respective marked points, we have for $s_k = 1$, that 
	\begin{equation}\label{eq:splitting-property} 
		\chi^-_{\lc{\Gamma}}(s) = \chi^-_{\Gamma_k,\dots,\Gamma_1}(s_{k-1},\dots,s_1)\qquad\text{and}\qquad\chi^+_{\lc{\Gamma}}(s) = \chi^+_{(\Gamma_d,\dots,\Gamma_k)}(s_{d-1},\dots,s_{k}).
	\end{equation}
	We now define for $1 \leq i \leq d$ the functions 
	\begin{equation}\label{eq:intermediate-parametrisations} 
		\chi^{+,i}_{\lc\Gamma}(s) \coloneqq \chi^+_{\Gamma_i,\dots,\Gamma_1}(s_{i-1},\dots,s_1)\qquad \qquad \chi^{-,i}_{\lc\Gamma}(s) \coloneqq \chi^-_{\Gamma_d,\dots,\Gamma_{i}}(s_{d-1},\dots,s_i).\end{equation}
	Given this, we define the thickening 
	\[\cTc_{\lc\Gamma}\sub I^{d-1}\times \prod_{i=0}^{d-1}\wh{\cKc_{\Gamma_{d-i}}}\] 
	to be the parametrised fibre product of tuples $(s,(u_d,\dots,u_1))$ where the global Kuranishi chart $\wh{\cKc_{\Gamma_{d-i}}}$ is the maximal stabilisation possible (although we only stabilise its evaluation maps according to the rules of Theorem~\ref{thm:cubical-cobordisms-enhanced}, respectively Definition~\ref{de:unstable-operations}) and for any edge $e$ between $\Gamma_i$ and $\Gamma_{i+1}$ we have 
	\begin{equation}\label{} \exp_{u_i(z_{i,e})}\lbr{\chi^{+,i}_{\lc\Gamma,e}(s)\xi_{i,e}} \;=\;  \exp_{u_{i+1}(z_{i+1,e})}\lbr{\chi^{-,i+1}_{\lc\Gamma,e}(s)\xi_{i+1,e}},\end{equation}
	where $z_{i,e}$ and $z_{i+1,e}$ are the marked points on the respective graph that are clutched to $e$ and $\xi_{j,e}$ is the tangent vector associated to that marked point, coming from the stabilisation of the respective global Kuranishi chart. We let $\cEc_{\lc\Gamma}$ and $\obs_{\lc\Gamma}$ be the pullback of $\bigoplus\limits_{i= 0}^{d-1}\cEc_{\Gamma_{d-i}}$ and the product of the obstruction sections, and we let the covering group be $G_{\lc\Gamma} = \prod_{i=0}^{d-1}G_{d-1}$. Since the outgoing evaluation map is always a submersion, the resulting global Kuranishi chart $\cKc_{\lc\Gamma}$ is a smooth global Kuranishi chart and it satisfies the first two as well as the penultimate property of the assertion. The remaining two properties follow from the fact that the functions $\chi_{\lc\Gamma}^{\pm,i}$ have the property~\eqref{eq:splitting-property} as well as the fact that $\chi^\pm_{\lc{\Gamma},j} \equiv \delta_{\Gamma_d\g \dots \g \Gamma_1}(j)$ if $\delta_{\Gamma_d\g \dots \g \Gamma_k}$ does not depend on $k = 1,\dots,d$.
\end{proof}
\subsection{Construction}\label{subsec:construction} We define $\scF_L$ on objects to be the completed tensor product
\begin{equation}\label{eq:dmft-on-objects} 
	\scF_L^*(y) \;\coloneqq\; \Omega^*(X^{y_c}\times L^{y_o};\Lambda)\; =\; \Omega^*(X^{y_c}\times L^{y_o})\,\wh{\otimes}\, \Lambda
\end{equation}
of differential forms with the Novikov ring. The differential is given by 
\begin{equation}\label{eq:differential} 
	d_y(\alpha T^\beta) \;\coloneqq\; (-1)^{|\alpha|}d_{dR}(\alpha)T^\beta
\end{equation}
for $\alpha \in \Omega^*(X^{y_c}\times L^{y_o};\Lambda)$. 
To define $\scF$ on morphisms we will use the correspondences constructed in the previous subsection. However, they are only relatively oriented over $\cMc_{\ov\Gamma}$ and do not yet take any information about chains on $\Mbar_{oc}$ into account. The point of the next lemma is that we can modify these global Kuranishi charts using morphisms of $\cdmc_n$ so that we obtain oriented correspondences. The definition of these operations is a considerable generalisation of the $\fq$-operations of \cite{ST16} and requires the resolution of singularities discussed in \S\ref{subsec:singular-integration}. Throughout the remainder of the section, we write $\beta\in H_2$ if we do not want to specify whether $\beta$ is a relative homology class in $H_2(X,L;\bZ)$ or lies in $H_2(X;\bZ)$.

\begin{lemma}\label{lem:evaluation-pushforward} To a stable simplex $\sigma\cl \Delta^{\norm{\sigma}}\to \cMc_{\cc{\Gamma}_\sigma}$ and $\beta\in H_2$ we can associate a well-defined map
	\begin{gather}\label{eq:generalised-q-operation} \fq^\beta_{\sigma} \cl \Omega^*(X^{y_c}\times L^{y_o})\to \Omega^*(X^{y'_c}\times L^{y'_o})\\
		\notag \alpha\mapsto (-1)^{\delta_{0,h}\,w_\fs(\beta)}
		\s{\substack{\beta_\Gamma = \beta\\\Gamma^{\stb} = \ov\Gamma_\sigma}} {c_\Gamma\,(\eva^+_{\Gamma,\sigma})_*\lbr{(\eva^-_{\Gamma,\sigma})^*\alpha \wedge p_\sigma^*\obs_\Gamma^*\eta_{\Gamma}}},
		\end{gather}
	of degree 
	\begin{equation}
		\label{eq:degree-q-operation} \deg(\fq^\beta_{\sigma}) = -\mu_L(\beta)+|\sigma|
		\end{equation} 
	where the sum is over all marked prestable graphs of total degree and type $\mfa$ and the factor is given by
	\begin{equation}\label{eq:q-coefficient}
		c_\Gamma \coloneqq \frac{|\Aut^+(\Gamma^{\stb})|}{|\Aut^+(\Gamma^{ps})|},
	\end{equation}
	where $\Aut^+(\Gamma)$ is the automorphism group of $\Gamma$ preserving the distribution into incoming and outgoing marked points. We call $\fq^\beta_\sigma$ the \emph{$\fq$-operation associated to $\sigma$ of energy $\beta$}.
\end{lemma}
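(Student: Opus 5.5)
To prove Lemma~\ref{lem:evaluation-pushforward}, I would make each ingredient of~\eqref{eq:generalised-q-operation} precise, show that it is independent of the auxiliary choices, and then count degrees.

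\emph{Step 1: the correspondence.} Fix a prestable graph $\Gamma$ with $\beta_\Gamma=\beta$ and $\Gamma^{\stb}=\ov\Gamma_\sigma$. Construction~\ref{con:orientation} gives the forgetful map $\cTc_\Gamma\to\cMc_{\ov\Gamma_\sigma}$ together with a canonical relative orientation. I would form the fibre product $\Delta^{\norm{\sigma}}\times_{\cMc_{\ov\Gamma_\sigma}}\cTc_\Gamma$. The map $\sigma$ is analytic and the thickening carries a real analytic structure near the relevant locus, or can be given one (\S\ref{subsec:singular-integration}). We may therefore apply the resolution of singularities of \cite{BM08}, extended to orbifolds with corners, to obtain a smooth orbifold $\wt\cT_{\Gamma,\sigma}$ with a proper map $p_\sigma\cl\wt\cT_{\Gamma,\sigma}\to\cTc_\Gamma$. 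This map is an isomorphism over the locus where the fibre product is transverse, which is open and dense. Set $\eva^\pm_{\Gamma,\sigma}\coloneqq\eva^\pm_\Gamma\g p_\sigma$.

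We orient $\wt\cT_{\Gamma,\sigma}$ over the dense regular locus by combining the orientation of $\Delta^{\norm{\sigma}}$, the trivialisation of $\det(\sigma)$ and the relative orientation of $\cTc_\Gamma\to\cMc_{\ov\Gamma_\sigma}$. The twist by $\det^{\otimes r}$ is exactly what makes this combination a genuine orientation rather than a relative one (Remark~\ref{rem:trivialising-det}). The sign $(-1)^{\delta_{0,h}w_\fs(\beta)}$ is then just a convention fixing the closed sector.

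\emph{Step 2: the pushforward.} By Lemma~\ref{lem:gkc-for-higher-homotopies}\eqref{higher-ev} and Theorem~\ref{thm:cubical-cobordisms-enhanced}\eqref{evaluation-cubical}, $\eva^+_\Gamma$ is a submersion. However, $\eva^+_\Gamma\g p_\sigma$ need not be a submersion on $\wt\cT_{\Gamma,\sigma}$. To handle this, I would regard $(\eva^-_{\Gamma,\sigma})^*\alpha\wedge p_\sigma^*\obs_\Gamma^*\eta_\Gamma$ as a compactly supported smooth form; compact support comes from Proposition~\ref{prop:thom systems exist}(1) and properness of $p_\sigma$. Its pushforward along $\eva^+\g p_\sigma$ is then well defined as a current. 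I would show it is a smooth form by writing it as $(\eva^+_\Gamma)_*\bigl(\obs_\Gamma^*\eta_\Gamma\wedge(\eva^-_\Gamma)^*\alpha\wedge (p_\sigma)_*1\bigr)$. Here $(p_\sigma)_*1$ is the current of integration over the chain $\sigma$ pulled back to $\cTc_\Gamma$, which is locally the pullback of a current on $\cMc_{\ov\Gamma_\sigma}$. Since the forgetful map is a submersion onto the base near the support, fibre integration along $\eva^+_\Gamma$ should smooth it out.

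\textbf{This smoothness argument is the step I expect to be the main obstacle.} If it fails in general, I would fall back on the approach of \cite{ST16}: enlarge the thickening by an additional stabilisation so that the composite $\wt\cT_{\Gamma,\sigma}\to X^{y'_c}\times L^{y'_o}$ becomes a submersion.

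\emph{Step 3: independence of choices.} Following the argument of Lemma~\ref{lem:lift-well-defined}, and as in Lemma~\ref{lem:multiplication-up-to-codim}, any two resolutions are dominated by a common third one, and the blow-down maps are isomorphisms off measure-zero sets. Integrals of forms are therefore unchanged. Independence of the representative of $\sigma$ follows from Lemma~\ref{lem:fibre-product-well-defined}. Independence of the chosen graph representatives is built into the fixed choice made in \S\ref{subsec:our-chains}. The factor $c_\Gamma$ compensates for the degree $d_1$ of the embeddings in Theorem~\ref{thm:cubical-cobordisms-enhanced}, since forgetting unstable discs changes automorphism groups. Finiteness of the sum for fixed $\beta$ follows from Gromov compactness.

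\emph{Step 4: degree.} The virtual dimension is $\dim\cTc_\Gamma-\operatorname{rk}\cEc_\Gamma=\dim\Mbar_{\ov\Gamma_\sigma}+\dim(\text{evaluation targets})+\mu_L(\beta)$, with the appropriate contribution from $n$. After taking the fibre product with $\sigma$, the dimension of $\Mbar_{\ov\Gamma_\sigma}$ is replaced by $\norm{\sigma}$. Pullback followed by fibre integration then shifts form degree by $\dim Y^+-\dim\wt\cK$. With the grading convention $|\sigma|=-\norm{\sigma}$ plus the twist shift in $\det(\sigma)$, this yields $\deg\fq^\beta_\sigma=-\mu_L(\beta)+|\sigma|$, as claimed in~\eqref{eq:degree-q-operation}.
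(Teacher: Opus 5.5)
Your Steps~1 and~2 contain a genuine gap, and both parts of it have the same cause.

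First, the thickening $\cTc_\Gamma$ is only a smooth orbifold. So the resolution of singularities of \cite{BM08} cannot be applied to $\Delta^{\norm{\sigma}}\times_{\cMc_{\ov\Gamma_\sigma}}\cTc_\Gamma$ itself. Your assertion that this space carries, or can be given, an analytic structure near the relevant locus is not supplied by the construction. The missing idea is the \emph{relative} analytic structure of Definition~\ref{de:rel-analytic-orbifold}. The map $\cTc_\Gamma\to\cMc_{\ov\Gamma_\sigma}$ factors as the smooth submersion $\cTc_\Gamma\to\cBc_\Gamma$ followed by the analytic map $\ff_\Gamma\cl\cBc_\Gamma\to\cMc_{\ov\Gamma_\sigma}$. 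Hence every singularity of the fibre product comes from the analytic fibre product $\cB_{\Gamma,\sigma}=\cBc_\Gamma\times_{\cMc_{\ov\Gamma_\sigma}}\Delta^{\norm{\sigma}}$. One resolves only this base, equivariantly as in Corollary~\ref{cor:equivariant-resolution}, and then pulls $\cTc_\Gamma$ back along the submersion to the base. The resulting total space is automatically smooth.

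Second, the same structure disposes of what you call the main obstacle. The paper uses that $\eva^+_\Gamma$ is jointly submersive with the map to the base, so it restricts to a submersion along the fibres of $\cTc_\Gamma\to\cBc_\Gamma$. These fibres are unchanged by base change along the resolution of $\cB_{\Gamma,\sigma}$. Hence $\eva^+_{\Gamma,\sigma}$ is again a relative submersion, and ordinary fibre integration applies. This is precisely Theorem~\ref{thm:pushforward along chains} with $\cW=\Delta^{\norm{\sigma}}$, $\cV=\cMc_{\ov\Gamma_\sigma}$ and $\cX=\cTc_\Gamma$.

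Your currents argument fails as written. You cannot present $(p_\sigma)_*1$ as the pullback of the current of $\sigma$ along $\cTc_\Gamma\to\cMc_{\ov\Gamma_\sigma}$, because that map is not a submersion; that failure is exactly why the fibre product is singular. You are conflating the submersion $\cTc_\Gamma\to\cBc_\Gamma$ with the non-submersive $\ff_\Gamma$. Your fallback of stabilising further also misses the point. You already have submersivity of $\eva^+_\Gamma$ on $\cTc_\Gamma$. What is needed is submersivity \emph{relative to the analytic base}, since only that survives the resolution.

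Your orientation, independence and degree steps are otherwise in line with the paper.
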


\begin{proof} 
	It will be useful to consider the pull-push operation without the sign and separately for each graph $\Gamma$, so we write
	\begin{equation}\label{eq:q-operation-graph} 
		\fq_{\Gamma,\sigma}(\alpha) \coloneqq (\eva^+_{\Gamma,\sigma})_*\lbr{(\eva^-_{\Gamma,\sigma})^*\alpha \wedge p_\sigma^*\obs^*\eta_{\Gamma}}
	\end{equation}
	for the (not yet defined) part of the sum. For a stable map graph $\Gamma$ of total degree $\beta$ and with stabilisation $\Gamma^{\stb} = \ov{\Gamma}_\sigma$, define
	\begin{equation}\label{eq:thickening-over-simplex}
		 \cT_{\Gamma,\sigma} \,\coloneqq\, \cTc_\Gamma \times_{\cMc_{\cc{\Gamma}_\sigma}}\Delta^{\norm{\sigma}}, 
	\end{equation}
	where the map
	\begin{equation}\label{eq:forgetful-map-to-spikes}
		 \cTc_\Gamma\to  \cMc_{\ov\Gamma}
	\end{equation}
	is given by the stabilisation map $\cTc_\Gamma\to \Mbar_{\ov{\Gamma}_\sigma}$ as well as the composition 
	$\cTc_\Gamma\to I^{E(\Gamma)}\to I^{E(\ov{\Gamma}_\sigma)}$.
	 Then, $\cT_{\Gamma,\sigma}$ is canonically a smooth orbifold over 
	\[ \cB_{\Gamma,\sigma} \coloneqq \cBc_{\Gamma} \times_{\cMc_{\cc{\Gamma}_\sigma}}\Delta^{\norm{\sigma}}\]
	 with a smooth structure away from the singular points, which lie in codimension $2$. Since any unstable vertices of $\Gamma$ are discs with only boundary marked points, there exists a canonical isomorphism 
	 
 	\begin{align}\label{eq:orienting-simplex-chart} 
 	\notag\orl(\cK_{\Gamma,\sigma}) \ &\cong \ \orl(\cKc_{\Gamma})\orl(\cMc_{\ov\Gamma_\sigma})\dul\orl(\Delta^{\norm{\sigma}}) 
 	\\\notag&\cong \ \orl(\delbar_\Gamma) \orl(\norm{\sigma})
 	\\& \cong \ \orl(Y_\sigma^+)\orl(\det(\sigma))\orl(\mu_L(\beta_\Gamma)) \orl(|\sigma|)
 	\end{align}
 	 over the locus of regular points, where $Y_\sigma^+ = X^{y'_c}\times L^{y_c}$. Note that $\mu_L(\beta_\Gamma)$ is even, so its position in the tensor product of orientation lines is irrelevant. 
 	By the definition of local system coefficient, $\sigma$ carries the data of a trivialisation of $\orl(\det(\sigma))$. Thus, the isomorphism~\eqref{eq:orienting-simplex-chart} yields a trivialisation of the orientation bundle of the smooth locus of $\cK_{\Gamma,\sigma}$.\par
	 Let $p_\sigma \cl \cT_{\Gamma,\sigma}\to \cTc_{\Gamma}$ be the canonical projection. Set 
	\begin{equation}\label{eq:chart-restricted-to-simplex} 
	\cK_{\Gamma,\sigma} \,\coloneqq\, \lbr{\cT_{\Gamma,\sigma},p_\sigma^*\cEc_{\Gamma},p_\sigma^*\obs_{\Gamma}}.
	\end{equation}	
	Since the evaluations maps $\eva^+_\Gamma$ are jointly submersive with $\ff$, the pullbacks $\eva_{\Gamma,\sigma}^\pm$ of the evaluation maps $\eva^\pm_{\Gamma}$ are still smooth and $\eva^+_{\Gamma,\sigma}$ is a relative submersion. 
	By Theorem~\ref{thm:pushforward along chains} with $\cW = \Delta^{\norm{\sigma}}$, $\cV = \cMc_{\ov\Gamma_\sigma}$ and $\cX = \cTc_\Gamma$ and the orientation determined by the order of the factors in \eqref{eq:thickening-over-simplex}, the map $\fq_{\Gamma,\sigma}$, defined by~\eqref{eq:q-operation-graph}, is well-defined. Hence, so is $\fq^\beta _\sigma$.
	 
	Finally, to compute the degree of $\fq_{\Gamma,\sigma}$, recall that, due to the definition of $\text{det}^{\otimes n}$, the map $\sigma$ is defined on $\Delta^{\norm{\sigma}}$, where 
	\begin{equation}\label{eq:shift-degree} \norm{\sigma} = |\sigma| -n\chi + 2n|y'_c| + n|y_o'|\end{equation}
	with $\chi$ being the Euler characteristic of the curves whose moduli $\sigma$ maps to. Thus,
	\begin{align*}
		|\fq_{\Gamma,\sigma}(\alpha)| &= |\alpha| -\lbr{\vdim(\Mbar_{\mathsf{a}}(X,L)\times_{\Mbar_{oc}(y,y')}\Delta^{\norm{\sigma}})-2n|y_c'|-n|y'_o|}\\& = |\alpha| -\lbr{\norm{\sigma}+n\chi+\mu_L(\beta_{\mathsf{a}})-2n|y_c'|-n|y'_o|}\\& = |\alpha|-|\sigma|-\mu_L(\beta_{\mathsf{a}})
		\end{align*}
	as claimed.
\end{proof}

\begin{remark}\label{} Note that the definition of \eqref{eq:generalised-q-operation} is also well-defined if the domain of $\sigma$ is an analytic map on an analytic manifold with corners. \end{remark}

\noindent
The operations of Lemma~\ref{lem:evaluation-pushforward} are not quite sufficient as they are not defined on elements of the morphism spaces of $\dmc_n$, cf. Definition~\ref{de:chain-model}. Thus, we have to show that these operations are independent under pushforwards by contractions in a suitable sense. The next lemma is essential to deal with the fact that the stabilisation map contracts unstable components, whence certain canonical squares are only Cartesian away from strata of positive codimension.

\begin{lemma}\label{lem:multiplication-up-to-codim} Let $\psi \cl X'\to X$ be a proper smooth map between smooth \'etale Lie groupoids with corners of the same dimension. If $\deg(\psi) = 1$ and $h \cl X\to Z$ is a submersion to a smooth manifold so that $h' \coloneqq h\psi$ is a submersion as well, then 
	\begin{equation*}\label{} h'_*\psi^* = h_* 
	\end{equation*} 
	as maps $\Omega^*_c(X)\to \Omega^{*-k}(Z)$.
\end{lemma}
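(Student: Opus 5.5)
The plan is to reduce the statement to the projection formula for fibre integration together with the fact that a proper degree-one map pushes the constant function $1$ forward to $1$ almost everywhere. Since $h' = h\g\psi$ and pushforward along submersions is functorial, formally $h'_*\psi^* = h_*\g(\psi_*\psi^*)$. The map $\psi$ need not be a submersion, so $\psi_*$ cannot be used directly. Instead, I will work with the defining property of fibre integration:
\[
\int_Z h_*\omega\wedge\gamma = \int_X \omega\wedge h^*\gamma
\]
for all compactly supported test forms $\gamma$ on $Z$. The identity then reduces to the equality
\[
\int_{X'}\psi^*\omega\wedge h'^*\gamma = \int_{X'}\psi^*(\omega\wedge h^*\gamma) = \int_X\omega\wedge h^*\gamma,
\]
which is the statement that $\int_{X'}\psi^*\mu = \deg(\psi)\int_X\mu$ for compactly supported top-degree forms $\mu$ on $X$. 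Properness of $\psi$ ensures that $\psi^*\mu$ is compactly supported.

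The key step is to prove this degree formula for proper smooth maps between equidimensional orbifolds with corners. By Sard's theorem, applied on the object manifold of the étale groupoid and compatibly with the isotropy, the critical values of $\psi$ form a null set. Over the regular values the map $\psi$ is a finite covering. There the signed count of preimages, weighted by the orbifold factors $|\mathrm{Aut}(x)|/|\mathrm{Aut}(x')|$, is locally constant and equals $\deg(\psi)=1$. The integral of $\psi^*\mu$ over the complement of the critical set is then a sum of local changes of variables, which gives $\int_X\mu$. The critical set is a closed measure-zero set in $X'$ where $\psi^*\mu$ vanishes pointwise, since the Jacobian is zero there. The boundary and corner strata have measure zero and contribute nothing. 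To make the localisation rigorous, I would use a partition of unity subordinate to orbifold charts $U/G$, together with the convention that integration on a quotient chart is $\frac{1}{|G|}$ times integration upstairs.

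To pass from the weak identity to an equality of forms, I would note that $h_*\omega$ and $h'_*\psi^*\omega$ are both smooth forms on $Z$, because $h$ and $h'$ are submersions. A smooth form is determined by its pairings with all compactly supported test forms on the manifold $Z$, so the two pushforwards agree. The degree shift $k$ is the common fibre dimension of $h$ and $h'$, and this is consistent because $X$ and $X'$ have the same dimension.

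The main obstacle I expect is the careful orbifold and corner bookkeeping in the degree formula. This includes checking that $\deg(\psi)$, defined as the weighted local count, is well defined, meaning constant on the set of regular values, when $X$ has corners. Since the paper applies the lemma to resolutions of singularities, where $\psi$ is a diffeomorphism off a set of positive codimension, I would first handle that case directly. There the integral identity is immediate because the exceptional loci have measure zero. I would then remark that the general case follows by the same argument on the regular locus.
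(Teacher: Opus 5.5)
Your argument is correct, but it is organised differently from the paper's. The paper works fibrewise. Using Sard's theorem for orbifolds, it asserts that for $z$ in a dense open set $U\subset Z$ the restricted map $\psi_z\colon h'^{-1}(z)\to h^{-1}(z)$ has degree one. It then applies the orbifold degree formula of \cite{BZ24} on each such fibre to get $(h_*\alpha)_z=(h'_*\psi^*\alpha)_z$ for $z\in U$, and concludes by continuity.

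You instead dualise on $Z$. The projection formula and Fubini move everything to the total spaces, where a single application of $\int_{X'}\psi^*\mu=\int_X\mu$ suffices. Non-degeneracy of the pairing on locally oriented open subsets of $Z$ then gives equality of forms.

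Your route avoids transferring the degree-one property from $\psi$ to the generic fibre maps $\psi_z$. That transfer needs two facts: at regular values $d\psi$ maps $\ker dh'$ isomorphically onto $\ker dh$, and a generic fibre meets the critical values in a null set. Your route also avoids comparing form-valued fibre integrals pointwise via compatible horizontal lifts. The price is the projection formula for \'etale groupoids with corners and a local choice of orientation on $Z$, which is harmless because it enters both sides identically.

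Two small corrections in your key step:
\begin{enumerate}
\item The critical set of $\psi$ in $X'$ need not be null; a proper degree-one map can be constant on an open set. What you actually use, correctly, is that $\psi^*\mu$ vanishes on the critical set, and that regular points lying over the null set of critical values form a null set.
\item To get a finite covering you should also discard the closed null set $\psi(\partial X')\cup\partial X$.
\end{enumerate}

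Finally, you need no constancy of the local count. Reading $\deg(\psi)=1$ as ``the weighted signed count equals $1$ almost everywhere on $X$'' is exactly what your integral identity uses. This holds in the application, because the blow-down is a diffeomorphism off the exceptional locus and preserves corner strata.
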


\begin{proof} 
	By Sard's theorem for orbifolds, \cite[Theorem~4.1]{BB12}, the restriction $\psi_z: h'^{-1}(z) \rightarrow h^{-1}(z)$ is a degree-$1$ map for an open dense set $U\sub Z$. By the degree formula for orbifolds, \cite[Theorem~1.1]{BZ24}, we have $(h_*\alpha)_z = (h'_*\psi^*\alpha)_z$ for $z\in U$ and $\alpha \in \Omega_c^*(X)$. By continuity, the two differential forms agree everywhere.
\end{proof}

\begin{lemma}\label{lem:independence-of-spike} Suppose $\mathsf{a} \in \cA$ is stable. Then, the map 
	\begin{equation*}\label{} 
		\fq^\beta\, \cl\, \Omega^*(X^{y_c}\times L^{y_0})\times  \bigoplus\limits_{\ov\Gamma}C^{an}_*(\cMc_{\ov\Gamma})\;\to\;  \Omega^*(X^{y'_c}\times L^{y'_0}) \;:\; (\sigma,\alpha)\mapsto \fq^{\mfa}_{\sigma}(\alpha)
	\end{equation*}
of Lemma~\ref{lem:evaluation-pushforward} descends to a well-defined map 
\begin{equation}\label{} 
	\scF^\beta \cl \Omega^*(X^{y_c}\times L^{y_0}) \times \dmc_n^*(y,y')\to  \Omega^*(X^{y'_c}\times L^{y'_0})
\end{equation}
of degree $-\mu_L(\beta_{\mathsf{a}})$.
\end{lemma}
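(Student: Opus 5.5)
The plan is to check that the assignment $(\sigma,\alpha)\mapsto \fq^\beta_\sigma(\alpha)$ on $\bigoplus_{\ov\Gamma} C^{an}_*(\cMc_{\ov\Gamma})$ respects the relations defining the colimit $\text{colim}_{\scS_{\mathsf{a}}}\scC$ of Construction~\ref{con:chains-for-dmft}. Since the colimit is the quotient of $\bigoplus_{(\Gamma,E)}\scC(\Gamma,E)$ by the relations $\sigma\sim \scC(f)_*\sigma$ for morphisms $f\cl(\Gamma,E)\to(\Gamma',E')$ in $\scS_{\mathsf{a}}$, it suffices to prove
\[
\fq^\beta_{(\ide_{I^{E'}}\times\psi^f)\g\sigma} \;=\; \fq^\beta_{\sigma}
\]
for every such $f$ and every analytic simplex $\sigma\cl\Delta^m\to \cMc_{\Gamma}|_{\{t_E=0\}}$. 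We also need to check that degenerate simplices give zero, which follows from Proposition~\ref{prop:correspondences-operations}\eqref{degenerate-vanishes}, since the chart $\cK_{\Gamma,\sigma}$ then maps to a chart of lower dimension. The degree statement is immediate from \eqref{eq:degree-q-operation}.

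First, I would reduce to $f$ contracting a single edge $e\in E$, since morphisms in $\scS_{\mathsf{a}}$ factor into such contractions and the $\fq$-operations are compatible with composition of the pushforwards. For such $f$, the simplex $\sigma$ lies over $\{t_e=0\}$. Its image $\psi^f\g\sigma$ lies in the boundary stratum $\del_{\Gamma}\Mbar_{\Gamma_e}$ of $\cMc_{\Gamma_e}$, reached by an $|\Aut|$-fold clutching map. Next, I would compare the two fibre products $\cT_{\Gamma',\sigma}$ (built from prestable graphs $\Gamma'$ with $\Gamma'^{\stb}=\Gamma$) and $\cT_{\Gamma'',\psi^f\sigma}$ (built from $\Gamma''^{\stb}=\Gamma_e$). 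By Theorem~\ref{thm:cubical-cobordisms-enhanced}\eqref{cubical-1-boundary-enhanced}, $\cKc_{\Gamma'}|_{\{t_e=0\}}$ embeds into $\del_{\Gamma'}\cKc_{\Gamma'_e}$ with degree $d_1(\Gamma',\Gamma'_e)$. By Proposition~\ref{prop:thom systems exist}, the Thom forms restrict compatibly via \eqref{eq:thom-contracting-edge}, and the evaluation maps are intertwined. Together these give a proper map $\cT_{\Gamma',\sigma}\to\cT_{\Gamma'_e,\psi^f\sigma}$ commuting with $\eva^\pm$ and pulling back the Thom form.

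Since the stabilisation map contracts unstable disc components, this map is only an isomorphism away from positive-codimension strata, and its degree is a ratio of automorphism counts. I would handle this with Lemma~\ref{lem:multiplication-up-to-codim}, applied after resolving singularities as in Theorem~\ref{thm:pushforward along chains}. The output is $\fq_{\Gamma'_e,\psi^f\sigma}=\deg\cdot \fq_{\Gamma',\sigma}$. The orientation comparison goes through \eqref{eq:orienting-simplex-chart}, using the canonical isomorphism of Lemma~\ref{lem:composition-and-det}.

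The main obstacle is the combinatorics: summing over all prestable $\Gamma''$ with $\Gamma''^{\stb}=\Gamma_e$ versus all $\Gamma'$ with $\Gamma'^{\stb}=\Gamma$. Here $\Gamma'\mapsto\Gamma'_e$ is neither injective nor surjective onto the graphs stabilising to $\Gamma_e$; some $\Gamma''$ do not hit the boundary stratum, and those contribute nothing because the fibre product over $\psi^f\sigma$ is empty. I would verify that the weights $c_\Gamma$ of \eqref{eq:q-coefficient} are chosen precisely so that
\[
\sum_{\Gamma'\mapsto \Gamma''} c_{\Gamma'}\,d_1(\Gamma',\Gamma'_e)\,/\,|\text{fibre of }\psi^f| \;=\; c_{\Gamma''},
\]
by an orbit–stabiliser count for $\Aut^+$ acting on the edge labellings. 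This is the step I expect to require the most care, together with tracking the sign $(-1)^{\delta_{0,h}w_\fs(\beta)}$, which is unchanged under contraction because $h$ and $\beta$ are.
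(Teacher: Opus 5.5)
Your framing matches the paper's: check the colimit relations one edge contraction at a time, compare $\cKc_{\Gamma'}|_{\{t_e=0\}}$ with $\del_{\Gamma'}\cKc_{\Gamma'_e}$ via Theorem~\ref{thm:cubical-cobordisms-enhanced}\eqref{cubical-1-boundary-enhanced} and \eqref{eq:thom-contracting-edge}, then match weights by counting automorphisms. But you skip the step on which the paper's proof actually turns.

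Among the prestable graphs $\Gamma'$ with ${\Gamma'}^{\stb}=\Gamma$ there are some in which $e$ is the stabilisation of several edges $e_1,\dots,e_k$, $k\ge 2$, joined through unstable disc vertices. For these there is no single-edge contraction $\Gamma'_e$, and Theorem~\ref{thm:cubical-cobordisms-enhanced}\eqref{cubical-1-boundary-enhanced} does not apply. By \eqref{def:forgetful-map-on-fattened-moduli} the coordinate $t_e$ pulls back to $1-\prod_i(1-t_{e_i})$, so $\{t_e=0\}$ pulls back to the codimension-$k$ corner $\{t_{e_1}=\dots=t_{e_k}=0\}$ of $\cTc_{\Gamma'}$. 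These summands of $\fq_\sigma$ have no partner among the codimension-one strata $\del_{\wh\Gamma}\cKc_{\Gamma''}$ seen by the fibre product over $\psi^f\g\sigma$. So no choice of weights $c_\Gamma$ can make your identity true unless these terms vanish on their own.

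The paper shows exactly that. The map $\cTc_{\Gamma'}\to I^{E(\Gamma)}$ has a corner singularity over $\{t_e=0\}$ in the sense of Definition~\ref{de:corner-singularity}: the differential is surjective, but the preimage has dimension $k-1$ too small. Hence $\cT_{\Gamma',\sigma}$ lies below the virtual dimension and $\fq_{\Gamma',\sigma}=0$. Lemma~\ref{lem:multiplication-up-to-codim} cannot substitute for this argument, since it requires a proper map between spaces of the same dimension.

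Once these terms are discarded, the remaining comparison is the one you describe. The paper settles it with \cite[Lemma~4.22]{HH25} and Lemma~\ref{lem:pushforward-if-group-acts}, followed by the orientation check through \eqref{eq:orienting-simplex-chart} and Lemma~\ref{lem:composition-and-det}. Note also that the discrepancy you anticipate is on the wrong side. The unmatched terms are the chain-type $\Gamma'$ on the source, and they die through a dimension drop, not because some fibre product over $\psi^f\g\sigma$ is empty.
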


\begin{proof} 
	It suffices to treat the case where $\mathsf{a}$ consists of a single element. The general case is analogous. Thus the claim reduces to showing that if $\sigma\cl \Delta^m \to \cMc_{\cc{\Gamma}_\sigma}$ has image contained in $\cMc_{\cc{\Gamma}_\sigma}|_{t_e = 0}$, then $q_{\Gamma,\sigma} = 0$ if in the stabilisation of the underlying prestable graph $\Gamma^{ps}$ several edges are combined to form $e$.  But in this case the map $\cTc_\Gamma\to I^{E(\Gamma^{\stb})}$ has a corner singularity (Definition \ref{de:corner-singularity}) over $\{t_e = 0\}$, whence the associated operation vanishes by construction. Therefore, the claim follows up to sign from \cite[Lemma~4.22]{HH25}, combined with Lemma~\ref{lem:pushforward-if-group-acts}. 
	To see that the sign is $1$, we observe that the outer rectangle 
	\begin{center}\begin{tikzcd}
			\orl(\cK_{\Gamma,\sigma}) \arrow[r,""] \arrow[d,""]&\orl(\cKc_{\Gamma})\orl(\cMc_{\ov\Gamma_\sigma})\dul\orl(\Delta^{\norm{\sigma}}) \arrow[d,""]\arrow[r]& \orl(\delbar_\Gamma)\orl(\Delta^{\norm{\sigma}})\arrow[d,"="]\\ 
			\orl\lbr{\cKc_\Gamma|_{\{t_e = 0\}}\times_{\Mbar_{\ov\Gamma_\sigma}|_{\{t_e = 0\}}}\Delta^{\norm{\sigma}}} \arrow[r,""] & \orl(\cKc_{\Gamma}|_{\{t_e = 0\}})\orl(\cMc_{\ov\Gamma_\sigma}|_{\{t_e = 0\}})\dul\orl(\Delta^{\norm{\sigma}})\arrow[r]& \orl(\delbar_\Gamma)\orl(\Delta^{\norm{\sigma}}) \end{tikzcd} \end{center}
	commutes because both squares commute up to sign $(-1)^{\ind(\delbar_\Gamma)}$. It now follows from a straightforward verification that
	\begin{center}\begin{tikzcd}
			\orl(\cK_{\Gamma,\sigma}) \arrow[r,""] \arrow[d,""]&\orl(Y_\sigma^+)\orl(\det(\sigma))\orl(|\sigma|) \arrow[d,"\ide\otimes \eqref{eq:iso-of-det} \otimes\ide"]\\ 
			\orl(\cK_{\psi(\Gamma),\psi_*\sigma}) \arrow[r," "] & \orl(Y_{\psi_*\sigma}^+)\orl(\det(\psi_*\sigma))\orl(|\psi_*\sigma|) \end{tikzcd} \end{center}
	commutes, yielding the desired result.
\end{proof}

\begin{lemma}\label{lem:pushforward-if-group-acts} Suppose $X$ is a Lie groupoid admitting a rel--$C^\infty$ action by a finite group $G$. If $f^\pm \cl X\to Y^\pm$ are two rel--$C^\infty$ $G$-invariant maps so that $f^+$ is a submersions, let $\lc{f}^\pm \cl [X/G] = [G\times X_1\rightrightarrows X_0]\to Y^\pm$ be the induced maps. Then, 
	\begin{equation}\label{}\lc{f}^+_*((\lc{f}^-)^*\alpha) = \frac{1}{|G|}\,f^+_*({f^-}^*\alpha) \end{equation}
	for any $\alpha\in \Omega^*(Y^-)$.
\end{lemma}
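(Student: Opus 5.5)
The claim is local, so I would reduce to the case where $X$ is presented by a single chart, i.e.\ work with the quotient groupoid $[X/G]=[G\times X_1\rightrightarrows X_0]$. Two facts are needed. Integration over the fibres on an \'etale proper groupoid is computed on the object space after dividing by the order of the isotropy. A global finite group action with trivial-on-a-dense-set ineffective part contributes the factor $1/|G|$. Concretely, fibre integration along $\lc f^+$ is defined by choosing a partition of unity on $X_0$ that is invariant for the groupoid action. Along a local chart $U/\Gamma$ one integrates over $U$ and divides by $|\Gamma|$.

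First I would choose a $G$-invariant (and $X_1$-invariant) partition of unity $\{\rho_i\}$ on $X_0$. This is possible by averaging an arbitrary one over the compact, proper action. I would then check that $f^+_*$ and $\lc f^+_*$ can both be computed using the same $\rho_i$. The key local comparison is this. If $U\to X_0$ is an orbifold chart for $X$ with group $\Gamma_U$, then a chart for $[X/G]$ near the image is $U$ with group $\Gamma_U\times G_U$, where $G_U$ is the stabiliser of the image of $U$ in $G$. Points of $U$ identified by elements of $G\setminus G_U$ land in disjoint translates $gU$. Summing over the $|G|/|G_U|$ translates, and dividing by $|\Gamma_U|\,|G_U|$ in the quotient chart, gives exactly $\tfrac{1}{|G|}$ times the integral computed upstairs over $\bigsqcup_{g} gU$ divided by $|\Gamma_U|$. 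The pullback $(\lc f^-)^*\alpha$ agrees with $(f^-)^*\alpha$ on $X_0$ by $G$-invariance of $f^-$, so only the pushforward needs comparing.

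The rel--$C^\infty$ setting (smooth in the fibre directions only) requires care. Fibre integration there is defined via \cite[Proposition~2.1]{ST16}-type formulas, and I would verify that it is also computed chartwise with the $1/|\text{isotropy}|$ weighting, so the same argument applies fibrewise over $Y^+$. Submersivity of $f^+$ ensures that the fibres $(f^+)^{-1}(y)$ are orbifolds with $G$ acting on them. The identity can then be checked pointwise in $y$, as integration of $\alpha$-pullbacks over $(f^+)^{-1}(y)$ versus $[(f^+)^{-1}(y)/G]$, together with the standard fact $\int_{[F/G]}\omega=\frac1{|G|}\int_F\omega$.

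The main obstacle is the bookkeeping of the ineffective part when $G$ acts non-freely or with a kernel. The clean way through is to do the pointwise fibre computation above, where the formula $\int_{[F/G]}=\frac{1}{|G|}\int_F$ holds for every finite group action, effective or not, by the definition of orbifold integration.
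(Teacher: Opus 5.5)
Your proposal is correct in outline, but it takes a more hands-on route than the paper.

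\textbf{The paper's route.} The paper never passes to local charts. In the groupoid formalism it uses, the pushforward along $\lc f^+$ (resp.\ $f^+$) is computed on the object space $X_0$ as $(f^+_0)_*(\rho\cdot\omega)$, for a function $\rho$ with $t_*s^*\rho=1$. Moreover $(\lc f^-)^*\alpha=(f^-)^*\alpha$ is literally the same $G$-invariant form on $X_0$. So the two sides can differ only through the normalisation of $\rho$. Averaging a partition of unity of $X$ over $G$ gives a $G$-invariant one, $\varrho$. Since the arrow space of $[X/G]$ is $G\times X_1$, one gets $\lc t_*\lc s^*\varrho=|G|\,t_*s^*\varrho=|G|$. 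Hence $\varrho/|G|$ is an admissible partition of unity for $[X/G]$, and the factor $1/|G|$ falls out at once.

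\textbf{Your route.} You localise with invariant bump functions and compare chart-weighted integrals, using the isotropy order $|\Gamma_x|\,|G_{[x]}|$ of the action groupoid. This isotropy is in general an extension of the stabiliser $G_{[x]}$ of $[x]\in|X|$ by $\Gamma_x$, not the product $\Gamma_U\times G_U$. Only its order enters, so this is harmless.

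\textbf{What you still need to supply.} To make this a proof relative to the paper's definitions, you must carry out the step you only flag: that the partition-of-unity pushforward agrees with chartwise integration weighted by $1/|\text{isotropy}|$. Also, your final fibrewise reduction to $\int_{[F/G]}=\frac1{|G|}\int_F$ is just the lemma itself over a point. It is therefore the chart count that actually carries your argument, not that ``standard fact''.

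\textbf{Comparison.} The paper's route buys brevity and robustness. Only the cardinality of the fibres of $\lc t$ enters, so the rel--$C^\infty$ and ineffective-action issues you worry about never arise. Your route gives a more geometric explanation of where $1/|G|$ comes from, and it applies to any chart-local definition of orbifold integration.
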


\begin{proof} Note first that ${f^-}^*\alpha$ is $G$-invariant for any $\alpha\in \Omega^*(Y^-)$. Thus the only difference between the two pushforward is given by the choice of partition of unity, cf. \cite[Example~4.16]{ST23b}. Given a partition of unity $\rho$ for $X$, we may first replace it by its average over $G$. Due to the fact that $G$ intertwines source and target map this yields a well-defined $G$-invariant partition of unity $\varrho$ on $X$. Then, set $\lc{\rho} := \frac{1}{|G|}\varrho$. This is a well-defined partition of unity because $\lc{s}^*\varrho$ is $G$-invariant. Hence, $\lc{t}_*\lc{s}^*\varrho = |G|t_*s^*\varrho = |G|$. 
\end{proof}

\noindent
For the unstable operations, we write $\ov{\mathsf{a}}$ for the unique $0$-dimensional simplex mapping to the coarse moduli space $\Mbar_{\ov{\mathsf{a}}}$ and let 
\begin{equation}\label{} 
	 \scF_{\ov{\mathsf{a}}}^0 \coloneqq \fq_{\fC_{\ov{\mathsf{a}}}}\cl \scF(y)\to \scF(y')
\end{equation}
be the operation associated to the respective correspondence in Definition~\ref{de:unstable-operations}. We set $\scF_{\ov{\mathsf{a}}}^\beta$ for $\beta$ with $\omega(\beta) > 0$.
We then extend $\scF$ linearly to chains, that is, for $\sigma = \s{i}{t_i\,\sigma_i}$ and $\beta\in H_2(X,L;\bZ)$ we set
\begin{equation}\label{eq:linear-extension} 
	\scF_{\sigma}^\beta \coloneqq \s{i}{t_i\, \scF^\beta_{\sigma_i}}.
\end{equation}

\begin{definition}\label{de:differential-and-curvature}
	For $\beta \in H_2(X,L;\bZ)$ with $\omega(\beta) > 0$, let
	\begin{equation*}\label{}
		\scF^\beta_{D_1} \coloneqq \s{\Gamma}{\fq_{\Gamma}} \cl \Omega^*(L)\to \Omega^*(L) 
	\end{equation*}
	where we are summing over stable map graphs of type $(\beta,0,1)$ with one incoming and one outgoing boundary marked point and $\fq_{\Gamma}$ is the operation associated to the correspondence $\cKc_\Gamma$ of Theorem~\ref{thm:cubical-cobordisms-enhanced} with the canonical orientation induced by the relative spin structure on $L$. Similarly, \[\scF^\beta_{D_0}\coloneqq \s{\Gamma}{\fq_\Gamma}\in \Omega^*(L)\] 
	is given by a sum over stable map graphs of type $(\beta,0,1)$ with one outgoing boundary marked point and $\fq_\Gamma$ being the operation associated to the correspondence $\cKc_\Gamma$.
\end{definition}

\begin{remark}\label{}These operations do not require resolutions of singularities since we need not take the fibre product with a chain.
\end{remark}

\noindent
We now define the higher homotopies of the $A_\infty$ functor $\scF_L$. Recall that these take the form of linear maps 
\begin{equation}\label{eq:higher-homotopies} 
	\scF_{\lc{\sigma}}^d\cl \scF(y^0)\,\to\, \scF(y^d).
\end{equation}
for composable sequences $\lc\sigma = (\sigma_d,\dots,\sigma_1)$ in $\cdmc_n$.
As in the case of the previous operations, the operations~\eqref{eq:higher-homotopies} are induced by correspondences. To make the notation uniform for stable and unstable operations, we define \emph{truly unstable} corollas, that is, graphs without edges whose vertex has energy. In this case we let $\cK_{\ov{\mathsf{a}}}$ be the corresponding global Kuranishi chart correspondence of Definition~\ref{de:unstable-operations}. Given a disjoint union $T$ of truly unstable corollas and a stable (map) graph $\Gamma$ so that the incoming/outgoing marked points of $T$ correspond to the outgoing/incoming marked points of $\Gamma$, we define the composition $T\g \Gamma$ (respectively $\Gamma\g T$) to be the stable (map) graph obtained by applying the respective operation associated to the truly unstable vertices of $T$ described in Definition~\ref{de:open-closed-curve-category}, e.g., forgetting a marked point our clutching two marked points in $\Gamma$ to form an edge. Given this, it is clear what a stable (map) graph with truly unstable vertices is. In contrast to a composition of stable (map) graphs the composition $T\g \Gamma$ does not always create new edges. If $T$ and $\Gamma = T'$ is a disjoint union of truly unstable corollas, then we can similarly define $T\g T'$. Note that this is again a truly unstable corolla. To make the notation more streamlined, we define for a graph $\Gamma$ with $k^\pm$ interior and $\ell^\pm$ boundary outgoing/incoming marked points the manifolds 
\[Y^\pm_\Gamma \coloneqq X^{k^\pm}\times L^{\ell^\pm}\]
and similarly if the marked points are labelled by finite ordered sets.

\begin{lemma}\label{lem:construction-of-higher-homotopies} For any $d \ge 1$ and any objects $y^0,\dots,y^d$ of $\cdmc_n$, there exists a well-defined linear map  
	\begin{equation*}\label{} 
		\scF^d = \s{\beta}{\scF^{d,\beta}Q^\beta} \;\cl\; \scF^*(y^0)\otimes \bigotimes\limits_{i=1}^d{\cdmc_n^*(y^{i-1},y^i)}\; \longrightarrow\; \scF(y^d) 
	\end{equation*}
	of degree $0$. 
\end{lemma}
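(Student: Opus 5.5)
The plan is to define $\scF^{d,\beta}_{\lc\sigma}$ as a pull-push operation along the correspondences $\cKc_{\lc\Gamma}$ of Lemma~\ref{lem:gkc-for-higher-homotopies}, fibred over the chains $\sigma_i$, exactly parallel to Lemma~\ref{lem:evaluation-pushforward}. By multilinearity it suffices to treat a composable sequence $\lc\sigma = (\sigma_d,\dots,\sigma_1)$ of simplices, each either a stable simplex $\sigma_i\cl \Delta^{\norm{\sigma_i}}\to \cMc_{\ov\Gamma_{\sigma_i}}$, the $0$-simplex $\ov{\mathsf{a}}$ of an unstable moduli space, or one of the formal generators $D_0,D_1$. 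For $D_0,D_1$ I would replace the simplex by the sum over stable map graphs of type $(\beta,0,1)$, as in Definition~\ref{de:differential-and-curvature}; nondegeneracy lets me set $\scF^d_{\lc\sigma}=0$ if some $\sigma_i$ is degenerate. For each tuple $\lc\Gamma=(\Gamma_d,\dots,\Gamma_1)$ of stable map graphs (or truly unstable corollas) with $\Gamma_i^{\stb}=\ov\Gamma_{\sigma_i}$ and total energy $\beta$, I would form
\[\cT_{\lc\Gamma,\lc\sigma}\coloneqq \cTc_{\lc\Gamma}\times_{\prod_i \cMc_{\ov\Gamma_{\sigma_i}}}\textstyle\prod_i\Delta^{\norm{\sigma_i}},\]
using the map to the base $\cBc_{\lc\Gamma}$ from property~\eqref{higher-base} composed with stabilisation. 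I would then resolve singularities as in \S\ref{subsec:singular-integration} and define $\scF^{d,\beta}_{\lc\sigma}(\alpha)$ as $c_{\lc\Gamma}\,(\eva^+_{\lc\Gamma,\lc\sigma})_*\bigl((\eva^-_{\lc\Gamma,\lc\sigma})^*\alpha\wedge p^*\obs^*\eta_{\lc\Gamma}\bigr)$, summed over $\lc\Gamma$ and multiplied by $Q^\beta$. Here $\eta_{\lc\Gamma}$ is the pulled back product Thom form, and the coefficients $c_{\lc\Gamma}=\prod_i c_{\Gamma_i}$ are as in~\eqref{eq:q-coefficient}. The pushforward exists by Theorem~\ref{thm:pushforward along chains} because $\eva^+_{\lc\Gamma}$ is a submersion by~\eqref{higher-ev}.

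The key steps, in order, are as follows. First, orient $\cK_{\lc\Gamma,\lc\sigma}$ by iterating the isomorphism~\eqref{eq:orienting-simplex-chart} factor by factor, together with the relative orientations of Construction~\ref{con:orientation}. The extra factor $I^{d-1}$ is oriented by the $s$-coordinates, and the trivialisations of $\det(\sigma_i)$ are combined through Lemma~\ref{lem:composition-and-det}. Second, compute the degree. Each factor contributes $|\sigma_i|-\mu_L(\beta_i)$ by~\eqref{eq:degree-q-operation}, and $I^{d-1}$ lowers the degree by $d-1$. Since energy is absorbed by $\deg Q^\beta=\mu_L(\beta)$, the result has degree $\sum|\sigma_i|+1-d$, which is degree $0$ in the shifted convention of the statement. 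Third, show that the map descends to the colimit $\dmc_n$ of Definition~\ref{de:chain-model}, i.e.\ that it is independent of the representative of each $\sigma_i$ in the diagram $\scC$. I would repeat the argument of Lemma~\ref{lem:independence-of-spike} in each slot separately. Spurious graphs with combined edges contribute zero by the corner singularity argument, and the remaining identifications are degree-one maps, handled by Lemma~\ref{lem:multiplication-up-to-codim} and Lemma~\ref{lem:pushforward-if-group-acts}. Fourth, check convergence in the completed Novikov ring. Gromov compactness gives finitely many $\lc\Gamma$ below any energy bound, and the coefficients $Q^\beta$ with $\omega(\beta)\to\infty$ then give a well-defined element of $\Omega^*(\cdot)\wh\otimes\Lambda$.

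I expect the third step to be the main obstacle. Lemma~\ref{lem:independence-of-spike} only handles a single slot, whereas here the charts $\cKc_{\lc\Gamma}$ also depend on stabilisation patterns through the functions $\chi^\pm$. Pushing forward $\sigma_i$ along a contraction in the colimit changes $\Gamma_i$, and one must check that the fibred charts agree up to a degree-one map. My approach would be to use properties~\eqref{higher-edge-zero} and~\eqref{trivial-if-stable} of Lemma~\ref{lem:gkc-for-higher-homotopies}, which identify the restrictions to $\{t_e=0\}$ with the charts for the contracted graph. I would then verify the orientation sign with a commutative diagram of orientation lines as in Lemma~\ref{lem:independence-of-spike}. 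Independence of the resolution of singularities and of the triangulation would follow as in Lemma~\ref{lem:lift-well-defined} and Lemma~\ref{lem:multiplication-up-to-codim}.
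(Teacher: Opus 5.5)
Your proposal is correct and follows the paper's proof: the same fibre product $\cK_{\lc\Gamma,\lc\sigma}=\cKc_{\lc\Gamma}\times_{\prod_i\cMc_{\ov\Gamma_i}}\prod_i\Delta^{\norm{\sigma_i}}$, the pushforward via Theorem~\ref{thm:pushforward along chains} weighted by $c_{\lc\Gamma}=\prod_i c_{\Gamma_i}$, the degree count $|\alpha|+|\lc\sigma|+1-d$, and descent to $\dmc_n$ by running Lemma~\ref{lem:independence-of-spike} slot by slot. The differences are immaterial for this statement: the paper fixes the interleaved orientation \eqref{eq:orientation-higher} (explicitly not the fibre product orientation) and inserts the sign $(-1)^{\epsilon}$ with $\epsilon=\sum_i\delta_{0,h_i}w_\fs(\beta_i)$, both of which matter only for the later $A_\infty$ relations, while your remarks on $D_0,D_1$ and Novikov convergence make explicit what the paper leaves implicit.
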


\begin{proof} Suppose $\sigma_i\cl \Delta^{\norm{\sigma_i}}\to \cMc_{\ov\Gamma_i}$ is an analytic simplex for each $1 \leq i \leq d$, where the incoming marked points of $\ov\Gamma_i$ are labelled by $y^{i-1}$ and its outgoing marked points by $y^i$. Note that $\ov\Gamma_i$ may be a collection of truly unstable corolla, in which case we can assume $\norm{\sigma_i} = 0$. Fix also $\beta \in H_2(X,L;\bZ)$, respectively $H_2(X;\bZ)$ if all simplices map to moduli spaces of closed curves. Let $\Gamma_d,\dots,\Gamma_1$ be stable map graphs, respectively disjoint unions of truly unstable corolla if $\ov\Gamma_i$ is such, so that $\s{i}{\beta_{\Gamma_i}} = \beta$. Then, $\lc{\sigma}$ induces an analytic map $\Delta^{\norm{\lc{\sigma}}} := \p{i \le d}{\Delta^{m_{d-i}}}\to  \p{i \le d}{\cMc_{\ov\Gamma_i}}$ and we can form the fibre product 
	\begin{equation*}\label{} 
		\cK_{\lc\Gamma,\lc\sigma}\;\coloneqq\; \cKc_{\lc\Gamma}\times_{\p{i \le d}{\cMc_{\ov\Gamma_i}}} \Delta^{\norm{\lc{\sigma}}}.
	\end{equation*}
	Define the orientation line
	\begin{equation}\label{eq:simplex-line} 
		\orl(\sigma) \coloneqq \orl(\det(\sigma))\orl(|\sigma|) = \orl(\Delta^{\norm{\sigma}})\orl(\det(\sigma))\orl(-n\chi+n|y'_o|+2n|y_c'|).
	\end{equation}
	Over the smooth locus of this fibre product, the fibre product $\cK_{\lc\Gamma,\lc\sigma}$ has a canonical orientation, induced by the isomorphism
	\begin{equation}\label{eq:orientation-higher} 
		\orl(\cK_{\lc\Gamma,\lc\sigma}) \;\cong\; \orl(Y_{\sigma_{d}}^+)\orl(\sigma_d)\orl(I^{\{d-1\}})\orl(\sigma_{d-1})\cdots\orl(I^{\{1\}})\orl(\sigma_1)
	\end{equation}
	and the given trivialisations that are part of the data of the simplices (i.e., coming from the local coefficients). Note that this is not the standard fibre product orientation, even in the case where $\cK_{\lc\Gamma,\lc\sigma}$ is an (unparametrised) fibre product.
	
	Using the resolution of singularities, Theorem~\ref{thm:pushforward along chains}, on $\cK_{\lc\Gamma,\lc\sigma}$ as in the proof of Lemma~\ref{lem:evaluation-pushforward}, we obtain a well-defined linear map 
	\begin{equation}\label{} 
		\fq_{\lc\Gamma,\lc\sigma}\;\cl\; \Omega^*(X^{y_c^0}\times L^{y^0_o};\Lambda)\;\to\; \Omega^*(X^{y_c^d}\times L^{y^d_o};\Lambda)
	\end{equation}
	which (up to replacing $\cK_{\lc\Gamma,\lc\sigma}$ by a non-singular space) is given by 
	\[\alpha\;\mapsto\; (\eva_{\lc\Gamma,\lc\sigma}^+)_*((\eva_{\lc\Gamma,\lc\sigma}^+)*\alpha\wedge p_{\lc\sigma}^*\obs_{\lc\Gamma}^*\eta_{\lc\Gamma})\]
	where $\eta_{\lc\Gamma}$ is the pullback of the product of Thom forms. Then,
	\begin{align*}\label{} 
		|\fq_{\lc\Gamma,\lc\sigma}(\alpha)| = |\alpha|- \vdim(\cK_{\lc\Gamma,\lc\sigma})+2n|y_c^d|+n|y_o^d| = |\alpha|+ |\lc\sigma|+1-d,
	\end{align*} 
	recalling that $|\sigma_i| = -m -2n\chi + 2n|y^i_c| + n|y^i_o|$. Thus, 
	\begin{equation*}\label{} 
		\scF^{d,\beta}_{\lc{\sigma}}\;\coloneqq\;\fq^\beta_{\lc\sigma} \;\coloneqq \;(-1)^{\epsilon}\s{\lc\Gamma}{c_{\lc\Gamma}\fq_{\lc\Gamma,\lc\sigma}},
	\end{equation*} 
	is a linear maps of degree $|\lc\sigma|$, where 
	\begin{equation}\label{eq:coefficient-higher-homotopies} 
		\epsilon \coloneqq \s{i}{\delta_{0,h_i}w_\fs(\beta_i)}\qquad \qquad c_{\lc\Gamma}\coloneqq \p{i}{c_{\Gamma_i}},
	\end{equation} 
	with $c_{\Gamma_i}= 1$ if $\Gamma_i$ is truly unstable and given by the formula in~\eqref{eq:q-coefficient} otherwise. Clearly, the map $\scF^{d,\beta}_{\lc\sigma}$ is linear. We extend it linearly to chains on $\cMc_{\ov\Gamma_d}\times \dots\times\cMc_{\ov\Gamma_1}$. By the same argument as in Lemma~\ref{lem:independence-of-spike}, applied to each $\sigma_i$ separately, it descends to the claimed linear map.
\end{proof}

\noindent
We show in \S\ref{subsec:warped-equations-proof} that $\scF$ satisfies the $A_\infty$ functor equations \eqref{eq:warped-relations} and that it is semi-strict and operadic. Here, we show first that $\scF$ interacts well with the symmetric monoidal structures, is unital and has Property~\ref{assum:weakly-curved-to-uncurved}, which is important for being able to deform it to an uncurved open-closed DMFT.

\begin{lemma}\label{lem:unital-functor} $\scF$ is unital.
\end{lemma}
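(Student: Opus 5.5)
By Definition~\ref{de:dg-to-a-infinity}, unitality has two parts. First, $\scF^1_{\ide_y} = \ide_{\scF(y)}$ for every object $y$. Second, $\scF^d_{\lc\sigma} = 0$ whenever $d\ge 2$ and some entry $\sigma_i$ of $\lc\sigma$ is an identity morphism. I will treat these separately.

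\emph{The first part.} The identity morphism $\ide_y$ in $\cdmc_n$ is the disjoint union, over the elements of $y_c\sqcup y_o$, of the truly unstable corollas representing spheres, respectively discs, with one incoming and one outgoing marked point. This is the braiding-type morphism of Definition~\ref{de:chains-for-dmft}, with the trivial permutation. Since it is a $0$-simplex in an unstable moduli space, $\scF^1_{\ide_y}$ is computed from the correspondences~\eqref{identity-correspondence}. Its energy-zero part is the product correspondence
\[
X^{y_c}\times L^{y_o}\xleftarrow{\ide} X^{y_c}\times L^{y_o}\xra{\ide} X^{y_c}\times L^{y_o}.
\]
By Proposition~\ref{prop:correspondences-operations}\eqref{composing-correspondences}, together with the compatibility of products of correspondences, the associated operation is $\ide_*\ide^* = \ide$.

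There are no higher-energy contributions. By definition, $\scF^\beta_{\ov{\mathsf{a}}}=0$ for $\omega(\beta)>0$ on unstable tuples. The only formal generator that could contribute is $D_1$, and it is a separate morphism, not $\ide$.

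It remains to check the orientation. Under~\eqref{eq:orientation-higher} with $d=1$, the sign is trivial: the lines $\orl(\det(\ide))$ and $\orl(|\ide|)$ are canonically trivial, since $\chi=0$ for a disc with two boundary points and $\chi = 2$ for a sphere, and the degree shifts in~\eqref{eq:det} cancel. This is a sign bookkeeping step that I expect to be routine.

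\emph{The second part.} Take $d\ge2$ and suppose $\sigma_i = \ide$. I will use the charts $\cKc_{\lc\Gamma}$ of Lemma~\ref{lem:gkc-for-higher-homotopies}. The factor $\Gamma_i$ is a disjoint union of identity corollas, which have no stabilised evaluation maps. Therefore the stabilisation pattern $\delta$ of every composite $\Gamma_k\g\cdots\g\Gamma_j$ with $j\le i\le k$ agrees with that of the composite with $\Gamma_i$ omitted. As a consequence, the functions $\chi^{\pm}_{\lc\Gamma}$ do not depend on the coordinate $s_i$ (or $s_{i-1}$, depending on which side the identity sits) through the $i$-th factor.

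Hence $\cKc_{\lc\Gamma}$ is isomorphic to $I\times \cKc_{\lc\Gamma'}$, where $\lc\Gamma'$ is the sequence with $\Gamma_i$ removed and the extra interval is one of the coordinates adjacent to position $i$. The fibre product with the simplices is then also a product with $I$. So the correspondence defining $\fq_{\lc\Gamma,\lc\sigma}$ admits a morphism, namely the projection forgetting that interval, to a correspondence of strictly lower virtual dimension. The evaluation maps and the Thom form $\eta_{\lc\Gamma}$ are pulled back along this projection.

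The correspondence is therefore degenerate in the sense of Definition~\ref{de:degenerate-correspondence}, and it vanishes by Proposition~\ref{prop:correspondences-operations}\eqref{degenerate-vanishes}. This needs some care after the resolution of singularities: I will choose the resolution of $\cK_{\lc\Gamma,\lc\sigma}$ to be the product of $I$ with a resolution of $\cK_{\lc\Gamma',\lc\sigma'}$, which is allowed by the independence statement in Lemma~\ref{lem:multiplication-up-to-codim}.

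\emph{Main obstacle.} The hard step is the second part when the identity sits at an end of the sequence, $i=1$ or $i=d$, or when it is adjacent to a truly unstable morphism such as the co-pairing. In these cases the stabilisation patterns of the composites can genuinely change when $\ide$ is composed: for example, forgetting one output of the co-pairing yields the identity, which is not stabilised. The claim that $\chi^\pm_{\lc\Gamma}$ is independent of the relevant coordinate must then be verified directly from the explicit formulas for $\chi^\pm_{\lc\Gamma,j}$. The key input is that composing with $\ide$ does not change $\delta$, so the factors $s_k\prod(1-s_i)$ telescope.

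If independence failed in some case, my fallback would be to use instead an orientation-reversing involution of $\cKc_{\lc\Gamma}$, reflecting the interval coordinate. This reflection preserves the evaluation maps and the Thom form, so the operation would still vanish by Proposition~\ref{prop:correspondences-operations}\eqref{reversing-orientation}.
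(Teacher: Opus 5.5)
Your second part has a gap, and it sits in the case you treat as routine: an identity $\sigma_i=\idsimp$ with $1<i<d$. There the functions $\chi^\pm_{\lc\Gamma}$ are \emph{not} independent of either coordinate adjacent to position $i$. Write $\delta_k$ for $\delta_{\Gamma_k\g\cdots\g\Gamma_1}(j)$ at a fixed incoming point $j$, and $s'$ for the coordinates other than $s_{i-1},s_i$. Since $\delta_i=\delta_{i-1}$, the $k=i-1$ and $k=i$ terms of $\chi^-_{\lc\Gamma,j}$ merge, and every later term carries the factor $(1-s_{i-1})(1-s_i)$. Hence
\[
\chi^-_{\lc\Gamma,j}(s)\;=\;A(s')+\delta_{i-1}\,P(s')\,u+(1-u)\,P(s')\,B(s'),\qquad u\coloneqq 1-(1-s_{i-1})(1-s_i),
\]
with $P(s')=\prod_{l<i-1}(1-s_l)$ and $A,B$ independent of $s_{i-1},s_i$. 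This depends on both $s_{i-1}$ and $s_i$ as soon as $B(s')\neq\delta_{i-1}$, i.e.\ whenever the stabilisation pattern changes further along the sequence. The same holds for $\chi^+_{\lc\Gamma}$ and for the intermediate functions $\chi^{\pm,k}_{\lc\Gamma}$ that enter the matching conditions. So forgetting one adjacent coordinate is incompatible with the evaluation maps and the defining equations, and $\cKc_{\lc\Gamma}$ is not $I\times\cKc_{\lc\Gamma'}$ with $I$ an adjacent coordinate. Since the fibre of $u\colon I^2\to I$ over $0$ is a single point, no reparametrisation of the square repairs this. What is true is $\chi^\pm_{\lc\Gamma}(s)=\chi^\pm_{\lc\Gamma'}(s',u)$, which the paper derives from $(1-s_{i-1})(1-s_i)=1-(s_{i-1}+s_i(1-s_{i-1}))$. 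Thus the chart is pulled back along $I^{d-1}\to I^{d-2}$, $s\mapsto(s',u)$, whose generic fibres are one-dimensional. This factorisation is what makes $\cK_{\lc\Gamma,\lc\sigma}$ degenerate.

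Your fallback fails for the same reason, since reflecting a single coordinate does not preserve $u$. The swap $(s_{i-1},s_i)\mapsto(s_i,s_{i-1})$ does preserve $u$. It also reverses the orientation \eqref{eq:orientation-higher}, because $|\idsimp|=0$, and it fixes the evaluation maps and the Thom form. So applying Proposition~\ref{prop:correspondences-operations}\eqref{reversing-orientation} to that involution would give a valid alternative to the paper's factorisation argument. Conversely, the end cases $i\in\{1,d\}$, which you single out as the main obstacle, are the benign ones. Only one coordinate is adjacent there, and the merged terms collapse: for $i=1$ the two relevant terms of $\chi^{+}$ combine into a multiple of $s_1+(1-s_1)=1$. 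So in the end cases your product argument is essentially right. There is also a minor slip in the first part: a disc has $\chi=1$, not $0$, and it is with $\chi=1$ that the shift $-\chi+|y'_o|$ in \eqref{eq:det} vanishes.
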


\begin{proof}
	By definition, $\scF_{\idsimp^{\times y}} = \ide_{\scF(y)}$. Suppose then $\lc\sigma =(\sigma_d,\dots,\sigma_1)$ is a composable sequence in $\cdmc_n$ with $d > 1$ and $\sigma_j = \idsimp$ for some $j$.
	Given graphs $\Gamma_d,\dots,\Gamma_1$ lying over the simplices, the gluing $\Gamma_d\g \dots\g \Gamma_j$ has the same stabilisation pattern as $\Gamma_d\g \dots \g \Gamma_{j+1}$ and $\Gamma_j \g \dots \g \Gamma_1$ has the same stabilisation pattern as $\Gamma_{j-1}\g \dots \g \Gamma_1$.
	We first show that the map $\chi^-_{\lc\Gamma}$ is degenerate. By definition, 
	\begin{multline*}\label{} 
		\chi^-_{\lc\Gamma}(s)\; =\; \sum_{i =1}^{j-1}\delta_{\Gamma_i\g \dots \g\Gamma_1}\,s_i\prod_{\ell=1}^{i-1}(1-s_\ell) \;+ \;\delta_{\Gamma_i\g \dots \g\Gamma_1}(s_j + s_{j+1}(1-s_j))\prod_{\ell=1}^{j-1}(1-s_\ell) \\+\; (1-s_j)(1-s_{j+1})\sum_{i =1}^{j-1}\delta_{\Gamma_i\g \dots \g\Gamma_1}\,s_i\prod_{\ell=j+2}^{i-1}(1-s_\ell),
	\end{multline*}
	where $\delta_{\Gamma}$ is defined by~\eqref{stabilisation-pattern} and the first sum does not depend on $s_j$ and $s_{j+1}$. Since 
	$$(1-s_j)(1-s_{j+1}) = 1 -(s_j+s_{j+1}(1-s_j)),$$\noindent 
	it follows that  $\chi^-_{\lc\Gamma}(s)$ can be written as the sum 
	\[\chi^-_{\lc\Gamma}(s) = f(s') + (1-s_j)(1-s_{j+1}) g(s')\]
	where $s' = (s_1,\dots,s_{j-1},s_{j+2},\dots,s_{d-1})$. Thus, $\chi^-_{\lc\Gamma}$ factors through the map $I^{d-1}\to I^{d-2} : s \mapsto (s',s_js_{j+1})$, which is degenerate. It follows from a similar computation that $\chi^+_{\lc\Gamma}$ factors through this map as well. While the maps $\chi^-_{\lc\Gamma,i}$ ($\chi^+_{\lc\Gamma,i}$) do not depend on $s_j$ or $s_{j+1}$ for $i <j$ (respectively $i > j+1$), the map $\chi^-_{\lc\Gamma,j}$ does. However, since we are not stabilising $\cK_{T_{\ide}}$, this does not matter and the above argument shows that $\chi^-_{\lc\Gamma,i}$ factors through $s\mapsto (s',(1-s_j)(1-s_{j+1}))$ as well. The same is true for $\chi^+_{\lc\Gamma,j}$ and $\chi^+_{\lc\Gamma,j+1}$. This shows that $\cKc_{\lc\Gamma,\lc\sigma}$ is degenerate, whence $\scF_{\lc\sigma} = 0$ by the degeneracy property in Definition~\ref{de:curved-dmft}.
\end{proof}

\subsection{$A_\infty$ functor relations}\label{subsec:warped-equations-proof} We can now complete the proof of Theorem~\ref{thm:dmft-exists} by showing that the operations $\scF^d$ defined above satisfy the $A_\infty$ functor relations.

\begin{proposition}\label{prop:warped-equations-satisfied} $\scF$ satisfies Equation~\eqref{eq:warped-relations}. That is, 
	\begin{multline}\label{eq:warped-relations-again} 
		(-1)^{|\scF^d_{\lc \sigma}|}(d_{y^d}\g\scF^d_{\lc \sigma}-(-1)^{|\scF^d_{\lc\sigma}|}\scF^d_{\lc\sigma}\g d_{y_0}) + 
		\sum_{i=1 }^{d-1}{(-1)^{|\lc\sigma^{\leq i}|'+1}\scF^{d-i}_{\lc\sigma^{> i}}\g\scF^i_{\lc\sigma^{\le i}}}\\ 
		= \sum_{i=1}^{d}(-1)^{|\lc\sigma^{\leq i}|' + 1}\scF^d_{\sigma_1,\dots,\partial_{oc}\sigma_{i},\dots,\sigma_d} +\sum_{i=1}^{d-1}{(-1)^{|\lc\sigma^{\leq i}|' + 1}\scF^{d-1}_{\sigma_d,\dots,\sigma_{i+1}\g \sigma_{i},\dots,\sigma_1}},
	\end{multline}
	for any $d \ge 1$ and any $\sigma_i \in \cdmc_n(y^{i-1},y^{i})$.
\end{proposition}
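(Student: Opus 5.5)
The strategy is Stokes' theorem for the pull-push operations, applied to each chart $\cK_{\lc\Gamma,\lc\sigma}$ with $\lc\sigma$ a composable sequence of simplices. The resulting boundary contributions are then matched with the terms of \eqref{eq:warped-relations-again}. Fix $\beta$ and graphs $\lc\Gamma$ with total degree $\beta$. On a resolution of singularities of $\cK_{\lc\Gamma,\lc\sigma}$ (Theorem~\ref{thm:pushforward along chains}), the fibrewise Stokes formula for $(\eva^+)_*$ gives
\[
d\,\fq_{\lc\Gamma,\lc\sigma}(\alpha)\mp\fq_{\lc\Gamma,\lc\sigma}(d\alpha)=\pm\,\fq_{\del\cK_{\lc\Gamma,\lc\sigma}}(\alpha).
\]
The Thom form $\eta_{\lc\Gamma}$ is closed and $\obs^*\eta$ is compactly supported, so neither contributes. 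By Lemma~\ref{lem:multiplication-up-to-codim}, the exceptional loci of the resolution contribute nothing. Summing over $\lc\Gamma$ with the weights $c_{\lc\Gamma}$, the left side becomes the first term of \eqref{eq:warped-relations-again}, since $d_y$ is the signed de Rham differential. It remains to list the codimension-one strata of $\cK_{\lc\Gamma,\lc\sigma}$ and identify each one.

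The boundary splits into five groups.
\begin{enumerate}[label=(\alph*),leftmargin=20pt]
\item The faces $\del\Delta^{\norm{\sigma_i}}$. These give $\scF_{\ldots,\del\sigma_i,\ldots}$.
\item The faces $\{s_i=0\}$. By Lemma~\ref{lem:gkc-for-higher-homotopies}\eqref{higher-side-one}, these give $\scF^{d-1}$ applied to $\sigma_{i+1}\g\sigma_i$. Here I use that the composition in $\dmc$ from Definition~\ref{de:chain-model} is realised by the spike inclusions \eqref{def:clutching-map-on-fattened-moduli}--\eqref{def:forgetful-map-on-fattened-moduli}, so the fibre product over $\cMc$ of the composed graph agrees with the $s_i=0$ face.
\item The faces $\{s_i=1\}$. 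By \eqref{higher-side-zero} and Proposition~\ref{prop:correspondences-operations}\eqref{composing-correspondences}, these give $\scF^{d-i}_{\lc\sigma^{>i}}\g\scF^i_{\lc\sigma^{\le i}}$.
\item The cube faces $\{t_e=0\}$ and $\{t_e=1\}$ of the edges of the $\Gamma_i$. These come in pairs and cancel internally:
 \begin{itemize}[leftmargin=15pt]
 \item When $e$ is an edge of the stabilisation $\ov\Gamma_i$, each such face lies over a face of $\cMc_{\ov\Gamma_i}$. The simplex only meets it through its own faces, which are already counted in (a). Otherwise $\{t_e=1\}$ matches \eqref{higher-edge-one} with an $s_i=0$ face, and that face is identified in the colimit of Construction~\ref{con:chains-for-dmft}.
 \item When $e$ is an edge contracted by stabilisation (a disc bubble at a boundary point), $\{t_e=0\}$ is identified via \eqref{higher-edge-zero} and \eqref{eq:restricted-e-0-enhanced} with the Gromov boundary of $\cKc_{(\Gamma_i)_e}$. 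Since these edges are not seen by $\cMc_{\ov\Gamma_i}$, they cancel in pairs against each other as in \eqref{eq:thom-contracting-edge}.
 \end{itemize}
\item The faces $\{t_e=1\}$ for edges contracted by stabilisation. These are exactly the bubbling of an unstable disc carrying one or two boundary marked points. By \eqref{eq:restricted-e-1-enhanced} and Definition~\ref{de:differential-and-curvature}, they give $\scF$ evaluated on $\pi^*\sigma_i\g_*D_0$, on $D_1\gt\sigma_i$ and on $\sigma_i\gt D_1$. Here $\pi^*\sigma_i$ appears through Definition~\ref{de:lift-of-chain-modified-chains}, because the graphs over $\pi^*\sigma_i$ are exactly the ones where forgetting the attachment point of the $D_0$-bubble stabilises to $\ov\Gamma_i$. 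Together with (a), this produces $\scF_{\ldots,\del_{oc}\sigma_i,\ldots}$. The coefficients $c_\Gamma$ from \eqref{eq:q-coefficient}, together with Lemma~\ref{lem:pushforward-if-group-acts}, account for the automorphism factors and the degrees $d_1(\Gamma,\Gamma_e)$.
\end{enumerate}

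Unstable entries $\sigma_i$ (identities, units, pairings, copairings) and the formal generators $D_0,D_1$ are handled the same way. I would use the explicit correspondences of Definition~\ref{de:unstable-operations} and the $\chi^\pm$-interpolations from the proof of Lemma~\ref{lem:gkc-for-higher-homotopies}, which reconcile the different stabilisation patterns at $s_i=0,1$. For the $d=1$ relations $\del_{oc}D_0$ and $\del_{oc}D_1$, I would apply Stokes directly to the disc charts.

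The main obstacle is the signs. I would compare the boundary orientation of each face with the orientation \eqref{eq:orientation-higher}, inserting $\orl(I^{\{i\}})$ between $\orl(\sigma_{i+1})$ and $\orl(\sigma_i)$. Using the $\det^{\otimes n}$ twist (Remark~\ref{rem:trivialising-det}) and Lemma~\ref{lem:composition-and-det}, I would check that the resulting signs are $(-1)^{|\lc\sigma^{\le i}|'+1}$, together with the spin-structure sign $w_\fs$ in \eqref{eq:coefficient-higher-homotopies}. I would carry this out first for $d=1$ and $d=2$ and then induct on the position of $I^{\{i\}}$.
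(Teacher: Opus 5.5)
Your overall scheme is the paper's proof: Stokes on (a resolution of) $\cK_{\lc\Gamma,\lc\sigma}$, with the boundary split into
\begin{itemize}
\item the simplex faces;
\item the $s_i=0$ and $s_i=1$ faces;
\item the $\{t_e=0\}$ faces, which cancel against Gromov boundary strata of the contracted graphs via Lemma~\ref{lem:total-degree-contraction};
\item the $\{t_e=1\}$ unstable-disc faces, which produce the terms $D_1\gt\sigma_i$, $\sigma_i\gt D_1$ and $\pi^*\sigma_i\g_*D_0$.
\end{itemize}
The signs are read off against \eqref{eq:orientation-higher}, and $D_0,D_1$ get a separate direct Stokes computation, exactly as in the paper. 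The one genuine gap is your claim that unstable entries are ``handled the same way''. For $d=1$ this is false.

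Let $\sigma$ be the $0$-simplex of an unstable morphism with boundary marked points (identity, unit, co-unit, pairing, co-pairing in Definition~\ref{de:unstable-operations}).
\begin{itemize}
\item Its correspondence has no boundary: $X$ and $L$ are closed, and the Thom forms in the co-pairing are compactly supported. There are also no cube coordinates. So Stokes only gives $[d_y,\scF^1_\sigma]=0$, and the $\chi^\pm$-interpolations play no role when $d=1$.
\item Since $\del\sigma=0$ and $\pi^*\sigma=0$, the relation reduces to $\scF^1_{\del_{oc}\sigma}=0$. Here $\del_{oc}\sigma$ consists only of $D_1$-terms, and these are not boundary contributions of anything in your computation.
\end{itemize}

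You need two extra inputs, which are the content of Lemma~\ref{eq:chain-map-unstable}.
\begin{itemize}
\item Unit and co-unit: you need $\scF_{D_1}(1)=0$ and $\int_L\scF_{D_1}(\alpha)=0$. In each case one of the two marked points of $\cKc_\Gamma$ is unused. The correspondence therefore factors through the chart with that point forgotten, so it is degenerate and vanishes by Proposition~\ref{prop:correspondences-operations}\eqref{degenerate-vanishes}.
\item Pairing and co-pairing: the two terms in which $D_1$ is attached to the first or to the second marked point must cancel. This holds because the involution of $\cK_\Gamma$ swapping its two boundary marked points reverses orientation. Equivalently, the positive-energy part of $\scF_{D_1}$ is graded self-adjoint for the Poincar\'e pairing on $L$.
\item Identity: the terms $\idsimp\g D_1$ and $D_1\g\idsimp$ cancel trivially.
\end{itemize}
Neither vanishing statement comes from matching boundary strata, so these arguments have to be added.

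A smaller point concerns the resolution of singularities. The exceptional loci are not boundary strata at all. What you need there is Lemma~\ref{lem:modified-stokes}, which rests on Proposition~\ref{prop:resolution-and-boundary}, rather than Lemma~\ref{lem:multiplication-up-to-codim} on its own.
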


The proof has several steps and will occupy the rest of the subsection. We first state two consequences.

\begin{cor}\label{cor:our-dmft-properties}
	$\scF$ is semi-strict and operadic.
\end{cor}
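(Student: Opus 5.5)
The statement to prove is Corollary~\ref{cor:our-dmft-properties}: $\scF$ is semi-strict and operadic. The plan is to prove directly that $\scF^d_{\lc\sigma}=0$ for $d\ge 2$ whenever all $\sigma_i$ are stable, or all lie in the operadic subcategory. I will do this by showing that the correspondence $\cK_{\lc\Gamma,\lc\sigma}$ underlying each summand of $\scF^d_{\lc\sigma}$ is degenerate in the sense of Definition~\ref{de:degenerate-correspondence}. Proposition~\ref{prop:correspondences-operations}\eqref{degenerate-vanishes} then gives vanishing, exactly as in the proof of Lemma~\ref{lem:unital-functor}. The $A_\infty$ relations of Proposition~\ref{prop:warped-equations-satisfied} are not needed for vanishing itself. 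They only ensure that the resulting restricted functor is then a dg functor, i.e.\ that the relation~\eqref{eq:warped-relations-again} with $d=2$ reduces to strict compatibility $\scF^1_{\sigma_2\g\sigma_1}=\scF^1_{\sigma_2}\g\scF^1_{\sigma_1}$.

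\textbf{Semi-strictness.} Take $\sigma_1,\dots,\sigma_d$ stable with $d\ge2$, and graphs $\Gamma_i$ over $\ov\Gamma_i$. By Lemma~\ref{lem:gkc-for-higher-homotopies}\eqref{trivial-if-stable}, when all $\Gamma_i$ are stable map graphs we have $\cKc_{\lc\Gamma}=I^{d-1}\times\cKc_{\Gamma_d}\times_{Y^+}\cdots\times_{Y^+}\cKc_{\Gamma_1}$. The stabilisation map, and hence the fibre product with $\Delta^{\norm{\lc\sigma}}$, does not see the $I^{d-1}$ factor. Therefore $\cK_{\lc\Gamma,\lc\sigma}$ is a product with $I^{d-1}$ in which neither the evaluation maps nor the Thom form depend on $s$. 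Projecting away $I^{d-1}$ is a morphism of correspondences to one of strictly lower virtual dimension, so the correspondence is degenerate and each $\fq_{\lc\Gamma,\lc\sigma}=0$.

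The subtlety is that $\Gamma_i$ may contain unstable disc vertices with energy even though $\ov\Gamma_i$ is stable. I will therefore have to check that the stabilisation pattern $\delta_{\Gamma_k\g\dots\g\Gamma_1}(j)$ is independent of $k$ for such sequences. If it is, then the final remark in the proof of Lemma~\ref{lem:gkc-for-higher-homotopies} applies: $\chi^\pm_{\lc\Gamma,j}$ is constant, so $\cKc_{\lc\Gamma}$ is again $s$-independent.

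\textbf{Operadicity.} Now the $\sigma_i$ are only required to be operadic, so truly unstable corollas (units, co-units, pairings, forgetful maps, restriction) may appear. Co-pairings are excluded, since they have two outputs. I will redo the analysis of $\chi^\pm$: for surfaces with at most one output per component, the only stabilised evaluation maps are the outgoing ones. The operations in Definition~\ref{de:unstable-operations} other than the co-pairing and inclusion, which are not operadic, are built from $\ide$, $\Delta$ and projections and involve no stabilisation. I expect that along operadic sequences $\delta_{\Gamma_k\g\dots\g\Gamma_1}(j)$ is again independent of $k$, so $\cKc_{\lc\Gamma}$ is $s$-independent and hence degenerate. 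When a forgetful corolla kills a marked point, the relevant $\chi$ simply disappears.

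\textbf{Main obstacle.} The main obstacle is this case analysis of stabilisation patterns, in particular for compositions that forget an outgoing point or clutch into a pairing, where a previously stabilised evaluation becomes unstabilised. If the pattern does change in such a case, I would instead show directly, as in Lemma~\ref{lem:unital-functor}, that $\chi^\pm_{\lc\Gamma}$ factors through a map $I^{d-1}\to I^{d-2}$, which again gives degeneracy.
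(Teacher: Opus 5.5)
Your proposal follows the paper's argument. You reduce via Proposition~\ref{prop:correspondences-operations}\eqref{degenerate-vanishes} to degeneracy of $\cK_{\lc\Gamma,\lc\sigma}$, and obtain that degeneracy from $\cKc_{\lc\Gamma}=I\times\cK$ once $\chi^\pm_{\lc\Gamma}$ are constant, i.e.\ once the stabilisation patterns $\delta_{\Gamma_j\g\dots\g\Gamma_1}$ and $\delta_{\Gamma_d\g\dots\g\Gamma_j}$ do not depend on $j$. The case check you flag as the main obstacle is settled in the paper in one line, by appeal to the construction of $\cKc_\Gamma$: the patterns are constant whenever the graphs admit a stabilisation or every component has a single output. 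That same clause also handles the formal generators $D_0,D_1$; these lie in both $\cdmc^{\stb}_n$ and $\cdmc^{\opr}_n$ but are not covered by Lemma~\ref{lem:gkc-for-higher-homotopies}\eqref{trivial-if-stable}, so your stable-case paragraph should treat them too.
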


\begin{proof}
	Due to Proposition~\ref{prop:correspondences-operations}\eqref{degenerate-vanishes}, the proof boils down to showing that $\cK_{\lc\Gamma,\lc\sigma}$ is a degenerate correspondence whenever $d > 2$ and $\lc\sigma$ is a sequence of chains in $\cdmc_n^{*,\stb}$ or $\cdmc_n^{*,\text{opr}}$.\par 
	It suffices to prove that $\cKc_{\lc\Gamma} = I\times \cK$ for some global Kuranishi chart $\cK$ so that $\cKc_{\lc\Gamma}\to \cMc_{\lc\Gamma^{\stb}}$ and the evaluation maps factor through the projection to $\cK$. The key point is that the functions $\chi^\pm_{\lc\Gamma}$ are constant whenever $\delta_{\Gamma_j\g \dots \Gamma_1}(\kappa)$ and $\delta_{\Gamma_d\g \dots \g \Gamma_j}(\kappa)$ do not depend on $j \in \{1,\dots,d\}$, where $\kappa \in y$ labels an incoming marked point and $\kappa'\in y'$ an outgoing one. It follows from our definition of $\cKc_\Gamma$ in Theorem~\ref{thm:cubical-cobordisms-enhanced} that this is the case whenever $\Gamma$ admits a stabilisation or when each component of $\Gamma$ has only one outgoing marked point.
\end{proof}

\begin{proof}[Proof of Proposition~\ref{prop:warped-equations-satisfied}]
	Suppose $\sigma_i \in \cdmc_n^*(y,y')$ is represented by an analytic simplex $\sigma_i\cl \Delta^{\norm{\sigma_i}}\to \cMc_{\ov\Gamma_i}$, where we take this to be a point if $\ov\Gamma_i$ is a disjoint union of truly unstable corolla. Fix $\beta\in H_2(X,L;\bZ)$, respectively $H_2(X;\bZ)$ if all $\ov\Gamma_i$ are closed, and set 
	 \begin{equation*} 
	 	\scS_d = \set{\lc{\Gamma} = (\Gamma_d,\dots,\Gamma_1)\;\big|\;\sum_{i=1}^d\beta_{\Gamma_i}= \beta,\,\Gamma_i^{\stb} = \ov{\Gamma}_i}.
	 \end{equation*} 
	 Then, we have for $\lc\Gamma\in \scS_d$ by Stokes' formula, \cite[Theorem~1(e)]{ST23b}, that
	 \begin{equation}\label{eq:application-stokes}
	 	d\fq_{\lc\Gamma,\lc\sigma}(\alpha)\ =\ \fq_{\lc\Gamma,\lc\sigma}(d\alpha)\ +\ (-1)^{|\alpha|+\vdim(\cK_{\lc\Gamma,\lc\sigma})}(\eva^+_{\lc\Gamma,\lc\sigma}|_{\del\cT_{\lc\Gamma,\lc\sigma}})_*({\eva^-_{\lc\Gamma,\lc\sigma}}^*(\alpha) \wedge p_{\lc\sigma}^* \obs_{\lc\Gamma}^*\eta_{\lc\Gamma})
	 \end{equation}
	 with
	 
	 \begin{align}\label{eq:dimension-computation}
	 	\notag\vdim(\cK_{\lc\Gamma,\lc\sigma}) \;&=\; d-1+2n|y^d_c|+n|y_o^d|+\sum_{i=1}^d{(\vdim(\cK_{\Gamma_i,\sigma_i})-2n|y^i_c|-n|y_o^i|)}\;\\ \notag&\equiv\; 1+ n|y_o^d|+|\lc\sigma_i|'\mod{2}.
	 	 \end{align}
	 Writing $\Delta^{\norm{\lc\sigma}}:= \Delta^{\norm{\sigma_d}}\times \dots\times\Delta^{\norm{\sigma_1}}$, we have that up to resolving singular points of the fibre product $\cT_{\lc\Gamma,\lc\sigma}$, its boundary is 
	 \begin{equation}\label{eq:boundary-fibre-product}
	 	\del^v\cTc_{\cl\Gamma}\times_{\cMc_{\lc{\ov{\Gamma}}}}\Delta^{\norm{\lc\sigma}} \;\sqcup\; \cT_{\lc\Gamma}\times_{\cMc_{\lc{\ov{\Gamma}}}}\del\Delta^{\norm{\lc\sigma}} 
 	\end{equation}
	 by \cite[Proposition~7.4]{Joy} (ignoring orientations). Here we write $\del^v\cTc_{\lc\Gamma}$ for the vertical boundary of the stabilisation map $\stb \cl \cTc_{\lc\Gamma}\to \cMc_{\lc{\ov{\Gamma}}}$; this corresponds to $\del^{\stb}_+\cTc_\Gamma$ in the notation of \cite{Joy}.\footnote{See Definition~4.1 and Proposition~4.3 op. cit. for his definition of vertical boundary.} The operation associated to the boundary of $\Delta^i$ in the second component in the disjoint union of~\eqref{eq:boundary-fibre-product} comes with the sign $(-1)^{n|y_o^d|+|\sigma_d|'+\dots+|\sigma_{i+1}|'}$. Indeed, this is the orientation sign of the vertical isomorphism on the left in the square
	 
	 \begin{center}\begin{tikzcd}
	 		\orl(\bR)\orl(\cK_{\lc\Gamma,(\sigma_1,\dots,\del\sigma_i,\dots,\sigma_d)}) \arrow[r,""] \arrow[d,""]&\orl(\bR)\orl(Y_{\sigma_{d}}^+)\orl(\sigma_d)\cdots\orl(I^{\{i\}})\orl(\del\sigma_i)\cdots\orl(I^{\{1\}})\orl(\sigma_1) \arrow[d,""]\\ 
	 		\orl(\cK_{\lc\Gamma,\lc\sigma}) \arrow[r,""] & \orl(Y_{\sigma_{d}}^+)\orl(\sigma_d)\orl(I^{\{d-1\}})\orl(\sigma_{d-1})\cdots\orl(I^{\{1\}})\orl(\sigma_1) \end{tikzcd} \end{center}
	 since we use the canonical isomorphism 
	 \[\orl(\sigma_i) = \orl(\det(\sigma_i))\orl(|\sigma_i|)\cong \orl(\Delta^{\norm{\sigma_i}})\orl(\det(\sigma_i))\orl(n\chi_i -2n|y_c^i|-n|y_o^i|)\]
	 recalling that $\orl(\det(\sigma_i))$ has degree $0$.
	 Together with the sign of Equation~\eqref{eq:application-stokes}, this yields the operation
	  \begin{equation}\label{eq:sign-usual-differential-chain} 
	  	(-1)^{|\alpha|+\vdim(\cK_{\lc\Gamma,\lc\sigma})}(\eva^+_{\lc\Gamma,\lc\sigma}|_{\del_i\cT_{\lc\Gamma,\lc\sigma}})_*({\eva^-_{\lc\Gamma,\lc\sigma}}^*(\alpha) \wedge p_{\lc\sigma}^* \obs_{\lc\Gamma}^*\eta_{\lc\Gamma})= (-1)^{|\alpha|+|\lc\sigma^{\le i}|'+1}\fq_{\lc\Gamma,(\sigma_1,\dots,\del\sigma_i,\dots,\sigma_d)}(\alpha).\end{equation}
	 Let us now turn to the vertical boundary of $\cTc_\Gamma$. It follows from its description as a parametrised fibre product, there are three types of contribution to the vertical boundary
	 \begin{itemize}[leftmargin=20pt]
	 	\item The first is of the form $\cTc_{\lc\Gamma}|_{\{s_i =k\}}$ for $k = 0,1$ and some $1 \leq i \leq d-1$.
	 	\item The second comes from the locus $\cTc_{\lc\Gamma}|_{\{t_e =k\}}$, with $k = 0,1$ for an edge $e\in E(\Gamma_i)$ connecting an unstable tree $T$ to the rest of $\Gamma_i$. Note that the case where $T$ is unstable but not a branch is a codimension-$2$ phenomenon, which we may ignore.
	 	\item The third contribution is from boundary strata $\del_{\Gamma'_i} \cTc_{\lc\Gamma}$ for some $i$, where $\Gamma'_i$ is a stable map graph such that $(\Gamma_i')_{e'} = \Gamma_i$ for some edge $e'$ of $\Gamma'_i$.
	 \end{itemize} 
 	  By Lemma~\ref{lem:gkc-for-higher-homotopies}\eqref{higher-edge-zero} and Lemma~\ref{lem:total-degree-contraction}, the operation associated to the third kind of contribution cancels with operations associated to the boundary strata $\cKc_{\lc\Gamma}|_{\{t_e = 0\}}$.
 	  
Recall the notation
 	 	\[\lc\sigma^{> i} \coloneqq (\sigma_d,\dots,\sigma_{i+1}) \qquad\quad \qquad\lc\sigma^{\le i} \coloneqq (\sigma_i,\dots,\sigma_1).\]
 	 	We will use the same notation for sequences of graphs. Moreover, we abbreviate 
 	 	\begin{equation*}\label{} 
 	 		\orl(\lc\sigma)\,\coloneqq\, \orl(\sigma_d)\cdots\orl(\sigma_1),
 	 	\end{equation*}
 	 	using the notation from~\eqref{eq:simplex-line}.
 	 	Note the change in ordering. 
 	
 	 By Lemma~\ref{lem:gkc-for-higher-homotopies}\eqref{higher-side-one}, the operation contributed by $\cKc_{\lc\Gamma}|_{\{s_i = 1\}}\times_{\cMc_{\lc{\ov{\Gamma}}}}\Delta^{\norm{\lc\sigma}}$ to~\eqref{eq:application-stokes} is exactly
 	 \begin{equation}\label{eq:boundary-higher-side-one-before-sign} 
 	 	 (-1)^{|\alpha|+\vdim(\cK_{\lc\Gamma,\lc\sigma})+\varepsilon_1}\fq_{\lc\Gamma^{> i},\lc\sigma^{> i}}(\fq_{\lc\Gamma^{\le i},\lc\sigma^{< i}}(\alpha)),
 	 \end{equation}
 	 where the sign $(-1)^{\varepsilon_1}$ is the orientation sign of the vertical isomorphism on the left in the diagram
 	 \begin{center}\begin{tikzcd}
 	 	\orl((\cKc_{\lc\Gamma}|_{\{s_i = 1\}})_{\lc\sigma}) \arrow[d,]\arrow[r,"(-1)^{n|y_o^d|+|\lc\sigma^{> i}|'+1}\eqref{eq:orientation-higher}"] &\orl(Y^+_d)\orl(\sigma_d)\orl(I^{\{d-1\}})\cdots\orl(\sigma_1) \arrow[d,"(1)"]\\ 
 	 	\orl(\cKc_{\lc\Gamma^{>i},\lc\sigma^{>i}}\times_{Y^+_i}\cKc_{\lc\Gamma^{\le i},\lc\sigma^{\le i}}) \arrow[r] & \orl(Y^+_d)\orl(\sigma_d)\orl(I^{\{d-1\}})\cdots\orl(\sigma_{i+1})\orl(Y_i^+)\dul\orl(Y_i^+)\orl(\sigma_i)\cdots \orl(\sigma_1) 
  	\end{tikzcd}
   \end{center}
 	 	where (1) is orientation-preserving. Thus, $\varepsilon_1 \,=\, n|y_o^d|+|\lc\sigma^{> i}|'+1$ and the sign in front of the right hand side of~\eqref{eq:boundary-higher-side-one-before-sign} is $(-1)^*$ with
 	 \begin{align}\label{eq:sign-composition-functors} 
 	 \notag * \;&= \;|\alpha|+ n|y^d_o|+ |\lc\sigma'|+1 +  n|y_o^d|+|\lc\sigma^{> i}|'+1\\&\equiv\; |\alpha|+|\lc\sigma^{\le i}|'\mod{2}.
 	 \end{align}
 	 Note that $c_{\lc\Gamma} = c_{\lc \Gamma^{>i}} c_{\lc\Gamma^{\leq i}} $ by definition, cf. \eqref{eq:coefficient-higher-homotopies}.
	 
	 \noindent Meanwhile, by Lemma~\ref{lem:gkc-for-higher-homotopies}\eqref{higher-side-zero}, the operation associated to the boundary stratum $\cKc_{\lc\Gamma}|_{\{s_i = 1\}}\times_{\cMc_{\lc{\ov{\Gamma}}}}\Delta^{\norm{\lc\sigma}}$ is $(-1)^{|\alpha|+n|y^d_o|+|\lc\sigma|'+1+\varepsilon_{0}}\fq_{*_i\lc\Gamma,*_i\lc\sigma}$, where 
	 \[ *_i\lc\sigma = (\sigma_1,\dots,\sigma_{i+1}\g \sigma_i,\dots,\sigma_d)\] 
	 and similarly for $\lc\Gamma$. The sign $(-1)^{\varepsilon_0}$ is the orientation sign of the vertical isomorphism on the left in the diagram
	 \begin{center}\begin{tikzcd}
	 	\orl((\cKc_{\lc\Gamma}|_{\{s_i = 0\}})_{\lc\sigma}) \arrow[d,]\arrow[rrr,"(-1)^{n|y_o^d|+|\lc\sigma^{> i}|'}"] &&&\orl(Y^+_d)\orl(\sigma_d)\orl(I^{\{d-1\}})\cdots\orl(\sigma_{i+1})\orl(\sigma_i)\orl(I^{\{i-1\}})\cdots\orl(\sigma_1) \arrow[d,"(1)"]\\ 
	 	\orl(\cKc_{*_i\lc\Gamma,*_i\lc\sigma}) \arrow[rrr] &&& \orl(Y^+_d)\orl(\sigma_d)\orl(I^{\{d-1\}})\cdots\orl(\sigma_{i+1}\g \sigma_i)\orl(I^{\{i-1\}})\cdots\orl(\sigma_1) \end{tikzcd} \end{center}
	 where (1) is orientation-preserving by the definition of the orientation~\eqref{eq:orientation-higher}. We thus obtain the operation $(-1)^{*}\fq_{*_i\lc\Gamma,*_i\lc\sigma}$ where
	 \begin{align}\label{eq:sign-composition-chains} 
	 \notag *\,&=\,|\alpha|+n|y^d_o|+|\lc\sigma|'+1+n|y_o^d|+|\lc\sigma^{> i}|'\\
	 &\equiv\,|\alpha|+|\lc\sigma^{\leq i}|' +1\mod{2}.
	 \end{align}
	 By \cite[Lemma~4.22]{HH25} and the definition of $c_\Gamma$ in~\eqref{eq:q-coefficient}, we have $c_{\lc\Gamma} = c_{\concat_i\lc\Gamma}$ as well.
	 
	  \noindent We are thus only left with the $\{t_{e_v} = 1\}$-boundary strata, which contribute the `positive-energy' terms of~the differential $\del^\scF$ as well as the curvature. We first discuss the case of $d = 1$.
	 
\begin{lemma}\label{lem:chain-map-stable} $\scF^1_\sigma$ satisfies Equation~\eqref{eq:warped-relations-again} when $\sigma$ is a stable chain.
\end{lemma}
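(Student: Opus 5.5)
For $d=1$ the sums over $i$ in \eqref{eq:warped-relations-again} collapse. The relation to prove is that $d\g\scF_\sigma-(-1)^{|\sigma|}\scF_\sigma\g d$ equals, up to the fixed signs, $\scF_{\del_{oc}\sigma}$. Expanding $\del_{oc}$ via \eqref{eq:modified-differential-old-simplex}, the right-hand side consists of four pieces:
\begin{itemize}
\item the ordinary boundary term $\scF_{\del\sigma}$;
\item the curvature term $(-1)^{|\sigma|}\scF_{\pi^*\sigma\g_*D_0}$;
\item the outgoing term $-\sum_{i\in y'_o}\scF_{D_1\g_i\sigma}$;
\item the incoming term $(-1)^{|\sigma|}\sum_{i\in y_o}\scF_{\sigma\g_iD_1}$.
\end{itemize}

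\textbf{Reduction.} I will work graph by graph. Fix $\beta$ and apply Stokes' formula \eqref{eq:application-stokes} to $\fq_{\Gamma,\sigma}$ for each $\Gamma$ with $\Gamma^{\stb}=\ov\Gamma_\sigma$ and total degree $\beta$. The proof of Proposition~\ref{prop:warped-equations-satisfied} above already disposes of several boundary contributions:
\begin{itemize}
\item the horizontal boundary $\del\Delta^{\norm\sigma}$ gives $\scF_{\del\sigma}$ with the sign \eqref{eq:sign-usual-differential-chain};
\item the third kind of vertical boundary cancels against the $\{t_e=0\}$ faces;
\item there are no $s_i$-faces when $d=1$.
\end{itemize}
So it remains to identify the faces $\cTc_\Gamma|_{\{t_e=1\}}$, where $e$ joins a maximal unstable tree $T$ (made of disc vertices with positive energy) to the stable part of $\Gamma$. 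By Theorem~\ref{thm:cubical-cobordisms-enhanced}\eqref{cubical-0-boundary-enhanced} and the Thom form compatibility \eqref{eq:thom-separating-edge}, each such face is the fibre product over $L$ of $\cKc_T$ with $\cKc_{\Gamma\sm T}$. Proposition~\ref{prop:correspondences-operations}\eqref{composing-correspondences} then turns the face into a composition of $\fq$-operations. Since $\sigma$ is stable, $\Gamma\sm T$ still stabilises to $\ov\Gamma_\sigma$, or to its preimage under the forgetful map when $T$ carries no original marked point.

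\textbf{Case split.} I sort the trees $T$ by how they attach.
\begin{enumerate}
\item $T$ contains an outgoing boundary marked point $i\in y'_o$. Then $T$ is a chain of discs of type $(0,1)$ with one input and one output. Summing over all such $T$ of given energy gives $\scF_{D_1}\g_i\scF_\sigma$. Since $\scF$ is defined on $D_1\g_i\sigma$ through Lemma~\ref{lem:construction-of-higher-homotopies} with $d=1$, this matches the outgoing term; the matching uses Definition~\ref{de:differential-and-curvature} and the fact that the thickening is a strict product.
\item $T$ contains an incoming boundary point $i\in y_o$. Symmetrically, this produces $\sigma\g_iD_1$.
\item $T$ carries no marked points. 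Then $T$ is a disc with a single output attached at a new boundary node. Forgetting that attachment point on $\ov\Gamma_\sigma$ is exactly the fibre product that defines $\pi^*\sigma$ in Definition~\ref{de:lift-of-chain-modified-chains}. Lemma~\ref{lem:multiplication-up-to-codim} identifies $\fq_{\Gamma\sm T,\pi^*\sigma}$ with the pushforward over the resolved fibre product, and the summation over $T$ gives $\scF_{D_0}$. Together these yield $\scF_{\pi^*\sigma\g_*D_0}$.
\end{enumerate}
Trees attached at two points, or interior chains of unstable discs, lie in codimension $\ge2$; the paper remarks on this when listing the vertical boundary strata, so I ignore them. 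The combinatorial coefficients $c_\Gamma$ from \eqref{eq:q-coefficient} should factor as $c_{\Gamma\sm T}c_T$ by \cite[Lemma~4.22]{HH25}, and the average over positions of the inserted point is consistent with Lemma~\ref{lem:pushforward-if-group-acts}.

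\textbf{Main obstacle.} I expect the orientation signs to be the hardest part, especially in the curvature case. There I must compare the relative orientation of Construction~\ref{con:orientation}, which is defined by contracting disc bubbles into collars, with the orientation \eqref{eq:orientation-lift} of $\pi^*\sigma$ and with the sign $(-1)^{|\sigma|}$ in $\del_{oc}$. My first attempt would run a commutative square of orientation lines, as in the proof of Lemma~\ref{lem:independence-of-spike}. On one side I place $\orl(\bR)\orl(\cK_{\Gamma,\sigma})$; on the other I place $\orl(\cK_{\Gamma\sm T,\pi^*\sigma})\orl(L)\dul\orl(\cK_T)$. I would use the spin-induced orientation of $\orl(\delbar)$ together with the gluing isomorphism of \cite{CZ24} for a boundary node, and extract the parity $|\sigma|+n|y'_o|$ coming from the vertical-boundary sign in \eqref{eq:application-stokes}. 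The $D_1$ cases are then the same computation, with the output or input point replacing the attachment point. Finally, the factor $(-1)^{\delta_{0,h}w_\fs(\beta)}$ should be multiplicative under splitting off disc bubbles.
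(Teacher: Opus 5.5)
Your overall route is the paper's: Stokes' formula graph by graph, the simplex boundary giving $\scF_{\del\sigma}$, the unstable vertical strata cancelling against the $\{t_e=0\}$ faces, and the $\{t_e=1\}$ faces producing the $D_1\g_i\sigma$, $\sigma\g_iD_1$ and $\pi^*\sigma\g_*D_0$ terms. However, two bookkeeping steps fail as you state them.

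The first is the enumeration of faces. You only take $\{t_e=1\}$ for the edge joining a \emph{maximal} unstable tree to the stable part. But $\cTc_\Gamma\to I^{E(\Gamma)}\to I^{E(\ov\Gamma_\sigma)}$ forgets the coordinates of \emph{every} edge of a contracted branch (cf.~\eqref{def:forgetful-map-on-fattened-moduli}). So each such edge gives a vertical face in Stokes' formula, and you need all of them. With $T$ maximal, $\Gamma\sm T$ carries no bubble at $i$. Summing over $\Gamma$ therefore produces only $\scF_{D_1}\g_i$ applied to the part of $\scF_\sigma$ coming from graphs without a bubble at $i$, not $\scF_{D_1}\g_i\scF_\sigma$. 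Likewise you miss the terms of $\scF_{\pi^*\sigma\g_*D_0}$ coming from graphs over $\pi^*\sigma$ in which $*$ sits on an unstable disc. These missing compositions are exactly the faces at edges interior to a branch, which your list otherwise leaves unaccounted for. So $T$ must range over all branches cut off by a single edge whose contraction is of type $D_1$ or $D_0$.

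The second is the coefficients in the curvature case, where the factorisation $c_\Gamma=c_{\Gamma\sm T}c_T$ is false in general. Replacing an unlabelled $D_0$-branch by the labelled point $*$ changes the prestable graph. As a result, $\Aut^+(\Gamma^{ps})$ and $\Aut^+((\Gamma\sm T)^{ps})$ can differ, and several edges of one $\Gamma$ can give isomorphic graphs $\Gamma\sm T$. For example, take two equal-energy bubbles on a boundary circle without labelled points: the swap makes $c_\Gamma=\tfrac12 c_{\Gamma\sm T}$, and the two faces compensate. What you need is the summed identity $\sum c_\Gamma=c_{\Gamma'}$ over all $\Gamma$ producing a given $\Gamma'$ over $\pi^*\sigma$, which the paper takes from \cite[Lemma~4.23]{HH25}. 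The per-face equality $c_\Gamma=c_{\Gamma\sm T}$ holds only in the two $D_1$ cases, where the bubble sits at a labelled point. Lemma~\ref{lem:pushforward-if-group-acts} does not supply the missing count.

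Finally, the signs, which make up most of the paper's proof, are still to be done. The paper first uses Construction~\ref{con:orientation} to reduce to $T$ and $\Gamma'$ being corollas. It then uses the observation of \cite{She25} that the inward normal of the face corresponds to positive scaling on the bubble, while translation is tangential. The position of the attaching point is handled with Lemma~\ref{lem:different-fibre-products}. A gluing argument via \cite{CZ24} would need the same identification of the collar coordinate $t_e$ with the scaling direction, since Construction~\ref{con:orientation} defines the orientation through that collar.
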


\begin{proof} 
	 The strata $\del_e\cTc_\Gamma\coloneqq\cTc_\Gamma|_{\{t_{e_v} = 1\}}$ are of three different types. We discuss them first in the case of $d = 1$, since the general case can then be derived more easily. In this case we have a single graph $\Gamma$ and the boundary strata 
	 \begin{enumerate}[\normalfont i),leftmargin=*,ref=\roman*]
	 	\setlength\itemsep{5pt}
	 	\item\label{incoming-d} $\del_e\cTc_\Gamma \cong \cT_{T}\times_L\cTc_{\Gamma'}$
	 	\item\label{outgoing-d} $\del_e\cTc_\Gamma \cong \cTc_{\Gamma'}\times_L\cT_{T}$
	 	\item\label{curv-d} $\del_e\cTc_\Gamma \cong \cTc_{\Gamma''}\times_L\cT_{T}$
	 \end{enumerate}
 	where in the first two cases $T$ (after contracting all edges) encodes a disc with one incoming and one outgoing marked point, attached to $\Gamma' = \Gamma\sm\{v\}$ either at the incoming or the outgoing marked point of $T$ and in~\eqref{curv-d}, the tree (after contracting all edges) encodes a disc with only an outgoing marked point. By our orientation convention~\eqref{con:orientation}, it suffices to consider the case when $T$ and $\Gamma'$ have no edges. We will suggestively write $\mathsf{a}_1$ and $\mathsf{a}_0$, respectively to distinguish between the two types of vertices. For the proof here, the perspective of \cite[\S B]{She25} is useful. Observe in particular that for $\mathsf{a}_1$ we have a canonical isomorphism 
 	
 	\begin{equation}\label{eq:iso-to-det}
 		\orl(\cK_{\mathsf{a}_1}) \;\cong\; \orl(\mu_L(\beta))\orl(L)\orl(T_-)\orl(sc)\dul\orl(tr)\dul,  
 	\end{equation}
 	where we have fixed the outgoing marked point at $\infty$ (identifying the disc with the upper half-plane), which leaves the two actions of positive scaling and positive translation, indicated by $sc$, respectively $tr$ here. We will henceforth omit the trivial even-rank orientation line as we are only interested in the sign. The isomorphism with $L$ is induced by the evaluation map at the outgoing (or equivalently incoming) marked point. As observed in \cite{She25} the linear gluing identifies the \emph{inward-pointing} normal vector with the positive scaling of the action, while positive translation corresponds to the infinitesimal variations of the marked point on the disc bubble; which are thus tangential to the boundary stratum.
 	This shows that in Case~\eqref{incoming-d} we have the following diagram
 	
 	\begin{center}\begin{tikzcd}
 			\orl(\bR)\orl(\cK_{\mathsf{a}_1}) \orl(L)\dul\orl(\cK_{\mathsf{a}',\sigma})\arrow[rr,"(-1)\eqref{eq:iso-to-det}"] \arrow[dd,""]&&\orl(\bR)\orl(L)\orl(T_+)\orl(sc)\dul\orl(tr)\dul\orl(L)\dul \orl(Y^+_\sigma)\orl(\sigma)\arrow[d,"(1)"]\\ 
 			&&\orl(L)\orl(\bR)\orl(sc)\dul\orl(L)\dul\orl(Y^+_\sigma)\orl(\sigma)\arrow[d,"(2)"]\\
 			\orl(\cK_{\Gamma,\sigma}) \arrow[rr,""]	&& \orl(Y^+_\sigma)\orl(\sigma)
 	\end{tikzcd} \end{center}
 	The map $(1)$ has orientation sign $(-1)^{n+1}$ because we moved $\orl(\bR)$ past $\orl(L)$ and $\orl(T_+)$ past $\orl(sc)\dul$. Meanwhile, $(2)$ has orientation sign $(-1)$ because $\orl(\bR)$ is oriented with respect to the outward point normal vector and the discussion above. Thus, the vertical morphism on the left has orientation sign $(-1)^{n+1}$.
 	As before, the same sign appears if $\Gamma$ (and thus $\Gamma'$) is an arbitrary graph. Then, by Lemma~\ref{lem:different-fibre-products}, the orientation sign of the equivalence between $\del_e\cK_{\Gamma,\sigma}$ and $(L^{i-1}\times \cK_{\mathsf{a}_1}\times L^{|y'_o|-i})\times_{Y_\sigma^+}\cK_{\Gamma',\sigma}$ is $(-1)^{\epsilon_{\eqref{incoming-d}}}$ with 
 	
 	\begin{align*}\label{}
 		\epsilon_{\eqref{incoming-d}}\;&=\;n+1+ n(i-1) 
 		\\&=ni+1 
 	\end{align*}
 	By our sign convention, we thus have 
 	
 	\begin{align*}\label{} 
 		(-1)^{|\alpha|+n\chi+\norm{\sigma}}(\eva_{\Gamma,\sigma}^+|_{\del_e})_*((\eva^-_{\Gamma,\sigma})^*(\alpha)\wedge\lambda_{\Gamma,\sigma}|_{\del_e})\, &=\,(-1)^{|\alpha|+|\sigma|+1} \fq_{T\g_i\Gamma',D_1\g_i\sigma}(\alpha).
 	\end{align*} 
 	Since the coefficient $c_{\Gamma}$, defined in~\eqref{eq:q-coefficient}, only depends on the prestable graph underlying $\Gamma$, we see that $c_\Gamma = c_{\Gamma'}$ for $\Gamma$ and $\Gamma'$ in the above equation.
 	
 	In Case~\eqref{outgoing-d} we have the following diagram
 	
 	\begin{center}\begin{tikzcd}
 			\orl(\bR)\orl(\cK_{\mathsf{a}',\sigma})\orl(L)\dul\orl(\cK_{\mathsf{a}_1}) \arrow[r,""] \arrow[ddd,""]&\orl(\bR)\orl(Y^+_\sigma)\orl(\sigma)\orl(L)\dul\orl(L)\orl(T_-)\orl(sc)\dul\orl(tr)\dul \arrow[d,"(1)"]\\ 
 			&\orl(Y^+_\sigma)\orl(\sigma)\orl(T_-)\orl(\bR)\orl(sc)\dul\orl(tr)\dul\arrow[d,"(2)"]\\
 			& \orl(Y^+_\sigma)\orl(\sigma)\orl(T_-)\orl(tr)\dul\arrow[d,"(3)"]\\
 			\orl(\cK_{\Gamma,\sigma}) \arrow[r,""]	& \orl(Y^+_\sigma)\orl(\sigma)
 	\end{tikzcd} \end{center}
 	Here $(1)$ has orientation sign $(-1)^{|\sigma|+n\ell^++1}$ because we moved $\orl(\bR)$ past the other orientation lines. Note that the orientation sign of $\orl(L)\dul\orl(L)\cong \orl(0)$ is $1$ because the identification uses the map $-d\eva_{z_+}$. The map $(2)$ has orientation sign $(-1)$ because $\orl(\bR)$ is oriented with respect to the outward point normal vector and the discussion above. Finally, $(3)$ is orientation-preserving. In total, we see that the vertical morphism on the right has orientation sign $(-1)^{\epsilon'_{\eqref{outgoing-d}}}$ with 
 	
 	\begin{equation*}\label{} 
 		\epsilon'_{\eqref{outgoing-d}} = n|y'_o| +|\sigma| .
 	\end{equation*}
 	Returning to the case where $\Gamma$ (and thus $\Gamma'$) is an arbitrary graph, it follows from Lemma~\ref{lem:different-fibre-products} that the orientation sign of the equivalence between $\del_e\cK_{\Gamma,\sigma}$ and $\cK_{\Gamma',\sigma}\times_{Y_\sigma^-}(L^{i-1}\times \cK_{\mathsf{a}_1}\times L^{|y_o|-i})$ is $(-1)^{\epsilon_{\eqref{outgoing-d}}}$ with 
 	
 	\begin{align*}\label{} 
 		\epsilon_{\eqref{outgoing-d}} \;&= \;\epsilon_{\eqref{outgoing-d}}' + n(|y_o|-i)(\vdim(\cK_{\mathsf{a}_1})-n)\\&\equiv \; n|y'_o| +|\sigma| + n(|y_o|-i)\mod{2}.
 	\end{align*}
 	Then, by Definition~\ref{de:differential-and-curvature}, we have
 	
 	\begin{align*}\label{} 
 		(-1)^{|\alpha|+n|y'_o|+|\sigma|}(\eva_{\Gamma,\sigma}^+|_{\del_e})_*((\eva^-_{\Gamma,\sigma})^*(\alpha)\wedge\lambda_{\Gamma,\sigma}|_{\del_e}) \;&=\; (-1)^{|\alpha|} \fq_{\Gamma'\g_iT,\sigma\g_i D_1}(\alpha).
 	\end{align*} 
 	due to the fact that $\scF$ is compatible with $D_1$ over $\cdmc_n^{\stb}$. As before, we have $c_\Gamma = c_{\Gamma'}$.
 	
 	Finally, in Case~\eqref{curv-d}, we have to consider $\cK_{\Gamma',\pi^*\sigma}$, or, equivalently the fibre product $\cKc_{\Gamma'}\times_{\cMc_{\ov\Gamma}}\Delta^m$, where the graph $\Gamma'$ has an additional incoming marked point, \emph{ordered before all other incoming marked points} (although it appears on the boundary after the $i$th marked point) and the map $ \cTc_{\Gamma'}\to \cMc_{\ov\Gamma}$ factors through the map forgetting said marked point. Thus, $\cK_{\Gamma',\pi^*\sigma}$ is oriented via the isomorphism 
 	\begin{align}\label{eq:orientation-of-lift} 
 		\notag\orl(\cK_{\Gamma',\pi^*\sigma}) \;&\cong\; \orl(Y^+_\sigma)\orl(\pi^*\sigma)\\
 		\;&\cong \; \orl(\bR)\orl(Y^+_\sigma)\orl(\sigma).
 	\end{align}
 	where we use the isomorphism $\orl(\pi^*\sigma)\cong \orl(\sigma)\orl(\bR)$, cf. Construction~\ref{con:lifted-simplex}. We write $\Gamma'_-$ for the graph obtained by forgetting the additional marked point. The second isomorphism has sign $(-1)^\star := (-1)^{n|y'_o|+|\sigma|}$ because we permute $\bR$ past $\orl(Y^+_{\pi^*\sigma})$. To distinguish the normal direction from this $\bR$-factor, we denote the normal direction by $\bR_{\del}$ in the commutative diagram
 	\begin{center}\begin{tikzcd}
 			\orl(\bR_{\del})\orl(\cK_{\mathsf{a}^+,\pi^*\sigma})\orl(L)\dul\orl(\cK_{\mathsf{a}_0}) \arrow[rr,"(-1)^{\star}\eqref{eq:orientation-of-lift}"] \arrow[ddd,""]&&\orl(\bR_{\del})\orl(\bR)\orl(Y^+_\sigma)\orl(\sigma)\orl(L)\dul\orl(L)\orl(sc)\dul\orl(tr)\dul \arrow[d,"(1)"]\\ 
 			&&\orl(Y^+_\sigma)\orl(\sigma)\orl(\bR_{\del})\orl(\bR)\orl(sc)\dul\orl(tr)\dul\arrow[d,"(2)"]\\
 			&& \orl(Y^+_\sigma)\orl(\sigma)\orl(T_-)\orl(tr)\dul\arrow[d,"(3)"]\\
 			\orl(\cK_{\Gamma,\sigma}) \arrow[rr,""]	&& \orl(Y^+_\sigma)\orl(\sigma)
 	\end{tikzcd} \end{center}
 	where the map $(1)$ has orientation sign $1$ as does $(2)$, using the same arguments as above and because we moved $\orl(\bR_{\del})$ past $\orl(\bR)$. Finally, $(3)$ is orientation-preserving. In total, we see that the vertical morphism on the left has orientation sign $(-1)^{\epsilon_{\eqref{curv-d}}}$, where 
 	\begin{equation}\label{eq:sign-curvature-basic} 
 		\epsilon_{\eqref{curv-d}} = n|y'_o|+|\sigma|.
 	\end{equation}
 	Since $\vdim(\cK_{\mathsf{a}_0})-n \equiv 0$, Lemma~\ref{lem:different-fibre-products} implies that 
 	\[\cK_{\mathsf{a}^+,\pi^*\sigma}\times_{X^{y_c}\times L^{*\sqcup y_o}}(X^{y_c}\times\cK_{\mathsf{a}_0}\times L^{y_o}) \simeq (-1)^{\epsilon_{\eqref{curv-d}}}\del\cK_{\Gamma,\sigma}-\]
 	Thus, the associated operation~\eqref{eq:application-stokes} becomes
 	
 	\begin{equation}\label{eq:curvature-term} 
 		(-1)^{|\alpha|+|\sigma|-n|y'_o|}(\eva_{\Gamma,\sigma}^+|_{\del_e})_*((\eva^-_{\Gamma,\sigma})^*(\alpha)\wedge\lambda_{\Gamma,\sigma}|_{\del_e}) \;=\; (-1)^{|\alpha|} \fq_{\Gamma'\g_* T,\pi^*\sigma\g_* D_0}(\alpha).
 	\end{equation}
 In this case, the underlying prestable graph of $\Gamma'$ differs from the one of $\Gamma$ and we have to invoke \cite[Lemma~4.23]{HH25} to see that
 \begin{equation}\label{} 
 	\s{\pi_j^*\Gamma}{c_\Gamma} = c_{\Gamma'}.
 \end{equation}
 Summing over all suitable graphs $\Gamma$ completes the proof.
\end{proof}	 

\noindent
The next lemma is used in the above proof to show that `unstable' boundaries in the sum over all graphs $\Gamma$ of a given stabilisation cancel.

\begin{lemma}\label{lem:total-degree-contraction}
	Let $\Gamma$ be a stable map graph and $\Gamma'$ be obtained from $\Gamma$ by an orientation-preserving map $f \cl \Gamma\to \Gamma'$, which contracts a single unstable edge. Then,
	\begin{equation}\label{eq:cancellation-for-chain-map} 
		\s{\substack{\widehat{\Gamma} \in \scP\scS^+(\Gamma)}}{\s{\substack{e\in E(\wh\Gamma)\\\widehat{\Gamma}_e \cong_+ \Gamma'}}{c_{\Gamma}\,\fq_{\Gamma,\sigma}|_{\{t_e =0\}}(\cdot)}}\,=\, \s{\widehat{\Gamma}' \in \scP \scS^+(\Gamma')}{\s{\scS\scG^+(\Gamma,\Gamma')/\Aut^+(\Gamma)}{c_{\Gamma'}\,\fq_{\widehat{\Gamma}',\sigma}|_{\del_{\Gamma}\cKc_{\wh\Gamma'}}(\cdot)}}
	\end{equation}
	for any analytic chain $\sigma\in C^{an}_*(\cMc_{\ov\Gamma})$.
\end{lemma}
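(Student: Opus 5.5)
The identity is a bookkeeping statement about how the boundary faces $\{t_e = 0\}$ of the charts $\cKc_{\wh\Gamma}$ match the boundary strata $\del_\Gamma\cKc_{\wh\Gamma'}$. I would prove it by reducing both sides to a single sum of operations $\fq$ over the same geometric chart, and then comparing combinatorial multiplicities. The main input is Theorem~\ref{thm:cubical-cobordisms-enhanced}\eqref{cubical-1-boundary-enhanced}: for each $\wh\Gamma$ and each edge $e$ with $\wh\Gamma_e\cong_+\Gamma'$ there is a local embedding
\[
\cKc_{\wh\Gamma}|_{\{t_e=0\}}\longrightarrow \del_\Gamma\cKc_{\wh\Gamma_e}
\]
of degree $d_1(\wh\Gamma,\wh\Gamma_e)$. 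This embedding intertwines the evaluation maps, and by Proposition~\ref{prop:thom systems exist}, \eqref{eq:thom-contracting-edge}, it is compatible with the Thom forms. It also commutes with the maps to $\cMc_{\ov\Gamma}$, since $f$ contracts an unstable edge and the stabilisation is unchanged. Hence the fibre products with $\Delta^{\norm{\sigma}}$ and their resolutions are compatible.

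\textbf{Step 1 (each summand on the left as a multiple of a boundary operation).} Using Lemma~\ref{lem:multiplication-up-to-codim}, I would show that
\[
\fq_{\wh\Gamma,\sigma}|_{\{t_e=0\}} = d_1(\wh\Gamma,\wh\Gamma_e)\,\fq_{\wh\Gamma_e,\sigma}|_{\del_\Gamma\cKc_{\wh\Gamma_e}}.
\]
The local embedding has the stated degree, and both evaluation maps are submersions, so the lemma applies. One must check that the orbifold structures are handled correctly, and here Lemma~\ref{lem:pushforward-if-group-acts} converts the isotropy factor into $1/|G|$. Orientations agree because of the construction in Construction~\ref{con:orientation}: the relative orientation over $\cMc_{\ov\Gamma}$ of $\cKc_{\wh\Gamma}$ is defined by exactly this collar inclusion of the unstable disc bubble. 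So the sign in \eqref{eq:restricted-e-0-enhanced} is $+1$ for the relevant relatively oriented charts.

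\textbf{Step 2 (reindexing the sums).} After Step~1, the left side becomes a sum over pairs $(\wh\Gamma, e)$ with $\wh\Gamma_e\cong_+\Gamma'$ of $c_\Gamma\, d_1(\wh\Gamma,\wh\Gamma_e)\,\fq_{\wh\Gamma',\sigma}|_{\del_\Gamma}$. I would group these terms by the isomorphism class $\wh\Gamma'=\wh\Gamma_e\in\scP\scS^+(\Gamma')$. For fixed $\wh\Gamma'$, the pairs $(\wh\Gamma,e)$ correspond to contractions $\Gamma\to\Gamma'$ modulo automorphisms, that is, to $\scS\scG^+(\Gamma,\Gamma')/\Aut^+(\Gamma)$. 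The orbit–stabiliser count replaces the multiplicity $d_1=|\{\phi\in\Aut(\Gamma)\mid f\g\phi=f\}|$ by the ratio of orbit sizes. It then remains to verify the identity
\[
c_\Gamma\, d_1(\Gamma,\Gamma') = c_{\Gamma'}
\]
summed appropriately, using \eqref{eq:q-coefficient}. This follows as in \cite[Lemma~4.22]{HH25}: $\Aut^+(\Gamma^{\stb})=\Aut^+(\Gamma'^{\stb})$, because the contracted edge is unstable, while $|\Aut^+(\Gamma^{ps})|/|\Aut^+(\Gamma'^{ps})|$ is compensated by $d_1$ together with the size of the orbit of edges.

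\textbf{Main obstacle.} I expect the difficulty to lie in the combinatorial matching of Step~2. The concern is that automorphisms of $\Gamma'$ permuting several unstable branches could make distinct edges $e$ give the same stratum, which would spoil the count. I would first try to make this precise by fixing the chosen representatives of graph isomorphism classes. I would then count the fibres of the map $(\wh\Gamma,e)\mapsto(\wh\Gamma_e,[f])$ explicitly, invoking \cite[Lemma~4.22]{HH25} for the identity of automorphism group orders.
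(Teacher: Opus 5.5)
Your Step~1 matches the paper. The face $\{t_e=0\}$ is compared with the stratum $\del_\Gamma\cKc_{\Gamma'}$ through the degree-$d_1$ local embedding of Theorem~\ref{thm:cubical-cobordisms-enhanced}\eqref{cubical-1-boundary-enhanced}; here $d_1=1$, since both graphs admit a stabilisation. The orientations agree because $\Gamma$ and $\Gamma'$ differ only in unstable vertices.

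The gap is in Step~2, which is where the content of the lemma lies. There are two different sets to count:
\begin{itemize}
\item An edge $e$ with $\Gamma_e\cong_+\Gamma'$ determines a contraction $\Gamma\to\Gamma'$ only up to \emph{post}-composition with $\Aut^+(\Gamma')$. So the number of such edges is $d_2=|\Aut^+(\Gamma')\backslash\scS\scG^+(\Gamma,\Gamma')|$.
\item The boundary strata of type $\Gamma$ in $\cKc_{\Gamma'}$ are indexed by $\scS\scG^+(\Gamma,\Gamma')/\Aut^+(\Gamma)$, that is, by contractions modulo \emph{pre}-composition. Their number $d_3$ is in general different from $d_2$.
\end{itemize}
Your identification of the pairs $(\wh\Gamma,e)$ with $\scS\scG^+(\Gamma,\Gamma')/\Aut^+(\Gamma)$ conflates these two sets. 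For the same reason, what you list as the main obstacle (several edges giving the same stratum) should not be ruled out. It is exactly the discrepancy between $d_2$ and $d_3$, and the ratio of $c_\Gamma$ to $c_{\Gamma'}$ is what absorbs it. Likewise, the identity $c_\Gamma d_1=c_{\Gamma'}$ is false as written in general, since $d_1=1$ while $c_\Gamma$ and $c_{\Gamma'}$ need not agree.

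The missing step is a double count of the set $\scS\scG^+(\Gamma,\Gamma')$ of orientation-preserving contractions.
\begin{itemize}
\item Under pre-composition by $\Aut^+(\Gamma)$, the stabiliser of $f$ is $\{\phi\mid f\g\phi=f\}$. It has order $d_1$, independently of $f$.
\item Post-composition by $\Aut^+(\Gamma')$ is free, because contractions are surjective.
\end{itemize}
Hence
\[
\frac{d_3\,|\Aut^+(\Gamma)|}{d_1}\;=\;|\scS\scG^+(\Gamma,\Gamma')|\;=\;d_2\,|\Aut^+(\Gamma')|,\qquad\text{i.e.}\qquad \frac{d_1d_2}{d_3}\;=\;\frac{|\Aut^+(\Gamma)|}{|\Aut^+(\Gamma')|}.
\]
For a fixed $\Gamma$, the left side of \eqref{eq:cancellation-for-chain-map} contributes $c_\Gamma d_1d_2$ copies of the boundary operation, and the right side contributes $c_{\Gamma'}d_3$ copies. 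This identity makes the two multiplicities agree, once you also reduce the outer sums over $\scP\scS^+(\Gamma)$ and $\scP\scS^+(\Gamma')$ to multiples of a single term and use \eqref{eq:q-coefficient} together with $\Gamma^{\stb}=\Gamma'^{\stb}$. This is how the paper concludes the proof.
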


\begin{proof}
	Note first that 
	\[\s{\substack{\widehat{\Gamma} \in \scP \scS^+(\Gamma)}}{\s{\substack{e\in E(\wh\Gamma)\\\widehat{\Gamma}_e \cong_+ \Gamma'}}{\fq_{\Gamma,\sigma}|_{\{t_e =0\}}(\cdot)}} = |\scP\scS^+(\Gamma)|\s{\substack{e\in E(\Gamma)\\\Gamma_e = \Gamma'}}{\fq_{\Gamma,\sigma}|_{\{t_e =0\}}(\cdot)}
	\]
	and similarly for the right hand side of Equation~\eqref{eq:cancellation-for-chain-map}. Over the smooth locus, the map $\cKc_{\Gamma,\sigma}|_{\{t_e = 0\}}\to \del_\Gamma\cKc_{\Gamma',\sigma}$ is the pullback of a local embedding of degree \[d_1(\Gamma,\Gamma') = |\{\phi\in\Aut(\Gamma)\mid f\g \phi = f\}|\]
	by Theorem~\ref{thm:cubical-cobordisms-enhanced}, where $f \cl \Gamma\to \Gamma'$ is the underlying contraction. (Since the graphs $\Gamma$ and $\Gamma'$ admit a stabilisation, we have $d_1(\Gamma,\Gamma') =1$ in this case. But this equation holds more generally.) Since the contracted half-edges have no orientation sign, we have 
	\[d_1(\Gamma,\Gamma') = |\{\phi\in\Aut^+(\Gamma)\mid f\g \phi = f\}|.\]
	Moreover,
	\begin{equation}\label{} 
		d_2(\Gamma,\Gamma') \coloneqq|\{ e\mid\Gamma_e \cong_+ \Gamma'\}| =|\Aut^+(\Gamma')\backslash \scS\scG_m^+(\Gamma,\Gamma')|.\\
	\end{equation}
	Finally, setting $d_3(\Gamma,\Gamma') := |\scG(\Gamma,\Gamma')/\Aut(\Gamma)|$, we have $d_3(\Gamma,\Gamma')$-many boundary strata of $\cKc_{\Gamma'}$ that come from a local embedding of $\cKc_\Gamma$ restricted to a face of the cube $I^{E(\Gamma)}$. Since $d_1(\Gamma,\Gamma')$ does not depend on the choice of contraction $\Gamma\to \Gamma'$, we have 
	\[|\scS\scG_m(\Gamma,\Gamma')| = |\scS\scG_m^+(\Gamma,\Gamma')/\Aut^+(\Gamma)|\, \frac{|\Aut^+(\Gamma)|}{d_1(\Gamma,\Gamma')}.\]
	Because contractions are surjective, the action of $\Aut^+(\Gamma')$ on $\scS\scG_m^+(\Gamma,\Gamma')$ is free, whence  
	\[\frac{d_1(\Gamma,\Gamma')d_2(\Gamma,\Gamma')}{d_3(\Gamma,\Gamma')} = \frac{|\Aut^+(\Gamma)|}{|\Aut^+(\Gamma')|}.\]
	Together with the first reduction, this allows us to conclude using the equality in~\eqref{eq:q-coefficient} up to sign. Since the difference between $\Gamma$ and $\Gamma'$ are only unstable vertices, their orientations agree, whence the claim follows.
\end{proof}

\begin{lemma}\label{lem:different-fibre-products} Suppose we have submersions $X\to Z$ and $Y\to Z$ as well as maps $X\to V\times W$, where $X$ and $Y$ are oriented orbifolds with corners and $Z,V,W$ are oriented closed manifolds. Then, equipped with the respective fibre product orientation, the isomorphisms 
	\begin{equation}\label{} 
		(V\times X\times W)\times_{V\times Z\times W} Y\;\xra{\phi} \;X\times_Z Y\;\xleftarrow{\varphi}\; X\times_{V\times Z\times W}(V\times Y\times W)
	\end{equation}
	have orientation signs
	\begin{equation}\label{} 
		\epsilon(\phi) = (-1)^{\dim V(\dim X-\dim Z)} \qquad \qquad \epsilon(\varphi) = (-1)^{\dim W(\dim Y-\dim Z)}
	\end{equation}
\end{lemma}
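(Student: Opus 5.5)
The plan is to reduce both assertions to a pointwise comparison of orientation lines, using the fibre product orientation convention the paper uses throughout: for a fibre product $A\times_C B$ of a submersion $B\to C$ with a map $A\to C$, $\orl(A\times_C B)\cong \orl(A)\orl(C)\dul\orl(B)$. Orientation signs are local, and all spaces involved are orbifolds, so we may work on a single chart. We may also assume $X$ and $Y$ have no corners, since corners do not affect the sign. Splitting tangent spaces, it then suffices to treat the linear algebra case where $X,Y,V,W,Z$ are vector spaces and all maps are linear. For simplicity we take the maps $X\to V\times W$ and $X\to Z$ to be jointly split. The sign is determined by the permutation of determinant lines, so genericity is harmless.

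For $\phi$, we compare the two orientations directly. On the left, $\orl(V)\orl(X)\orl(W)\,\orl(W)\dul\orl(Z)\dul\orl(V)\dul\,\orl(Y)$ is the fibre product orientation. Note that the dual of $\orl(V\times Z\times W)$ reverses the order of the factors, and we should record which convention the paper uses for $\orl(A\times B)\dul$. On the right, $\orl(X)\orl(Z)\dul\orl(Y)$ is the orientation of $X\times_Z Y$. The contraction $\orl(W)\orl(W)\dul$ is canonical and immediate. We then move $\orl(V)$ rightwards past $\orl(X)\orl(Z)\dul$, where it cancels against $\orl(V)\dul$. This costs $(-1)^{\dim V(\dim X+\dim Z)}=(-1)^{\dim V(\dim X-\dim Z)}$, which is the claimed $\epsilon(\phi)$.

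For $\varphi$, the symmetric computation applies. We start from $\orl(X)\,\orl(V\times Z\times W)\dul\,\orl(V)\orl(Y)\orl(W)$. Here $\orl(V)$ cancels adjacently with its dual, and $\orl(W)$ must move leftwards past $\orl(Y)$ and $\orl(Z)\dul$. This gives $(-1)^{\dim W(\dim Y-\dim Z)}$.

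The main obstacle is bookkeeping of conventions rather than mathematics. The first point is whether the dual of a product line is ordered reversed. The second is whether the contraction $\orl(V)\orl(V)\dul\to \bR$ or $\orl(V)\dul\orl(V)\to\bR$ is the canonical one, since these differ by $(-1)^{\dim V}$ when $\dim V$ is odd. I would fix these to agree with the convention in \cite{HH25}, which the paper inherits. I would then check that, under that convention, the canonical pairings contribute no extra signs, for instance on the example $X=V$, $Z=W=0$, where $\phi$ should be the identity. If an extra sign $(-1)^{\dim V}$ appears, it must be absorbed by correcting the order in which the dual factors are written.
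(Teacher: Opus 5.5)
Your orientation-line computation is correct and is, in substance, the paper's argument. The paper splits locally $X = X'\times Z$ (resp.\ $Y = Z\times Y'$) and declares $X\times_Z Y$ to be oriented as $X'\times Y$ (resp.\ $X\times Y'$). Then $\phi$ is just the reordering $V\times X'\to X'\times V$, with sign $(-1)^{\dim V\dim X'}$ and $\dim X' = \dim X-\dim Z$. Your step of moving $\orl(V)$ past $\orl(X)\orl(Z)\dul$ is the same count.

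The convention you actually compute with is exactly the one encoded by that local model: reversed order for the dual of a product, and both contractions $\orl(C)\orl(C)\dul$ and $\orl(C)\dul\orl(C)$ sign-free. You should commit to it rather than leave it open. Two small points:
\begin{itemize}
\item No genericity or ``joint splitting'' is needed. The fibre products are transverse because $\ide_V\times(X\to Z)\times\ide_W$ and $\ide_V\times(Y\to Z)\times\ide_W$ are submersions.
\item For $\phi$, the map to $V\times W$ must come from $Y$, not from $X$.
\end{itemize}

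The normalisation check in your last paragraph is wrong. For $X = V$, $Z = W = \pt$, the lemma itself predicts $\epsilon(\phi) = (-1)^{\dim V\cdot\dim V} = (-1)^{\dim V}$, and your own computation gives the same. So $\phi$ is orientation-reversing there when $\dim V$ is odd.

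The expectation that $\phi$ is the identity comes from contracting $\orl(V)\dul$ against the adjacent copy $\orl(X) = \orl(V)$. The fibre-product constraint instead pairs it with the $V$-factor of $V\times X\times W$, and bringing that factor next to $\orl(V)\dul$ is precisely the reordering the sign records. So there is nothing to absorb. Any change of convention engineered to make $\phi$ orientation-preserving in this example contradicts the formula you are proving, and the signs derived from it later, such as $ni+1$ in the proof of Lemma~\ref{lem:chain-map-stable}.

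Legitimate checks for $\phi$ are $V = W = \pt$, or $X = Z$ with $X\to Z$ the identity. In both, source and target reduce to $X\times_Z Y$ (namely $Y$ in the second case) and the predicted sign is $+1$. This is also what the paper's local splitting buys over orientation-line bookkeeping: after splitting, $\phi$ is literally a permutation of factors, so the question of which copy of $\orl(V)$ gets contracted never arises.
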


\begin{proof} We use a trick from \cite{Joy} to simplify the computation. Since the question is local, we may assume $X = X'\times Z$ and $Y = Z\times Y'$ respectively, in which case $X\times_Z Y$ is oriented as $X'\times Y$, respectively $X\times Y'$. The signs now follow immediately from the fact that $\dim X' = \dim X- \dim Z$.
\end{proof}

\begin{lemma}\label{lem:curved-chain-map-unstable} $\scF^1_\sigma$ satisfies~\eqref{eq:warped-relations-again} when $\sigma = D_0$ or $D_1$. Explicitly, \[d_{(0,1)} \g \scF_{D_0} = -\scF_{D_1 \g D_0}\] and 
	\begin{equation*}\label{eq:curved-diff-base-case} 
		\scF_{D_1} \g d_{(0,1)} + d_{(0,1)} \g \scF_{D_1} = -\scF_{D_1 \g D_1} + \scF_{D_2 \g_1 D_0} - \scF_{D_2 \g_2 D_0}.
	\end{equation*}
\end{lemma}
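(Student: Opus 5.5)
The argument mirrors Lemma~\ref{lem:chain-map-stable}: apply Stokes' formula \cite[Theorem~1(e)]{ST23b} to each correspondence $\cKc_\Gamma$ entering the definition of $\scF_{D_0}$ and $\scF_{D_1}$ (Definition~\ref{de:differential-and-curvature}), and then identify the codimension-one boundary strata. Here there is no chain on a moduli space of curves to fibre over, so no resolution of singularities is needed and the second summand of~\eqref{eq:boundary-fibre-product} is absent. For a graph $\Gamma$ of type $(\beta,0,1)$ with one outgoing boundary marked point (respectively with one incoming and one outgoing point), the boundary of $\cTc_\Gamma$ consists of three kinds of strata:
\begin{itemize}
\item the faces $\{t_e=0\}$;
\item the strata $\del_{\Gamma'}\cTc_\Gamma$ with $\Gamma'_{e'}=\Gamma$;
\item the faces $\{t_e=1\}$.
\end{itemize}
By Theorem~\ref{thm:cubical-cobordisms-enhanced}\eqref{cubical-1-boundary-enhanced} and Lemma~\ref{lem:total-degree-contraction}, the first two kinds cancel in pairs once we sum over all $\Gamma$ of fixed energy, weighted by $c_\Gamma$. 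So only the faces $\{t_e=1\}$ remain. By~\eqref{eq:restricted-e-1-enhanced} and~\eqref{eq:thom-separating-edge}, each such face is a fibre product $\cKc_{\Gamma_1}\times_L\cKc_{\Gamma_2}$ of two disc correspondences, both of positive energy, with Thom form $\eta_{\Gamma_1}\times\eta_{\Gamma_2}$. Proposition~\ref{prop:correspondences-operations}\eqref{composing-correspondences} turns each such term into a composition of operations.

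For $D_0$, splitting at a boundary node leaves two cases.
\begin{itemize}
\item The component carrying the output has one incoming point. This gives $\scF_{D_1}\g\scF_{D_0}=\scF_{D_1\g D_0}$.
\item The output component has two incoming points, each capped by a $D_0$-disc. Up to sign, this gives terms $\scF_{D_2}(\scF_{D_0}\times\scF_{D_0})$.
\end{itemize}
The second kind of term has to vanish. I expect to show this with Proposition~\ref{prop:correspondences-operations}\eqref{reversing-orientation}: the involution swapping the two identical bubbles reverses orientation, because $\vdim\cK_{\mathsf{a}_0}-n$ is even and so the swap acts on $\orl(\bR_\del)$ with a sign. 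Equivalently, this is the $D_2^+$/$D_2^-$ antisymmetry built into $\pi^*D_1$. I also have to check that the $\{t_e=1\}$ stratum counts each unordered splitting once, which uses the coefficients $c_\Gamma$ via \cite[Lemma~4.23]{HH25}.

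For $D_1$, the splittings are:
\begin{itemize}
\item a $D_1$-bubble at the input or at the output, giving $\scF_{D_1}\g\scF_{D_1}$ with two orientations, which combine into $-\scF_{D_1\g D_1}$ (the paper writes $D_1\g D_1$ for the composition in the dg category, so semi-strictness is not even needed here);
\item a $D_0$-bubble attached on either side of the input on the boundary, giving $\scF_{D_2\g_1D_0}$ and $\scF_{D_2\g_2D_0}$ with opposite signs.
\end{itemize}
The last two reflect whether the extra point sits before or after the input in the boundary order; compare $\del^v$ in the proof of Lemma~\ref{lem:lifted-simplex-analytic}, where $s_{\kappa,*}$ and $s_{*,\kappa}$ appear.

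All signs are to be computed exactly as in the diagrams of Lemma~\ref{lem:chain-map-stable}: use the isomorphism~\eqref{eq:iso-to-det}, the identification of the inward normal with positive scaling \cite[\S B]{She25}, and Lemma~\ref{lem:different-fibre-products} to move factors of $L$. I also need to match the Stokes sign $(-1)^{|\alpha|+\vdim}$ with the convention $d_y=(-1)^{|\alpha|}d_{dR}$ from~\eqref{eq:differential}, using $|D_i|=i-2$. I expect the main obstacle to be this sign bookkeeping, in particular getting the relative sign between $D_2\g_1D_0$ and $D_2\g_2D_0$ and the vanishing of the symmetric $D_0$–$D_0$ term consistent with the orientation convention of Construction~\ref{con:orientation} for unstable discs.
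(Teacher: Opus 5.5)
Your overall strategy is the paper's: apply Stokes to each $\cKc_\Gamma$, cancel the $\{t_e=0\}$ faces against the strata $\del_{\Gamma'}\cTc_\Gamma$ via Lemma~\ref{lem:total-degree-contraction}, and read off the $\{t_e=1\}$ faces as fibre products. However, your list of $\{t_e=1\}$ faces is wrong.

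\textbf{The $D_0$ case.} A face $\{t_e=1\}$ cuts exactly one edge of the tree $\Gamma$, so the piece carrying the output acquires exactly one new incoming leg. For $D_0$ this piece is therefore always a $D_1$-graph $\Gamma_1$ and the other piece a $D_0$-graph $\Gamma_2$. The only terms are $\fq_{\Gamma_1}\g\fq_{\Gamma_2}$, which sum to $\scF_{D_1}\g\scF_{D_0}$. Your second case has an output disc with two $D_0$-capped nodes. That configuration carries two boundary nodes, so it lies on $\{t_e=t_{e'}=1\}$, which has codimension two and does not enter Stokes' formula. A graph whose output vertex carries two $D_0$-subtrees contributes, on the face of either edge, a term $\fq_{\Gamma_1}\g\fq_{\Gamma_2}$ in which the other subtree sits inside the $D_1$-graph $\Gamma_1$. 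So there is no $\scF_{D_2}(\scF_{D_0}\times\scF_{D_0})$ term to kill, and the involution argument is unnecessary. This matches the fact that the zero-input curved $A_\infty$ relation is just $\mu^1(\mu^0)=0$. The $D_2^\pm$ antisymmetry of $\pi^*D_1$ you invoke belongs to the $D_0$-terms of $\partial_{oc}D_1$, not to $\partial_{oc}D_0$.

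\textbf{The $D_1$ case: energy.} Your claim that both factors have positive energy is false here. Cutting the edge to a $D_0$-subtree can leave a ghost vertex carrying the input, the output and the node. This vertex is stable at energy zero. These faces produce $\fq^{0}_{D_2}(\fq^{\beta''}_{D_0},\alpha)$ and $\fq^{0}_{D_2}(\alpha,\fq^{\beta''}_{D_0})$, the classical product with the curvature. The paper keeps them explicitly: its sums run over $\omega(\beta')\ge 0$ for the $D_2$-factor. Without them the right-hand side would not be $\scF_{D_2\g_1D_0}-\scF_{D_2\g_2D_0}$.

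\textbf{The $D_1$ case: counting.} ``A $D_1$-bubble at the input'' and ``a $D_1$-bubble at the output'' describe the same face $\cKc_T\times_L\cKc_{\Gamma''}$, namely the cut of the edge separating two $D_1$-subgraphs. There is a single sign to compute, not two orientations to combine. Running both Cases~\eqref{incoming-d} and~\eqref{outgoing-d} of Lemma~\ref{lem:chain-map-stable} and adding them would double count.

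With these corrections your argument coincides with the paper's. The paper obtains the signs by reducing, via Construction~\ref{con:orientation}, to \cite[Theorem~3.5]{HH25}. This gives $(-1)^{n+1}$ for the $D_0$-face, for $e_1$, and for $e_2$ of type $+$, and $(-1)^{n}$ for $e_2$ of type $-$. Your diagrammatic sign computation is an acceptable alternative, and the paper remarks as much.
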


\begin{proof} 
	Recall that $\scF_{D_0} = \s{\Gamma}{(\eva^+_\Gamma)(\obs_\Gamma^*\eta_\Gamma)}$ and that, since $|D_0|$ is even, $d_{(0,1)}(\scF_{D_0})  = d(\scF_{D_0})$. By Stokes' formula,
	\begin{equation*}\label{} 
		d_{(0,1)}\scF_{D_0} = d\scF_{D_0} =  0 + (-1)^{n} \s{\Gamma}{(\eva^+_\Gamma|_{\del\cTc_\Gamma})(\obs_\Gamma^*\eta_\Gamma|_{\del\cTc_\Gamma})}
	\end{equation*}
	where the index set runs over stable map graphs of type $(0,1)$ with one outgoing boundary marked point. By the same arguments as in the proof of Lemma~\ref{lem:chain-map-stable}, we can reduce to computing the sign in the equivalence $\cKc_\Gamma|_{\{t_e = 0\}} \simeq \cKc_{T}\times_L \cKc_{\Gamma_0}$, where $e$ is an edge of $\Gamma$, $T$ is a stable map graph of type $(0,1)$ with one incoming and one outgoing boundary marked point and $\Gamma_0$ is a stable map graph of type $(0,1)$ with one outgoing boundary marked point. By our orientation convention, we can in turn reduce this to the sign computation in \cite[Theorem~3.5]{HH25}, which yields the sign $(-1)^{n+1}$. This could also be obtained by a computation of a diagram as above. 
	For the second claim, we have  
	\begin{multline*}\label{} 
		d_{(0,1)}\scF_{D_1}(\alpha) \,=\, (-1)^{|\alpha|+1}d\s{\Gamma}{(\eva^+_\Gamma)({\eva^-_\Gamma}^*(\alpha)\wedge \obs_\Gamma^*\eta_\Gamma)Q^{\beta_\Gamma}}\\
		=\, -\s{\Gamma}{(\eva^+_\Gamma)({\eva^-_\Gamma}^*(d_{(0,1)}\alpha)\wedge \obs_\Gamma^*\eta_\Gamma)Q^{\beta_\Gamma}} \,-\, (-1)^{n+1} \s{\Gamma}{(\eva^+_\Gamma|_{\del\cTc_\Gamma})({\eva^-_\Gamma}^*(\alpha)\wedge \obs_\Gamma^*\eta_\Gamma|_{\del\cTc_\Gamma})Q^{\beta_\Gamma}.}
	\end{multline*}
	By the same argument as before and our orientation convention in~\eqref{con:orientation} we have to determine the orientation sign between $\cKc_\Gamma|_{\{t_e = 1\}}$ and $\cKc_{T}\times_L \cKc_{\Gamma''}$, where $T$ and $\Gamma'$ are two graphs obtained from $\Gamma$ by cutting the edge $e$ and where $\Gamma'$ and $\Gamma''$ are single vertices. The graph $T$ is of type $(0,1)$ with one incoming and one or two outgoing boundary marked points, while $\Gamma''$ has one outgoing marked point and one incoming, respectively no incoming, boundary marked point. We write $e = e_1$ in the first case and $e = e_2$ in the second one. In the second case, we also have that the boundary node is either ordered before the incoming and after the outgoing marked point on $\Gamma'$, in which case we say $e_2$ is of type $+$, or it is ordered after the incoming marked point. \cite[Theorem~3.5(1)]{HH25} yields the signs 
	\begin{itemize}
		\item $(-1)^{n+1}$ for $e_1$,
		\item $(-1)^{n+1}$ for $e_2$ of type $+$,
		\item $(-1)^{n}$ for $e_2$ of type $-$.
	\end{itemize}
	Write $\scF_{D_2} = \s{\beta}{\fq_{D_2}^\beta Q^\beta}$ and $\scF_{D_0} = \sum_\beta \fq_{D_0}^\beta Q^\beta$. With the notation from Definition~\ref{de:differential-and-curvature}, one has
	\begin{align*}\label{} 
		&(-1)^{n+1} \s{\Gamma}{(\eva^+_\Gamma|_{\del\cTc_\Gamma})_*({\eva^-_\Gamma}^*(\alpha)\wedge \obs_\Gamma^*\eta_\Gamma|_{\del\cTc_\Gamma})Q^{\beta_\Gamma}.}\, \\
		&\,=\,\s{\substack{\omega(\beta')>0\\\omega(\beta'') > 0}}{\fq_{D_1}^{\beta'}(\fq_{D_1}^{\beta''}(\alpha))Q^{\beta'+\beta''}} 
		\,+\, \s{\substack{\omega(\beta')\ge 0\\\omega(\beta'') > 0}}{\fq_{D_2}^{\beta'}(\fq_{D_0}^{\beta''},\alpha)Q^{\beta'+\beta''}} \,-\,
		\s{\substack{\omega(\beta')\ge 0\\\omega(\beta'') > 0}}{\fq_{D_2}^{\beta'}(\alpha,\fq_{D_0}^{\beta''})Q^{\beta'+\beta''}}  
	\end{align*}
	Since we constructed $\scF$ to be compatible with $D_0$ and $D_1$, it follows that
	\[\scF_{D_1} \g d_{(0,1)} + d_{(0,1)} \g \scF_{D_1} = -\scF_{D_1 \g D_1} + \scF_{D_2 \g_1 D_0} - \scF_{D_2 \g_2 D_0}\]
	as claimed.
\end{proof} 

\begin{lemma}\label{eq:chain-map-unstable} 
	$\scF^1_{\sigma}$ satisfies Equation~\eqref{eq:warped-relations-again} when $\sigma$ is an unstable chain.
\end{lemma}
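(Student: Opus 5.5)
The plan is to treat $\sigma$ as an unstable chain in $\cdmc_n$. By the nondegeneracy requirement in Definition~\ref{de:curved-dmft}, $\scF$ vanishes on degenerate simplices, so it suffices to take $\sigma$ to be the unique $0$-dimensional simplex $\ov{\mathsf{a}}$ of one of the unstable types \eqref{identity-correspondence}--\eqref{corestrict-correspondence}, or the empty curve. For such $\sigma$ we have $\del\sigma=0$, and $\pi^*\sigma=0$ by Construction~\ref{con:lifted-simplex}. Hence, by \eqref{eq:modified-differential-old-simplex}, the right-hand side of \eqref{eq:warped-relations-again} with $d=1$ reduces to the $D_1$-terms
\[-\sum_{i\in y'_o}\scF_{D_1\g_i\sigma}+(-1)^{|\sigma|}\sum_{i\in y_o}\scF_{\sigma\g_i D_1}.\]
The left-hand side is the graded commutator of $\scF^0_{\ov{\mathsf{a}}}=\fq_{\fC_{\ov{\mathsf{a}}}}$ with the de Rham differentials. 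The proof therefore splits into two tasks: compute $[d,\fq_{\fC_{\ov{\mathsf{a}}}}]$ by Stokes' formula, and identify the compositions $D_1\g_i\sigma$ and $\sigma\g_i D_1$ as chains of the form $D_1$, $D_2$, or $D_{2,0}$ with the appropriate incoming and outgoing labels.

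I would go through the cases in turn.

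\emph{Closed cases.} For the identity, unit, co-unit and pairing correspondences, the total space ($X$, $L$, or the diagonal) has no boundary, so Stokes gives $[d,\fq]=0$. If $\sigma$ has no boundary marked points, the $D_1$-terms are absent and we are done. Otherwise the $D_1$-terms must cancel in pairs:
\begin{itemize}
\item For the identity, $D_1\g\ide=\ide\g D_1$ and the signs cancel.
\item For the unit, $D_1\g\sigma$ is the forgetful composition \eqref{forget-boundary-outgoing}, i.e.\ $(\pi_1)_*D_1$, which is degenerate; equivalently the operation vanishes by Proposition~\ref{prop:correspondences-operations}\eqref{degenerate-vanishes}.
\item For the co-unit the same argument applies.
\item For the pairing, $\sigma\g_1D_1$ and $\sigma\g_2D_1$ are both the disc with two incoming points and positive energy. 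They are identified by the cyclic relabelling of Lemma~\ref{lem:curved-algebra-from-dmft}, and I expect the two terms to cancel with the sign $(-1)^{|\sigma|}$ supplied by reordering.
\end{itemize}

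\emph{Restriction.} This is the disc with one incoming interior point and no boundary points. It has no $D_1$-terms, and $j^*$ commutes with $d$.

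\emph{Co-pairing and inclusion.} These correspondences are genuine Kuranishi charts $(TL^{\oplus2}_\epsilon,\dots)$, respectively $(TX_\epsilon|_L,\dots)$, equipped with Thom forms supported in the interior. Since $\pt$ and $L$ are closed, Stokes again gives $[d,\fq]=0$ up to the sign convention \eqref{eq:differential}. For the inclusion there are no outgoing boundary points, so the relation holds. For the open co-pairing, however, the $D_1$-terms $D_1\g_1\sigma$ and $D_1\g_2\sigma$ survive. This is exactly the case flagged in the remark after Corollary~\ref{cor:actual-dmft}, where $\scF$ is not compatible with $D_1$. I would handle it by checking that the two terms agree as operations: both equal $(\scF_{D_1}\otimes\ide\pm\ide\otimes\scF_{D_1})$ applied to the same form, and they cancel because the disc with two outgoing points carries an orientation-reversing symmetry swapping them. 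I would verify this with Proposition~\ref{prop:correspondences-operations}\eqref{reversing-orientation} combined with the Koszul sign of the symmetric monoidal structure.

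I expect the main obstacle to be the co-pairing, and more generally the correct bookkeeping of which compositions with $D_1$ are defined and vanish. The first thing I would try there is to show that the co-pairing correspondence composed with $D_1$ on either output yields isomorphic correspondences of opposite orientation. If that fails, I would check whether the definition of $\cdmc_n$ actually excludes these compositions, since the category was built only for $y'\neq(0,1)$ together with the generators $D_0,D_1$.
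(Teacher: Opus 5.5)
Your proposal follows the paper's proof. Stokes' theorem kills the commutator for every $0$-dimensional unstable simplex, and $\scF^1_{\partial_{oc}\sigma}=0$ is then checked case by case: the identity terms cancel, the unit and co-unit terms vanish because the $D_1$-correspondence factors through the forgetful map and is degenerate, and the pairing and co-pairing terms cancel in pairs. One correction to the sign bookkeeping in those last two cases: the relative $-1$ is the orientation sign of the swap $\tau$ of the two marked points on the positive-energy chart $\cK_\Gamma$ lying over $D_1$. It is not $(-1)^{|\sigma|}$, which multiplies both terms of $\partial_{oc}\sigma$, and the symmetry of the energy-zero co-pairing chart alone does not give it. So the co-pairing is handled exactly like the pairing, and since the paper treats these compositions as morphisms of $\cdmc_n$, your fallback is unnecessary.
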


\begin{proof}
	It follows from Stokes' theorem and the fact that $L$ and $X$ are closed that $[d_y,\scF^1_{\sigma}] = 0 = \scF^1_{\partial_{oc}\sigma}$ whenever $\sigma$ is a $0$-dimensional unstable simplex, while both sides vanish whenever $\sigma$ has higher dimension. It remains thus to consider the case with boundary marked points, where we have to check that $\scF^1_{\partial_{oc}\sigma} = 0$.
	\begin{itemize}[leftmargin=20pt]
	\item For the identity, we have $\scF^1_{\partial_{oc} \sigma} = \scF^1_{\idsimp \g D_1} - \scF^1_{D_1 \g \idsimp} = \scF^1_{D_1} - \scF^1_{D_1} = 0$, as required.
	\item For the co-unit~\eqref{forgetful-correspondence}, we have that $\scF^1_{\partial_{oc}\sigma}(\alpha) = \pm \scF_{\sigma \g D_1}(\alpha) = \pm \int_L \scF_{D_1}(\alpha)$ for any $\alpha \in \Omega^*(L)$. We prove this vanishes for each energy $\beta$ separately. By definition, the correspondences involved are of the form $L \xleftarrow{} \cKc_\Gamma \rightarrow \pt,$ 
	where $\Gamma$ is a stable map graph of type $(\beta,0,1)$ with one incoming and one outgoing boundary marked point. By Theorem~\ref{thm:cubical-cobordisms-enhanced}, this factors through the correspondence $L \xleftarrow{} \cK_{\pi_*\Gamma} \rightarrow \pt$ and is thus degenerate. By Proposition \ref{prop:correspondences-operations}(\ref{degenerate-vanishes}) the associated operation vanishes.
	\item For the unit~\eqref{unit-correspondence}, $\scF^1_{\partial_{oc}\sigma} = \pm \scF_{D_1 \g \sigma} = \pm \scF_{D_1}(1)$, which vanishes by a similar argument as for the co-unit.
	\item For the pairing~\eqref{pairing-correspondence}, $\scF^1_{\partial_{oc}\sigma}(\alpha) = \scF_{\sigma \g (\idsimp \times D_1)}(\alpha) + \scF_{\sigma \g (D_1 \times \idsimp)}(\alpha)$. For each $\Gamma$ contributing to $\scF_{D_1}$, we get two contributions to the above, given by the correspondences 
	\begin{equation*}
	L \times L \xleftarrow{ev^-_\Gamma \times ev^+_\Gamma} \cK_\Gamma \rightarrow\pt\qquad \text{and}\qquad L \times L \xleftarrow{ev^+_\Gamma \times ev^-_\Gamma} \cK_\Gamma \rightarrow\pt.
	\end{equation*}
	Consider the map $\tau: \cK_\Gamma \rightarrow \cK_\Gamma$ exchanging the two marked points. This is an isomorphism with sign $\sign (\tau) = -1$. Whence the two contributions cancel and $\scF^1_{\partial_{oc}\sigma} = 0$.
	\item For the co-pairing~\eqref{copairing-correspondence}, $\scF^1_{\partial_{oc}\sigma} = -\scF_{(\idsimp \times D_1) \g \sigma}- \scF_{(D_1 \times \idsimp) \g \sigma}$. For each $\Gamma$ contributing to $\scF_{D_1}$, we get two contributions to the above, given by the correspondences 
	\begin{equation*}
		\pt \leftarrow \cK_\Gamma \xrightarrow{ev^-_\Gamma \times ev^+_\Gamma} L \times L\qquad \text{and}\qquad \pt \leftarrow \cK_\Gamma \xrightarrow{ev^+_\Gamma \times ev^-_\Gamma} L \times L.
		\end{equation*}
	Again, since they are related by a factor $(-1)$, the associated operations cancel.
	\end{itemize}
\end{proof}

\noindent
We now discuss the $A_\infty$ functor equation in the case of $d > 1$. To make the notation less cluttered, write $T_{\ide}$ for any (finite) disjoint union of corollas that encode the identity via Definition~\ref{de:unstable-operations}. We call $T_{\ide}$ an \emph{identity forest}. 

For any edge $e\in E(\Gamma_i)$ as in Case~\eqref{incoming-d} we have by the computation in Lemma~\ref{lem:chain-map-stable} and by Lemma~\ref{lem:gkc-for-higher-homotopies}\eqref{higher-side-one}
 	
 	\begin{equation}\label{eq:boundary-incoming}
 	(\del_e\cKc_{\lc\Gamma})_{\lc\sigma} \; = \; (-1)^{\delta_{\eqref{incoming-d}}}\,\cKc_{\Gamma_d,\dots, T\sqcup T_{\ide},\Gamma'_i,\dots,\Gamma_1,\sigma_d,\dots,\pt, \sigma_i,\dots,\sigma_1}|_{\{s_{i+1}=0\}},
 	\end{equation}
 	where
 	 
 	\begin{align*}\label{} 
 		\delta_{\eqref{incoming-d}}\;& =\; \epsilon_{\eqref{incoming-d}} + n|y_o^d|+|\lc\sigma^{> i}|'\\& =\; nk+1  +  n|y_o^d|+|\lc\sigma^{> i}|'
 	\end{align*}
	with $k$ the marked point lying on the unstable tree $T$ and
 	the last term comes from the way we orient $\cK_{\lc\Gamma,\lc\sigma}$.
 	Thus, the associated term in Equation~\eqref{eq:application-stokes} is 
 	\begin{multline}\label{eq:boundary-diff-before} 
 	(-1)^{|\alpha|+\vdim(\cK_{\lc\Gamma,\lc\sigma})}(\eva^+_{\lc\Gamma,\lc\sigma}|_{\del_e\cT_{\lc\Gamma,\lc\sigma}})_*({\eva^-_{\lc\Gamma,\lc\sigma}}^*(\alpha) \wedge p_{\lc\sigma}^* \obs_{\lc\Gamma}^*\eta_{\lc\Gamma})\\ =\; (-1)^{|\alpha|+|\lc\sigma^{\le i}|'}\fq_{(\Gamma_d,\dots,T\g_k\Gamma_i,\dots,\Gamma_1),(\sigma_d,\dots,D_1\g_k\sigma_i,\dots,\sigma_1)}(\alpha).
 	\end{multline}
 	
\noindent In Case~\eqref{outgoing-d}, we have 
 	
 	\begin{equation*}\label{eq:boundary-outgoing}
 	(\del_e\cKc_{\lc\Gamma})_{\lc\sigma}\; = \;(-1)^{\delta_{\eqref{outgoing-d}}}\cKc_{\Gamma_d,\dots,\Gamma'_i,T_{\ide}\sqcup T',\dots,\Gamma_1;\sigma_d,\dots,\sigma_i,D_1,\dots,\sigma_1}|_{\{s_i = 0\}},\end{equation*}
 	where the exponent of the sign is given by
 	\begin{equation*}\label{}
 		\delta_{\eqref{outgoing-d}}\; \equiv\; n|y^d_o|+|\lc\sigma^{> i}|' + n(|y^{i-1}_o|-k)+|\sigma_i|.
 	\end{equation*}
  	Together with the sign from~\eqref{eq:application-stokes}, it follows that
  	\begin{multline}\label{eq:boundary-diff-after} 
  		(-1)^{|\alpha|+\vdim(\cK_{\lc\Gamma,\lc\sigma})}(\eva^+_{\lc\Gamma,\lc\sigma}|_{\del_e\cT_{\lc\Gamma,\lc\sigma}})_*({\eva^-_{\lc\Gamma,\lc\sigma}}^*(\alpha) \wedge p_{\lc\sigma}^* \obs_{\lc\Gamma}^*\eta_{\lc\Gamma})\\ =\; (-1)^{|\alpha|+|\lc\sigma^{\le i}|'+|\sigma_i|+1}\fq_{(\Gamma_d,\dots,\Gamma_i\g_k T',\dots,\Gamma_1),(\sigma_d,\dots,\sigma_i\g_k D_1,\dots,\sigma_1)}(\alpha).
  	\end{multline}

  \noindent Finally, in Case~\eqref{curv-d} we have
 	\begin{align*}
 	(\del_e\cKc_{\lc\Gamma})_{\lc\sigma} \; &=\; (-1)^{\delta_{\eqref{curv-d}}}\,\cKc_{\Gamma_d, \dots, \Gamma_{i+1},\pi_j^*\Gamma'_i,T_{\ide}\sqcup T'',\Gamma_{i-1},\dots,\Gamma_{1}}|_{\{s_i =0\}},\\
 	&=\; (-1)^{\delta_{\eqref{curv-d}}}\,\cKc_{\Gamma_d, \dots, \Gamma_{i+1},\pi_j^*\Gamma'_i\g_* T'',\Gamma_{i-1},\dots,\Gamma_{1}},
 	\end{align*}
 	where the left hand side is equipped with the boundary orientation,
	\begin{equation*}\label{} 
		\delta_{\eqref{curv-d}} \;=\;1+ n|y^d_o|+|\lc\sigma^{\ge i}|'
	\end{equation*}
 	and $\pi_j^*\Gamma_i'$ is the graph $\Gamma_i'$ with an additional boundary marked point `after' the boundary marked point labelled by $j$. 
 	This sign follows from the same argument as was used to compute the exponent $\epsilon_{\eqref{curv-d}}$ in Equation~\eqref{eq:sign-curvature-basic}. However, the horizontal map in the diagram above said equation comes with the sign $(-1)^{n|y^d_o|+1+\sum_{j=i}^{d}|\sigma_j|'}$ because the isomorphism~\eqref{eq:orientation-of-lift} has this sign. Thus, the corresponding operation is 
 	\begin{multline}\label{eq:sign-curvature} 
 		(-1)^{|\alpha|+\vdim(\cK_{\lc\Gamma,\lc\sigma})}(\eva^+_{\lc\Gamma,\lc\sigma}|_{\del_e\cT_{\lc\Gamma,\lc\sigma}})_*({\eva^-_{\lc\Gamma,\lc\sigma}}^*(\alpha) \wedge p_{\lc\sigma}^* \obs_{\lc\Gamma}^*\eta_{\lc\Gamma})\\ =\; (-1)^{|\alpha|+|\lc\sigma^{\le i}|'+1}\fq_{(\Gamma_d,\dots,\pi^*\Gamma_i\g_*T'',\dots,\Gamma_1),(\sigma_d,\dots,(-1)^{|\sigma_i|}\pi^*\sigma_i\g_*D_0,\dots,\sigma_1)}(\alpha).
 	\end{multline}
Using the definition of the $A_\infty$ structure on $\cdmc_n$ from Definition~\ref{de:dg-to-a-infinity}, the relations of~\eqref{eq:warped-relations-again} become
 \begin{multline*}
 	(-1)^{|\scF^d_{\lc \sigma}|}(d_{y^d}\g\scF^d_{\lc \sigma}-(-1)^{|\scF^d_{\lc\sigma}|}\scF^d_{\lc\sigma}\g d_{y^0}) + 
 	\sum_{i=1 }^{d-1}{(-1)^{|\lc\sigma^{\leq i}|'+1}\scF^{d-i}_{\sigma_d,\dots,\sigma_{i+1}}\g\scF^i_{\sigma_i,\dots,\sigma_1}}\\ 
 	+\sum_{i=1}^{d}(-1)^{|\lc\sigma^{\leq i}|'}\scF^d_{\sigma_d,\dots,\partial\sigma_{i}-D_1\gt \sigma + (-1)^{|\sigma|}\sigma \gt D_1 + (-1)^{|\sigma|}\pi^*\sigma\g_* D_0,\dots,\sigma_1} +\sum_{i=1}^{d-1}{(-1)^{|\lc\sigma^{\leq i}|'}\scF^{d-1}_{\sigma_d,\dots,\sigma_{i+1}\g \sigma_{i},\dots,\sigma_1}} = 0.
 \end{multline*}
  On the other hand, multiplying Equation~\eqref{eq:application-stokes} with $(-1)^{|\alpha|}$ yields
 \begin{equation*}\label{eq:stokes-rearranged}
 	(-1)^{|\scF^d_{\lc\sigma}|}(d_{y^d}\fq_{\lc\Gamma,\lc\sigma}(\alpha)\ -(-1)^{|\scF^d_{\lc\sigma}|}\ \fq_{\lc\Gamma,\lc\sigma}(d_{y^0}\alpha))\ +\ (-1)^{1+\vdim(\cK_{\lc\Gamma,\lc\sigma})}(\eva^+_{\lc\Gamma,\lc\sigma}|_{\del\cT_{\lc\Gamma,\lc\sigma}})_*({\eva^-_{\lc\Gamma,\lc\sigma}}^*(\alpha) \wedge p_{\lc\sigma}^* \obs_{\lc\Gamma}^*\eta_{\lc\Gamma}) \ = \ 0.
 \end{equation*}
 Thus, summing over all suitable sequences $\lc\Gamma$ of graphs, the last term becomes
\begin{equation*}
	 \sum_{i=1}^{d-1}(-1)^{|\lc\sigma^{\le i}|'+1}\scF^{d-i}_{\lc\sigma^{> i}}(\scF^i_{\lc\sigma^{\le i}}(\alpha)) +  \sum_{i=1}^{d}(-1)^{|\lc\sigma^{\leq i}|'}\scF^d_{(\sigma_d,\dots,\del_{oc}\sigma,\dots,\sigma_1)}(\alpha)+
	 \sum_{i=1}^{d-1}(-1)^{|\lc\sigma^{\le i}|'}\scF^{d-1}_{*_i\lc\sigma}(\alpha)
\end{equation*}
by Equation~\eqref{eq:sign-composition-functors} for the first term, Equations~\eqref{eq:sign-usual-differential-chain},\eqref{eq:boundary-diff-before}, \eqref{eq:boundary-diff-after} and~\eqref{eq:sign-curvature} for the second term and Equation~\eqref{eq:sign-composition-chains} for the last term.
\end{proof} 

\subsection{Symmetric monoidal structure}
We can now complete the proof of Theorem~\ref{thm:dmft-exists} by showing that $\scF$ admits a symmetric monoidal structure. The strategy of the proof is to define the required natural transformations using certain loci inside the hypercubes $I^{d-1}$ over which the (global Kuranishi charts defining the) higher homotopies live.

\begin{proposition}
	\label{prop:dmft-symmetric-monoidal}
	The $A_\infty$ functor $\scF_L$ is unital symmetric monoidal.
\end{proposition}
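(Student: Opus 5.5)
The plan is to build $R$ and $L$ from the same parametrised fibre-product charts that give the higher homotopies, restricted to suitable loci in the cubes $I^{d-1}$, as the remark before the proposition suggests. For $R^0_{x,y}\colon \scF(x)\otimes\scF(y)\to\scF(x\otimes y)$ we take the cross product of forms,
\[
\Omega^*(X^{x_c}\times L^{x_o})\otimes\Omega^*(X^{y_c}\times L^{y_o})\to\Omega^*(X^{x_c\sqcup y_c}\times L^{x_o\sqcup y_o}),
\]
with the Koszul sign fixed by $\alpha\times\beta=(-1)^{|\alpha||\beta|}R^0(\alpha\otimes\beta)$, and we set $L^0=R^0$. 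This is a quasi-isomorphism by the Künneth theorem for de Rham cohomology, and over the Novikov completion we pass to the completed tensor product with a filtration spectral sequence. That gives h-splitness.

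For $d\ge 1$ we define $R^d(\lc\sigma\otimes\lc\tau)$ as follows. Pick sequences $\lc\Gamma,\lc\Gamma'$ lying over $\lc\sigma,\lc\tau$, and consider the disjoint-union chart $\cKc_{\lc\Gamma\sqcup\lc\Gamma'}$ of Lemma~\ref{lem:gkc-for-higher-homotopies}. Its cube $I^{d-1}$ carries two sets of stabilisation functions $\chi^\pm$, one from each factor. For each $\lc p$ we compare this chart with the product $\cKc_{\concat_{\lc p}\lc\Gamma}\times\cKc_{\lc\Gamma'}$, whose cubes are $I^{r}\times I^{d-1}$. The interpolation lives on the locus
\[
\{(s,s')\in I^{d-1}\times I^{d-1}\},
\]
and we restrict to the faces that match the composition pattern $\lc p$, that is $s_j=0$ for $j\notin\lc p$ on the $\sigma$-side. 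This produces a chart over a cube of dimension $d$, i.e.\ one extra parameter, which interpolates between the diagonal $s=s'$ (giving $\scF\g\otimes$) and the split configurations (giving $\scF(\concat_{\lc p}\lc\sigma)\otimes\scF^{\g\lc p}(\lc\tau)$). $R^d$ is then the $\fq$-operation of this chart, defined as in Lemma~\ref{lem:construction-of-higher-homotopies}: fibre product with $\Delta^{\norm{\lc\sigma}}\times\Delta^{\norm{\lc\tau}}$, resolve singularities, and pull-push with the product Thom form. $L^d$ is defined symmetrically with the roles of $\lc\sigma,\lc\tau$ exchanged.

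Naturality, $\mu^1_\scQ(R)=0$, is then Stokes' theorem (eq.~\eqref{eq:application-stokes}) applied to these charts, with the boundary strata sorted exactly as in the proof of Proposition~\ref{prop:warped-equations-satisfied}:
\begin{itemize}
\item the $\{s_i=0\}$ and $\{s_i=1\}$ faces give the $\concat_i$ terms and the $\scG\g T$ and $T\g\scF$ terms;
\item the edge faces and the $\partial\Delta$ faces give the $T(\partial_i\lc\sigma)$ terms;
\item the disc-bubble faces give the $D_0$ and $D_1$ parts of $\partial_{oc}$, as in Lemma~\ref{lem:chain-map-stable}.
\end{itemize}
The two faces of the extra interpolation parameter give $\scF\g\otimes$ and $\otimes\g(\scF\otimes\scF)$ respectively.

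Unitality \eqref{eq:unital-symm-monoidal} follows by the degeneracy argument of Lemma~\ref{lem:unital-functor}: if for every $i$ one of $\sigma_i,\tau_i$ is an identity, the $\chi$-functions factor through a lower-dimensional cube, so the correspondence is degenerate and vanishes by Proposition~\ref{prop:correspondences-operations}\eqref{degenerate-vanishes}.

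For the symmetry axiom, the square with $R^0$ and $S$ commutes because the cross product is graded commutative and $\scF(S)$ is the pullback along the swap, via the identity correspondences~\eqref{identity-correspondence}. The identity $\scF(S)\g R(\lc\sigma\otimes\lc\tau)=L(\lc\tau\otimes\lc\sigma)\g S$ should hold on the nose: the chart defining $L$ is the image of the chart defining $R$ under the swap diffeomorphism. This requires comparing the orientation~\eqref{eq:orientation-higher} of the two charts under reordering the factors $\orl(\sigma_i)$ and $\orl(\tau_i)$.

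The main obstacle is the sign bookkeeping. We must verify that the orientation conventions of~\eqref{eq:orientation-higher} reproduce exactly the signs $\dagger(|\lc\sigma+\lc p|,|\lc\tau+\lc p^c|)$ in~\eqref{eq:tensor-F} and the signs in~\eqref{eq:fun-differential}. We would first do $d=1,2$ by hand using Lemma~\ref{lem:different-fibre-products}, and then induct on $d$ via the face identifications.
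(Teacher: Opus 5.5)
There is a genuine gap, and it sits in the step that carries the content of the proposition: the choice of locus in $I^{d-1}\times I^{d-1}$ over which $R^d$ is defined. Your description has three problems.
\begin{enumerate}
\item The split end for a fixed $\lc p$ is not the face $\{s_j=0,\ j\notin\lc p\}$. You must also impose $t_{p_k}=1$ on the $\tau$-side, since that is where $\cK_{\lc\Gamma',\lc\tau}$ becomes the fibre product computing $\scF^{\g\lc p}(\lc\tau)$. So this end is $(d-1)$-dimensional, with cubes $I^r\times I^{d-1-r}$, not $I^r\times I^{d-1}$.
\item If you build one interpolation per $\lc p$, each with the full diagonal at one end, summing them counts the diagonal end $2^{d-1}$ times. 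The split loci for different $\lc p$ are glued along common faces and must be bounded simultaneously.
\item Most seriously, a product-type interpolation with a single extra parameter $u$ does not satisfy $\mu^1_\scQ(R)=0$; this is what ``a chart over a cube of dimension $d$'' suggests. At a face $s_j=t_j=1$ your chart becomes the fibre product of the upper and lower interpolations \emph{sharing the same} $u$, i.e.\ the diagonal $\{u'=u''\}$ in the product of their parameter intervals. The naturality equation instead requires $\scH^{d-j}\g R^j$ (upper part at the diagonal end, lower part free) plus $R^{d-j}\g\scG^j$ (upper part free, lower part at the split end), i.e.\ an L-shaped locus. These correspondences differ as soon as $j\ge 2$ and $d-j\ge 2$, first for $d=4$.
\end{enumerate}
This is exactly where your bullet attributing ``the $\scG\g T$ and $T\g\scF$ terms'' to the $\{s_i=1\}$ faces breaks. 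That bullet is borrowed from the homotopy equation, not from the equation for a natural transformation from $\scG$ to $\scH$.

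The paper resolves this with a specific cellular choice. It works in $\cK_{\lc\Gamma,\lc\sigma}\times\cK_{\lc\Gamma',\lc\tau}\to I^{2(d-1)}$ with coordinates $(s_k,t_k)$ and sets $D_k=\{s_k=t_k\}$, $C_k=\{s_k=0\}\cup\{t_k=1\}$ and $\Delta_k=\{s_k\le t_k\}$.
\begin{itemize}
\item $\prod_k D_k$ gives $\scH\g R^0$, because the disjoint-union chart is the diagonal restriction of the product.
\item $\prod_k C_k$ gives $R^0\g\scG$ with all $\lc p$ at once, because choosing $t_k=1$ or $s_k=0$ in each factor is precisely a splitting pattern.
\item $R^d$ is the operation of $\bigsqcup_{i=1}^{d-1}D_{d-1}\times\cdots\times D_{i+1}\times\Delta_i\times C_{i-1}\times\cdots\times C_1$.
\end{itemize}
This locus is designed so that its faces produce exactly the naturality terms. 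At $\bm 1_j$ it factors as $\cD^{>j}\times\cR^{\le j}(i)$ for $j>i$, giving $\scH\g R$, and as $\cR^{>j}(i)\times\cC^{\le j}$ for $j<i$, giving $R\g\scG$. The $\bm 0_j$ faces give the $\concat_j$ terms. Finally, $\partial\Delta_i=C_i\sqcup D_i$ telescopes over $i$, leaving only $\prod_k D_k$ and $\prod_k C_k$ with opposite signs. Once this locus replaces your interpolation, the rest of your plan matches the paper: Stokes with the vertical boundary sorted as in Proposition~\ref{prop:warped-equations-satisfied}, unitality by degeneracy, and $L$ by exchanging the roles of $s$ and $t$.
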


\begin{proof} The first observation is that for our curved open-closed DMFT, the terms in Equation~\eqref{eq:F-tensor} and Equation~\eqref{eq:tensor-F} are given by correspondences obtained from certain loci inside the product $\cK_{\lc\Gamma,\lc\sigma} \times \cK_{\lc\Gamma',\lc \tau}$, which is equipped with a canonical map to $I^{2(d-1)}$.
	To this end, define the subspaces
	\begin{align*}
		\Delta_k &= \{0 \leq s_k \leq t_k \leq 1\}\\
		D_k &= \{0 \leq s_k = t_k \leq 1\}\\
		C_k &= \{s_k = 0\} \cup \{t_k = 1\}\\
		\bm{1}_k &= \{s_k = t_k = 1\}\\
		\bm{0}_k &= \{s_k = t_k = 0\}
	\end{align*}
	of $I^2$ with coordinates $s_k,t_k$, which are real analytic submanifolds with corners.
	In particular, 
	\begin{align}\label{eq:boundary-locus}
		\notag\partial \Delta_k &= C_k \sqcup D_k\\
		\partial C_k = \partial D_k &= \bm{1}_k \sqcup \bm{0}_k.
	\end{align}
	
	\begin{nota}
		Given an object $y$ of $\cdmc_n$, we write $\fo(y) = \fo(X^{y_c}\times L^{y_o})$.
	\end{nota}
	
	\begin{definition}\label{de:symmetric-monoidal-transformation} 
		The pre-natural transformation $R^d$ is defined by 
		\[R^0(\alpha\otimes \beta) \;=\; (-1)^{|\alpha||\beta|}\alpha\times \beta,\]
		where $\times$ is the usual cross product of differential forms, by $R^1 = 0$, and for $d \ge 2$ by
		\[R^d(\lc\sigma\otimes\lc\tau)(\alpha\otimes \beta) = (-1)^{|\alpha||\beta|}\sum_{\lc\Gamma,\lc\Gamma'}c_{\lc\Gamma}c_{\lc\Gamma'}\fq_{\cR_{\lc\Gamma,\lc \sigma;\lc\Gamma',\lc \tau}}(\alpha\times \beta)\]
		where $\cR_{\lc\Gamma,\lc \sigma;\lc\Gamma',\lc \tau} = \sqcup_i\cR_{\lc\Gamma,\lc \sigma;\lc\Gamma',\lc \tau}(i)$ is the disjoint union of the correspondences
		\begin{equation*}
			\cR_{\lc\Gamma,\lc \sigma;\lc\Gamma',\lc \tau}(i)\, :=\,  \left(\cK_{\lc\Gamma,\lc\sigma} \times \cK_{\lc\Gamma',\lc\tau}\right) \times_{I^{2(d-1)}} \left( D_{d-1} \times \dots \times D_{i+1} \times \Delta_i \times C_{i-1} \times \dots \times C_1 \right)
		\end{equation*}
		for $1\le i \le d-1$, oriented by the isomorphism
		\begin{align}\label{} 
			\orl(\cR_{\lc\Gamma,\lc \sigma;\lc\Gamma',\lc \tau}(i))
			&\; \cong \orl(x^d)\orl(y^d)\orl(\sigma_d)\orl(\tau_d)\orl(D_{d-1})\dots\orl(\tau_{i+1})\orl(t_{i})\orl(\sigma_i) \dots \orl(C_1)\orl(\sigma_1)\orl(\tau_1)\orl(s_i).
		\end{align}
	\end{definition}
	
	\noindent
	Lemma~\ref{lem:right-sym-mon} asserts that $R$ is a natural transformation. 
	To simplify the notation, abbreviate
	\begin{equation*}\label{}
		\scG(\lc \sigma, \lc \tau) \,\coloneqq\, \otimes_{\text{Ch}_\Lambda} \g (\scF \otimes \scF)(\lc \sigma, \lc \tau) 
	\end{equation*}
	and
	\begin{equation*}\label{}
		\scH(\lc \sigma, \lc \tau)\,\coloneqq\, \scF \g \otimes_{\cdmc} (\lc \sigma, \lc \tau) =\s{\lc\Gamma,\lc\Gamma'}{c_{\lc\Gamma}c_{\lc\Gamma'}}\,\fq_{\lc\Gamma\sqcup\lc\Gamma',\lc\sigma\times\lc\tau}
	\end{equation*}
	for composable sequences in $\cdmc_n$. Roughly, these functors are induced by the operations associated to the following correspondences.
	
	\begin{definition}\label{def:correspondences-for-tensor-product}
		Define the correspondences
		\[
		\cD_{\lc\Gamma,\lc \sigma;\lc\Gamma',\lc \tau} \,= \,\left(\cK_{\lc\Gamma,\lc \sigma} \times \cK_{\lc\Gamma',\lc \tau}\right) \times_{I^{2(d-1)}} \lbr{D_{d-1} \times \dots \times D_1}
		\] 
		oriented via the isomorphism
		\begin{equation}\label{} 
			\orl(\cD_{\lc\Gamma,\lc \sigma;\lc\Gamma',\lc \tau})\,\cong\, \orl(x^d)\orl(y^d)\orl(\sigma_d)\orl(\tau_d)\orl(D_{d-1})\cdots \orl(D_1)\orl(\sigma_1)\orl(\tau_1),
		\end{equation}
		and
		\[
		\cC_{\lc\Gamma,\lc \sigma;\lc\Gamma',\lc \tau} := \left(\cK_{\lc\Gamma,\lc \sigma} \times \cK_{\lc\Gamma',\lc \tau}\right) \times_{I^{2(d-1)}} \left( C_{d-1} \times \dots \times C_1 \right),
		\]
		oriented by 
		\begin{equation}\label{}
			\orl(\cC_{\lc\Gamma,\lc \sigma;\lc\Gamma',\lc \tau})\,\cong\, \orl(x^d)\orl(y^d)\orl(\sigma_d)\orl(\tau_d)\orl(C_{d-1})\cdots \orl(C_1)\orl(\sigma_1)\orl(\tau_1).
		\end{equation}
	\end{definition}
	
	\begin{lemma}\label{lem:described-by-locus}
		As maps $\Omega^*(X^{x^0_c}\times L^{x^0_o};\Lambda)\otimes_\Lambda \Omega^*(X^{y^0_c}\times L^{y^0_o};\Lambda) \rightarrow \Omega^*(X^{x^d_c + y^d_c}\times L^{x^d_o + y^d_o};\Lambda)$, we have that 
		\begin{equation}
			\label{eq:locus-tensor-F}
			\scH(\lc \sigma, \lc \tau) \g R^0_{x^0,y^0} (\alpha\otimes\beta) = (-1)^{|\alpha||\beta|} \s{\lc\Gamma,\lc\Gamma'}{c_{\lc\Gamma}c_{\lc\Gamma'}}\,\fq_{\cD_{\lc\Gamma,\lc \sigma;\lc\Gamma',\lc \tau}}(\alpha \times \beta)
		\end{equation}
		and 
		\begin{equation}
			\label{eq:locus-F-tensor-F}
			R^0_{x^d,y^d} \g \scG(\lc \sigma, \lc \tau)(\alpha\otimes\beta) = (-1)^{|\alpha||\beta|} \s{\lc\Gamma,\lc\Gamma'}{c_{\lc\Gamma}c_{\lc\Gamma'}}\,\fq_{\cC_{\lc\Gamma,\lc \sigma;\lc\Gamma',\lc \tau}}(\alpha \times \beta)
		\end{equation}
		More generally
		\begin{equation*}
			R^{d-j}(\lc \sigma^{>j},\lc \tau^{>j}) \g \scG^j(\lc \sigma^{\leq j}, \lc \tau^{\leq j}) = (-1)^{|\alpha||\beta|}\s{\lc\Gamma,\lc\Gamma'}{c_{\lc\Gamma}c_{\lc\Gamma'}} \fq_{\cR_{\lc\Gamma^{>j},\lc\sigma^{> j};(\lc\Gamma')^{>j},\lc\tau^{> j}}} \g \fq_{\cC_{\lc\Gamma^{\le j},\lc\sigma^{\le j};(\lc\Gamma')^{\le j},\lc\tau^{\le j}}}(\alpha \times \beta)
		\end{equation*}
	\end{lemma}
	
	\begin{proof}
		The map $\cKc_{\lc\Gamma}\to I^{d-1}$ is a submersion due to the fact that the parametrised fibre products in the definition of $\cKc_{\lc\Gamma}$ are by submersions. Thus, the fibre products in Definition~\ref{def:correspondences-for-tensor-product} are well-defined correspondences required in Definition~\ref{de:operation-from-correspondence} and we can associate the operation $\fq$ to it.
		The first claim then follows from the construction of the higher homotopies (Lemma~\ref{lem:gkc-for-higher-homotopies}), the way they are oriented, cf.~\eqref{eq:orientation-higher}, and the definition of $R^0$.
		To prove the second claim, fix sequences $\lc\Gamma$ and $\lc\Gamma'$ of graphs lying over $\lc\sigma$ and $\lc\tau$ respectively. We have to show \begin{equation*}
			\sum_{\lc p} (-1)^{\dagger(|\lc \sigma + \lc p|, |\lc \tau + \lc p^c|)} R^0_{x^d,y^d} \g \fq_{\concat_{\lc p}\lc\Gamma,\concat_{\lc p}\lc\sigma} \otimes_{\scD} \fq^{\g\lc p}_{\lc\Gamma',\lc\tau}(\alpha\otimes\beta) = (-1)^{|\alpha||\beta|}\fq_{\cC_{\lc\Gamma,\lc \sigma;\lc\Gamma',\lc \tau}}(\alpha \times \beta)
		\end{equation*}
		where we define $ \fq^{\g\lc p}_{\lc\Gamma',\lc\tau}$ analogous to how we defined $\scF^{\g\lc p}$.
		Fix a sequence $\lc p = (0= p_0 <p_{1} < \dots < p_r < p_{r+1} = d)$. Define $\lc \tau^{(i)} := (\tau_{p_{i}}, \dots, \tau_{p_{i-1} + 1})$ for $i \leq r+1$ and similarly for $\lc \Gamma'$. Then, $\fq^{\g\lc p}_{\lc\Gamma',\lc\tau}$ is the operation associated to the correspondence
		\[\cK^{\g \lc p}_{\lc \Gamma',\lc \tau} \;:=\; \cK_{\lc \Gamma'^{(r+1)},\lc \tau^{(r+1)}} \times_{Y^{y_{p_r}}} \dots \times_{Y^{y_{p_1}}} \cK_{\lc \Gamma'^{(1)},\lc \tau^{(1)}}\]
		carrying the cross product of Thom forms.\par 
		On the other hand, set $\cC^{\lc p}_{\lc\Gamma,\lc \sigma;\lc\Gamma',\lc \tau} := \left(\cK_{\lc\Gamma,\lc \sigma} \times \cK_{\lc\Gamma',\lc \tau}\right) \times_{I^{2(d-1)}} C^{\lc p}$, where
		\[C^{\lc p} := \set{x \in C_{d-1}\times \dots\times C_1\mid \forall i \in \{p_1,\dots,p_r\}: t_i =1,\;\forall j \notin  \{p_1,\dots,p_r\}:s_j = 0}.\] 
		Since the canonical isomorphism $\orl(X\times_Z Y)\cong \orl(X)\orl(Z)\dul\orl(Y)$ yields the fibre product orientation, the isomorphism
		\begin{equation*}
			\cK_{\star_{\lc p} \lc \Gamma, \star_{\lc p} \lc \sigma} \times \cK^{\g \lc p}_{\lc \Gamma', \lc \tau} \;\cong\; \cC^{\lc p}_{\lc\Gamma,\lc \sigma;\lc\Gamma',\lc \tau}
		\end{equation*}
		has orientation sign $(-1)^{|y^d||\scF_{\star_{\lc p}\lc \sigma}| + \dagger(|\lc \sigma + \lc p|, |\lc \tau + \lc p^c|)}$. 
		Therefore,
		\begin{align*}
			R^0_{x^d,y^d} \g \left( \scF(\concat_{\lc p}\lc\sigma) \otimes_{\scD} \scF^{\g\lc p}(\lc\tau)\right)(\alpha\otimes\beta)
			&= R^0_{x^d,y^d} \g \left(\fq_{\cK_{\star_{\lc p} \lc \Gamma, \star_{\lc p} \lc \sigma}} \otimes \fq_{\cK^{\g \lc p}_{\lc \Gamma', \lc \tau}}\right)(\alpha\otimes\beta)\\
			&= (-1)^{\epsilon_1}\fq_{\cK_{\star_{\lc p} \lc \Gamma, \star_{\lc p} \lc \sigma}}(\alpha) \times \fq_{\cK^{\g \lc p}_{\lc \Gamma', \lc \tau}}(\beta).
		\end{align*}
		where $\epsilon_1 = (|\scF_{\star_{\lc p}\lc \sigma}|+|\alpha|)(|\scF^{\g \lc p}_{\lc \tau}| + |\beta|) + |\scF^{\g \lc p}_{\lc \tau}||\alpha|$.
		It follows from the properties of the pushforward, that 
		\[(f\times g)_*(\alpha\times \beta) = (-1)^{\text{rdim}(f)(\dim Z-|\beta|)}f_*(\alpha)\times g_*(\beta)\]
		for any submersions $f$ and $g\cl Z\to Z'$. We thus find that 
		\begin{align}
			\fq_{\cK_{\star_{\lc p} \lc \Gamma, \star_{\lc p} \lc \sigma}}(\alpha) \times \fq_{\cK^{\g \lc p}_{\lc \Gamma', \lc \tau}}(\beta) &= (-1)^{\epsilon_2}\fq_{\cK_{\star_{\lc p} \lc \Gamma, \star_{\lc p} \lc \sigma} \times \cK^{\g \lc p}_{\lc \Gamma', \lc \tau}} (\alpha \times \beta),
		\end{align}
		where $\epsilon_2 = |\scF_{\star_{\lc p}\lc \sigma}|(|y_d| + |\scF^{\g \lc p}_{\lc \tau}| + |\beta|)$. Combining all previous equations yields the second equality. The third equation follows by a very similar computation.
	\end{proof}

\begin{lemma}\label{lem:right-sym-mon}
	$R$ defines a natural transformation of degree $0$ from $\scG$ to $\scH$.
\end{lemma}

\begin{proof}
	Abbreviate $\lc a := (\sigma_d\otimes\tau_d,\dots,\sigma_1\otimes \tau_1)$. Then, Equation~\eqref{eq:fun-differential} becomes
	\begin{align}\label{eq:simplied-differential}
		\notag(-1)^{|\lc a|'}\lbr{d_{x^d\sqcup y^d} \g R^d(\lc a)-(-1)^{|\lc a|'}&R^d(\lc a)\g d_{x^0,y^0}}\;=\;-\sum_{j= 0}^d (-1)^{|\lc a^{\leq j}|'} \scH(\lc a^{>j}) \g R^j(\lc a^{\leq j})\\
		&\notag +\sum_{j = 0}^{d} R^{d-j}(\lc a^{>j}) \g \scG(\lc a^{\leq j}) 
		+ \sum_{j=1}^{d-1} (-1)^{|\lc a^{\le j}|'}R^{d-1}(\star_j \lc a) \\
		&	+ \sum_{j = 1}^d (-1)^{|\lc a^{\leq j}|'}\lbr{R^d((\partial^{oc}_j \lc \sigma)\otimes \lc \tau) + (-1)^{|\sigma_j|}R^d(\lc \sigma\otimes (\partial^{oc}_j \lc \tau))} = 0.
	\end{align}
	Since $d_y(\alpha) = (-1)^{|\alpha|}d\alpha$ by definition, $R^0$ is a chain map, which shows the claim for $d = 0$. 
	For $d = 1$, the equation becomes
	$$ \scH(\sigma\otimes\tau) \g R^0 = R^0\g \scG(\sigma\otimes\tau).$$\noindent
	Let $\lc \Gamma$ and $\Gamma'$ be graphs as `lying' over the respective simplices $\sigma$ and $\tau$. Given the orientations determined by the isomorphism~\eqref{eq:orientation-higher}, we have
	\begin{equation*}\label{} 
		\cK_{\lc\Gamma\sqcup \Gamma',\sigma\times\rho} \;=\; (-1)^{n|y'_o||\sigma|}\cK_{\lc\Gamma,\lc\sigma}\times\cK_{\Gamma',\rho}.
	\end{equation*}
	It follows from the properties of the pushforward, that 
	\[(f\times g)_*(\alpha\times \beta) = (-1)^{\text{rdim}(f)(\dim Z-|\beta|)}f_*(\alpha)\times g_*(\beta)\]
	for any submersions $f$ and $g\cl Z\to Z'$.
	Thus, 
	\begin{align*}
		\fq_{\Gamma\sqcup \Gamma',\sigma\times\rho}(R^0_{x,y}(\alpha\otimes\beta)) &= (-1)^{|\alpha||\beta|}\fq_{\Gamma\sqcup \Gamma',\sigma\times\rho}(\alpha\times\beta)\\
		&= (-1)^{|\alpha||\beta|+n|y'_o||\sigma| }(\fq_{\Gamma,\sigma}\times\fq_{\Gamma',\rho})(\alpha\times\beta)\\
		&=(-1)^{|\alpha||\beta|+(|\rho| +|\beta|) \sigma|}\fq_{\Gamma,\sigma}(\alpha)\times\fq_{\Gamma',\rho}(\beta)\\
		&= (-1)^{|\alpha||\beta|+(|\rho| +|\beta|)|\sigma|+(|\alpha|+|\sigma|)(|\beta|+|\rho|)}R^0_{x',y'}(\fq_{\Gamma,\sigma}(\alpha)\otimes\fq_{\Gamma',\rho}(\beta))\\
		&= R^0_{x',y'} \g (\fq_{\Gamma,\sigma} \otimes \fq_{\Gamma',\rho})(\alpha \otimes \beta)
	\end{align*}
	as required.\\
	
	\noindent
	Suppose now $d \ge 2$ and fix $1 \le i \le d$ as well as sequences of graphs $\lc\Gamma$ lying over $\lc\sigma$ and $\lc\Gamma'$ lying over $\lc\tau$. By Stokes' Theorem, we have 
	\begin{equation}\label{eq:stokes-symm-mon} 
		(-1)^{|\gamma|}d\fq_{\cR^d_{\lc\Gamma,\lc \sigma;\lc\Gamma',\lc \tau}}(\gamma)\;=\; (-1)^{|\gamma|}\fq_{\cR^d_{\lc\Gamma,\lc \sigma;\lc\Gamma',\lc \tau}}(d\gamma)\, +\, (-1)^{\vdim(\cR^d_{\lc\Gamma,\lc \sigma;\lc\Gamma',\lc \tau})}\fq_{\del\cR^d_{\lc\Gamma,\lc \sigma;\lc\Gamma',\lc \tau}}(\gamma).
	\end{equation}
	The boundary is given by
	\begin{align}\label{eq:boundary-natural-transformation} 
		\notag\del \cR_{\lc\Gamma,\lc \sigma;\lc\Gamma',\lc \tau}(i) \; &=\; \del^v(\cK_{\lc\Gamma,\lc\sigma}\times\cK_{\lc\Gamma',\lc\tau})\times_{I^{2(d-1)}}\left( D_{d-1} \times \dots \times D_{i+1} \times \Delta_i \times C_{i-1} \times \dots \times C_1 \right)\\&\sqcup 
		(\cK_{\lc\Gamma,\lc\sigma}\times\cK_{\lc\Gamma',\lc\tau})\times_{I^{2(d-1)}}\del\lbr{ D_{d-1} \times \dots \times D_{i+1} \times \Delta_i \times C_{i-1} \times \dots \times C_1}
	\end{align}
	as unoriented correspondences by \cite{Joy}. The first component of this disjoint union is similar to the terms determined in \S\ref{subsec:warped-equations-proof} and it yields the last three terms of~\eqref{eq:simplied-differential}. Observe that \[\vdim(\cR_{\lc\Gamma,\lc\sigma;\lc\Gamma',\lc\tau}) \equiv \vdim(\cK_{\lc\Gamma\sqcup\lc\Gamma',\lc a}) + 1 \equiv |\lc a|'+ n|x^d_o|+n|y^d_o|,\]
	which in particular implies that $R$ has degree $0$. Summing over all suitable sequences of graphs, the last term in Equation~\eqref{eq:stokes-symm-mon} gives rise to the following operations, due to the way we orient $\cR^d_{\lc\Gamma,\lc\sigma;\lc\Gamma',\lc\tau}$.
	\begin{enumerate}[label=\Roman*),leftmargin=20pt,ref=\Roman*]
		\item\label{boundary-simplex-sym} Using the same computation as below Equation~\eqref{eq:boundary-fibre-product}, we see that the first component in~\eqref{eq:boundary-natural-transformation} yields one one hand the operations $(-1)^{|\lc a^{\le j}|'}R^d(\del_j\lc a)$ from the boundary of the simplices $\sigma_j$ and $\tau_j$. On the other hand, it yields the terms 
		\begin{itemize}[leftmargin=20pt]
			\item $(-1)^{|\lc a^{\le j}|'}R^d((- D_1 \gt_j\lc\sigma)\otimes\lc\tau)$ and $(-1)^{|\lc a^{\le  j}|'+|\sigma_j|}R^d(\lc\sigma\otimes (- D_1 \gt_j\lc\tau))$ by the same computation as before Equation~\eqref{eq:boundary-diff-before}	\item $(-1)^{|\lc a^{\le j}|'}R^d((-1)^{|\sigma_j|}(\lc\sigma\gt_j D_1 )\otimes\lc\tau)$ and $(-1)^{|\lc a^{\le  j}|'+|\sigma_j|}R^d(\lc\sigma\otimes ((-1)^{|\tau_j|}\lc\tau\gt_j D_1))$ by the same computation as before Equation~\eqref{eq:boundary-diff-after}
			\item $(-1)^{|\lc a^{\le j}|'}R^d((\sigma_d,\dots,(-1)^{|\sigma_j|}\pi^*\sigma_j\g_*D_0,\dots,\sigma_1)\otimes\lc\tau)$ as well as \\ $(-1)^{|\lc a^{\le j}|'+|\sigma_j|}R^d(\lc\sigma\otimes(\tau_d,\dots,(-1)^{|\tau_j|}\pi^*\tau_j\g_*D_0,\dots,\tau_1))$ by the same computation as before Equation~\eqref{eq:sign-curvature}.
		\end{itemize}
		The remaining operations associated to boundary strata in $\del^v(\cK_{\lc\Gamma,\lc\sigma}\times\cK_{\lc\Gamma',\lc\tau})$ cancel by Lemma~\ref{lem:total-degree-contraction}.
		\item\label{boundary-D} We now turn to the remaining components of the boundary. For $j > i$, we have $\del D_j =\textbf{1}_j- \textbf{0}_j$ by Equation~\eqref{eq:boundary-locus} so that  	
		\begin{align*}\label{}
			&(\cK_{\lc\Gamma,\lc\sigma}\times\cK_{\lc\Gamma',\lc\tau})\times_{I^{2(d-1)}}\lbr{ D_{d-1} \times \dots\del D_j\times\dots \times D_{i+1} \times \Delta_i \times C_{i-1} \times \dots \times C_1} \\&= (\cK_{\lc\Gamma,\lc\sigma}|_{\{s_j =1\}}\times\cK_{\lc\Gamma',\lc\tau}|_{\{t_j =1\}})\times_{I^{2(d-2)}}\lbr{ D_{d-1} \times\dots \wh{D_j}\times\dots \times D_{i+1} \times \Delta_i \times C_{i-1} \times \dots \times C_1}\\&\;\;\sqcup (\cK_{\lc\Gamma,\lc\sigma}|_{\{s_j =0\}}\times\cK_{\lc\Gamma',\lc\tau}|_{\{t_j =0\}})\times_{I^{2(d-2)}}\lbr{ D_{d-1} \times\dots \wh{ D_j}\times\dots \times D_{i+1} \times \Delta_i \times C_{i-1} \times \dots \times C_1}\\
			&=\, \cD_{\lc\Gamma^{>j},\lc\sigma^{> j};(\lc\Gamma')^{>j},\lc\tau^{> j}}\times_{Y_{x^j}\times Y_{y^j}}\cR_{\lc\Gamma^{\le j},\lc\sigma^{\le j};(\lc\Gamma')^{\le j},\lc\tau^{\le j}}(i)
			\,\sqcup\, 
			\cR_{\concat_j\lc\Gamma,\concat_j\lc\sigma;\concat_j\lc\Gamma',\concat_j\lc\tau}(i)
		\end{align*}
		as unoriented correspondences. The sign in front of the first operation is $(-1)^{\epsilon_1}$, where
		\begin{align*}\label{} 
			\epsilon_1 &\equiv |\lc a|'+ n|x^d_o|+n|y^d_o| + |\lc a^{> j}|'+1+ n|x^d_o|+n|y^d_o|\mod{2}\\
			&\equiv |\lc a^{\le j}|'+1.
		\end{align*}
		The second operation on the right hand side appears with sign $(-1)^{|\lc a^{\le j}|'}$,
		where the extra $+1$ is from the fact that it occurs as $\textbf{0}_j$ occurs with sign $-1$ in $\del D_j$.
		\item\label{boundary-C} For $j < i$, we have similarly $\del C_j = \textbf{1}_j-\textbf{0}_j$ so that 
		\begin{align*}\label{}
			(\cK_{\lc\Gamma,\lc\sigma}&\times\cK_{\lc\Gamma',\lc\tau})\times_{I^{2(d-1)}}\lbr{ D_{d-1} \times \dots \times D_{i+1} \times \Delta_i \times C_{i-1} \times \dots\times \del C_j\times \dots\times C_1} \\
			&=\, \cR_{\lc\Gamma^{>j},\lc\sigma^{> j};(\lc\Gamma')^{>j},\lc\tau^{> j}}(i)\times_{Y_{x^j}\times Y_{y^j}}\cC_{\lc\Gamma^{\le j},\lc\sigma^{\le j};(\lc\Gamma')^{\le j},\lc\tau^{\le j}}
			\,\sqcup\, 
			\cR_{\concat_j\lc\Gamma,\concat_j\lc\sigma;\concat_j\lc\Gamma',\concat_j\lc\tau}(i)
		\end{align*}
		as unoriented correspondences. The sign for the first operation is $(-1)^{\epsilon_1}$ with
		\begin{equation*}\label{} 
			\epsilon_1 \,\equiv\, |\lc a^{\le j}|' + 1 + |\lc a^{\le j}|' + 1\,\equiv\, 0.
		\end{equation*}
		The term $|\lc a^{\le j}|' + 1$ comes from the fact that we have to move $\orl(s_i)$ forward appear before $\orl(\sigma_j)\orl(\tau_j)$. The sign in front of the second operation is $(-1)^{ |\lc a^{\le j}|'}$.
		\item\label{boundary-Delta} The final component is 
		\begin{equation*}\label{eq:remaining-terms*}
			(\cK_{\lc\Gamma,\lc\sigma}\times\cK_{\lc\Gamma',\lc\tau})\times_{I^{2(d-1)}}\lbr{ D_{d-1} \times \dots \times D_{i+1} \times (C_i\sqcup D_i) \times C_{i-1} \times \dots \times C_1} 
		\end{equation*}
		This yields the remaining terms by Lemma~\ref{lem:described-by-locus}. To understand, the sign we use the coordinates $s_i$ and $t_i$ to orient $\Delta_i\sub \bR^2$. Observe that the canonical isomorphism 
		\[\quad\orl(\cR^d_{\lc\sigma,\lc\tau}(i))\;\cong \; \orl(x^d)\orl(y^d)\orl(\sigma_d)\orl(\tau_d)\orl(D_{d-1})\dots\orl(\tau_{i+1})\orl(s_i)\orl(t_i)\orl(\sigma_i) \orl(\tau_i) \dots \orl(C_1)\orl(\sigma_1)\orl(\tau_1)\]
		has sign $(-1)^{|\lc a^{\le i}|'+1+1}$.
		The normal vectors to the one-dimensional strata of $C_i$ are given by $t_i$ and $-s_i$ respectively, while the normal vector to $D_i$ is given by $s_i - t_i$.
		The term with $D_i$ thus appears with the sign $(-1)^{\epsilon'}$ in Equation~\eqref{eq:stokes-symm-mon}, where
		\begin{equation*}\label{}
			\epsilon' \,\equiv\, |\lc a|'+ n|x^d_o|+n|y^d_o| + |\lc a^{>  i}|'+1+ n|x^d_o|+n|y^d_o| + |\lc a^{\le i}|'\,\equiv\, 1 \mod{2}.
		\end{equation*}
		Similarly the term with $C_i$ appears with the sign $1$.
		Summing over $1 \leq i \le d-1$, we are left with the operations coming from the correspondences $\cD_{\lc\Gamma,\lc \sigma;\lc\Gamma',\lc \tau}$ and $\cC_{\lc\Gamma,\lc \sigma;\lc\Gamma',\lc \tau}$, appearing
		with signs $-1$ and $1$ respectively.
	\end{enumerate}
	The result then follows by summing over all terms and using Lemma \ref{lem:described-by-locus}.	
\end{proof}

\noindent
By switching the roles of $s$ and $t$ in the definition of the loci $D_i,C_i$ and $\Delta_i$, we obtain a natural transformation $L$ between $\scF^d \g \otimes_{\scC}$ and $\otimes_{\scD} \g (\scF \ov\otimes \scF)$ as required. It follows from the construction that this is symmetric. By the same argument as in Lemma~\ref{lem:unital-functor}, one shows that the defining correspondence is degenerate whenever $\sigma_i =\idsimp$ or $\tau_i = \idsimp$ for $1 \leq i \le d$. To prove that $\scF$ is unital symmetric monoidal, let $\lc \sigma$, $\lc \tau$ be such that for each $i$ either $\sigma_i$ or $\tau_i$ is the identity morphism. By the proof of Lemma \ref{lem:unital-functor}, the correspondence $\cK_{\lc \sigma} \times \cK_{\lc \tau}$ factors through a correspondence of codimension $d$. The correspondence used to define $R^d_{\lc \sigma, \lc \tau}$ is of codimension $d-1$ in $\cK_{\lc \sigma} \times \cK_{\lc \tau}$, and thus factors through a correspondence of codimension $1$, so it is degenerate. Hence $R^d_{\lc \sigma, \lc \tau} = 0$ for such $\lc \sigma$, $\lc \tau$.
\end{proof}

\noindent
We can now prove that $\scF$ has the property required for showing that deformations by a bounding cochain lead to an uncurved open-closed DMFT.

\begin{lemma}\label{lem:additional-properties} $\scF$ satisfies Property~\ref{assum:weakly-curved-to-uncurved}.
\end{lemma}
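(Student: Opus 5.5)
Both parts of Property~\ref{assum:weakly-curved-to-uncurved} will be reduced to statements about the correspondences $\cK_{\lc\Gamma,\lc\sigma}$ of Lemma~\ref{lem:construction-of-higher-homotopies}, using Proposition~\ref{prop:correspondences-operations}. The open unit $e_\scF$ is $\scF_{\ov{\mathsf{a}}}$ for the unit correspondence~\eqref{unit-correspondence}, i.e.\ the constant function $1\in\Omega^0(L)$. Write $R^0(\alpha\otimes e_\scF)=\pm\alpha\times 1=\pm\,\pr^*\alpha$. Here $\pr\colon Y^-\times L\to Y^-$ forgets the factor labelled by $*$.

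For part~\eqref{assum:weakly-curved-unit}, fix graphs $\lc\Gamma$ over $(\sigma_d,\dots,\pi^*\sigma_i,\dots,\sigma_1\times\idsimp)$. By Definition~\ref{de:lift-of-chain-modified-chains}, $\pi^*\sigma_i$ is built from graphs $\Gamma_i^+$ carrying an extra incoming boundary point $*$. The simplex $\pi^*\sigma_i$ is the resolved fibre product of $\sigma_i$ with the forgetful map. Hence the chart $\cK_{\lc\Gamma,\pi^*\lc\sigma}$ maps to the chart for the graphs with $*$ forgotten; along the way the stabilisation of $\Gamma_i^+$ is replaced by that of the forgotten graph. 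This map is compatible with $\eva^+$ and the Thom forms, and $\eva^-$ composed with $\pr$ factors through it. By Theorem~\ref{thm:cubical-cobordisms-enhanced} the evaluation at $*$ is not stabilised for such graphs, so the forgetful map respects the thickening. The pulled-back form $\pr^*\alpha$ is independent of the $*$-coordinate. Therefore the correspondence computing $\fq(\pr^*\alpha)$ factors through one of virtual dimension one lower, i.e.\ it is degenerate in the sense of Definition~\ref{de:degenerate-correspondence}. Proposition~\ref{prop:correspondences-operations}\eqref{degenerate-vanishes} then gives vanishing. The coefficients $c_\Gamma$ play no role since each summand vanishes. The stabilisation functions $\chi^\pm_{\lc\Gamma}$ of Lemma~\ref{lem:gkc-for-higher-homotopies} also need no special care: the point $*$ has $\delta\equiv 0$ throughout, so $\chi^\pm_{\lc\Gamma,*}\equiv 0$ and nothing depends on the extra fibre coordinate.

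For part~\eqref{assum:weakly-curved-Dk}, compare $\scF_{(\dots,\pi^*\sigma_i\g_* D_k,\dots)}$ with $\scF_{(\dots,\pi^*\sigma_i,\dots)}\g(\ide\otimes\scF_{D_k})$. In the first, the $D_k$-graph is glued inside the $i$-th entry. In the second, it is applied before the whole sequence, on the extra tensor factor. I would first reduce via the semi-strict/operadic vanishing in Corollary~\ref{cor:our-dmft-properties}. The discs $D_k$ have a single output, which is glued at $*$, and $*$ is never stabilised. So the functions $\chi^\pm$ attached to the parametrised fibre product in Lemma~\ref{lem:gkc-for-higher-homotopies} do not depend on where along $s$ the $D_k$ factor is placed. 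Concretely, I would exhibit the chart for $(\sigma_d,\dots,\pi^*\sigma_i\g_*D_k,\dots,\sigma_1)$ as an ordinary fibre product over $L$ of $\cK_{(\dots,\pi^*\sigma_i,\dots,\sigma_1\times\idsimp)}$ with $\cK_{D_k}$. Its factor $\cK_{D_k}$ is constant in the cube coordinates $s_1,\dots,s_{i-1}$, via properties~\eqref{higher-side-zero} and~\eqref{trivial-if-stable} of Lemma~\ref{lem:gkc-for-higher-homotopies}. Proposition~\ref{prop:correspondences-operations}\eqref{composing-correspondences} then identifies the operation as the composite.

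The remaining work is the sign $(-1)^{k|\lc\sigma^{<i}|'}$. It comes from moving $\orl(D_k)$, which has parity $k$, from the position of $\sigma_i$ in~\eqref{eq:orientation-higher} past $\orl(\sigma_{i-1})\orl(I^{\{i-1\}})\cdots\orl(\sigma_1)$. I expect this orientation bookkeeping to be the main obstacle, together with checking that the $R^0$ and cross-product conventions contribute no extra sign (via Lemma~\ref{lem:different-fibre-products}). For $k=0$ and $k=1$ the same argument applies to the formal $D_0,D_1$, since $\scF_{D_0},\scF_{D_1}$ are defined by the analogous charts $\cKc_\Gamma$ in Definition~\ref{de:differential-and-curvature}.
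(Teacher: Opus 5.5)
Part~(2) has a genuine gap. The substance of the identification of correspondences is that the stabilisation functions $\chi^\pm$ of Lemma~\ref{lem:gkc-for-higher-homotopies} are unchanged when the graph $\Gamma'$ over $D_k$ is moved from inside slot $i$ (glued at $*$) to a separate factor in front. Your only justification is that ``$*$ is never stabilised'', and this fails on two counts.

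First, it is not correct. $*$ is a marked point of the stable map graph $\pi^*\Gamma_i$, and by Theorem~\ref{thm:cubical-cobordisms-enhanced}\eqref{evaluation-cubical} all its evaluation maps are stabilised, so $\chi^\pm_{\lc\Gamma,*}\not\equiv 0$ in general.

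Second, it is beside the point. Once $\Gamma'$ is glued, $*$ is an internal edge. What can change are the $\chi^\pm$ of the \emph{other} marked points: the outputs of $\Gamma_j\g\cdots\g\pi^*\Gamma_i$ versus those of $\Gamma_j\g\cdots\g(\pi^*\Gamma_i\g_*\Gamma')$, and the inputs threaded through the lower slots.

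The paper supplies exactly this by a case analysis:
\begin{itemize}
\item If $\Gamma_i$ has positive energy, every composite $\Gamma_j\g\cdots\g\pi^*\Gamma_i$ is a stable map graph, and one uses compatibility of the cubical charts with stable clutching.
\item If $\Gamma_i$ has zero energy, forgetting or clutching outgoing points is harmless because $\int_X\alpha=\int_{TX}\pi_{TX}^*\alpha\wedge\eta_X$. Forgetting inputs does not alter the pattern.
\item The remaining case, a co-pairing in $\Gamma_{i-1}\g\cdots\g\Gamma_1$ interacting with $\Gamma_i$, is checked by hand.
\end{itemize}

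Your shortcuts do not replace this. The appeal to Corollary~\ref{cor:our-dmft-properties} disposes of sequences lying in the stable or operadic subcategories. The remaining sequences are exactly where the stabilisation pattern can jump: for instance, those containing a co-pairing, or a co-unit applied to one output of a multi-output $\sigma_i$. Property~\eqref{trivial-if-stable} of Lemma~\ref{lem:gkc-for-higher-homotopies} does not apply to them, since they contain truly unstable corollas. Your sign bookkeeping for $(-1)^{k|\lc\sigma^{<i}|'}$ matches the paper's, but it presupposes the unoriented identification.

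Part~(1) is in substance the paper's argument. The paper rewrites the term as $\pm\scF_{\sigma_d,\dots,\pi_*\pi^*\sigma_i,\dots,\sigma_1}(\alpha)$ and uses that $\pi_*\pi^*\sigma_i$ is degenerate; you push the correspondence forward rather than the chain. Both rely on forgetting $*$ not changing the stabilisation patterns of the remaining points.

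Your misstatement about $*$ is harmless here, but the step needs a small repair. Since $*$ is stabilised, the obstruction bundle carries an extra summand at $*$, so the map forgetting $z_*$ is not a morphism of correspondences in the sense of Definition~\ref{de:morphism-correspondence}. First integrate out the stabilising vector $\xi_*$ against its Thom form; this is legitimate because $\pr^*\alpha$ does not see the $*$-evaluation. Only then forget $z_*$.
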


\begin{proof}
	We only have to consider the case where $\sigma_i$ is stable or $\sigma_i = \sigma'_i\g D_j$ for $j = 0,1$.
	In the either case, Property \eqref{assum:weakly-curved-unit} follows from the fact that forgetting an incoming marked point does not change stabilisation patterns, whence 
	\[\scF_{\sigma_d,\dots,\pi^*\sigma_i,\dots,\sigma_1\times\idsimp}(R^0(\alpha\otimes e_\scF)) \;=\;\pm\, \scF_{\sigma_d,\dots,\pi_*\pi^*\sigma_i,\dots,\sigma_1}(\alpha) = 0\]
	due to the degeneracy condition. For the second property, we have to show that 
	\begin{equation}\label{eq:second-property-proof} 
		\scF^{d}_{\sigma_d,\dots,\pi^*\sigma_i,\sigma_{i-1}\times D_k,\dots,\sigma_1\times\idsimp}\g R^0\; =\;(-1)^{k|\lc\sigma^{< i}|'} \scF^{d}_{\sigma_d,\dots,\pi^*\sigma_i,\sigma_{i-1}\times D_k,\dots,\sigma_1\times\idsimp}\g R^0\g (\ide\otimes\scF_{D_k}(\gamma)).
	\end{equation}
	By Lemma~\ref{lem:gkc-for-higher-homotopies}, this reduces to showing that for graphs $\Gamma_i$ lying over $\sigma_i$ and $\Gamma$ lying over $D_k$ we have an isomorphism 
	\begin{equation}\label{eq:correspondences-to-compare} 
		\cKc_{(\Gamma_d,\dots,\pi^*\Gamma_i,\Gamma',\Gamma_{i-1},\dots,\Gamma_1)}|_{\{s_i = 1\}}\;\cong\; \cKc_{(\Gamma_d,\dots,\pi^*\Gamma_i,\dots,\Gamma_1)}\times_{L} \cKc_{\Gamma'}
	\end{equation}
	of (unoriented) correspondences, where we have suppressed the identity trees from the subscript. We may assume without loss of generality that $\Gamma_d \g \dots \g (\pi^*\Gamma_i\g_*\Gamma')\g \dots \g \Gamma_1$ is connected. Write $\lc{\wh{\Gamma}}$ for the sequence of graphs on the left hand side of~\eqref{eq:correspondences-to-compare} and $\lc{\wt{\Gamma}}$ for the sequence of graphs on the right hand side, that is, without $\Gamma'$. 
	Since $s_i = 1$, the maps $\chi^\pm_{\lc{\wh\Gamma}}$ factor through the map $p\cl I^d\to I^{d-1}$ that forgets the $s_i$ coordinate (and renumbers the other coordinates). 
	We have to show that for an incoming marked point $\kappa$ of $\Gamma_1$ and an outgoing marked point $\kappa'$ of $\Gamma_d$, we have $\chi^-_{\lc{\wh\Gamma};\kappa}(s) = \chi^-_{\lc{\wt\Gamma};\kappa}(p(s))$ and $\chi^+_{\lc{\wh\Gamma};\kappa'}(s) = \chi^+_{\lc{\wt\Gamma};\kappa'}(p(s))$. 
	Observe that one only has to consider the case where $\Gamma_i$ has a stable component since both sides of~\eqref{eq:second-property-proof} vanish otherwise. If $\Gamma_j \g \dots \g \pi^*\Gamma_i$ is a stable map graph for any $i \le j \le d$, then the claim follows from the fact that we constructed the global Kuranishi charts to be compatible with stable clutching and that $\Gamma_j \g \dots \g (\pi^*\Gamma_i\g_*\Gamma')$ is always a stable map graph.
	This is in particular the case if $\Gamma_i$ has positive energy. If $\Gamma_i$ has zero energy, then forgetting outgoing marked points (or clutching them with another marked point) does not change the stabilisation pattern because $\int_X\alpha = \int_{TX}\pi_{TX}^*\alpha \wedge \eta_X$. Forgetting incoming marked points also does not change the stabilisation patter of $\Gamma_i$ in this case and the remaining case to consider is if $\Gamma_{i-1} \g \dots \g\Gamma_1$ contains a copairing corolla that interacts with $\Gamma_i$. But then, the respective component of $\pi^*\Gamma_i \g \Gamma_{i-1}$ either remains stable or loses an incoming marked point. In either case, composing $\pi^*\Gamma_i$ with $\Gamma'$ at the added marked point does not change the stabilisation pattern, whence we obtain the isomorphism in~\eqref{eq:correspondences-to-compare}.
	The sign follows from the definition of the orientation of the higher homotopies, Equation~\eqref{eq:orientation-higher}.
\end{proof}

\subsection{Independence}
In this section we can prove independence up to homotopy.

\begin{theorem}
	\label{thm:independence-of-dmft}Let  $(X,\omega)$ be closed and $L \sub X$ be an embedded relatively spin Lagrangian.
	\begin{enumerate}[leftmargin=20pt]
		\item\label{independence-curved} The curved open-closed DMFT of Theorem~\ref{thm:dmft-exists} is independent of the choice of auxiliary data up to homotopy of curved open-closed DMFTs.
		\item\label{independence-uncurved} The open-closed DMFT of Corollary~\ref{cor:actual-dmft} is independent of the choice of the gauge equivalence class of weak bounding cochain and auxiliary data up to homotopy of open-closed DMFTs.
	\end{enumerate}
\end{theorem}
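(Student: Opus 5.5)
The plan for (\ref{independence-curved}) is to build a homotopy of curved open-closed DMFTs geometrically from a one-parameter family of auxiliary data. Given two choices $(J^0,\{\cKc^0_\Gamma\},\eta^0_\Gamma)$ and $(J^1,\{\cKc^1_\Gamma\},\eta^1_\Gamma)$, I would first pick a path $J^t$ of $\omega$-tame almost complex structures. Applying the construction of \cite{HH25} (Theorem~\ref{thm:cubical-cobordisms-enhanced} and Proposition~\ref{prop:thom systems exist}) in a $[0,1]$-parametrised form, I would obtain for each stable map graph $\Gamma$ a cubical cobordism $\cKc_{[0,1],\Gamma}$ for $[0,1]\times I^{E(\Gamma)}\times\Mbar_\Gamma(X,L)$, together with a compatible Thom form. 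These should restrict at $t=0,1$ to the given systems; where the charts only agree up to equivalence, I would insert an additional collar using the stated uniqueness of global Kuranishi charts up to equivalence. Next I would run the proof of Lemma~\ref{lem:gkc-for-higher-homotopies} verbatim over $[0,1]$ to get parametrised charts $\cKc_{[0,1],\lc\Gamma}$. The homotopy is then defined by
\[T^d_{\lc\sigma}\coloneqq \pm\sum_{\lc\Gamma}c_{\lc\Gamma}\,\fq_{\cKc_{[0,1],\lc\Gamma},\lc\sigma},\]
with $T^0=0$ and the orientation $\orl(\bR)$ placed in front of \eqref{eq:orientation-higher}. Lemma~\ref{lem:evaluation-pushforward} and Lemma~\ref{lem:independence-of-spike} show that this is well defined on $\cdmc_n$. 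By the degree count in Lemma~\ref{lem:construction-of-higher-homotopies}, adding the extra interval factor makes $T$ a degree-$0$ pre-natural transformation.

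The identity $\mu^1_\scQ(T)=\scF^0-\scF^1$ will follow from Stokes' formula, exactly as in the proof of Proposition~\ref{prop:warped-equations-satisfied}. The new boundary faces $\{t=0\}$ and $\{t=1\}$ produce $\scF^0_{\lc\sigma}-\scF^1_{\lc\sigma}$. The faces $\{s_i=1\}$ produce compositions $T\g\scF^0$ and $\scF^1\g T$, according to whether the $t$-coordinate is pushed into the left or right factor. To make this work, the parametrised fibre product over $\{s_i=1\}$ should split as a fibre product along a reparametrised diagonal $\{t'\le t\}$, so that the two pieces account for the two compositions in \eqref{eq:fun-differential}. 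The remaining faces $\{s_i=0\}$, $\{t_e=0,1\}$, $\del\Delta$ and the vertical boundary are handled as before, via Lemmas~\ref{lem:total-degree-contraction} and \ref{lem:different-fibre-products}, and give $T(\partial_i\lc\sigma)$ and $T(\concat_i\lc\sigma)$. For the symmetric monoidal compatibility in Definition~\ref{de:homotopy-dmfts}, I would define $P$ with the loci $D_k,C_k,\Delta_k$ of Definition~\ref{de:symmetric-monoidal-transformation}, now fibred over $[0,1]^2\ni(t,t')$ and restricted to $\{t\le t'\}$. The Stokes argument of Lemma~\ref{lem:right-sym-mon} then shows that $\mu^1(P)$ equals the difference of the two composites around the square.

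For (\ref{independence-uncurved}) I would work in three steps. First, push the homotopy $T$ through the bounding-cochain deformation of Definition~\ref{de:deformed-dmft}, setting $T^{\bc}_{\lc\sigma}(\alpha)=\sum_k\frac{1}{k!}T_{\pi^{[k]}\lc\sigma}(\alpha\times\bc^{\times k})$. By Lemma~\ref{lem:homotopies-induce-homs}, $T$ induces an $A_\infty$ isomorphism $f$, which carries a weak bounding cochain $\bc$ for $\scA^0$ to $f_*\bc$ for $\scA^1$; the same Lemma~\ref{lem:lifted-sequence-properties} computation as in Proposition~\ref{prop:deformed-dmft} should show that $T^{\bc}$ is a homotopy between $(\scF^0)^{\bc}$ and $(\scF^1)^{f_*\bc}$, after composing with the identification. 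Second, gauge-equivalent bounding cochains $\bc\sim\bc'$ are related by a pseudo-isotopy in the sense of \cite{Fu10}. I would realise this as a family $\bc_t$ on the trivial family $\scF\times[0,1]$ and apply the first step. Third, to pass to uncurved theories, I would apply Lemma~\ref{lem:homotopy-to-uncurved}, which requires checking that $T$ is compatible with $\wh D_1$, i.e.\ $(T^{\wh D_1})^0=0$. I expect this check to be the main obstacle. The first thing I would try is the degeneracy argument from the remark after Corollary~\ref{cor:actual-dmft}, showing that $T_{(\wh D_1,\dots,\wh D_1)}$ vanishes: all the $\chi^\pm$ functions are constant along sequences consisting of $D_1$'s (stabilisation patterns do not change), so the corresponding parametrised chart is a product with an interval and hence degenerate. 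Correspondingly, the most delicate geometric step is building parametrised cubical cobordisms that genuinely restrict to the two given systems at the endpoints.
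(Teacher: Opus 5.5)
The main gap is in your handling of the faces $\{s_i=1\}$ in part (1), which is exactly where the paper needs an additional idea. If you run Lemma~\ref{lem:gkc-for-higher-homotopies} over $[0,1]$, the face $\{s_i=1\}$ of $\cK^{01}_{\lc\Gamma}$ is $\cK^{01}_{\lc\Gamma^{>i}}\times_{[0,1]\times Y^+_{\Gamma_i}}\cK^{01}_{\lc\Gamma^{\le i}}$. Both halves carry the \emph{same} parameter $t$. Its operation is neither $\scF^1\g T$ nor $T\g\scF^0$, since those would require the boundary \eqref{eq:required-boundary}, and there is no sense in which $t$ is ``pushed into one factor''. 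Nor can this face ``split along $\{t'\le t\}$'': a fibre product over the region $\{t'\le t\}\subset[0,1]^2$ has one dimension more than the face.

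The missing idea is that this region must enter as \emph{additional summands} of $T$. The paper sets $T^d_{\lc\sigma}=\sum_{\lc\Gamma,\textbf{i}}c_{\lc\Gamma}\,\fq^{01}_{\textbf{i},\lc\Gamma,\lc\sigma}$, summing over all $\textbf{i}=(i_r>\dots>i_1)$, where
\[
\cK^{01}_{\textbf{i},\lc\Gamma}\;=\;\cK^{01}_{\lc\Gamma^{>i_r}}\times_{Y^+_{i_r}}\cdots\times_{Y^+_{i_1}}\cK^{01}_{\lc\Gamma^{\le i_1}}\times_{I^{r+1}}\{t_r\ge\dots\ge t_0\},
\]
with an independent parameter on each factor. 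In the Stokes computation of Lemma~\ref{lem:homotopy-dmft}, the diagonal faces $\{t_p=t_{p-1}\}$ of these correction charts cancel the $\{s_j=1\}$ faces. Their faces $\{t_j=1\}$ and $\{t_j=0\}$ produce $\scF^1\g T$ and $T\g\scF^0$. The same correction charts must then be used in $P$ and in the bounding-cochain deformation.

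A smaller point: the endpoint matching is not achieved by ``inserting a collar'' via uniqueness up to equivalence. The paper interpolates the perturbation terms convexly in a double-sum thickening at fixed $J$ (Lemma~\ref{lem:cobordism-for-homotopy}), treats a path $J_\kappa$ separately, and composes the resulting homotopies.

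In part (2), your first step fails. With a fixed $\bc\in\Omega^1(L;\Lambda)$, the extra marked points of $T_{\pi^{[k]}\lc\sigma}$ evaluate into $L$ for every $t$. So the $t=1$ faces produce $(\scF^1)^{\bc}$, not $(\scF^1)^{f_*\bc}$, and $\bc$ need not be weakly bounding for $\scA^1$; no identification repairs this. The inserted cochain must vary with $t$. The paper composes the interpolating charts over $D_k$ with the diagonal $L^k\times[0,1]\to(L\times[0,1])^k$, making $\wt\scA=\Omega^*(L\times[0,1];\Lambda)$ a curved $A_\infty$ algebra. It then takes a weak bounding cochain $\wt\bc$ of $\wt\scA$ restricting to $\bc^0$ and $\bc^1$ (its definition of gauge equivalence). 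Finally it inserts $\wt\bc$ with the extra points evaluating into $L\times[0,1]$, handling the change of data and of bounding cochain in one step.

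For your third step, the paper itself simply invokes Lemma~\ref{lem:homotopy-to-uncurved} without checking compatibility with $\wh D_1$. Your degeneracy argument cannot supply this check, because it only collapses the $s$-coordinates of main terms with $d\ge 2$. By contrast, $T^1_{D_1}$ (a count, in the $t$-parametrised family, of discs with one input and one output) has no $s$-coordinates. Neither do correction terms such as $\cK^{01}_{\Gamma}\times_L\cK^{01}_{\Gamma'}\times_{I^2}\{t_1\ge t_0\}$. Up to sign, $\sum_d T^d(D_1,\dots,D_1)$ is the positive-energy part of $f^1$ in Lemma~\ref{lem:homotopies-induce-homs}.

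In fact, sum the homotopy equation over the sequences $(\wh D_1,\dots,\wh D_1)$ and use the Maurer--Cartan equation. This shows that $(T^{\wh D_1})^0=0$ would force the two deformed Floer differentials on $\scF(0,1)$ to agree on the nose, which fails in general. At this step one should expect only that $\mathrm{id}+(T^{\wh D_1})^0$ is a quasi-isomorphism. That is an equivalence weaker than a homotopy that agrees on objects.
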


\noindent
We are only defining the notion of gauge equivalence of bounding cochains in our specific context. To this end, recall that any curved open-closed DMFT $\scF$ given by Theorem~\ref{thm:dmft-exists} satisfies $\scF_L(y) = \Omega^*(X^{y_c}\times L^{y_o};\Lambda)$. Thus, its associated curved $A_\infty$ algebra $\scA$ has underlying vector space $\Omega^*(L;\Lambda)$ and we define gauge equivalence for such $A_\infty$ algebras.

\begin{definition}
	Two weak bounding cochains $\bc_0$ for $\scA_0$ and $\bc_1$ for $\scA_1$ are \emph{gauge-equivalent} if there exists a pseudo-isotopy, that is, an $A_\infty$ structure on $\wt\scA := \Omega^*(L \times [0,1]; \Lambda)$, from $\scA_0$ to $\scA_1$ and a weak bounding cochain $\wt \bc$ for $\wt \scA$ such that $j_i^*\wt \bc = \bc_i$ for $i \in \{0,1\}$, where $j_i\cl L \hookrightarrow L \times [0,1]$ is the inclusion at $\{i\}$.
\end{definition}

\begin{proof}
	We recall first the choices that are required for the construction of the curved DMFT, in the logical dependency in which they have to be made. The first two choices are
	\begin{enumerate}
		\item an $\omega$-tame almost complex structure $J$.
		\item an unobstructed auxiliary datum $\alpha_{\mathsf{a}}$ for $\Mbar_{\mathsf{a}}^J(X,L)$ as in \cite[Definition~2.20]{HH25} for each $\mathsf{a}\in \cA$.
	\end{enumerate}
	Given this, Theorem~\ref{thm:cubical-cobordisms-enhanced} yields a system $\{\cKc_\Gamma\}_{\Gamma}$ of cubical cobordisms, with Lemma~\ref{lem:gkc-for-higher-homotopies} providing the global Kuranishi charts for the higher homotopies. The final choice is a system of compatible Thom forms, guaranteed by Proposition~\ref{prop:thom systems exist}.
	We first discuss the case where we fix some tame $J$ but choose the auxiliary data and the Thom forms arbitrarily. 
	Then, Theorem \ref{thm:dmft-exists} yields two curved open-closed DMFTs $\scF^0$ and $\scF^1$. 
	By definition, they agree on objects. 
	Write $\cKd{i}_{\lc\Gamma}$ for the global Kuranishi chart of Theorem~\ref{thm:cubical-cobordisms-enhanced}, respectively Lemma~\ref{lem:gkc-for-higher-homotopies}, used to define $\scF^i$. 
	We introduce the following definition.
	\begin{definition}\label{}
		Let $Y$ be a smooth manifold, possibly with corners. A \emph{system of cubical cobordisms over $Y$} is a system $\{\cNc_\Gamma\}_\Gamma$ of global Kuranishi charts equipped with submersions to $Y\times I^{E(\Gamma)}$ so that the properties of Theorem~\ref{thm:cubical-cobordisms-enhanced} hold with fibre products over $X^k\times L^\ell$ replaced by fibre products over $Y\times X^k\times L^\ell$.
	\end{definition}

	\begin{lemma}\label{lem:cobordism-for-homotopy}
		There exists a system $\cKd{01}_\bullet = \{\cKd{01}_\Gamma\}_\Gamma$ of cubical cobordisms over $[0,1]$ so that $\cKd{i}_\Gamma\simeq \cKd{01}_\Gamma|_{\{i\}}$ for $i \in \{0,1\}$, where these equivalences are functorial with the equivalences of the system. We can equip it with a system of Thom forms extending the Thom forms on $\cKd{0}_{\bullet}$ and $\cKd{1}_{\bullet}$.
	\end{lemma}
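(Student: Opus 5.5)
The plan is to run the construction of Theorem~\ref{thm:cubical-cobordisms-enhanced} once more, now in a family over $[0,1]$, and to make it restrict to the two given systems at the endpoints. Since $J$ is fixed, the moduli spaces $\Mbar^J_{\mathsf{a}}(X,L)$ do not vary. The only difference between $\scF^0$ and $\scF^1$ lies in the unobstructed auxiliary data $\alpha^0_{\mathsf{a}},\alpha^1_{\mathsf{a}}$ and in the Thom forms.

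First I will build, for each corolla $\mathsf{a}$, a global Kuranishi chart $\cKd{01}_{\mathsf{a}}$ for $[0,1]\times\Mbar^J_{\mathsf{a}}(X,L)$ that restricts to $\cK^i_{\mathsf{a}}$ at $\{i\}$. I would follow \cite{HH25}: take the product (direct sum) of the two auxiliary data, i.e. both framings and both obstruction spaces. This gives a chart over $[0,1]$ that is equivalent to $\cK^0_{\mathsf{a}}$ and to $\cK^1_{\mathsf{a}}$ after stabilisation and group enlargement. Concatenating the two resulting cobordisms $\cK^0\rightsquigarrow\cK^{01}\rightsquigarrow\cK^1$ and reparametrising the interval produces a chart whose endpoint restrictions are the given charts up to equivalence. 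Unobstructedness at the endpoints extends over the interval by the same openness argument used in \cite[Definition~2.20]{HH25}, after possibly enlarging the obstruction bundle on the interior.

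Next I will extend this to all stable map graphs $\Gamma$ by induction on the number of edges, exactly as in the proof of Theorem~\ref{thm:cubical-cobordisms-enhanced}. For each $\Gamma$ the boundary data are already determined:
\begin{itemize}
\item on $\{t_e=0\}$, the restriction of $\cKd{01}_{\Gamma_e}$;
\item on $\{t_e=1\}$, the fibre product over $[0,1]\times Y$ of the charts for the pieces;
\item at $\{0\}\times I^{E(\Gamma)}$ and $\{1\}\times I^{E(\Gamma)}$, the given charts $\cKd{i}_\Gamma$.
\end{itemize}
This prescribes a chart on the whole boundary of $[0,1]\times I^{E(\Gamma)}$, compatibly on corners by the inductive hypothesis and the functoriality of the equivalences in Theorem~\ref{thm:cubical-cobordisms-enhanced}. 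One then fills in the interior by the same collar-and-stabilise procedure. The fibre products over $[0,1]\times X^k\times L^\ell$ are transverse because the evaluation maps are submersions relative to $[0,1]$, by property~\eqref{evaluation-cubical} applied fibrewise.

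The main obstacle is the corner compatibility. At corners where an endpoint face $\{i\}$ meets faces $\{t_e=1\}$, the equivalences $\cKd{i}_\Gamma|_{\{t_e=1\}}\simeq \cKd{i}_{\Gamma_1}\times_Y\cKd{i}_{\Gamma_2}$ must agree with the fibre product of the restricted family charts. Here I would use that equivalences in \cite{HH25} compose strictly after passing to a common stabilisation, and choose, inductively in the number of edges, collars of $\{0,1\}$ on which $\cKd{01}_\Gamma$ is literally a product $[0,\epsilon)\times\cKd{0}_\Gamma$ (and similarly near $1$).

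For the Thom forms I will run the inductive argument of Proposition~\ref{prop:thom systems exist} relative to the boundary. On the collars the form is the pullback of $\eta^i_\Gamma$. On the other boundary faces it is prescribed by \eqref{eq:thom-contracting-edge}--\eqref{eq:thom-boundary-edge-type-E}. Any two Thom forms with compact vertical support that agree near a closed subset can be interpolated, since the space of such forms is affine and their difference is exact relative to that subset. This gives the extension with compact support.
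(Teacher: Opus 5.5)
Your overall plan, a family version of Theorem~\ref{thm:cubical-cobordisms-enhanced} over $[0,1]$ followed by a relative version of Proposition~\ref{prop:thom systems exist}, has the right shape. The way you handle the endpoints, however, does not work.

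\textbf{The endpoint step.} You prescribe the end faces of $[0,1]\times I^{E(\Gamma)}$ to be the given charts $\cKd{i}_\Gamma$. To fix the corners, you ask for collars on which $\cKd{01}_\Gamma$ is \emph{literally} $[0,\epsilon)\times\cKd{0}_\Gamma$. This is impossible:
\begin{itemize}
\item a single global Kuranishi chart over $[0,1]\times I^{E(\Gamma)}$ has one covering group and one obstruction bundle of constant rank;
\item $\cKd{0}_\Gamma$ and $\cKd{1}_\Gamma$ are built from different auxiliary data $\alpha^0,\alpha^1$, so in general their groups, obstruction ranks and thickening dimensions differ.
\end{itemize}
This is exactly why the lemma only asserts $\cKd{i}_\Gamma\simeq\cKd{01}_\Gamma|_{\{i\}}$.

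Once you weaken this to ``a stabilisation and group enlargement of $\cKd{i}_\Gamma$'', the real content of the lemma is to choose these stabilisations for all $\Gamma$ at once. They must be compatible with the face structure (restrictions to $\{t_e=0\}$, fibre products at $\{t_e=1\}$), so that the face data on the cube glue and the endpoint equivalences are functorial. Saying that equivalences ``compose strictly after passing to a common stabilisation'' does not produce such a coherent choice. Likewise, ``filling in the interior'' of a chart with arbitrary prescribed face data is not a step the construction of Theorem~\ref{thm:cubical-cobordisms-enhanced} provides: those charts are built directly from the auxiliary data, not extended from boundary values. (For the same reason, you cannot enlarge the obstruction bundle only on the interior.)

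\textbf{The missing idea.} Put the interpolation into the auxiliary datum itself. Take the double sum thickening of \cite[Proposition~2.57]{HH25}, with both framings and both obstruction spaces. Replace the perturbation $\lambda_0(e_0)+\lambda_1(e_1)$ by the convex combination $(1-t)\lambda_0(e_0)+t\lambda_1(e_1)$, $t\in[0,1]$, as in the proof of \cite[Theorem~4.1]{HH25}.
\begin{itemize}
\item Transversality holds for every $t$, since at least one coefficient is positive and each $\alpha^i$ is unobstructed.
\item At $t=0$ the $\alpha^1$-directions ($e_1$ and the second framing) decouple, so the restriction is canonically a stabilisation and group enlargement of $\cK^0_{\mathsf{a}}$. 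The same holds at $t=1$ with the roles of $\alpha^0$ and $\alpha^1$ exchanged.
\item Feeding this family datum into the construction of Theorem~\ref{thm:cubical-cobordisms-enhanced}, with fibre products over $I\times X^k\times L^\ell$, yields the system. The endpoint equivalences are automatically functorial, because the construction is natural in the auxiliary data.
\end{itemize}
Only then does your relative Thom-form extension apply: you extend from the transports of $\eta^i_\Gamma$ to these canonically stabilised ends.

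Your corolla-level concatenation $\cK^0\rightsquigarrow\cK^{01}\rightsquigarrow\cK^1$ comes close to this if you realise it as a path of perturbation coefficients that never vanish simultaneously. You would then need to carry that path through the whole graph construction, rather than prescribing end faces.
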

	
	\begin{proof}
		We first recall that $\cK^{01}_\Gamma$ is obtained from a double sum thickening constructed in \cite[Proposition~2.57]{HH25} by not adding the perturbation terms coming from $\alpha^0$ and $\alpha^1$ but taking their convex sum over $[0,1]$ as in the proof of \cite[Theorem~4.1]{HH25}, where this step is explained in detail. Then, the constructions in said proof carry over verbatim, except for replacing fibre products over $X^k\times L^\ell$ by fibre products over $I\times X^k\times L^\ell$.
	\end{proof}
	
	\noindent
	 The proof of Lemma~\ref{lem:gkc-for-higher-homotopies} now carries over verbatim to give correspondences $\cKd{01}_{\lc\Gamma}$ equipped with submersion to $I\times I^{d-1}$ as well as Thom forms for composable sequences $\lc\Gamma = (\Gamma_d,\dots,\Gamma_1)$ of graphs.
	 However, the restriction $\cKd{01}_{\lc \Gamma}|_{\{s_i=1\}}$ is given by
	\begin{equation*}
		\cKd{01}_{\lc\Gamma^{> i}} \times_{I\times X^{y^i_c}\times L^{y^i_o}} \cKd{01}_{\lc\Gamma^{\le i}}
	\end{equation*}
	instead of
	\begin{equation}\label{eq:required-boundary}
		\cK^{1}_{(\Gamma_d, \dots, \Gamma_{i+1})} \times_{Y^+_{\Gamma_i}} \cKd{01}_{(\Gamma_i, \dots, \Gamma_1)}\; \sqcup\; \cKd{01}_{(\Gamma_d, \dots, \Gamma_{i+1})} \times_{Y^+_{\Gamma_i}} \cK^{0}_{(\Gamma_i, \dots, \Gamma_1)},
	\end{equation}
	which would be the required boundary structure for a homotopy of open-closed DMFTs.
	However, these global Kuranishi charts also appear as boundary strata of
	\begin{equation}
		(\cKd{01}_{\lc\Gamma^{> i}} \times_{Y^+_{i}} \cKd{01}_{\lc\Gamma^{\le i}})\times_{I^2} \set{(t_1,t_0) | t_1 \geq t_0},
	\end{equation}
	where $t_i$ denotes the parameter $t$ on each factor. By adding these correction terms, we will define a homotopy between $\scF^0$ and $\scF^1$.
	
	To this end, define for a tuple $\textbf{i} = (d \geq i_{r} > \dots > i_1 \geq 1)$ the global Kuranishi chart 
	\begin{equation*}
		\cKd{01}_{\textbf{i}, \lc\Gamma} \;\coloneqq\; \cKd{01}_{\lc\Gamma^{> i_r}} \times_{Y^+_{i_{r}}} \dots \times_{Y^+_{i_{1}}} \cKd{01}_{\lc\Gamma^{\le i_1}}\,\times_{I^{r+1}}\set{t \in I^{r+1}: t_r \geq \dots \geq t_0}.
	\end{equation*}
	If $\textbf{i}$ is the empty sequence, we simply have $\cKd{01}_{\lc\Gamma}$.
	Given a composable sequence $\lc\sigma$ of analytic simplices and a sequence $\lc\Gamma$ of graphs lying over it, equip $\cK^{01}_{\lc \Gamma,\lc\sigma}$ with the orientation 
	\[
	\orl(\cK^{01}_{\lc \Gamma,\lc\sigma}) \cong \orl(Y_d^+)\orl(\sigma_d)\orl(s_{d-1})\dots\orl(s_1)\orl(\sigma_1)\orl(t).
	\]
	and 
	$$\cK^{01}_{\textbf{i},\lc\Gamma,\lc\sigma} :=	\cKd{01}_{\textbf{i}, \lc\Gamma}\times_{\Mbar_{\lc{\ov{\Gamma}}}} \prod_{j =1}^{d}\Delta^{\norm{\sigma_j}}$$\noindent
	with the fibre product orientation.
	In other words, its orientation line is given by
	\begin{equation}
		\orl(\cKd{01}_{\textbf{i}, \lc\Gamma}) \;\cong\; \orl(y^d)\orl(\sigma_d)\orl(s_{d-1})\dots \orl(\sigma_{i_{r}+1})\orl(t_r)\orl(\sigma_{i_{1}+1})\dots\orl(t_1)\orl(\sigma_{i_1}) \dots \orl(\sigma_1)\orl(t_0),
	\end{equation}
	where we use the notation $\orl(y_d) = \orl(X^{y^d_c}\times L^{y^d_o})$ and we write $\q^{01}_{\textbf{i},\lc \Gamma,\lc\sigma}$ for operation associated to this correspondence.
	
	\begin{lemma}\label{lem:homotopy-dmft}
		The maps 
		$T^d \,\cl\, \bigotimes\limits_{j= 1}^d\cdmc_n(y^j,y^{j-1})\otimes \scF^0(y^0)\;\to\; \scF^1(y^d)$
		defined by
		\begin{equation}
			T^d_{\lc\sigma}(\alpha) \;=\;  \sum_{\lc \Gamma, \textbf{i}} c_{\lc \Gamma}\,\fq^{01}_{\textbf{i},\lc\Gamma,\lc\sigma}(\alpha)\,Q^{\beta(\lc \Gamma)}
		\end{equation}
		for $d\ge 1$ and with $T^0 = 0$ define a homotopy of $A_\infty$ functors from $\scF^0$ to $\scF^1$.
	\end{lemma}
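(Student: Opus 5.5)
The plan is to run Stokes' theorem on each correspondence $\cK^{01}_{\textbf{i},\lc\Gamma,\lc\sigma}$, exactly as in the proof of Proposition~\ref{prop:warped-equations-satisfied}. We then sort the codimension-one boundary strata by the type of face they lie over and match them with the terms of $\mu^1_\scQ(T)$ in~\eqref{eq:fun-differential}, with $\scF=\scF^0$ and $\scG=\scF^1$. The target identity is $\mu^1_\scQ(T)^d=\scF^{0,d}-\scF^{1,d}$, up to the overall sign convention fixed by the orientation of $\cK^{01}$.

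First, we note that $T$ has degree $0$. The extra $t$-parameters make $\vdim\cK^{01}_{\textbf{i},\lc\Gamma,\lc\sigma}$ exceed $\vdim\cK_{\lc\Gamma,\lc\sigma}$ by one for each entry of $\textbf{i}$ plus one. The $r$ additional fibre products over $Y^+_{i_j}$ remove one $s$-direction each, which compensates exactly. Stokes' formula (as in~\eqref{eq:application-stokes}) then expresses $[d,\fq^{01}_{\textbf{i},\lc\Gamma,\lc\sigma}]$ as a sum of contributions from the following boundary faces.

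\begin{enumerate}
\item[(a)] Faces coming from $\del\Delta^{\norm{\sigma_j}}$, from edge faces $\{t_e=0\}$, and from the vertical boundary. These give the terms $T(\del_j\lc\sigma)$ together with the $D_1$- and $\pi^*\sigma\g D_0$-terms of $\partial_{oc}$. The cancellations are handled by Lemma~\ref{lem:total-degree-contraction}, and the signs agree with~\eqref{eq:sign-usual-differential-chain}, \eqref{eq:boundary-diff-before}, \eqref{eq:boundary-diff-after} and~\eqref{eq:sign-curvature}. The only change is that every orientation line carries the additional factors $\orl(t_j)$.
\item[(b)] Faces $\{s_k=0\}$ in some block. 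By Lemma~\ref{lem:gkc-for-higher-homotopies}\eqref{higher-side-one}, these give $T(\concat_k\lc\sigma)$.
\item[(c)] Faces $\{s_k=1\}$ in some block with $k\notin\textbf{i}$. These reproduce the chart $\cK^{01}_{\textbf{i}\cup\{k\},\lc\Gamma}$ restricted to the diagonal $t_j=t_{j-1}$. They cancel against the diagonal faces $\{t_j=t_{j-1}\}$ of the simplex $\{t_r\ge\dots\ge t_0\}$ for the chart indexed by the larger tuple $\textbf{i}\cup\{k\}$. This cancellation is the whole purpose of the correction terms.
\item[(d)] The extreme faces $\{t_r=1\}$ and $\{t_0=0\}$. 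On these, one outer factor restricts to $\cK^1$, respectively $\cK^0$, by Lemma~\ref{lem:cobordism-for-homotopy}. They give $\scF^1(\lc\sigma^{>i_r})\g T(\lc\sigma^{\le i_r})$ and $T(\lc\sigma^{>i_1})\g\scF^0(\lc\sigma^{\le i_1})$. When $\textbf{i}=\emptyset$, they give $\scF^{1,d}_{\lc\sigma}-\scF^{0,d}_{\lc\sigma}$ themselves.
\end{enumerate}

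The next step is to fix signs and coefficients. As in Lemma~\ref{lem:right-sym-mon}, we read off the sign of each face by comparing the displayed orientation of $\orl(\cKd{01}_{\textbf{i},\lc\Gamma})$ with the boundary orientation. The outward normals are $-dt_0$ at $t_0=0$, $dt_r$ at $t_r=1$, and $dt_{j-1}-dt_j$ on the diagonals. We then use Lemma~\ref{lem:different-fibre-products} to move the $\orl(t_j)$ lines past the $\orl(\sigma_k)$ lines. The combinatorial coefficients factor as $c_{\lc\Gamma}=c_{\lc\Gamma^{>i}}c_{\lc\Gamma^{\le i}}$, as already noted, so that summing over $\lc\Gamma$ and $\textbf{i}$ reorganises (d) into the composition terms of~\eqref{eq:fun-differential}. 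Unitality of the homotopy, that is, vanishing on identity entries, follows as in Lemma~\ref{lem:unital-functor}.

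I expect the main obstacle to be step (c): checking that the diagonal faces cancel with the $s_k=1$ faces with the correct sign. The $s_k=1$ face carries the sign $\varepsilon_1$ computed before~\eqref{eq:sign-composition-functors}. The diagonal face instead gets the sign from moving $\orl(t_j)$ to its new position between $\orl(\sigma_{k+1})$ and $\orl(\sigma_k)$. The two must be opposite. I would handle this by first treating $r=0$ against $r=1$ explicitly, and then arguing inductively on $r$, since each diagonal face only involves two adjacent $t$-coordinates. A secondary issue is making sure that the stabilisation-pattern functions $\chi^\pm$ of Lemma~\ref{lem:gkc-for-higher-homotopies} are compatible with the extra $[0,1]$ factor, so that the identifications in (b)–(d) hold as correspondences. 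Since the construction only replaces fibre products over $X^k\times L^\ell$ by fibre products over $I\times X^k\times L^\ell$, I expect this to carry over verbatim.
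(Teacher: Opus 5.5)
Your proposal is correct and follows the paper's proof. Both run Stokes' theorem on each $\cK^{01}_{\textbf{i},\lc\Gamma,\lc\sigma}$ and sort the boundary faces the same way. The simplex faces and disc-bubbling faces give the $T(\partial_{oc})$ terms, and the faces $\{s_j=0\}$ give $T(\concat_j\lc\sigma)$. The faces $\{s_j=1\}$ with $j\notin\textbf{i}$ cancel the diagonal faces of the chart for $\textbf{i}\cup\{j\}$; the paper records the two orientation signs as differing by exactly $-1$, which is the check you flag. The extreme $t$-faces produce $\scF^1\g T$ and $T\g\scF^0$, and for $\textbf{i}=\emptyset$ they produce $\scF^1-\scF^0$.
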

	
	\begin{proof}
		By Stokes' Theorem,
		\begin{equation}\label{eq:stokes-homotopy} 
			(-1)^{|\lc \sigma|'}d_{y^d}\fq^{01}_{\textbf{i}, \lc\Gamma,\lc\sigma}(\alpha)\; -\; \fq^{01}_{\textbf{i}, \lc\Gamma,\lc\sigma}(d_{y^0}\alpha) \;+\; (-1)^{1+n|y^d_o| + |\lc \sigma|'}\,\fq_{\del\cK^{01}_{\textbf{i}, \lc\Gamma,\lc\sigma}}(\alpha) \;=\; 0,
		\end{equation}
		since $\vdim(\cK^d{01}_{\textbf{i}, \lc\Gamma,\lc\sigma}) \equiv n|y^d_o| + |\lc \sigma|' \mod{2}$. Summing over all sequences $\lc\Gamma$ and all $\textbf{i}$ the last term on the left side yields the following operations.
		\begin{enumerate}[label=\Roman*),leftmargin=20pt]
			\item Arguing exactly as in~\eqref{boundary-simplex-sym}, we obtain the term $(-1)^{|\lc\sigma^{\le j}|'+1}T_{(\sigma_d,\dots,\del_{oc}\sigma_j,\dots,\sigma_1)}(\alpha)$ from the boundary of the simplices $\Delta^{\norm{\sigma_j}}$ and the boundary strata coming from disc bubbling, which are discussed in Lemma~\ref{lem:chain-map-stable}. 
			\item By the discussion before Equation~\eqref{eq:sign-composition-chains}, the vertical boundary of $\cK^{01}_{\lc\Gamma,\lc\sigma}$ (over $I^{d-1}$) yields the term $(-1)^{|\lc\sigma^{\le j}|'+1}T_{\concat_j\lc\sigma}(\alpha)$, noting that $\vdim(\cK^{01}_{\textbf{i}, \lc\Gamma,\lc\sigma})  = \vdim(\cK_{\lc\Gamma,\lc\sigma}) +1$.
			\item Consider now $\cK^{01}_{\textbf{i}, \lc\Gamma,\lc\sigma}|_{\{s_j = 1\}}$ for $j \notin \{i_1,\dots,i_r\}$. Choose $p \ge 0$ so that $i_p < j< i_{p+1}$ and define $\textbf{i}' := (i_r > \dots > i_{p+1} > j > i_p >\dots i_1)$. By the equivalent of Lemma~\ref{lem:gkc-for-higher-homotopies}, there exists an equivalence
			\begin{equation*}\label{eq:boundary-parameter-one}
				\cK^{01}_{\textbf{i}^{> p},\lc\Gamma^{> j},\lc\sigma^{> j}}\times_{I\times Y_j}\cK^{01}_{\textbf{i}^{\le p}, \lc\Gamma^{\le p},\lc\sigma^{\le j}}\;\simeq\; \cK^{01}_{\textbf{i}, \lc\Gamma,\lc\sigma}|_{\{s_j = 1\}}. 
			\end{equation*}
			The orientation sign comparing the fibre product orientation with the boundary orientation is $(-1)^{n|y^d_o|+|\lc\sigma^{> j}|'+1}$. On the other hand, we have by definition that
			\begin{equation*}\label{eq:boundary-parameter-diagonal}
				\cK^{01}_{\textbf{i}^{> p},\lc\Gamma^{> j},\lc\sigma^{> j}}\times_{I\times Y_j}\cK^{01}_{\textbf{i}^{\le p}, \lc\Gamma^{\le p},\lc\sigma^{\le j}}\;=\; \cK^{01}_{\textbf{i}', \lc\Gamma,\lc\sigma}|_{\{t_{p-1} = t_p\}}. 
			\end{equation*}
			In this case the orientations differ by $(-1)^{n|y^d_o|+|\lc\sigma^{> j}|'}$ due to the fact that the normal vector of $\{(s,t)\in I^2\mid s \le t\}$ is given by $s-t$. Thus, the operations associated to these correspondences cancel.
			\item If $t_j =1$ for some $j \in \{1,\dots,r\}$, then $t_{j'} = 1$ for any $j'\ge j$, whence
			\[\cK^{01}_{\textbf{i}, \lc\Gamma,\lc\sigma}|_{\{t_j = 1\}}\;\simeq\; \cK^{1}_{\lc\Gamma^{>i_j},\lc\sigma^{>i_j}}\times_{Y_j}\cK^{01}_{\textbf{i}^{\le j}, \lc\Gamma^{\le i_j},\lc\sigma^{\le i_j}}
			\]
			where $\textbf{i}^{\le j} = (i_{j-1} >\dots > i_1)$. Thus, the associated operation is $(-1)^{|\lc \sigma^{\le i_j}|'}\scF^1_{\lc\sigma^{> i_j}}\g T_{\lc\sigma^{\le i_j}}(\alpha)$ if $j > 0$, respectively $\scF^1_{\lc\sigma}(\alpha)$ if $j = 0$.
			\item  Similarly, 
			\[\cK^{01}_{\textbf{i}, \lc\Gamma,\lc\sigma}|_{\{t_j = 0\}}\;\simeq\; \cK^{01}_{\textbf{i}^{> j},\lc\Gamma^{>i_j},\lc\sigma^{>i_j}}\times_{Y_j}\cK^{0}_{ \lc\Gamma^{\le i_j},\lc\sigma^{\le i_j}}
			\]
			where $\textbf{i}^{\le j} = (i_r-i_j >\dots > i_{j+1}-i_j)$. Therefore, the associated operation appearing in Equation~\eqref{eq:stokes-homotopy} is $(-1)^{|\lc \sigma^{\le i_j}|'+1}T_{\lc\sigma^{> i_j}}\g \scF^0_{\lc\sigma^{\le i_j}}(\alpha)$ if $j < r$, respectively $-\scF^0_{\lc\sigma}(\alpha)$ if $j = r$.
		\end{enumerate}
		Adding these terms one exactly obtains the relation a homotopy is required to satisfy.
	\end{proof}
	
	\begin{lemma}\label{lem:compatible-symmetric-monoidal}
		$T$ is a homotopy of symmetric monoidal $A_\infty$ functors in the sense of Definition~\ref{de:homotopy-dmfts}.
	\end{lemma}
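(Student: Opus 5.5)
We need a natural transformation $P$ from $\otimes_{\text{Ch}_\Lambda}\g(\scF^0\otimes\scF^0)$ to $\scF^1\g\otimes_{\cdmc_n}$ whose boundary $\mu^1_\scQ(P)$ equals the difference of the two composites in the square of Definition~\ref{de:homotopy-dmfts}. The analogous statement for $L$ then follows by exchanging the roles of $s$ and $t$, with $P\g S$ in place of $P$. The plan combines two constructions: the loci $\Delta_k,D_k,C_k\sub I^2$ used for $R$ in Definition~\ref{de:symmetric-monoidal-transformation}, and the monotone parameter simplices $\{t_r\ge\dots\ge t_0\}$ used for $T$ in Lemma~\ref{lem:homotopy-dmft}.

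First I will write both composites in correspondence form. By Lemma~\ref{lem:described-by-locus}, the operation $(T\g\otimes)\g R^{(0)}$ is a sum of operations of correspondences of the shape $\cKd{01}_{\textbf{i},\lc\Gamma\sqcup\lc\Gamma'}$ composed with $\cR^{(0)}$ and $\cC^{(0)}$ loci. Likewise, $R^{(1)}\g\otimes(T\otimes T)$ is built from products $\cKd{01}_{\textbf{i},\lc\Gamma,\lc\sigma}\times\cKd{01}_{\textbf{i}',\lc\Gamma',\lc\tau}$ restricted to $\cR^{(1)}$ and $\cC^{(1)}$ loci. The key observation is that the products in the second composite carry two independent homotopy parameters $t,t'\in I$, while the first composite only sees the diagonal $t=t'$.

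This suggests defining $P$ as the operation of the correspondence
\[
\cP_{\lc\Gamma,\lc\sigma;\lc\Gamma',\lc\tau}\coloneqq\bigl(\cKd{01}_{\textbf{i},\lc\Gamma,\lc\sigma}\times\cKd{01}_{\textbf{i}',\lc\Gamma',\lc\tau}\bigr)\times_{I^{2(d-1)}\times I^2}\bigl(\Lambda^{\lc p}\times\Delta_t\bigr).
\]
Here $\Delta_t=\{t\le t'\}$ interpolates between the diagonal and the corner where the tensor factors are evaluated at $t=1$ and $t'=0$. The factor $\Lambda^{\lc p}$ is the union of products of $D_k$, $\Delta_k$ and $C_k$ indexed as in the definition of $R$, with the parameters $s_k$ in the $\cR$-loci coupled to the $t$'s. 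I will sum over $\lc\Gamma,\lc\Gamma'$ with the coefficients $c_{\lc\Gamma}c_{\lc\Gamma'}$ and orient the correspondence by inserting $\orl(t)\orl(t')$ at the end, following the conventions of $\cK^{01}$. The ingredients are the following.
\begin{enumerate}
\item The parametrised cubical cobordisms of Lemma~\ref{lem:cobordism-for-homotopy}, with the fibre product over $I$ replaced by the fibre product over $I^2$ for disjoint unions. This is possible because the construction is local in connected components.
\item Compatibility of Thom forms under products, from Proposition~\ref{prop:thom systems exist}.
\end{enumerate}

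The verification is a Stokes computation, as in Lemma~\ref{lem:right-sym-mon} and Lemma~\ref{lem:homotopy-dmft}. The vertical boundary of the $\cK$ factors produces the $\del_{oc}$ and $\concat_j$ terms of $\mu^1_\scQ(P)$, as in~\eqref{boundary-simplex-sym}, using the disc-bubbling sign analysis of Lemma~\ref{lem:chain-map-stable}. The faces $s_j=0,1$ and $t_j=0,1$ produce the terms $\scF^1\g P$, $P\g\scG^0$ and $T\g P$-type corrections, and the interior faces cancel pairwise as in step (III) of Lemma~\ref{lem:homotopy-dmft}. The faces of $\Delta_t$ give the two sides of the square. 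The diagonal $\{t=t'\}$ gives $(T\g\otimes)\g R^0$ via the restriction to $D$-loci in Lemma~\ref{lem:described-by-locus}. The faces $t'=1$ and $t=0$ give $R^1\g\otimes(T\otimes T)$, because restricting $\cK^{01}$ to an endpoint yields $\cK^0$ or $\cK^1$ and the remaining composites reassemble into $\scG$ applied to $T\otimes T$.

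I expect the main obstacle to be the combinatorics of $\otimes_{\text{Ch}_\Lambda}\g(T\otimes T)$. As a natural transformation between the $A_\infty$ functors~\eqref{eq:tensor-F}, this involves a sum over $\lc p$ together with the sequences $\textbf{i}$ and $\textbf{i}'$ in both factors. Matching each term with exactly one boundary face of $\cP$, with the correct sign $\dagger(|\lc\sigma+\lc p|,|\lc\tau+\lc p^c|)$, is delicate. I would first treat $d\le 2$ explicitly to fix the shape of $\Lambda^{\lc p}$, and then argue inductively using the splitting property~\eqref{eq:splitting-property}. Degenerate faces, such as those where an identity appears, vanish by the argument of Lemma~\ref{lem:unital-functor}.
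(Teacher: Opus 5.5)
There is a genuine gap. As you describe it, $\cP$ has the wrong virtual dimension, and its boundary does not decompose the way you claim.

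\textbf{Dimension count.} Write $\lc a=\lc\sigma\otimes\lc\tau$. The chart $\cK_{\lc\Gamma\sqcup\lc\Gamma',\lc a}$ is the $D$-locus, of codimension $d-1$, in $\cK_{\lc\Gamma,\lc\sigma}\times\cK_{\lc\Gamma',\lc\tau}$. Compared with this:
\begin{itemize}
\item passing to $\cK^{01}$ adds one homotopy direction per factor;
\item $\Delta_t\sub I^2$ has codimension $0$;
\item an $R$-type locus $D_{d-1}\times\dots\times D_{\kappa+1}\times\Delta_\kappa\times C_{\kappa-1}\times\dots\times C_1\sub I^{2(d-1)}$ has codimension $d-2$.
\end{itemize}
Hence $\vdim\cP=\vdim\cK_{\lc\Gamma\sqcup\lc\Gamma',\lc a}+3$. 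By contrast, $\cR$ and the charts defining $T\g\otimes$ have $\vdim\cK_{\lc\Gamma\sqcup\lc\Gamma',\lc a}+1$. So your $P$ has degree $-2$, and $\mu^1_\scQ(P)$ cannot equal the degree-$0$ difference of the two composites.

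\textbf{Source of the error.} You stack two independent interpolations: the $\Delta_\kappa$-interpolation between $D$- and $C$-loci in the $s$-coordinates (which is what produces $R$), and the $\Delta_t$-interpolation in the homotopy parameters. Consequently $\del\cP$ contains $\del\Delta_\kappa\times\Delta_t$ in addition to $\Lambda^{\lc p}\times\del\Delta_t$. Over the $R$-loci, the diagonal face $\{t=t'\}$ is therefore not $(T\g\otimes)\g R^{(0)}$; that composite, as in Lemma~\ref{lem:described-by-locus}, only sees the $D$-locus. The diagonal face is instead a degree $-1$ correspondence, essentially the one the paper takes as $P$. If your unspecified coupling of the $s_k$ to the $t$'s is meant to cut out a further dimension, then that coupled locus is the real content of the proof and must be constructed and its faces analysed. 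As written, it is a product.

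\textbf{Well-definedness.} $\cK^{01}_{\textbf{i},\lc\Gamma,\lc\sigma}$ maps to $I(\textbf{i})\times I^{r+1}=I^{d-1-r}\times I^{r+1}$, not to $I^{d-1}\times I$. So the fibre product over $I^{2(d-1)}\times I^2$ only makes sense for $\textbf{i}=\textbf{i}'=\emptyset$. For $\textbf{i}\neq\textbf{i}'$ the free $s$-coordinates of the two factors do not match, so the loci $D_k$ are not even defined.

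\textbf{What the paper does.} The paper uses a single interpolation. It takes the same $\textbf{i}$ in both factors and puts $D_{d-1-r}\times\dots\times D_{\kappa+1}\times\Delta_\kappa\times C_{\kappa-1}\times\dots\times C_1$ on $I(\textbf{i})^2$. It uses no region in the homotopy parameters; for the degree to be $-1$, these have to be the common ones of the disjoint-union chart underlying $T\g\otimes$. It also sets $P^d=0$ for $d\le 1$. The boundary then works out as follows:
\begin{itemize}
\item $\del\Delta_\kappa$ telescopes to the $D$-locus, giving $T^d(\lc a)\g R^{(0),0}$, and to the $C$-locus, which in this approach is what $\otimes_{\text{Ch}_\Lambda}\g(T\otimes T)$ amounts to;
\item the faces where the homotopy parameters reach $0$ or $1$ give the remaining compositions with $R^{(0)}$ and $R^{(1)}$, together with $\scH^1\g P$ and $P\g\scG^0$;
\item the $\bm 0_j$-faces give the $\concat_j$-terms;
\item the $\bm 1_j$-faces cancel against the diagonal faces of the $t$-simplex for $\textbf{i}\cup\{j\}$, as in step~(III) of the proof of Lemma~\ref{lem:homotopy-dmft}.
\end{itemize}

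\textbf{Where your triangle fits.} The triangle $\{t\le t'\}$ is only needed if you read $\otimes_{\text{Ch}_\Lambda}\g(T\otimes T)$ as $\scF^0\otimes T+T\otimes\scF^1$, which Definition~\ref{de:homotopy-dmfts} leaves open. In that case it enters as a separate homotopy between that version and the shared-parameter one. This homotopy is supported over the $C$-loci and composed with $R^{(1)}$, rather than multiplied with the $R$-loci.
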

	
	\begin{proof}
		The natural transformation $P$ of degree $-1$ with 
		\[\mu^1(P)\; =\; (T\g\otimes_{\dmc_n})\g R^0 \,-\, R^1\g (\otimes_{\text{Ch}_\Lambda}\g(T\otimes T))\]
		is defined by $P^d = 0$ for $d \le 1$. 
		The maps $P^d$ for $d \ge 2$ are constructed by combining the correspondences above with the correspondences in Definition~\ref{de:symmetric-monoidal-transformation}. To this end, define for $\textbf{i} = (d> i_r \dots > i_r\ge 1)$ the subspace $I(\textbf{i}) \coloneqq I^{d-1-r}$. Given composable sequences $\lc\sigma$ and $\lc\tau$ of length $d$ in $\dmc_n$ and sequences of graphs $\lc\Gamma$ and $\lc\Gamma'$ lying over them, we define for $1 \le \kappa \le d-1-r$ the correspondence
		\begin{equation*}\label{} \cP_{\textbf{i},\lc\Gamma,\lc\sigma,\lc\Gamma',\lc\tau}(\kappa) := (\cKd^{01}_{\textbf{i},\lc\Gamma,\lc\sigma}\times\cK^{01}_{\textbf{i},\lc\Gamma',\lc\tau})\times_{I(\textbf{i})^2}(D_{d-1-r}\times\dots\times D_{\kappa} \times \Delta_\kappa\times C_{\kappa-1}\times \dots\times C_1).
		\end{equation*}
		The same computations as in the proof of Lemma~\ref{lem:homotopy-dmft} now show the claim.
	\end{proof}
	
	\noindent
	Suppose now we have made two different choices $J_0$ and $J_1$ of almost complex structures and two different choices of auxiliary data that define curved open-closed DMFTs $\scF^0$ and $\scF^1$ respectively.
	Choose a path $\{J_\kappa\}_\kappa$ of almost complex structures starting at $J_0$ and ending at $J_1$. By the same argument as in \cite{HS22}, we can find an auxiliary datum $\wt\alpha_{\mathsf{a}}$ for each $\mathsf{a}\in \cA$ that yields auxiliary data for $\Mbar_{\mathsf{a}}^{J_0}(X,L)$ and $\Mbar_{\mathsf{a}}^{J_1}(X,L)$ simultaneously and which allows one to construct a global Kuranishi chart $\wt\cK_{\mathsf{a}}$ for the `cobordism' $\Mbar_{\mathsf{a}}^{\{J_\kappa\}}(X,L)$, admitting a submersion to $[0,1]$. Applying the constructions of this proof to the charts $\wt\cK_{\mathsf{a}}$ we obtain a homotopy from $\wt\scF^0$ to $\wt\scF^1$, the curved open-closed DMFTs obtained from the restrictions of the global Kuranishi charts $\wt{\cK_{\mathsf{a}}}$ (and the Kuranishi charts built from them) to $0$ and $1$ respectively. Finally, we can use the theorem for the different choices of auxiliary data for $\Mbar^{J_0}_{\mathsf{a}}(X,L)$ and $\Mbar^{J_1}(\mathsf{a})(X,L)$ to obtain homotopies from $\scF^0$ to $\wt\scF^0$ and $\wt\scF^1$ to $\scF^1$ respectively. This completes the proof of \eqref{independence-curved}.\\

\noindent
We will now incorporate bounding cochains into the homotopies. To this end, let $\scF^0$, $\scF^1$ be two curved open-closed DMFTs obtained from two different choices of auxiliary data and $\omega$-tame almost complex structures. Let $\scA_i$ be the curved $A_\infty$ algebra associated to $\scF^i$ by Lemma~\ref{lem:algebra-from-dmft}. 
The last paragraph of the previous proof yields global Kuranishi charts $\cKd{01}_\Gamma$ interpolating between the global Kuranishi charts used to define $\scF^0$ and $\scF^1$. In particular, taking $\Gamma$ to be a stable map graph lying over $\sigma = D_k$ we obtain correspondences 
\[
L^k \times [0,1] \leftarrow \cKd{01}_{\Gamma, D_k} \rightarrow L \times [0,1].
\]
Composing this with the diagonal map $L^k \times [0,1] \rightarrow L^k \times [0,1]^k$ yields the correspondence
\[
(L \times [0,1])^k \,\xleftarrow{\wt\eva^-}\, \cKd{01}_{\Gamma, D_k} \,\xra{\wt\eva^+}\,L \times [0,1].
\]
Similar to the proof of Lemma~\ref{lem:algebra-from-dmft}, these correspondences equip $\wt \scA \coloneqq \Omega^*(L \times [0,1]; \Lambda)$ with the structure of a curved $A_\infty$ algebra. Given a gauge equivalence $\wt \bc$ between $\bc^0$ and $\bc^1$, we deform the homotopy $T$ constructed in Lemma \ref{lem:homotopy-dmft} as follows. Each $\cK^{01}_{\textbf{i},\pi^{[k]}\lc\Gamma,\pi^{[k]}\lc\sigma}$ has $k$ additional boundary marked points, with evaluation maps to $L \times [0,1]$. Therefore, taking $(T^{\wt b})^0 =0$ and
\[
T^{\wt \bc}_{\lc\sigma}(\alpha) \;=\;  \sum_{k\ge 0} \frac{1}{k!}T_{\pi^{[k]}\lc\sigma}(\alpha \times \wt \bc^{\times k}).
\]
defines a well-defined pre-natural transformation and similar for the deformation of $P$ of Lemma~\ref{lem:compatible-symmetric-monoidal}. The same argument as in the proof of Lemma \ref{lem:homotopy-dmft} show that $T^{\wt \bc}$ is a homotopy of $A_\infty$ functors. Similarly, deforming the natural transformation $P$ of Lemma \ref{lem:compatible-symmetric-monoidal} shows this is a homotopy of symmetric monoidal $A_\infty$ functors. Together with Lemma \ref{lem:homotopy-to-uncurved}, this completes the proof.
\end{proof}

\begin{remark}
	We can use the correspondences $\cKd{01}_{\Gamma}$ to define a curved open-closed DMFT $\wt \scF$ with $\wt \scF(y) = \Omega^*(X^{y_c}\times L^{y_o} \times [0,1];\Lambda)$. Analogously, we call this a \emph{pseudo-isotopy} of curved open-closed DMFTs. The curved open-closed DMFT of Theorem~\ref{thm:dmft-exists} is thus also independent of choices up to pseudo-isotopy.
\end{remark}

\appendix
\section{Integration along singular submanifolds}\label{subsec:singular-integration} 
\noindent In this section we prove a major technical ingredient for the definition of the open-closed DMFT associated to a Lagrangian submanifold. The main result is Theorem \ref{thm:pushforward along chains}, from which we obtain that our generalised $\fq$-operations are well-defined.
\addtocontents{toc}{\protect\setcounter{tocdepth}{1}}

\subsection{Preliminaries on analytic spaces} A key ingredient of the proof is the resolution of singularities of an analytic space as defined/constructed in \cite{BM97}. We recall here the relevant definitions and extend them to the setting of our main application in \textsection\ref{subsec:lagrangian-dmft}. More details can be found in \cite{BM88}.

\begin{definition}\label{} A \emph{real analytic manifold with corners} $V$ is a Hausdorff space with an atlas $\{(\phi_i,U_i)\}_{i\in I}$ of homeomorphisms $\phi_i \cl U_i \to V_i \sub (\bR_{\geq 0})^n$, where $V_i$ is an open subspace of $ (\bR_{\geq 0})^n$ so that the transition functions extend to real analytic functions in a neighbourhood of $\phi_i(U_i\cap U_j)$ for any $i,j\in I$. 
\end{definition}

\begin{definition}\label{} A \emph{analytic map} between real analytic varieties with corners is a continuous map that in local coordinates extends to a real analytic map between affine real analytic varieties.
\end{definition}

\noindent
This is a adaption of the notion of a smooth map between manifolds with corners in \cite{Joy}. 

\begin{example} The standard simplex $\Delta^n$ is given by the intersection of $(\bR_{\geq 0})^{n+1}$ with the zero locus of $f(x) = 1-\sum_{i =0}^n x_i$ and thus admits the structure of a real analytic manifold with corners.
\end{example}

\begin{definition}\label{de:analytic} Let $V$ be a real analytic manifold with corners. A subspace $Z\sub V$ is \emph{analytic} if for each $z \in Z$ there exists an open neighbourhood $U$ of $z$ in $V$ and an analytic function $f \cl U \to \bR^k$ so that $Z\cap U  = f\inv(\{0\})$.
\end{definition}

\noindent
Let $Z\sub V$ be an analytic subspace of a real analytic manifold $V$ with corners. We call $x \in Z$ \emph{smooth (of dimension $k$)} if there exists a neighbourhood $U \sub Z$ of $x$ so that $Z \cap U$ is a real analytic submanifold (of dimension $k$) of $V$. We call $x$ \emph{singular} otherwise. We write $Z^{sm}$ and $Z^{sing}$ for the set of smooth, respectively singular points of $Z$. The \emph{dimension} of an analytic subspace is the supremum over the dimensions of its smooth points. As $Z^{sm}$ is analytic, so is $Z^{sing}$.

\begin{remark}\label{rem:analytic-fibre-product} If $\varphi \cl X\to Y$ is an analytic map between real analytic manifolds with corners and $Z\sub Y$ is a closed analytic subspace, then the preimage $\varphi\inv(Z)$ is an analytic subspace of $X$. In particular, the fibre product $X\times_Z Y$ of real analytic maps is an analytic subspace. \end{remark}

\noindent
However, the fact that we allow corners means that fibre product can be ill-behaved as the following example shows.

\begin{example}\label{ex:non-regular-fibre-product} Consider the analytic function $f \cl [0,1]^2\to [0,1]$ given by $f(t) = t_1t_2-1$. Then the fibre product $\{1\}\times_{[0,1]} [0,1]^2$ is given by $\{(1,1)\}$. .
\end{example}

\noindent
This is due to the fact that a map between a manifold with corners need not be a submersion at a boundary point $x$ even if its differential at $x$ is surjective (between the tangent spaces of the respective strata). The issue is that the maps we (have to) consider are only \emph{weakly smooth} in the language of \cite{Joy}, cf. Definition~3.2 and Example~3.5(f) op. cit..

\begin{definition}\label{de:corner-singularity}
	We say that a weakly smooth map $f\cl M\to N$ between manifolds with corners has a \emph{corner singularity} over a submanifold $Q\sub N$ if $df(x)$ is surjective for $x \in f\inv(Q)$ but $\dim(f\inv(Q)) < \dim(M)-\codim(Q)$, where $\dim$ on the left hand side denotes the Lebesgue dimension.
\end{definition}

\noindent
 The blow-down construction below will not identify corner singularities as the fibre product locally extends to a transverse fibre product of analytic manifolds without corners. By restricting the class of maps we consider, we ensure that corner singularities arise in codimension $\ge 2$.

\begin{definition}\label{de:admissible-map} An analytic map $\varphi\cl V\to W$ between analytic manifolds with corners is \emph{admissible} if near each $x\in V$ there exists a neighbourhood $U$ so that $\varphi|_U$ is the composition of a smooth map in the sense of \cite[Definition~3.2]{Joy} with a map of the form $f_k\times \ide$, where $f_k\cl [0,\infty)^k\to [0,\infty]$ is the multiplication $f_k(x) = x_1\cdots x_k$. We call $\varphi$ \emph{strongly smooth} if it is smooth in the sense of \cite[Definition~3.2]{Joy}.
\end{definition}

\begin{lemma}\label{cor:fibre-product-with-action} Suppose a real analytic Lie group $G$ acts analytically on a real analytic manifold $Y$ with corners and $\varphi \cl X\to Y$ is an analytic map. Then 
	$$V := \{(x,g,y)\in X\times G\times Y\mid g\cdot \varphi(x) = y\}$$
	is an analytic subspace of $X\times G\times Y$. The map 
	$$\sigma_V\cl G\times V\to V : (h,(x,g,y))\mapsto (x,hg,h\cdot y)$$ is analytic.
\end{lemma}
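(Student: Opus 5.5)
We realise $V$ as the preimage of a closed analytic subspace under an analytic map and then appeal to Remark~\ref{rem:analytic-fibre-product}. The action is analytic, so the map
\[
\Phi\cl X\times G\times Y\to Y\times Y,\qquad (x,g,y)\mapsto (g\cdot\varphi(x),\,y)
\]
is analytic, being a composition of the analytic maps $\varphi\times\ide_G\times\ide_Y$, the action $G\times Y\to Y$ (applied to the first two factors after reordering), and products with identities. We have $V=\Phi\inv(\Delta_Y)$, where the diagonal $\Delta_Y\sub Y\times Y$ is a closed analytic subspace. In a chart $U\sub[0,\infty)^n$ of $Y$, the diagonal is cut out by the analytic function $(y,y')\mapsto y-y'$. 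Away from a single chart, closedness of $\Delta_Y$ follows from $Y$ being Hausdorff, and points off the diagonal have neighbourhoods disjoint from it, where the constant function $1$ serves as the local defining function. By Remark~\ref{rem:analytic-fibre-product}, $V$ is then an analytic subspace of $X\times G\times Y$. To avoid depending on the remark, the same conclusion can be checked directly from Definition~\ref{de:analytic} by pulling back the local defining function $y-y'$ along $\Phi$.

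For the second claim, consider the map $\wt\sigma\cl G\times X\times G\times Y\to X\times G\times Y$ given by $(h,x,g,y)\mapsto(x,hg,h\cdot y)$. It is analytic because group multiplication and the action are analytic. We check that it preserves $V$: if $g\cdot\varphi(x)=y$, then $(hg)\cdot\varphi(x)=h\cdot(g\cdot\varphi(x))=h\cdot y$. Hence $\sigma_V=\wt\sigma|_{G\times V}$ takes values in $V$.

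It remains to interpret analyticity of a map between analytic subspaces that need not be manifolds. We take this to mean that the map is, locally, the restriction of an analytic map between the ambient manifolds with corners, in line with the definition of analytic maps via local extension. The map $\wt\sigma$ provides exactly such an extension, so $\sigma_V$ is analytic.

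No step is a genuine obstacle. The only point needing care is the corners: one must confirm that the pullback of the diagonal's local equations is still given by analytic functions near boundary points of $X\times G\times Y$. This holds because, by definition, analytic maps extend analytically to neighbourhoods in $\bR^n$ of the charts. Consequently the corner pathology of Example~\ref{ex:non-regular-fibre-product} affects only the dimension of $V$, not whether it is analytic, and the lemma claims nothing about dimension.
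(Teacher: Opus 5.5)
Your proposal is correct and follows essentially the same route as the paper. The paper realises $V$ as the fibre product of $\varphi$ with the analytic map $G\times Y\to Y$, $(g,y)\mapsto g^{-1}\cdot y$, and invokes Remark~\ref{rem:analytic-fibre-product}, which amounts to your pullback of the diagonal along $(x,g,y)\mapsto(g\cdot\varphi(x),y)$; it likewise obtains the second claim by restricting the analytic action $(x,g,y)\mapsto(x,hg,h\cdot y)$ of $G$ on $X\times G\times Y$.
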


\begin{proof} By assumption, the map $\sigma\cl G\times Y\to Y : (g,y)\mapsto g\inv\cdot y$ is real analytic, so the first claim follows from Remark \ref{rem:analytic-fibre-product}. The second claim follows from the fact that $\sigma_V$ is the restriction of the analytic action of $G$ on $X\times G\times Y$.
\end{proof}

\begin{definition}\label{} A \emph{real analytic Lie groupoid with corners} $X = [X_1 \rightrightarrows X_0]$ is a Lie groupoid so that all $X_0$ and $X_1$ are real analytic varieties with corners and all structure maps are analytic. A morphism between two such Lie groupoids $X\to Y$ is \emph{analytic} if the maps $X_0 \to Y_0$ and $X_1\to Y_1$ are analytic. 
\end{definition}

\noindent
We define the category of \emph{real analytic orbifolds with corners} to have objects given by real analytic \'etale proper Lie groupoids with corners and morphisms $f = (R,\wt f)$ given by zigzags
\begin{center}\begin{tikzcd}
		&\wt X\arrow[dr,"\wt f"] \arrow[dl,"R"]\\  X&& Y\end{tikzcd} \end{center}
where $R$ is a refinement, and both $R$ and $f$ are analytic.

\begin{example} Suppose $G$ is a real analytic group acting analytically and almost freely on a real analytic manifold $V$ with corners. Then $[V/G]$ is a real analytic orbifold with corners.
\end{example}

\noindent
The definition of an analytic subgroupoid of a real analytic orbifold is analogous to the one of an analytic subspace.

\begin{cor}\label{cor:analytic-orbifold-fibre-product} Suppose $X = [V/G]$, $Y = [W/H]$ and $Z = [U/K]$ are analytic orbifolds with corners and $X\to Z$ and $Y\to Z$ are admissible analytic maps. Then the orbifold fibre product $X \times_Z Y$ is an analytic subgroupoid of $X\times K\times Y$.
\end{cor}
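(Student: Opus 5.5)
The plan is to reduce the statement to Lemma~\ref{cor:fibre-product-with-action} after unwinding the definition of the orbifold fibre product in the groupoid model. Write $X=[V/G]$, $Y=[W/H]$, $Z=[U/K]$, and let $f\cl X\to Z$, $g\cl Y\to Z$ be the given admissible analytic maps. After passing to refinements, which we may take to be analytic by the definition of morphisms of real analytic orbifolds, we may assume $f$ and $g$ are represented by analytic maps of groupoids $\varphi\cl V\to U$ and $\psi\cl W\to U$ on objects. These come with compatible analytic maps on arrows. On objects, the (weak) orbifold fibre product is
\[(X\times_Z Y)_0=\{(v,k,w)\in V\times K\times W\mid k\cdot\varphi(v)=\psi(w)\},\]
and on arrows it is the corresponding set of triples twisted by $G\times H$.

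First, I would show that the object space is an analytic subspace. Applying Lemma~\ref{cor:fibre-product-with-action} to the action of $K$ on $U$ and the map $\varphi$ gives the analytic subspace
\[\{(v,k,u)\mid k\cdot\varphi(v)=u\}\sub V\times K\times U.\]
Its intersection with the preimage of the graph of $\psi$ under the analytic map $(v,k,u,w)\mapsto(u,\psi(w))$ is analytic by Remark~\ref{rem:analytic-fibre-product}. Projecting away the redundant $U$-coordinate, which is determined by $w$ through $\psi$, identifies this set analytically with the set above.

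Second, the arrow space is the analogous locus inside $G\times V\times K\times H\times W$. The structure maps $s$, $t$, composition, unit and inverse are restrictions of analytic maps, namely the group actions and multiplication, and the analyticity of $\sigma_V$ in Lemma~\ref{cor:fibre-product-with-action} handles exactly the $K$-twisting. Hence the fibre product is an analytic subgroupoid of $X\times K\times Y$, in the sense analogous to Definition~\ref{de:analytic}.

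The main obstacle is the corners, where Example~\ref{ex:non-regular-fibre-product} shows that fibre products can degenerate. Note, however, that the claim only asserts analyticity of the subspace, not regularity, and this survives because the defining equations are analytic. Where admissibility enters is in making sure the local models exist. Locally, $\varphi$ and $\psi$ factor as a strongly smooth map composed with $f_k\times\ide$, and these factors extend to analytic maps on open neighbourhoods in Euclidean space, so $f\inv(Q)$-type loci are cut out by honest analytic functions there. I would verify this local extension carefully, and also check that it is independent of the refinement chosen, using Lemma~\ref{lem:fibre-product-well-defined}.
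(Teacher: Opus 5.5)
Your proof is correct and follows the paper's route. The paper states this corollary without proof, as a direct consequence of Lemma~\ref{cor:fibre-product-with-action} (applied to the $K$-action on $U$ and $\varphi$) together with Remark~\ref{rem:analytic-fibre-product} for pulling back along $\psi$, which is what you do for objects and, via $\sigma_V$, for arrows. Your last paragraph is unnecessary: analytic maps extend locally by definition, so admissibility plays no role here (the paper uses it only to keep corner singularities in codimension $\ge 2$), and Lemma~\ref{lem:fibre-product-well-defined} is about fibre products with simplices, not about independence of refinements.
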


\begin{lemma}\label{} 
	The moduli space $\Mbar_{g,h;k,\ell}^{\,*,\,m}(\bC P^N,\bR P^N)$ of \cite[\S2.1]{HH25} is a real analytic orbifold with corners on which $\PGL_\bR(N+1)$ acts analytically. 
\end{lemma}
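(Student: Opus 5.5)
The plan is to reduce the statement to the closed algebraic description of the moduli space of stable maps to $(\bC P^N,\bR P^N)$ recalled in \cite[\S2.1]{HH25}, and to show that every local model appearing there is cut out by real analytic equations in real analytic coordinates with corners.

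First I would recall that a point of $\Mbar_{g,h;k,\ell}^{\,*,\,m}(\bC P^N,\bR P^N)$ is a stable map $u\cl (\Sigma,\del\Sigma)\to(\bC P^N,\bR P^N)$ of degree $m$, which is holomorphic for the standard integrable complex structure, with the framing/regularity condition indicated by the superscript $*$. The key point is that $J$ is integrable and real analytic. By Schwarz reflection along $\bR P^N$ (the fixed locus of complex conjugation $c$), such a map corresponds to a $c$-equivariant holomorphic map from the complex double $\Sigma_\bC$ to $\bC P^N$. Thus the moduli space is the fixed locus of an anti-holomorphic involution, restricted to the appropriate components, inside the complex Deligne--Mumford stack $\Mbar_{g',k'}(\bC P^N,m')$ of doubled maps. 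For the open-part $*$-locus this stack is a smooth complex analytic (indeed algebraic) orbifold. The real points of a smooth complex orbifold under an anti-holomorphic involution form a real analytic orbifold. Moreover, $\PGL_\bR(N+1)$ acts algebraically on the doubled space and commutes with the involution, so the action on the real locus is analytic.

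The second step concerns corners. The real locus is not the moduli space with boundary: the real locus of the double contains both sides of each real boundary node $xy=\epsilon$, with $\epsilon\in\bR$. The open moduli space corresponds to the closure of one sign, $\epsilon\ge 0$. Locally, by the standard gluing charts of the complex moduli space, which are analytic in the smoothing parameters, a point with $r$ boundary nodes has a real analytic neighbourhood of the form $\bR^{a}\times\bR^{r}$, where the last coordinates are the real smoothing parameters. The moduli space with boundary is the subset $\bR^a\times[0,\infty)^r$. Interior nodes contribute complex smoothing parameters and introduce no corners. The transition maps between these charts are restrictions of analytic maps on the ambient $\bR^{a+r}$, so they extend analytically across the corner, exactly as the definition of a real analytic manifold with corners requires. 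The $\PGL_\bR(N+1)$ action preserves the sign of the smoothing parameters, so it restricts to the corner charts. Since the group is finite-stabiliser (almost free) on the $*$-locus, the resulting groupoid $[\,\cdot\,/G]$ is an étale proper real analytic groupoid after slicing, and slicing is analytic by the analytic slice theorem for proper actions.

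The main obstacle is the sign choice and the corner structure at points with several boundary nodes, including collapsed boundary circles of type (E). At such points the real locus of the doubled space may be singular or may mix different topological types, for instance when a real node of the double is isolated rather than a boundary node. My first approach would be to use the explicit local universal deformation $\prod_i (x_iy_i-\epsilon_i)$ with the real structure, and to check case by case that the open stratum is $\{\epsilon_i\ge 0\}$ for the hyperbolic boundary nodes. For type (E) nodes, the smoothing parameter is again real and its sign distinguishes a collapsed circle from a crosscap, so the same half-space description applies. Orientability or the component selection is not needed for analyticity, so I would defer it.
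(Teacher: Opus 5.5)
Your proposal is correct and follows the paper's route: the paper also realises $\Mbar_{g,h;k,\ell}^{\,*,\,m}(\bC P^N,\bR P^N)$ as a codimension-$0$ piece of the fixed locus of the conjugation involution $\fc$ on the doubled space $\Mbar_{2g+h-1,2k+|\ell|}(\bC P^N,2m)$, and calls the analyticity of the $\PGL_\bR(N+1)$-action immediate. The only difference is that you carry out the local corner analysis yourself (real smoothing parameters $\epsilon_i\ge 0$ at hyperbolic and type (E) nodes, complex parameters at conjugate pairs of interior nodes), whereas the paper delegates it to the deformation theory of bordered curves in \cite[\S3.3]{Liu20}.
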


\begin{proof} In fact, it is real algebraic but we will only show real analyticity in order to save on definitions. Let $\fc$ be the anti-complex linear involution on $\Mbar_{2g+h-1,2k +|\ell|}(\bC P^N,2m)$ induced by complex conjugation on $\bC P^N$. Then $\Mbar_{g,h;k,\ell}^{\,*,\,m}(\bC P^N,\bR P^N)$ is a codimension $0$ submanifold of the fixed point locus $\Mbar_{2g+h-1,2k +|\ell|}(\bC P^N,2m)^\fc$. The first claim now follows from the discussion of the deformation theory of curves with boundary in \cite[\textsection3.3]{Liu20}. The second assertion is immediate.
\end{proof}

\begin{cor}\label{cor:moduli-analytic-orbifold} The moduli space $\Mbar_{g,h;k,\ell}$ is a real analytic orbifold with corners.
\end{cor}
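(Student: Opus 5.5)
The corollary follows from the preceding lemma once we have a groupoid presentation. By Example~\ref{ex:concrete-groupoids}, $\Mbar_{g,h;k,\ell}$ is represented as the quotient
\[\Mbar_{g,h;k,\ell}\;\simeq\;\bigl[\Mbar^{\,*,\,m}_{g,h;k,\ell}(\bC P^N,\bR P^N)/G\bigr],\]
where $G=\text{PO}(N+1)$ in the open case and $\PU(N+1)$ in the closed case, with $N$ and $m$ determined by $(g,h,k,\ell)$. For $h=0$ this is the closed presentation of \cite{ACG-moduli}, and the same argument runs with $\Mbar^*_{g,n}(\bC P^N;d)$. The plan is to apply the Example (a real analytic Lie group acting analytically and almost freely on a real analytic manifold with corners gives a real analytic orbifold with corners), so three things must be checked.

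\textbf{Analyticity of the total space and action.} By the preceding lemma, $\Mbar^{\,*,\,m}_{g,h;k,\ell}(\bC P^N,\bR P^N)$ is a real analytic orbifold with corners carrying an analytic action of $\PGL_\bR(N+1)$. Its compact subgroup $\text{PO}(N+1)$ is a real analytic Lie subgroup, so restricting the action keeps it analytic. In the closed case, $\PU(N+1)$ is a real algebraic subgroup of $\PGL_\bC(N+1)$ and acts algebraically, hence analytically.

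\textbf{Manifold rather than orbifold.} The starred locus consists of stable maps whose image is linearly nondegenerate, with the specified degree on each component. Such maps have trivial automorphism group because the map is an embedding of each component. So the total space is in fact an analytic manifold with corners, and the corners arise exactly from the boundary nodes. This is the step I expect to need the most care: one has to check that the local models of \cite[\S3.3]{Liu20} near boundary nodes are of the form $[0,\infty)^k\times\bR^j$ in analytic coordinates, which is consistent with the definition of an analytic manifold with corners.

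\textbf{Almost freeness and conclusion.} The action is almost free (indeed proper with finite stabilisers), since stabilisers correspond to automorphisms of the underlying marked curve, which are finite by stability. The action groupoid $[V\times G\rightrightarrows V]$ is then an analytic Lie groupoid. Slicing by an analytic local transversal, which exists since the action is analytic and proper with finite stabilisers, yields an analytic étale proper groupoid Morita equivalent to it. This is exactly the real analytic orbifold with corners structure, and the structure maps are analytic because they are restrictions of the analytic action.
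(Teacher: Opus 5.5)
Your proposal is essentially the paper's argument: the corollary is read off from the preceding lemma, the quotient presentation of Example~\ref{ex:concrete-groupoids}, and the fact that an almost free analytic action of a compact analytic Lie group on an analytic manifold with corners gives a real analytic orbifold with corners. One caveat concerns your ``manifold'' step. Trivial automorphisms hold on the locus of maps framed by $\omega_C(\sum_i x_i)^{\otimes p}$ with $p\ge 3$, where $u$ embeds the whole curve, but not for arbitrary linearly nondegenerate maps, since these can factor through a quotient of the domain. So you should either restrict to that framed locus or use the version of the quotient statement for an analytic orbifold with an almost free analytic action.
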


\begin{definition}\label{} A \emph{relative real analytic manifold with corners} $X/B$ consists of a smooth manifold $X$ and a submersion $q\cl X\to B$ to a real analytic manifold with corners. A \emph{relative analytic subspace} of $X/B$ is the preimage $q\inv(Z)$ of an analytic subspace $Z \sub B$. 
\end{definition}

\begin{definition}\label{} A map $\varphi \cl X/B\to X'/B'$ between real analytic varieties with corners is \emph{analytic} if $\varphi$ is smooth and the induced map $B\to B'$ is analytic.
\end{definition}

\noindent
The following definition is not the most general but sufficient for our purposes.

\begin{definition}\label{de:rel-analytic-orbifold} A \emph{relative real analytic orbifold with corners} consists of a relative real analytic manifold with corners $X/B$ equipped with a smooth proper group action by a real analytic group $G$ so that the group action on $X$ is almost free and the action on $B$ is analytic. We write $\cX$ for the associated orbifold.
\end{definition}

\noindent
This definition is inspired by the theory of rel--$C^\infty$ manifolds although it does not require it. The point is that for our main use of resolution of singularities, the singular manifold we study is a relatively smooth manifold over a singular fibre product of analytic maps. We thus only have to apply the resolution of singularities to said fibre product and not to the actual space. This will become clear in Theorem~\ref{thm:pushforward along chains}.

\subsection{Integration over relative real analytic orbifolds} We will now prove the main theorem of this section, Theorem \ref{thm:pushforward along chains}, which is essential for the construction in \textsection\ref{subsec:lagrangian-dmft} of the DMFT associated to a Lagrangian submanifold.

\begin{definition}\label{} Let $V$ be a real analytic manifold with corners and $Z \sub V$ be a closed analytic subspace. An \emph{embedded resolution of singularities} of $Z$ is a proper analytic map $\sigma_Z \cl \wt V\to V$ so that 
	\begin{itemize}[leftmargin=20pt]
		\item $\sigma_Z$ is given by a locally finite sequence of blow-ups of $V$ with final exceptional hypersurface\footnote{As we are in the real analytic setting, we prefer not to use the word divisor, reserving it for submanifolds of real codimension $2$.} $E\sub \wt V$,
		\item the strict transform $\wt Z := \cc{\sigma_Z\inv(Z\sm Z^{sing})}$ is an analytic submanifold of $V$,
		\item the restriction $\sigma_Z|_{\wt Z\sm E}\cl  \wt Z\sm E\to Z\sm Z^{sing}$ is an isomorphism,
		\item $\wt Z$ and $E$ have normal crossing intersections.
	\end{itemize}
\end{definition}

\noindent
This is usually only defined in the case where $\del V = \emst$. However, the local construction of the blow-up in the real analytic category generalises to the setting of analytic manifolds with corners. In particular, we may arrange for the blow-down map to preserve corner strata.

By \cite[Theorem~1.1]{BM08}, there exists a resolution of singularities for any compact\footnote{The same is true for any open subspace of $Z$ with compact closure if $Z$ itself is not compact.} analytic subspace $Z$ of a real analytic manifold $V$. The resolution $\sigma_Z$ is \emph{universal} in the sense that any isomorphism $U \to U'$ between open subspaces of analytic sets $Z$ and $Z'$ lifts to an isomorphism $\sigma_Z\inv(U)\to \sigma_{Z'}\inv(U')$.

\begin{proposition}\label{prop:resolution-of-singularities-with-corners} Let $V$ be a real analytic manifold with corners and $Z \sub V$ be an analytic subspace. If $Z$ is a relatively compact open subset of an analytic space $Z'\sub V$, it admits a universal (embedded) resolution of singularities.
\end{proposition}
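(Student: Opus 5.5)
The plan is to reduce to the classical result of \cite[Theorem~1.1]{BM08} for analytic spaces in manifolds without boundary, by extending $V$ locally across its corners and then checking that the resulting blow-ups restrict correctly to $V$.

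\emph{Local extension.} Since $Z$ is relatively compact in $Z'$, cover $\cc Z$ by finitely many charts $\phi_i\cl U_i\to V_i\sub(\bR_{\ge0})^n$. By the definition of a real analytic manifold with corners, each $V_i$ is the intersection with $(\bR_{\ge 0})^n$ of an open set $\wh V_i\sub\bR^n$, and the local analytic functions cutting out $Z'$ extend analytically to $\wh V_i$ (shrinking if necessary). This gives analytic subspaces $\wh Z_i\sub\wh V_i$ with $\wh Z_i\cap V_i=\phi_i(Z'\cap U_i)$.

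\emph{Resolution on each chart.} Apply \cite[Theorem~1.1]{BM08} to the relatively compact pieces $\wh Z_i$, which yields embedded resolutions $\sigma_i\cl\wt{V}_i\to\wh V_i$. Universality is used to glue: the transition maps only extend analytically to neighbourhoods of $\phi_i(U_i\cap U_j)$, so the extended spaces $\wh Z_i$ and $\wh Z_j$ agree on some open neighbourhood of the overlap. Universality then lifts these isomorphisms to isomorphisms of the resolutions over that neighbourhood. The cocycle condition follows from uniqueness of the lifts, which in turn is a consequence of functoriality of the BM algorithm.

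Finally, restrict to $V$ by setting $\wt V:=\bigcup_i\sigma_i\inv(V_i)$. This inherits an atlas of corners.

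\emph{Main obstacle: restricting to $V$.} The step I expect to be hardest is showing that $\sigma_i\inv(V_i)$ is again a manifold with corners and that the blow-down preserves corner strata. For this I would use the following strategy. Add the coordinate hyperplanes $\{x_j=0\}$ to the initial exceptional divisor, which the BM algorithm permits because it resolves relative to a normal crossing divisor. Every blow-up centre is then normal crossing with respect to these hyperplanes. With that in place, a local computation in blow-up charts shows that the preimage of the orthant is a union of orthant charts. Those charts give the corner structure, and the strict transform meets the faces transversally in the normal crossing sense.

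\emph{Remaining properties and universality.} The other properties, namely that $\sigma$ is an isomorphism off $E$ on the smooth locus and the normal crossing of $\wt Z$ with $E$, restrict directly from the chart resolutions. Universality for $Z$ is inherited as follows. An isomorphism $U\to U'$ of open subsets of analytic sets extends to a neighbourhood in the extended spaces; after shrinking, universality of the extended resolution applies, and then we restrict back to $V$.
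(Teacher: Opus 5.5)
Your route differs from the paper's, which never leaves $V$. The paper runs the Bierstone--Milman marked-ideal algorithm directly on $V$ with $\lc\cI_Z=(V,V,\emst,\cI_Z,1)$, arguing that its geometric inputs are local. Your extend--resolve--restrict detour breaks at the step you flagged yourself.

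First, $\wt V:=\bigcup_i\sigma_i^{-1}(V_i)$ is the total preimage, which contains the whole exceptional fibre over every boundary point lying in a centre. For example, blowing up the corner of $[0,\infty)^2$, the preimage is the sector of first-quadrant directions together with the entire exceptional $\bR P^1$, so an arc of outward directions sticks out. At best you could take the closure of $\sigma_i^{-1}(\operatorname{int}V_i)$. Even then your local computation is false, because normal crossings with the coordinate hyperplanes does not stop a centre from lying inside a face with normal directions tangent to that face. A boundary point of the half-plane $\{u_1\ge 0\}\sub\bR^2$ is an admissible centre for any normal-crossing $E_0$. In the chart $u_1=v_1v_2$, $u_2=v_2$, the closure of the preimage of $\{u_1>0\}$ is $\{v_1v_2\ge 0\}$. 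This is two opposite closed quadrants meeting at the point of the tangent direction, which is not a manifold with corners. Such centres do arise: the cusp $u_2^2=u_1^3$ lies in $\{u_1\ge 0\}$ with its singular point on the boundary, and that point is the first centre of the algorithm. What you would need is that all centres be neatly embedded, i.e.\ transverse to every corner stratum. This is the property the paper invokes in the proof of Proposition~\ref{prop:resolution-and-boundary}. Normal crossings with the faces is strictly weaker and does not give it.

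The gluing step has a further, fixable gap. Extensions of local defining functions are not canonical, and different extensions can have different germs at boundary points. For instance, $\{x=y=0\}$ and $\{x^2+y^3=0\}$ both cut out $\{0\}$ in $[0,\infty)^2$. The first needs no blow-up, while the second is a cusp whose resolution blows up the corner, so even the restrictions to the quadrant differ. Hence ``$\wh Z_i$ and $\wh Z_j$ agree on a neighbourhood of the overlap'' fails for arbitrary choices. You must instead extend the ideal sheaf $\cI_Z$ itself; germs of analytic functions at a boundary point of $V$ coincide with germs in $\bR^n$, so this extension is canonical. That ideal sheaf is precisely the object the paper works with.

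Finally, putting the faces into $E_0$ makes the resolution depend on the boundary. You require the strict transform to meet the faces in normal crossings, so a smooth $Z$ tangent to a face, such as $\{u_1=u_2^2\}\sub\{u_1\ge 0\}$, must be blown up at the tangency point. That is once more a non-neat boundary centre, while an isomorphic copy of $Z$ in the interior is left untouched. So the universality you claim to inherit fails for isomorphisms that do not preserve faces.
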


\begin{proof} The proof of \cite[Theorem~1.1]{BM08} relies on the resolution of singularities of a marked ideal, \cite[Theorem~1.3]{BM08}. A \emph{marked ideal} is a tuple $\lc{\cI} = (V,N,E_0,\cI,d)$, where $N\sub V$ is an analytic submanifold, $E_0$ is a hypersurface in $M$ with normal crossing singularities, $\cI$ is a coherent sheaf of ideals on $N$ and $d$ is a non-negative integer. This definition can be made verbatim in the setting with corners. Given $Z\sub V$ we let $\cI_Z\sub \cO_V$ be its ideal sheaf in $V$ and set $\lc{\cI}_Z := (V,V,\emst,\cI_Z,1)$. Since the proof of \cite[Theorem~1.3]{BM08} is phrased purely in terms of a marked ideal with the only geometric inputs, such as \cite[Lemma~3.1]{BM08} being local and thus generalising directly, we obtain the claim. 
\end{proof}

\begin{cor}\label{cor:equivariant-resolution} Suppose $Z \sub V$ satisfies the condition of Proposition \ref{prop:resolution-of-singularities-with-corners} and a compact real analytic Lie group $G$ acts almost freely and analytically on $V$, preserving $Z$ and $Z'$. Then we can choose the resolution of singularities to be $G$-equivariant, i.e., $\wt V$ admits an analytic $G$-action preserving $\wt Z$ and $E$ and $\sigma_Z$ is $G$-equivariant.
\end{cor}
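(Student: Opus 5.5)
The plan is to derive this from Proposition~\ref{prop:resolution-of-singularities-with-corners} together with the universality of the resolution, and then average nothing. Universality means that isomorphisms between open subsets of analytic sets lift canonically to the resolutions. Because the Bierstone--Milman algorithm is functorial for analytic isomorphisms, it is in particular functorial for the automorphisms given by the group action. The group action then lifts automatically, and the work is to check that the lifts assemble into an analytic action with the stated properties.

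\textbf{Step 1.} For each $g\in G$, the action map $g\cl V\to V$ is an analytic isomorphism preserving $Z$ and $Z'$. It therefore preserves the ideal sheaf $\cI_Z$ and the marked ideal $\lc{\cI}_Z=(V,V,\emst,\cI_Z,1)$. By universality, each blow-up centre produced by the algorithm in the proof of Proposition~\ref{prop:resolution-of-singularities-with-corners} is $g$-invariant, because the centres are determined by invariants of the marked ideal that are preserved under isomorphism. Hence $g$ lifts uniquely to an analytic automorphism $\wt g$ of $\wt V$ with $\sigma_Z\g\wt g=g\g\sigma_Z$. Uniqueness of lifts through blow-ups gives $\wt{gh}=\wt g\,\wt h$ and $\wt e=\ide$, so we get a group action. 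Since $\wt g$ maps $\sigma_Z^{-1}(Z\sm Z^{sing})$ to itself and maps exceptional hypersurfaces to exceptional hypersurfaces, it preserves both $\wt Z$ and $E$.

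\textbf{Step 2.} We check that the action $G\times\wt V\to\wt V$ is analytic, not just analytic in each group element separately. For this, apply the same universality to the family version: consider the analytic isomorphism of $G\times V$ given by $(g,v)\mapsto(g,gv)$, which preserves $G\times Z$. The resolution of $G\times Z\sub G\times V$ is $G\times\wt V$, because the algorithm commutes with products by a smooth factor; this is part of the functoriality of Bierstone--Milman for smooth morphisms. The lifted isomorphism of $G\times\wt V$ is analytic, and it equals $(g,\wt v)\mapsto(g,\wt g\,\wt v)$. Almost freeness on $\wt V$ away from $E$ is immediate, since $\sigma_Z$ is an isomorphism there. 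If one needs it globally, the point stabilisers on $\wt V$ inject into those on $V$, because $\sigma_Z$ is equivariant and lifts are unique, so they remain finite.

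\textbf{Main obstacle.} The main difficulty is Step 2 in the setting with corners: we need compatibility of the resolution with smooth (projection) morphisms, so that $G\times\wt V$ really is the resolution of $G\times Z$. I would first try to verify this directly from the marked-ideal formulation. The invariants used to choose centres are local and are unchanged by multiplying by a smooth factor, and, as in the proof of Proposition~\ref{prop:resolution-of-singularities-with-corners}, the presence of corners does not affect these local arguments. Compactness of $G$ keeps everything within the relatively compact region where the resolution exists.
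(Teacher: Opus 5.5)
Your proposal is correct and follows essentially the same route as the paper: you lift each $g\in G$ using the universality (functoriality under isomorphisms) of the Bierstone--Milman resolution, and you obtain joint analyticity of the resulting action from the functoriality of \cite[Theorem~1.3]{BM08} with respect to smooth morphisms out of $G\times V$. The differences are cosmetic. The paper first lifts to $\wt Z$ and then extends to $\wt V$ by applying functoriality to the action map $G\times V\to V$, whereas you lift directly to $\wt V$ and use the projection together with the shear $(g,v)\mapsto(g,gv)$, which amounts to the same thing.
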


\begin{proof} We will use the universality of the resolution in Proposition \ref{prop:resolution-of-singularities-with-corners}. If $Z$ is $G$-invariant, then the isomorphism $\phi_g \cl V\to V$ associated to $g \in G$ induces an isomorphism $Z\to Z$ and thus lifts uniquely to an isomorphism $\wt\phi_g\cl \wt Z \to \wt Z$. In particular, this yields an analytic $G$-action on $\wt Z$. To see that this action extends to an analytic action on $\wt V$, one can use the functoriality statement of \cite[Theorem~1.3]{BM08} on the action map $G\times V\to V$.\end{proof}

\begin{proposition}\label{prop:resolution-and-boundary} Suppose $\sigma_Z \cl \wt V\to V$ is an embedded resolution of singularities of an analytic subspace $Z$. 
	\begin{enumerate}[\normalfont 1),leftmargin=20pt,ref=\arabic*]
		\item\label{preimage-stratum}  The preimage $\sigma_Z\inv(S)$ of a corner stratum $S$ of $V$ is a corner stratum of $\wt V$ of the same dimension. In particular, 
		\begin{equation}\label{eq:boundary-of-resolution}\del\wt Z = \sigma_Z\inv(Z\cap \del V).
		\end{equation}
		\item\label{resolution-stratum} For any corner stratum $S\sub \del V$, the restriction 
		\begin{equation}\label{} \sigma_Z|_{\sigma_Z\inv(S)}\cl \sigma_Z\inv(S)\to  S\end{equation}
		is an embedded resolution of singularities that is isomorphic to the universal resolution of $Z\cap S$ in $S$.
	\end{enumerate}
 The corresponding statements hold in the equivariant setting.
\end{proposition}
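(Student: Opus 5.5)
The plan is to reduce both assertions to the local structure of blow-ups near corners. The key input is the construction recalled in Proposition~\ref{prop:resolution-of-singularities-with-corners}: the resolution is a locally finite sequence of blow-ups along smooth analytic centres, produced by the marked-ideal algorithm of \cite[Theorem~1.3]{BM08}. Throughout I arrange that every centre is transverse to the corner stratification of $V$, i.e.\ meets each corner stratum cleanly. Then each single blow-up $\wt V'\to V'$ is locally, near a corner point, modelled on the product of $[0,\infty)^j$ with a blow-up of $\bR^{m-j}$ along a smooth centre. This transversality is the main point to secure, since the algorithm of \cite{BM08} does not a priori know about corners. I would obtain it by locally extending $V$ across its corners to an open analytic manifold $\hat V$ in which the faces are cut out by coordinate hypersurfaces $x_i=0$. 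I then include these hypersurfaces in the initial normal-crossing divisor $E_0$ of the marked ideal $(\hat V,\hat V,E_0,\cI_Z,1)$. The algorithm only blows up centres having normal crossings with the accumulated exceptional divisor, which includes (strict transforms of) $E_0$, so the centres are transverse to the faces. Universality of the algorithm with respect to local isomorphisms ensures that these local extensions glue.

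Given this, \eqref{preimage-stratum} follows by induction on the number of blow-ups. For one blow-up $\pi\colon \wt V'\to V'$ along a centre $C$ transverse to the faces, the local product model shows that $\pi\inv(S)$ is the product of the corner of $[0,\infty)^j$ with the blow-up of the remaining factor along $C\cap S$. Hence it is a corner stratum of $\wt V'$ of the same dimension as $S$, possibly with exceptional pieces that are themselves corner strata of the same depth. Iterating over the locally finite sequence gives the claim for $\sigma_Z$. For \eqref{eq:boundary-of-resolution}, I use that $\wt Z$ is a submanifold with normal crossings with respect to the exceptional hypersurface and the strict transforms of the faces. Its boundary is then $\wt Z\cap \del\wt V$, which equals $\wt Z\cap\sigma_Z\inv(\del V)$ by the first part. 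The equality with $\sigma_Z\inv(Z\cap\del V)$ should be read on the level of strict transforms. It holds because $\sigma_Z$ is an isomorphism off $E$, and away from $E$ the boundary points of $\wt Z$ are exactly the boundary points of $Z\sm Z^{sing}$; closures then agree.

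For \eqref{resolution-stratum}, I apply the functoriality of the \cite{BM08} resolution with respect to restriction to the faces. Transversality of each centre to $S$ means that the marked ideal $\lc\cI_Z$ restricts along $S$ to the marked ideal of $Z\cap S$ in $S$, with $E_0|_S$ formed by the remaining faces. The invariants governing the choice of centres, namely order and the resolution invariant $\mathrm{inv}$, are compatible with restriction to a hypersurface transverse to all centres. Inductively, the restricted sequence of blow-ups is therefore the algorithmic resolution sequence for $Z\cap S\sub S$. I expect this compatibility of invariants under restriction to be the hardest step. To handle it, I would first try to use the fact that faces are part of $E_0$, so that \cite{BM08}'s treatment of the boundary divisor yields the restriction property. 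Failing that, I would argue with universality: near $S$, $V$ is locally $S\times[0,\infty)^j$ and $Z$ is locally $(Z\cap S)\times[0,\infty)^j$ up to the $E_0$-data, so the resolution is the product resolution. The equivariant statements then follow from Corollary~\ref{cor:equivariant-resolution}, since all constructions above are canonical and hence commute with the lifted $G$-action.
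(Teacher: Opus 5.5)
Your route is the paper's. Part~\eqref{preimage-stratum} is to follow from every centre being neatly embedded in the corner structure, so that each blow-up is locally $[0,\infty)^j$ times a blow-up of the remaining factor; the paper invokes \cite[Proposition~3.2]{AK10} at this point. Part~\eqref{resolution-stratum} is to follow from compatibility of the \cite{BM08} algorithm with restriction to faces; the paper appeals to functoriality under closed embeddings via \cite{Wlo05}. You are right that neatness of the centres is the crux, but your mechanism for securing it fails. Putting the faces into $E_0$ only forces each centre $C$ to have \emph{simultaneous normal crossings} with the accumulated divisor: in a common chart, $C$ and all components of $E$ are coordinate subspaces. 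This explicitly allows $C$ to lie inside a component, and centres inside exceptional or boundary components are routine in the algorithm. So the inference ``normal crossings with $E\supseteq E_0$, hence transverse to the faces'' is a non sequitur. Worse, demanding that $\wt Z$ have normal crossings with the faces, as you do for \eqref{eq:boundary-of-resolution}, actively creates non-neat centres. For example, $Z=\{y=x^2\}$ in $V=\{y\ge 0\}$ is smooth but tangent to the face, so the origin would have to be blown up. The real blow-up of a half-plane at a boundary point is, in the chart $y=tx$, the set $\{tx\ge 0\}$: two closed quadrants meeting in a point, which is not a manifold with corners.

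In fact no choice of $E_0$ can repair this, because the statement needs an extra hypothesis. Take $V=[0,\infty)^2$ and $Z=\{y^2=x^3\}=\{(t^2,t^3)\mid t\ge 0\}$. The vertex is a singular point of $Z$, so some centre must meet the fibre over it. The first such centre passes through the vertex, since earlier blow-ups are isomorphisms near it. Blowing up a smooth curve in a surface is an isomorphism, so this centre is the vertex itself, which lies in both faces. Its exceptional arc survives all later (surjective) blow-ups, so $\sigma_Z\inv(\{0\})$ has positive dimension. Hence part~\eqref{preimage-stratum} fails for $S=\{0\}$. Also $\sigma_Z$ over $S$ is not the trivial resolution of the point $Z\cap S$, so part~\eqref{resolution-stratum} fails too. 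Your fallback for part~\eqref{resolution-stratum} breaks on the same example, since $Z$ is not locally $(Z\cap S)\times[0,\infty)^j$. Moreover, the standard functoriality properties of the algorithm say nothing about intersecting $Z\not\sub S$ with a face: they concern smooth morphisms, and independence of the ambient embedding of a \emph{fixed} subspace. The paper's proof does not supply the missing idea either: it rests on the same assertion, that the centres, being analytic submanifolds, are automatically neatly embedded. One hypothesis that does make your product-model argument work: near each corner stratum $S$, in a product chart $V\cong S\times[0,\infty)^j$, one has $Z\cong Z_S\times[0,\infty)^j$. Then functoriality under the smooth projection $S\times\bR^j\to S$ \cite[Theorem~1.1(3)]{BM08} forces every centre to have the form $C_S\times[0,\infty)^j$. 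Such centres are neat, and both parts follow.
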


\begin{proof} Since $\wt Z$ is an analytic submanifold of $\wt V$, we have that $\del \wt Z = \wt Z \cap \del V$. The equality in \eqref{eq:boundary-of-resolution} then follows from \cite[Proposition~3.2]{AK10} since the centers of the blow-ups used to define $\wt V$ are analytic submanifolds and thus are neatly embedded\footnote{That is, $C\cap \del V_j = \del C$ and $C\pf \del V_j$.} in the respective intermediate blow-up.\par The second assertion follows from the functoriality of the resolution of singularities in \cite{BM08} with respect to closed embeddings $V'\hkra V$. To be precise, \cite{BM08} does not prove this but refers to \cite[Theorem~1.0.1]{Wlo05}, see also \textsection 5.2 op. cit.. As the argument in \cite{Wlo05} only uses the notion of a marked ideal, it carries over to our setting. Note that, as discussed in \cite[\textsection 71]{Kol07}, this relies on us using very simple marked ideals.
\end{proof}

\begin{theorem}\label{thm:pushforward along chains}
	Suppose $(X'/B,G)$ is an oriented real analytic orbifold and $X\sub X'$ a $G$-invariant relatively compact open subspace. Let $f \cl X'\to Y$ be a $G$-invariant submersion to a smooth oriented manifold $Y$. Given analytic maps $\varphi \cl [B/G]\to \cV$ and $\psi \cl \cW\to \cV$, where $\cW$ and $\cV$ are compact oriented real analytic orbifolds with corners, we can define a pushforward 
	\begin{equation}\label{eq:modified-pushforward} (f_\psi)_*\cl \Omega_c^*(\cX) \to \Omega^{*-\normalfont\text{rdim}}(Y)\end{equation}
	which agrees with the composition
	\begin{equation}\label{eq:if-transverse} \Omega_c^*(\cX) \xra{p^*} \Omega_c^*(\cX\times_\cV\cW)\xra{(fp)_*} \Omega^*(Y)\end{equation}
	if $\varphi\pf \psi$, where $\cX := [X/G]$ and $p\cl \cX\times_\cV\cW\to \cX$ is the canonical projection. 
\end{theorem}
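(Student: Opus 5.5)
Write $Z := [B/G]\times_\cV \cW$ for the (possibly singular) fibre product. By Corollary~\ref{cor:analytic-orbifold-fibre-product} and Remark~\ref{rem:analytic-fibre-product}, $Z$ is an analytic subgroupoid of the real analytic orbifold with corners $[B\times \cW/G]$. Concretely, after representing $\cV$ and $\cW$ as global quotients and absorbing their groups into $G$, it is a $G$-invariant analytic subspace of an analytic manifold with corners $V$ on which $G$ acts almost freely. The plan is to construct the pushforward upstairs on a resolution of $Z$ and then check that it is independent of choices.

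First, I would apply Proposition~\ref{prop:resolution-of-singularities-with-corners} together with Corollary~\ref{cor:equivariant-resolution}. Since $X$ is relatively compact in $X'$, its image in $B$ gives a relatively compact open piece of an analytic space, so the hypotheses hold. This yields a $G$-equivariant embedded resolution $\sigma\cl \wt V\to V$ with strict transform $\wt Z$, a smooth analytic $G$-manifold with corners. Next I would pull back the relative structure along $\wt Z\to Z\to [B/G]$ by setting $\wt X := X'\times_B \wt Z$. Because $X'\to B$ is a submersion, this is a smooth manifold with corners carrying an almost free $G$-action, and it has a map $\wt p\cl [\wt X/G]\to \cX$. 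The orientation comes from the orientations of $\cX$, $\cV$ and $\cW$, using $\orl(\cX\times_\cV\cW)\cong \orl(\cX)\orl(\cV)\dul\orl(\cW)$ on the smooth locus $Z\sm Z^{sing}$, where $\sigma$ is an isomorphism. This orientation extends over $\wt Z$ because the exceptional locus has positive codimension and the orientation is pulled back from a continuous local system. I then define
\[(f_\psi)_*\alpha := (f\wt p)_*(\wt p^*\alpha) ,\]
which is well defined as a fibre integral of a compactly supported form: $f\wt p$ is a composition of the submersion $f$ with a map that is proper on supports, and the pushforward along such maps is taken in the sense of \cite{ST16,ST23b}. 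If $f\wt p$ is not a submersion I would integrate using currents and argue that the result is smooth. Alternatively, I would restrict to the regular locus of $f\wt p$, whose complement has measure zero.

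Second, I need independence of the choice of resolution. Given two resolutions, dominate both by a third, as in the proof of Lemma~\ref{lem:lift-well-defined}, and apply Lemma~\ref{lem:multiplication-up-to-codim}: the blow-down maps between the strict transforms are proper, of degree one and orientation preserving, so the two pushforwards agree. The $G$-quotient is handled by Lemma~\ref{lem:pushforward-if-group-acts}. Third, for the transverse case $\varphi\pf\psi$, $Z$ is already smooth, so the identity is a resolution and the construction reduces to \eqref{eq:if-transverse}. I would also record Stokes' formula using Proposition~\ref{prop:resolution-and-boundary}: since $\del\wt Z=\sigma^{-1}(Z\cap\del V)$ is the resolution of the boundary fibre product, the boundary term is again an operation of the same type. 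The later sections use this implicitly.

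The main obstacle is smoothness of the output form when $f\wt p$ fails to be a submersion, which can happen where exceptional divisors meet the corners. I would first try to show that $f\wt p$ is a submersion on an open dense set whose complement has codimension $\geq 1$. Then I would argue, as in Lemma~\ref{lem:multiplication-up-to-codim} via Sard, that $(f\wt p)_*$ agrees off this set with the pushforward from the smooth locus of $\cX\times_\cV\cW$, and use properness over $Y$ to extend smoothly. This step relies on $f$ being a submersion on $X'$ and on the relative (fibrewise-smooth) structure over $B$, so that only the base $B$ is resolved. A second delicate point is corner singularities (Definition~\ref{de:corner-singularity}). I would try to show that they occur in codimension $\geq 2$ and therefore do not contribute.
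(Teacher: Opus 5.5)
Your overall construction matches the paper's. You resolve the fibre product equivariantly via Proposition~\ref{prop:resolution-of-singularities-with-corners} and Corollary~\ref{cor:equivariant-resolution}, resolving only the base since everything is relative over $B$. You then pull $\alpha$ back, integrate along the composite to $Y$, and take the identity resolution in the transverse case. But the step you single out as the main obstacle is left open, and the workarounds you propose would fail.

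The missing observation is that $f\wt p$ is \emph{automatically} a submersion on all of $\wt X$, including over the exceptional locus and at corners. For this, the hypothesis on $f$ must be read in the relative sense: $(q,f)\colon X'\to B\times Y$ is a submersion, where $q\colon X'\to B$ is the structure map. This reading is forced on you anyway:
\begin{itemize}
\item even the transverse composite \eqref{eq:if-transverse} needs $fp$ to be a submersion, which a bare submersion $f\colon X'\to Y$ does not guarantee;
\item the paper's proof relies on it when it says $\wt f$ is ``still a co-oriented vertical submersion'';
\item in the application it is the joint submersivity of $\eva^+_\Gamma$ with $\ff$ invoked in Lemma~\ref{lem:evaluation-pushforward}.
\end{itemize}
Given this, the map $\wt X=X'\times_B\wt Z\to\wt Z\times Y$, $(x,z)\mapsto(z,f(x))$, is the base change of $(q,f)$ along $\wt Z\times Y\to B\times Y$. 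It is therefore a submersion, and composing with the projection to $Y$ shows that $f\wt p$ is one too. Locally, if $X'\cong B\times F$ and $\pi\colon\wt Z\to B$ is the induced map, then $f\wt p(z,x)=f(\pi(z),x)$ is already submersive in the $F$-directions, which the resolution never touches. That is exactly why one resolves only the base.

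Your fallbacks cannot replace this. Fibre integration along a non-submersion generally gives non-smooth output: the pushforward of $\rho(x)\,dx$ along $x\mapsto x^2$ is $\frac{\rho(\sqrt y)+\rho(-\sqrt y)}{2\sqrt y}\,dy$ for $y>0$ and $0$ for $y<0$. Lemma~\ref{lem:multiplication-up-to-codim} only identifies two pushforwards that are already smooth because both maps are submersions. So agreement on an open dense set together with properness yields no smoothness. Your independence-of-resolution argument invokes the same lemma and so also rests on this observation; the paper avoids it entirely by using the canonical universal resolution.

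A secondary point: your reason why the orientation extends over $\wt Z$ is not valid. In an embedded resolution, $E\cap\wt Z$ is a normal crossing hypersurface in $\wt Z$, and an orientation on the complement of a hypersurface need not extend. This holds even when the orientation is induced from a continuous local system on the ambient space, because $\orl(T\wt Z)$ is identified with that system only where the fibre product is transverse. For example, take $\varphi(x,y)=xy$ on $\bR^2$ and $\psi$ the inclusion of $0$ into $\bR$. The fibre-product orientation on $\{xy=0\}\sm\{0\}$ is governed by $d\varphi|_{\{y=0\}}=x\,dy$, so the $x$-axis is oriented by $\mp\partial_x$ on $\{\pm x>0\}$ (up to a global convention sign). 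This orientation does not extend across the exceptional point of the strict transform, and Stokes' formula would pick up spurious terms there. The paper does not prove this extension either; it simply asserts that $\wt f$ is co-oriented. So this is not a divergence from its proof, but the justification you give does not work.
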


\begin{proof} Write $\cW = [W/\Gamma]$ for some analytic compact Lie group $\Gamma$ and let $\cB$ be the global quotient associated to the $(G\times\Gamma\times H)$-action on $X\times\Gamma\times W$.  By Corollary ~\ref{cor:analytic-orbifold-fibre-product} and Corollary~\ref{cor:equivariant-resolution}, we can find a morphism $\rho\cl\wt\cB\to \cB$ of relative analytic orbifolds that defines a universal resolution of singularities for $\cX\times_\cV\cW$. Set $\cZ:= \rho\inv(\cX\times_\cV\cW)$.  The map $\wt f:= f\g \pr_{\cX}\g \rho|_{\cZ}$ is still a co-oriented vertical submersion and we can thus define $(f_\psi)_*$ by the composition
	\begin{equation}\label{eq:def-modified-pf}  \Omega_c^*(\cX)\xra{\pr_{\cX}^*} \Omega_c^*(\cB) \xra{\rho^*} \Omega_c^*(\wt\cB)\to \Omega_c^*(\cZ) \xra{\wt f_*} \Omega^{*-k}(Y)
	\end{equation}
	where $k = \normalfont\text{rdim}(\wt f) = \dim(Y)-\dim(\wt Z) = \dim(Y)-\dim(Z)$. If the fibre product is transverse, we can take $\rho$ to be the identity, whence \eqref{eq:def-modified-pf} agrees with \eqref{eq:if-transverse}. 
\end{proof}

\begin{lemma}\label{lem:modified-stokes} Let $f'$ be the restriction of $f$ to $\del\cX$ and $\psi'$ be the restriction of $\psi$ to $\del \cW$. Then the pushforward $(f_\psi)_*$ satisfies 
	\begin{equation}\label{eq:modified-stokes} d(f_\psi)_*(\alpha) = (f_\psi)_*(d\alpha) + (-1)^{|\alpha|+\dim(\cZ)}\lbr{(f'_\psi)_*(\alpha|_{\del\cX}) + (-1)^{\dim(\cX)+\dim(\cV)}(f_{\psi'})_*(\alpha)} 
	\end{equation}
\end{lemma}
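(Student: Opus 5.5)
The plan is to reduce Lemma~\ref{lem:modified-stokes} to the ordinary Stokes formula for the pushforward along a co-oriented vertical submersion of an orbifold with corners, \cite[Theorem~1(e)]{ST23b}, applied to the resolved space $\cZ$ from the proof of Theorem~\ref{thm:pushforward along chains}. By definition~\eqref{eq:def-modified-pf}, $(f_\psi)_*(\alpha) = \wt f_*(\rho^*\pr_\cX^*\alpha|_\cZ)$. Pullbacks commute with $d$, so Stokes for $\wt f$ gives
\[
d(f_\psi)_*(\alpha) \;=\; (f_\psi)_*(d\alpha) + (-1)^{|\alpha|+\dim(\cZ)}\,(\wt f|_{\del\cZ})_*\bigl(\rho^*\pr_\cX^*\alpha|_{\del\cZ}\bigr),
\]
with the sign being the standard one for fibre integration with the boundary oriented outward-normal-first. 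The rest of the argument identifies the boundary term.

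First, I would use Proposition~\ref{prop:resolution-and-boundary}\eqref{preimage-stratum} to show $\del\cZ = \rho\inv\bigl(\del(\cX\times_\cV\cW)\bigr)$, where the unresolved boundary is the union $(\del\cX)\times_\cV\cW \cup \cX\times_\cV\del\cW$. This decomposition is the standard boundary formula for fibre products \cite[Proposition~7.4]{Joy}. It is valid away from a set of codimension $\ge 2$, where corner singularities in the sense of Definition~\ref{de:corner-singularity} may occur; such a set does not affect integrals. Next, Proposition~\ref{prop:resolution-and-boundary}\eqref{resolution-stratum} says that the restriction of $\rho$ to each of the two pieces is (isomorphic to) the universal resolution of the corresponding boundary fibre product. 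By universality, the pushforwards over these pieces are therefore exactly $(f'_\psi)_*(\alpha|_{\del\cX})$ and $(f_{\psi'})_*(\alpha)$, where these are defined as in Theorem~\ref{thm:pushforward along chains} with $\cX$ or $\cW$ replaced by its boundary. Lemma~\ref{lem:multiplication-up-to-codim} shows that the result does not depend on the choice of resolution, since any two resolutions are dominated by a common one through a degree-one map.

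The main obstacle is the orientation bookkeeping. $\cZ$ carries the fibre product orientation $\orl(\cX)\orl(\cV)\dul\orl(\cW)$. On the piece coming from $\del\cX$, the outward normal sits in front of $\orl(\del\cX)$, so it already matches the boundary orientation of $\cZ$ and contributes no extra sign. On the piece coming from $\del\cW$, the normal has to be moved past $\orl(\cX)\orl(\cV)\dul$, which produces $(-1)^{\dim\cX+\dim\cV}$. I would carry out this local check with the trick from Lemma~\ref{lem:different-fibre-products}: assume $\cX$ is locally $\cX'\times\cV$, so that the fibre product is $\cX'\times\cW$, and read off the sign directly. Finally, I would check that the blow-down map $\rho$ preserves orientations on the top-dimensional strata. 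This holds because $\rho$ is an isomorphism away from the exceptional hypersurface $E$, and $E$ has measure zero.
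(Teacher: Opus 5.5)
Your proposal is correct and follows essentially the same route as the paper. The paper applies Stokes for $\wt f$ on the resolved space $\cZ$, uses Proposition~\ref{prop:resolution-and-boundary} to split $\del\cZ$ into the preimages of $(\del\cX)\times_\cV\cW$ and $\cX\times_\cV\del\cW$ and to identify these with the resolutions defining $(f'_\psi)_*$ and $(f_{\psi'})_*$ (with Lemma~\ref{lem:multiplication-up-to-codim} handling the degree-one comparison), and takes the sign from Joyce's fibre-product boundary formula. Your local computation gives $(-1)^{\dim\cX+\dim\cV}$, which agrees with the statement; the final display of the paper's proof writes only $(-1)^{\dim(\cX)}$, which appears to be a slip in the paper rather than a problem with your argument.
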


\begin{proof} Let $\alpha\in \Omega_c^*(\cX)$ be arbitrary and set $\wt\alpha := \rho^*\pr_\cX^*\alpha$. By Stokes' Formula, \cite[Proposition~2.2]{ST16}, we have 
	\begin{equation*}\label{}d(f_\psi)_*(\alpha) = (f_\psi)_*(d\alpha) + (-1)^{|\alpha|+\dim(\cZ)} (\wt f|_{\del\cZ})_*(\wt\alpha).\end{equation*}
	By abuse of notation we write $\cX\times\cW$ for the orbifold $\cB$ defined in the previous proof. By Proposition~\ref{prop:resolution-and-boundary}\eqref{preimage-stratum}, we have 
	$$\del\cZ = \cZ\cap \rho\inv((\del \cX)\times\cW) \sqcup (-1)^{\dim(\cX)}\cZ\cap\rho\inv(\cX\times \del\cW) =: \del_1 \cZ\sqcup \del_2\cZ.$$
	Let $f'$ be the pullback of $f$ to $\del X = X\times_B\del B$. This is still a submersion and by Proposition~\ref{prop:resolution-and-boundary}\eqref{resolution-stratum} and Lemma \ref{lem:multiplication-up-to-codim}, the composite
	\begin{equation}\label{eq:def-modified-pf-boundary-1}  \Omega_c^*(\cX)\xra{\pr_{\cX}^*} \Omega_c^*(\cX\times \del\cW) \xra{\rho^*} \Omega_c^*(\rho\inv(\cX\times \del\cW))\to \Omega_c^*(\del_2\cZ) \xra{\wt f_*} \Omega^{*-k}(Y)
	\end{equation}
	agrees with $(f'_\psi)_*$. Similarly, if $\psi'\cl \del \cW\to \cV$ is the restriction of $\psi$, then
	\begin{equation}\label{eq:def-modified-pf-boundary-2}  \Omega_c^*(\cX)\xra{\pr_{\cX}^*} \Omega_c^*(\cX\times \del\cW) \xra{\rho^*} \Omega_c^*(\rho\inv(\cX\times \del\cW))\to \Omega_c^*(\del_2\cZ) \xra{\wt f_*} \Omega^{*-k}(Y)
	\end{equation}
	agrees with $(f_{\psi'})_*$. Therefore,
	\begin{equation}\label{} (\wt f|_{\del\cZ})_*(\wt \alpha) = (f'_\psi)_*(\alpha|_{\del\cX}) +(-1)^{\dim(\cX)} (f_{\psi'})_*(\alpha).\end{equation}
	by \cite[Proposition~7.4(30)]{Joy12}.
\end{proof}

\begin{lemma}\label{lem:modified-push-pull} Suppose $f\cl \cX\to Y$ and $\varphi,\psi$ are as in Theorem \ref{thm:pushforward along chains} and
	\begin{center}\begin{tikzcd}
			\cX'\arrow[r,"f'"] \arrow[d,"p"]&Y' \arrow[d,"q"]\\ 
			\cX \arrow[r,"f"] & Y \end{tikzcd} \end{center}
	is a cartesian square of oriented with $q$ a smooth submersion between two smooth manifolds. Equipping $\cX'$ with the pullback orientation, we have
			\begin{equation}\label{eq:push-pull-modified}
				 (f'_{\psi})_*p^* = q^*(f_\psi)_*.
			\end{equation}
\end{lemma}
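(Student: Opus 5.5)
The strategy is to reduce \eqref{eq:push-pull-modified} to the ordinary base-change formula for fibre integration, \cite[Proposition~2.1]{ST16}, after first showing that the resolution used to define $(f_\psi)_*$ also serves to define $(f'_\psi)_*$. Recall that $(f_\psi)_*$ is defined as the composite \eqref{eq:def-modified-pf}: one pulls back along $\pr_\cX$ and the resolution $\rho\cl\wt\cB\to\cB$, restricts to $\cZ=\rho\inv(\cX\times_\cV\cW)$, and integrates along $\wt f = f\g\pr_\cX\g\rho|_\cZ$. The resolution depends only on the analytic base data $\varphi\cl[B/G]\to\cV$ and $\psi\cl\cW\to\cV$. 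Since $q$ is a smooth submersion and the square is cartesian, $\cX' = \cX\times_Y Y'$ is again a relative real analytic orbifold over the same base $B$ with the same $G$-action. The map to $\cV$ is $\varphi$ composed with the projection $\cX'\to\cX$, so the analytic fibre product to be resolved is unchanged; only the smooth directions are modified.

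First, I would set $\cB' := \cB\times_Y Y'$ and $\wt\cB' := \wt\cB\times_Y Y'$, where the map $\wt\cB\to Y$ is $f\g\pr_\cX\g\rho$. Because $\rho$ acts only on the analytic base, $\wt\cB'\to\cB'$ is the universal resolution of $\cX'\times_\cV\cW$, and this resolution is unique by Proposition~\ref{prop:resolution-of-singularities-with-corners} and Corollary~\ref{cor:equivariant-resolution}. Consequently $\cZ' = \cZ\times_Y Y'$ and $\wt f'$ is the base change of $\wt f$ along $q$. Second, I would note that the pullback maps commute with this base change: $\rho'^*\pr_{\cX'}^*p^* = \tilde p^*\rho^*\pr_\cX^*$, where $\tilde p\cl\cZ'\to\cZ$ is the projection. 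Third, I would apply the smooth push-pull formula \cite[Proposition~2.1]{ST16} to the cartesian square with $\cZ'\to\cZ$ on the left and $q$ on the right, which gives $\wt f'_*\tilde p^* = q^*\wt f_*$. This formula requires $\wt f$ to be a submersion, which it is, being a co-oriented vertical submersion. If a different resolution were used to define $(f'_\psi)_*$, Lemma~\ref{lem:multiplication-up-to-codim} would identify the two pushforwards, so the choice does not matter.

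The main obstacle is orientation. The pullback orientation on $\cX'$ must induce on $\cZ'$ exactly the orientation that the fibre-product convention of Theorem~\ref{thm:pushforward along chains} assigns. I expect this to hold because $q$ is a submersion onto the target and base change along the target commutes with fibre products over $\cV$. To confirm it, I would use the local splitting trick from the proof of Lemma~\ref{lem:different-fibre-products}, writing locally $\cX = F\times Y$ and $Y' = Y\times Q$. In these coordinates no Koszul sign arises, because the $Q$-factor appears last both in the pullback orientation and in the fibre-product orientation over $\cV$.
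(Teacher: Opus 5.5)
Your proof follows the paper's route. You show that the resolution defining $(f_\psi)_*$ base-changes along the smooth submersion $q$, and then apply the ordinary base-change formula \cite[Proposition~2.1(4)]{ST16} to $\wt f$. The paper phrases the first step as functoriality of the Bierstone--Milman resolution under smooth morphisms \cite[Theorem~1.1(3)]{BM08}. Your remark that $\cX'$ lies over the same analytic base, so the analytic fibre product being resolved is unchanged, makes the same point directly. It also spares you from re-checking the hypotheses of Theorem~\ref{thm:pushforward along chains}, such as relative compactness, for $\cX'$.

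The justification in your orientation paragraph, however, does not work as written. The paper orients $\cZ'=\cX'\times_\cV\cW$ by $\orl(\cX')\orl(\cV)^\vee\orl(\cW)$, so the $\cW$-fibre directions come after the $Q$-factor, and $Q$ is not last. In your fibre-first model $\cX=F\times Y$ (with $F$ the fibre of $f$), comparing $\cZ'$ with $\cZ\times_Y Y'$ actually produces the sign $(-1)^{(\dim Y'-\dim Y)(\dim\cW-\dim\cV)}$.

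The identity holds without sign for a different reason. The paper, following \cite{ST16}, uses the base-first convention for relative orientations; this is visible in the unsigned $(f_\psi)_*(d\alpha)$ term in Lemma~\ref{lem:modified-stokes}, which a fibre-first convention would not produce. Take $\orl(\cX)\cong\orl(Y)\orl(F)$ and the pullback orientation $\orl(\cX')\cong\orl(Y')\orl(F)$. Then $\orl(\cZ)\cong\orl(Y)\,\orl(F)\orl(\cV)^\vee\orl(\cW)$ and $\orl(\cZ')\cong\orl(Y')\,\orl(F)\orl(\cV)^\vee\orl(\cW)$. Hence $\wt f$ and $\wt f'$ carry the same relative orientation $\orl(F)\orl(\cV)^\vee\orl(\cW)$, and \cite[Proposition~2.1(4)]{ST16} applies with no correction. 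The base factor coming first in both spaces, not $Q$ coming last, is what kills the Koszul sign.
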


\begin{proof} The first assertion is immediate, while the second follows from the functoriality of the resolution of singularities with respect to smooth morphisms, \cite[Theorem~1.1(3)]{BM08} and \cite[Proposition~2.1(4)]{ST16}.
\end{proof}

\bibliographystyle{amsalpha}
\bibliography{bib}

\smallskip
\Addresses

\end{document}